\newtheorem{theorem}{Theorem}[section]
\newtheorem{proposition}[theorem]{Proposition}
\newtheorem{corollary}[theorem]{Corollary}
\newtheorem{lemma}[theorem]{Lemma}
\theoremstyle{definition}
\newtheorem{definition}[theorem]{Definition}
\newtheorem{example}[theorem]{Example}
\newtheorem{remark}[theorem]{Remark}
\numberwithin{equation}{section}
\newcommand{\soplus}{\mathop{\mbox{\normalsize$\bigoplus$}}\limits}
\newcommand{\stens}{\mathop{\mbox{\normalsize$\bigotimes$}}\limits}
\newcommand{\CC}{{\mathscr{C}}}
\newcommand{\ko}{\mathbf{k}}
\newcommand{\dct}[2]{\overset{#2}{\underset{#1}{\conv}} }
\def\seteq{\mathbin{:=}}
\def\g{\mathfrak g}
\def\h{\mathfrak h}
\def\Z{\mathbb Z}
\def\C{\mathbb C}
\def\F{\mathcal F}
\def\A{\mathbb A}
\def\va{\varpi}
\def\Q{\mathbb Q}
\def\Hom{{\rm Hom}}
\def\Mod{{\rm Mod}}
\def\Ind{{\rm Ind}}
\def\cl{{\rm cl}}
\def\aff{{\rm aff}}
\def\dim{{\rm dim}}
\def\Rep{{\rm Rep}}
\def\Dim{\underline{\dim}}
\newcommand{\nc}{\newcommand}
\nc{\hs}{\hspace*}
\nc{\be}{\begin{enumerate}} \nc{\ee}{\end{enumerate}}
\nc{\bnum}{\be[{\rm(i)}]} \nc{\bna}{\be[{\rm(a)}]}
\nc{\Rnorm}{R^{\rm{norm}}}
\nc{\Runiv}{R^{\rm{univ}}}
\nc{\conv}{\mathbin{\mbox{\large $\circ$}}}
\nc{\tens}{\mathop\otimes}
\nc{\wl}{\mathsf{P}}
\nc{\rl}{\mathsf{Q}}
\nc{\cm}{\mathsf{A}}
\nc{\redex}{\widetilde{w}} \nc{\redez}{{\widetilde{w}_0}}
\newcommand{\Lto}{\longrightarrow}
\newcommand{\um}{\underline{m}}
\nc{\rmat}[1]{{\mathbf
r}_{\mspace{-2mu}\raisebox{-.5ex}{${\scriptstyle{#1}}$}}}
\begin{document}

\title[AR quiver of type A and generalized quantum affine Schur-Weyl duality] {Auslander-Reiten quiver of type A and \\ the generalized quantum affine
Schur-Weyl duality }

\author[Se-jin Oh]{Se-jin Oh}
\address{School of Mathematics, Korea Institute for Advanced Study Seoul 130-722, Korea}
\email{sejin092@gmail.com}

\subjclass[2010]{Primary 05E10, 16T30, 17B37; Secondary 81R50}
\keywords{Auslander-Reiten quiver, quiver Hecke algebra, generalized
quantum affine Schur-Weyl duality}

\begin{abstract}
The quiver Hecke algebra $R$ can be also understood as a
generalization of the affine Hecke algebra of type $A$ in the
context of the quantum affine Schur-Weyl duality by the results of
Kang, Kashiwara and Kim. On the other hand, it is well-known that
the Auslander-Reiten(AR) quivers $\Gamma_Q$ of finite simply-laced
types have a deep relation with the positive roots systems of the
corresponding types.
In this paper, we present explicit combinatorial descriptions for the AR-quivers $\Gamma_Q$ of finite type $A$. 
Using the combinatorial descriptions, we can investigate relations
between finite dimensional module categories over the quantum affine
algebra $U'_q(A_n^{(i)})$ $(i=1,2)$ and finite dimensional graded
module categories over the quiver Hecke algebra $R_{A_n}$ associated
to $A_n$ through the generalized quantum affine Schur-Weyl duality
functor.
\end{abstract}

\maketitle



\section*{Introduction}

Let $\Delta_n$ be a rank $n$ Dynkin diagram of finite type $A$, $D$
or $E$, and $Q$ be a Dynkin quiver by orienting edges of $\Delta_n$.
Gabriel's theorem \cite{Gab80} states that isomorphism classes of
indecomposables of the category of modules ${\rm mod }\C Q$ over the
path algebra $\C Q$ are labeled by the set of positive roots
$\Phi^+_n$ related to $\Delta_n$.

The Auslander-Reiten quiver (AR-quiver) $\Gamma_Q$ arising from $Q$
encodes the structure of the ${\rm mod }\C Q$ in the following
sense: (i) vertices of AR-quiver present the indecomposables, (ii)
arrows present the irreducible morphisms between them. Moreover,
$\Gamma_Q$ itself provides reduced expressions of the longest
element $w_0$ of the Weyl group associated with $\Delta_n$ by
reading the levels of vertices in $\Gamma_Q$ in a manner compatible
with paths \cite{B99}. The AR-quiver also plays an important role in
the research area of cluster algebras, cluster-tilted algebras and
cluster categories (for example, \cite{BMMRT}).

For each reduced expression $\tilde{w}_0$ of $w_0$, we can assign a
total order $<_\redez$ on $\Phi^+_n$ by \cite[Chapter VI, \S
1.6]{Bour}. Moreover, the total order is {\it convex} in the sense
that for any $\alpha,\beta \in \Phi_n^+$ with $\alpha+\beta \in
\Phi_n^+$, we have that $\alpha <_\redez \beta$ implies $\alpha
<_\redez \alpha+\beta <_\redez \beta$ (\cite{Papi94}). By
abstracting this concept, we say a partial order $\prec$ on
$\Phi_n^+$  convex if $\alpha+\beta \in \Phi_n^+$ implies either
$\alpha \prec \alpha+\beta \prec \beta$ or $\beta \prec \alpha+\beta
\prec \alpha$ (see \S \ref{subsec: order}).

On the other hand, Hernandez and Leclerc \cite{HL11} defined certain
category $\mathscr{C}^{(1)}_Q$ which consists of finite dimensional
integrable modules over quantum affine algebras $U_q'(\g)$, and
depends on the AR-quiver $\Gamma_Q$. They proved that
$\mathscr{C}^{(1)}_Q$ categorifies $\C[N]$ the coordinate ring of
the unipotent group $N$ associated with $\g_0$. Here $\g$ is affine
Kac-Moody algebra of untwisted simply laced type and $\g_0$ is the
finite dimensional semisimple Lie subalgebra inside $\g$.

The quiver Hecke algebras $R_{\mathsf g}$, introduced by
Khovanov-Lauda \cite{KL09,KL11} and Rouquier \cite{R08}, categorify
the negative part of $U_q({\mathsf g})$ for all symmetrizable
Kac-Moody algebras ${\mathsf g}$. Since $R_{\mathsf g}$ provides
quantum versions of Lascoux-Leclerc-Thibon-Ariki theory
(\cite{Ar96,LLT}), many authors study the representation theories on
quiver Hecke algebras $R_{\mathsf g}$ very actively since its
introduction (for example,
\cite{BK09A,BK09B,HM10,KK12,KKO13,LV09,VV09}). In particular, if
${\mathsf g}$ is a finite classical semisimple Lie algebra, the
theory on the construction of simple modules over $R_{{\mathsf g}}$
is well-developed (see
\cite{BKOP,BKM12,HMM09,KP11,Kato12,KR09,Mc12}). Among these results,
the approach of \cite{Kato12,KR09,Mc12} provides the way of
constructing simple modules over $R_{{\mathsf g}}$ by using
PBW-bases which are arisen from a convex order induced by a fixed
reduced expression $\tilde{w}_0$ of $w_0$.

Note that, for $\g_0$ inside $\g$, the specialization of the
integral form $U^{-}_{\mathbb A}(\g_0)$ of $U_q^-(\g_0)$ at $q=1$ is
isomorphic to $\C[N]$ and hence $U^{-}_{\mathbb A}(\g_0)|_{q=1}$ is
categorified via $R_{\g_0}$ (with forgetting grading) and
$\mathscr{C}^{(1)}_Q$ over $U_q'(\g)$, simultaneously. Since the
result of Brundan and Kleshchev in \cite{BK09A} provides the
isomorphism between affine Hecke algebras of type $A$ and the
$R_{A_n^{(1)}}$ up to a specialization and a localization, Hernandez
and Leclerc expected that quiver Hecke algebras might be interpreted
as a generalization of affine Hecke algebra of type $A$ in the
context of the quantum affine Schur-Weyl duality
(\cite{CP96,Che,GRV94}).

In \cite{KKK13a,KKK13b}, Kang, Kashiwara and Kim came up with an
answer for the expectation. They constructed the {\it generalized
quantum affine
Schur-Weyl duality functor} 
$$  \F : \Rep(R_J) \rightarrow \CC_\g  $$
between the finite dimensional integrable modules category $\CC_\g$
over $U'_q(\g)$ and the category $\Rep(R_J)$ of finite dimensional
graded modules over quiver Hecke algebra $R_J$, by observing the
zeros of denominator of {\it normalized $R$-matrix}
$\Rnorm_{k,l}(z)$. Their work can be understood as a graded
generalization of quantum affine Schur-Weyl duality since
$\Rep(R_J)$ has a grading. Here $R_J$ is defined by the quiver
$Q_J$, which appears in the process of constructing $\mathcal{F}$.
Interestingly, $R_J$ does depend on the {\it choice} of {\it good}
modules over $U'_q(\g)$.

By the results of \cite{KKK13a,KKK13b} and \cite{HL11}, we have an
exact functor $\mathcal{F}^{(1)}_Q$ from $\Rep(R_{\g_0})$ to
$\mathscr{C}^{(1)}_Q$ over $U'_q(\g)$
$$ \mathcal{F}^{(1)}_Q:  \Rep(R_{\g_0}) \to \mathscr{C}^{(1)}_Q $$
which sends simples to simples, for
\begin{itemize}
\item[({\rm a})] each Dynkin quiver $Q$ of finite type $A$ (resp. $D$),
\item[({\rm b})] $\g$ is of type $A_n^{(1)}$ (resp $D^{(1)}_{n}$) and $\g_0$ is of type $A_n$ (resp. $D_n$).
\end{itemize}

In this paper, we investigate $\mathcal{F}^{(1)}_Q$ more deeply by
using the explicit combinatorial description of an AR-quiver
$\Gamma_Q$ which arises from a chosen Dynkin quiver $Q$ of finite
type $A$.

Note that all vertices in $\Gamma_Q$ are labeled by the set of
positive roots $\Phi_n^+$ and each positive root $\beta$ in
$\Phi_n^+$ of finite type $A$ can be expressed by the segment
$[a,b]$, where $\beta=\sum_{k=a}^b \alpha_a$. We say $a$ the first
component of $\beta$ and $b$ the second one. Identifying $\beta$
with $[a,b]$, $\Gamma_Q$ satisfies the following property: Every
positive root appearing in the {\it maximal $N$-sectional} (resp.
{\it $S$-sectional}) path in $\Gamma_Q$ has the same first (resp.
second) component (Theorem \ref{Thm: sectional}). With this
property, the facts
\begin{itemize}
\item[(i)] $\Phi_n^+=\{ [a,b] \ | \ 1 \le a \le b \le n \}$,
\item[(ii)] we know the position of simple roots in $\Gamma_Q$ by \cite[Lemma 3.2.2]{KKK13b}
\end{itemize}
provides a way (Remark \ref{Alg: easy}) for computing the bijection
$$\phi^{-1}: \Phi_n^{+} \times \{ 0 \} \longrightarrow \Gamma_Q$$
without using {\it the Coxeter element $\tau$ adapted $Q$} and {\it
the additive property of the dimension vectors}.

The description of $\mathscr{C}^{(1)}_Q$ (Definition \ref{def: Q
module category}) is the smallest category containing $V_Q(\beta)$
for $\beta \in \Phi_n^+$ and stable by certain operations on
modules, where
$$V_Q(\beta) \seteq V(\va_i)_{(-q)^{p}} \text{ is the fundamental $U_q'(A_n^{(1)})$-modules for } \phi^{-1}(\beta,0)=(i,p).$$

Using the combinatorial descriptions of $\Gamma_Q$, we can prove
that the {\it Dorey's rule} in \cite{CP96} always holds for every
$\alpha \prec_Q \beta \in \Phi_n^+$ with $\gamma=\alpha+\beta \in
\Phi_n^+$; i.e., the following surjective homomorphisms exist:
$$V_Q(\beta) \otimes V_Q(\alpha)  \twoheadrightarrow V_Q(\gamma) \quad \text{ and } \quad S_Q(\beta) \conv S_Q(\alpha)  \twoheadrightarrow S_Q(\gamma),$$
where $\prec_Q$ is {\it the convex partial order arising from
$\Gamma_Q$} and the simple $R_{A_n}$-module $S_Q(\beta)$ is the
preimage of $V_Q(\beta)$ under the functor $\mathcal{F}^{(1)}_Q$
(Theorem \ref{Thm: i_k+i_l=i_j}).

Moreover, we prove that every pair $(\alpha, \beta) \in
(\Phi_n^+)^2$ with $\alpha+\beta \in \Phi_n^+$ is indeed a {\it
minimal pair} in the sense of McNamara \cite{Mc12}, with respect a
suitable total order which is compatible with the convex partial
order $\prec_Q$ (Theorem \ref{thm: minimal}). Thus, for every pair
$(\alpha,\beta)$ with $\alpha+\beta \in \Phi_n^+$, we have
(Corollary \ref{cor: length2})
$$V_Q(\beta)\otimes V_Q(\alpha)\text{ and } S_Q(\beta) \conv S_Q(\alpha) \text{ have composition length $2$}.$$

In \cite{AK,Kas02}, it is proved that every finite dimensional
simple integrable $U_q'(\g)$-module $M$ appears as a simple head of
$\bigotimes_{i=1}^{l}V(\varpi_{i_k})_{a_k}$ for some finite sequence
$$\left( (i_1,a_1),\ldots, (i_l,a_l)\right) \text{ in } (\{ 1,\ldots,n\} \times \ko^\times)^l$$
such that the normalized $R$-matrices $\Rnorm_{i_k,i_{k'}}(z)$ have
no pole at $z=a_{k'}/a_k$, for all  $1\le k<k'\le l$. Here, $\ko
=\overline{\C(q)} \subset \cup_{m >0} \C((q^{1/m}))$. With this
theorem and the denominators in \cite[Appendix A]{Oh14}, we can
construct various exact sequences and simple modules over
$R_{A_n}$-modules, which can be understood as generalizations of the
exact sequences and simple modules in \cite[Proposition
4.2.3]{KKK13b} and \cite{KP11} (see Section \ref{sec: revisit}). We
also obtain an alternative proof for the exactness of
$\mathcal{F}^{(1)}_Q$ (Theorem \ref{thm: comb proof}). Moreover, we
can rephrase one of the main results of \cite{Mc12} with the
following forms: For any simple module $M \in \Rep(R_{A_n})$ and
Dynkin quiver $Q$ of finite type $A_n$, there exist two $n$-tuple of
sequences
\begin{align*}
& \{  (a_{1;\mathbf{i}_k},a_{2;\mathbf{i}_k},\ldots,a_{\mathbf{i}_k-1;\mathbf{i}_k},a_{\mathbf{i}_k;\mathbf{i}_k}) \mid 1 \le k \le n \} \ \text{ and } \\
& \{
(a_{\mathbf{j}_l;\mathbf{j}_l},a_{\mathbf{j}_l;\mathbf{j}_l+1},\ldots,a_{\mathbf{j}_l;n-1},a_{\mathbf{j}_l;n})
\mid 1 \le l \le n \} \end{align*} such that $a_{s;\mathbf{i}_k},
a_{\mathbf{j}_l;u} \in \Z_{\ge 0}$ for $1 \le s \le \mathbf{i}_k$,
$\mathbf{j}_l \le u \le n$, $1 \le k,l \le n$ and
$$ \dct{k=1}{n} \left(\dct{s=1}{\mathbf{i}_k} S_Q\big([s,\mathbf{i}_k]\big)^{\conv a_{s;\mathbf{i}_k}}\right) \twoheadrightarrow M \quad \text{and} \quad
\dct{l=1}{n} \left(\dct{u=\mathbf{j}_l}{n}
S_Q\big([\mathbf{j}_l,u]\big)^{\conv a_{\mathbf{j}_l;u}}\right)
\twoheadrightarrow M,$$ where $[s,\mathbf{i}_k],[\mathbf{j}_l,u] \in
\Phi_n^+$,
$(\mathbf{i}_1,\mathbf{i}_2,\cdots,\mathbf{i}_{n-1},\mathbf{i}_{n})$
and
$(\mathbf{j}_1,\mathbf{j}_2,\cdots,\mathbf{j}_{n-1},\mathbf{j}_{n})$
are re-indexing of $\{ 1,2,\cdots,n \}$ defined by the combinatorial
property we observed in this paper (Theorem \ref{thm: re-indexing}).
Here, $\dct{s=1}{\mathbf{i}_k} S_Q\big([s,\mathbf{i}_k]\big)^{\conv
a_{s;\mathbf{i}_k}}$ and $\dct{u=\mathbf{j}_l}{n}
S_Q\big([\mathbf{j}_l,u]\big)^{\conv a_{\mathbf{j}_l;u}}$ are
simple, and hence every simple $R_{A_n}$-module appears as a head of
convolution product of $n$-simple modules.

Lastly, for $A^{(2)}_{m}$, we will prove that the description for
$\mathscr{C}_{Q}^{(2)}$ in Definition \ref{def: Q module category 2}
can be re-expressed (Theorem \ref{Thm: m 2}) by using the
combinatorial properties and the Dorey's type morphisms studied in
\cite{Oh14}. The observations on $\mathscr{C}_{Q}^{(2)}$ will be one
of the main ingredients for investigating relations between
$\Rep(R_{A_n})$ and $\mathscr{C}^{(2)}_Q$ in \cite{KKKO14b}.

\medskip

We call a positive root $\beta=\sum_{k}n_k\alpha_k$ {\it
multiplicity free} if $n_k \le 1$ for all $1 \le k \le n$. Note that
all positive roots in $\Phi_n^+$ of finite type $A$ are multiplicity
free. However, it is not true for $\Phi_n^+$ of finite type $D$ and
$E$. Thus we can ask the following question naturally:
\medskip

\noindent \textbf{Conjecture} For any Dynkin quiver $Q$ of finite
type $A$, $D$ or $E$, every pair $(\alpha,\beta)$ of a positive root
$\gamma=\alpha+\beta \in \Phi_n^+$ is a minimal with respect to a
suitable total order compatible with $\prec_Q$ if and only if
$\gamma$ is multiplicity free.

\medskip

In the forthcoming paper \cite{Oh14D}, we will prove that the above
conjecture is true for $\Phi_n^+$ of finite type $D$. In other word,
there exists a pair $(\alpha,\beta)$ of a multiplicity non-free
positive root $\gamma=\alpha+\beta$ such that the pair can not be
minimal for {\it any} total order compatible with $\prec_Q$.
Furthermore, we will prove that the Dorey's rule always holds even
though $(\alpha,\beta)$ is not minimal; i.e., $S_{Q}(\beta) \circ
S_{Q}(\alpha) \twoheadrightarrow S_{Q}(\gamma)$ for every Dynkin
quiver $Q$ of finite type $D$, every positive root $\gamma$ and its
pair $(\alpha,\beta)$.

\medskip

The outline of this paper is as follows. In Section \ref{sec:
Combinatorics}, we first recall the definition of AR-quivers
$\Gamma_Q$ and their basic properties,
briefly review the various orders on $\Phi_n^+$, and  provide explicit descriptions for $\Gamma_Q$ of finite type $A$. 
In Section \ref{sec: SW-duality}, we recall the backgrounds and
theories of the generalized quantum affine Schur-Weyl duality
briefly. In the later sections, we investigate the categories
$\mathscr{C}_Q^{(i)}$ $(i=1,2)$ and $\Rep(R_{A_n})$ using the
generalized quantum affine Schur-Weyl duality and the explicit
descriptions of $\Gamma_Q$ in Section \ref{sec: Combinatorics}.

\bigskip

\noindent {\bf Acknowledgements.} The author would like to express
his gratitude to Professor Seok-Jin Kang, Professor Masaki
Kashiwara, Professor Kyungyong Lee and Myungho Kim for many fruitful
discussions. The authors would like to thank the anonymous reviewers
for their valuable comments and suggestions, also. The author
gratefully acknowledge the hospitality of RIMS (Kyoto) during his
visit in 2013 and 2014.

\section{Combinatorial characterization of AR-quivers of finite type $A$} \label{sec: Combinatorics}
In this section, we recall Gabriel's Theorem and various orders on
the set of positive roots. We also observe the combinatorial feature
of AR-quivers $\Gamma_Q$ of finite type $A$ and provides an explicit
combinatorial description of $\Gamma_Q$. We refer to
\cite{ARS,ASS,Gab80} for basic theories on quiver representations
and Auslander-Reiten theory.

\subsection{Gabriel's Theorem.} \label{subsec: Gabriel}
Let $\Delta_n$ be a rank $n$ {\it Dynkin diagram} of finite type. We
denote by $I_0=\{1,2,\ldots,n\}$ the set of indices,
$\Pi_n=\{\alpha_i \ | \ i \in I_0 \}$ the set of {\it simple roots},
$\Phi_n$ ($\Phi_n^+$, $\Phi_n^-$) the set of (positive, negative)
roots, $\mathsf{E}$ the vector space which $\Phi_n$ lies, and $( \
,\ )$ scalar product defined on $\mathsf{E}$ which are associated to
the Dynkin diagram $\Delta_n$.

Let $W_0$ be the Weyl group associated to $\Delta_n$, which is
generated by the set of simple reflections $(s_i)_{i \in I_0}$. We
denote by $w_0$ the unique longest element in $W_0$. It is
well-known that $w_0$ induces an involution $^*$ on $I_0$ given by
$w_0 \alpha_i= -\alpha_{i^*}$ (see \cite[PLATE I$\sim$IX]{Bour}).

The Dynkin quiver $Q$ is obtained by assigning orientation on the
edges of $\Delta_n$; i.e., $Q$ is a data $(Q_0,Q_1)$ where $Q_0$ is
a set of vertices indexed by $I_0$ and $Q_1$ is a set of arrows with
underlying graph $\Delta_n$. We say that a vertex $i$ is a {\it
source} (resp. {\it sink}) if and only if there are only exiting
arrows out of it (resp. entering arrows into it).

Let $\Mod \C Q$ be the category of finite dimensional modules over
the path algebra $\C Q$. An object $\mathtt{M}$ in this category is
defined by the following data:
\begin{enumerate}
\item[{\rm(a)}] To each $i \in Q_0$ is associated a finite dimensional $\C$-vector space $\mathtt{M}_i$.
\item[{\rm(b)}] To each arrow $i \overset{\mathsf{a}}{\to} j$ in $Q_1$ is associated to a linear map
$\varphi_{i \overset{\mathsf{a}}{\to} j}: \mathtt{M}_i \to
\mathtt{M}_j$.
\end{enumerate}

We define {\it the dimension vector} of $\mathtt{M} \in \Mod \C Q$
as
$$\Dim \ \mathtt{M} \seteq \sum_{i \in Q_0} (\dim \ \mathtt{M}_i) \alpha_i.$$

The simple objects in $\Mod \C Q$ are one dimensional vector spaces
$\mathtt{S}(i)$ $(i \in Q_0)$ which can be characterized by $\Dim \
\mathtt{S}(i)=\alpha_i$. We denote by $\Ind Q$ the set of
isomorphism classes $[\mathtt{M}]$ of indecomposable modules in
$\Mod \C Q$.

\begin{theorem} \label{Thm: Gabriel}
{\rm(}Gabriel's Theorem\rm{)} For a Dynkin quiver $Q$ of finite type
$A$, $D$ or $E$, the map $[\mathtt{M}] \mapsto \Dim \ \mathtt{M}$
gives a bijection from $\Ind Q$ to $\Phi^+_n$.
\end{theorem}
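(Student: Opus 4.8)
The plan is to prove Gabriel's Theorem, which asserts that $[\mathtt{M}] \mapsto \Dim\,\mathtt{M}$ is a bijection from $\Ind Q$ onto $\Phi_n^+$, by the classical Bernstein--Gelfand--Ponomarev argument via reflection functors. First I would recall that the Euler form $\langle \Dim\,\mathtt{M}, \Dim\,\mathtt{N}\rangle = \dim\Hom_{\C Q}(\mathtt{M},\mathtt{N}) - \dim\mathrm{Ext}^1_{\C Q}(\mathtt{M},\mathtt{N})$ (using that $\C Q$ is hereditary) has symmetrization equal to the Cartan--Killing form associated to $\Delta_n$; in particular, for an indecomposable $\mathtt{M}$ one computes $\langle \Dim\,\mathtt{M}, \Dim\,\mathtt{M}\rangle = \dim\mathrm{End}(\mathtt{M}) - \dim\mathrm{Ext}^1(\mathtt{M},\mathtt{M})$, and the goal is to show this equals $1$, i.e. that $\mathtt{M}$ is a brick with no self-extensions, so that $\Dim\,\mathtt{M}$ is a positive root.

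The key steps, in order: (1) Introduce the reflection functors $\Sigma_i^+ : \Mod\,\C Q \to \Mod\,\C s_i Q$ at a sink $i$ and $\Sigma_i^- : \Mod\,\C Q \to \Mod\,\C s_i Q$ at a source $i$, where $s_i Q$ reverses all arrows at $i$; establish that on dimension vectors they act by the simple reflection $s_i$ whenever $\mathtt{M}$ has no summand isomorphic to the simple $\mathtt{S}(i)$, and that they restrict to mutually inverse bijections between indecomposables of $\Mod\,\C Q$ not isomorphic to $\mathtt{S}(i)$ and indecomposables of $\Mod\,\C s_i Q$ not isomorphic to $\mathtt{S}(i)$. (2) Fix an admissible ordering of $Q_0$ adapted to $Q$ and let $c = s_{i_n}\cdots s_{i_1}$ be the corresponding Coxeter element; iterating the reflection functors along a reduced word for $w_0$ obtained from powers of $c$ shows that every indecomposable $\mathtt{M}$ is carried, after finitely many reflections, to some simple $\mathtt{S}(j)$, because applying $c$ enough times to any positive vector eventually leaves $\Phi_n^+$. (3) Since $\Dim\,\mathtt{S}(j) = \alpha_j$ is a simple root and each reflection functor acts on dimension vectors by an element of $W_0$, it follows that $\Dim\,\mathtt{M} \in W_0\Pi_n = \Phi_n^+$ (positivity being preserved since $\mathtt{M} \in \Mod\,\C Q$ has a genuine nonnegative dimension vector), giving that the map lands in $\Phi_n^+$ and is injective (an indecomposable is determined by the sequence of reflections needed to reach a simple, together with that simple, hence by its dimension vector). (4) For surjectivity, given $\beta \in \Phi_n^+$ write $\beta = w\alpha_j$ for a suitable $w$ expressible through the Coxeter element, and pull $\mathtt{S}(j)$ back through the corresponding inverse reflection functors to produce an indecomposable with dimension vector $\beta$.

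The main obstacle is step (2): controlling the combinatorics of the Coxeter element and showing that \emph{every} indecomposable, not merely the preprojective ones, is reached from a simple by reflection functors — equivalently that there are no "regular" indecomposables. This is precisely where finite (Dynkin) type is essential: one argues that $c$ has finite order on the root lattice and that $\{\,c^k\alpha_j : k \ge 0,\ j \in I_0\,\} \cap \Phi_n^+$ exhausts $\Phi_n^+$, which forces every indecomposable into a preprojective orbit. I would also need the technical lemma that $\mathrm{Ext}^1(\mathtt{M},\mathtt{M}) = 0$ and $\mathrm{End}(\mathtt{M}) = \C$ for indecomposable $\mathtt{M}$, which follows once $\Dim\,\mathtt{M}$ is known to be a root together with the reflection-functor reduction to a simple, for which these facts are transparent. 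The remaining verifications — exactness properties of $\Sigma_i^\pm$, their effect on dimension vectors, and the bookkeeping of admissible sequences — are routine and I would treat them briefly, citing \cite{ARS,ASS,Gab80}.
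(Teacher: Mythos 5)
The paper does not prove this statement at all: it is quoted as a classical result with a citation to \cite{Gab80} (and the surrounding text defers to \cite{ARS,ASS,Gab80} for the background theory), so there is no in-paper argument to compare against. Judged on its own, your outline is the standard Bernstein--Gelfand--Ponomarev proof via reflection functors and is essentially sound: the functors $\Sigma_i^{\pm}$ act by $s_i$ on dimension vectors away from $\mathtt{S}(i)$ and give inverse bijections on the remaining indecomposables; finiteness of the order of the Coxeter element (equivalently, that $1$ is not an eigenvalue of $c$ on the root lattice, so $\sum_{k=0}^{h-1}c^k v=0$ forces some $c^k v$ to fail nonnegativity) drives every indecomposable down to a simple; and injectivity/surjectivity follow as you describe. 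Two small points of precision: in step (2) the relevant statement is that $c^k(\Dim\,\mathtt{M})$ eventually leaves the \emph{positive cone} of the root lattice, not $\Phi_n^+$ --- at that stage in the argument you do not yet know $\Dim\,\mathtt{M}$ is a root, and the contradiction you need is with nonnegativity of an honest dimension vector; and in step (4) you should say explicitly that the intermediate vectors $s_{i_{k-1}}\cdots s_{i_1}\beta$ stay in $\Phi_n^+$ because the word is adapted to $Q$ (convexity of the induced order), which is what licenses applying the inverse reflection functors at every stage. Your opening paragraph on the Euler form and bricks belongs to the quadratic-form proof of Gabriel's theorem rather than the BGP route you actually follow, so it is harmless but dispensable. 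None of this is a gap in substance; the proposal is a correct proof sketch of a result the paper simply imports.
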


Thus the set $\Ind Q$ consists of the indecomposable modules
$\mathtt{M}(\beta)$ ($\beta \in \Phi_n^+$) such that $\Dim\big(
\mathtt{M}(\beta)\big)=\beta$ when a Dynkin quiver $Q$ is of finite
type $A$, $D$ or $E$.

\subsection{Auslander-Reiten quiver.} \label{subsec: AR-quiver}
In this subsection, we fix a Dynkin quiver $Q$ whose underlying
graph is of finite type $A$, $D$ or $E$. We denote by $s_iQ$ the
quiver obtained from $Q$ by reversing the orientation of each arrow
that ends at $i$ or starts at $i$. A map $\xi: I_0 \to \Z$ is called
a {\it height function on $Q$} if $\xi_j=\xi_i-1$ for $i \to j \in
Q_1$. Connectedness of $Q$ implies that any two height functions on
$Q$ differ by a constant.

Set
$$ \Z Q \seteq \{ (i,p) \in \{ 1,2,\ldots,n\} \times \Z \ | \ p -\xi_i \in 2\Z \}.$$
We view $\Z Q$ as the quiver with arrows
$$ (i,p) \to (j,p+1), \ (j,q) \to (i,q+1) \ \text{ for which $i$ and $j$ are adjacent in $\Delta_n$}$$
and call it the {\it repetition quiver} of $Q$. Note that $\Z Q$
does not depend on the orientation of the quiver $Q$. It is
well-known that the quiver $\Z Q$ itself has an isomorphism with the
AR-quiver of $D^b(\C Q)$-${\rm mod}$, the bounded derived category
of $\C Q$-${\rm mod}$ (\cite{Hap}). In our convention, the injective
module $\mathtt{I}(i)$ is located on the vertex $(i,\xi_i)$ of $\Z
Q$.

\medskip

For a reduced expression $\widetilde{w}=s_{i_1}s_{i_2}\cdots
s_{i_l}$ of an element $w$ in $W_0$, it is called {\it adapted to
$Q$} if
$$\text{$i_k$ is a source of $s_{i_{k-1}}\cdots s_{i_{2}}s_{i_{1}}Q$ for all $1 \le k \le l$.}$$
It is well-known that there is a {\it unique Coxeter element} $\tau$
whose reduced representations are adapted to $Q$.

Set $\widehat{\Phi}_n \seteq \Phi^+_n \times \Z$. For $i \in I_0$,
we define
\begin{align} \label{eq: dim P I}
\gamma_i = \sum_{j \in B(i)} \alpha_j \quad  \text{and}
\quad\theta_i = \sum_{j \in C(i)} \alpha_j \qquad \text{ where }
\end{align}
\begin{itemize}
\item $B(i)$ is the set of vertices $j$ in $Q_0$
such that there exists a path from $j$ to $i$,
\item $C(i)$ is the set of vertices $j$  in $Q_0$
such that there exists a path from $i$ to $j$.
\end{itemize}

The bijection $\phi : \Z Q \rightarrow \widehat{\Phi}_n$ defined by
$\mathtt{M}(\beta)[m] \mapsto (\beta,m)$ is described
combinatorially as follows (\cite[\S 2.2]{HL11}):
\begin{eqnarray} &&
  \parbox{80ex}{
\begin{enumerate}
\item[({\rm a})] $\phi(i,\xi_i)=(\gamma_i,0)$,
\item[({\rm b})] for a given $\beta \in \Phi_n^+$ with $\phi(i,p)=(\beta,m)$,
\begin{itemize}
 \item if $\tau(\beta) \in \Phi^+_n$, we set $\phi(i,p-2)=(\tau(\beta), m)$,
      \item if $\tau(\beta) \in \Phi^-_n$, we set $\phi(i,p-2)=(-\tau(\beta), m-1)$,
      \item if $\tau^{-1}(\beta) \in \Phi^+_n$, we set $\phi(i,p+2)=(\tau^{-1}(\beta), m)$,
      \item if $\tau^{-1}(\beta) \in \Phi^-_n$, we set $\phi(i,p+2)=(-\tau^{-1}(\beta), m+1)$.
 \end{itemize}
\end{enumerate}} \label{eq:phi}
\end{eqnarray}

The {\it Auslander-Reiten quiver $\Gamma_Q$} is the full subquiver
of $\Z Q$ whose set of vertices is $\phi^{-1}(\Phi^+_n \times \{ 0
\} )$. Here the vertex $\phi^{-1}(\beta,0)$ corresponds to the
indecomposable module $\mathtt{M}(\beta)$ in $\Ind Q$ and the arrow
$\phi^{-1}(\beta,0) \to \phi^{-1}(\beta',0)$  corresponds to an {\it
irreducible} morphism from $\mathtt{M}(\beta)$ to
$\mathtt{M}(\beta')$.

In particular, the injective envelope $\mathtt{I}(i)$ of
$\mathtt{S}(i)$ corresponds to the vertex $\phi^{-1}(\gamma_i,0)$
and the projective cover $\mathtt{P}(i)$ of $\mathtt{S}(i)$
corresponds to the vertex $\phi^{-1}(\theta_{i^*},0)$. It is
well-known that
\begin{equation} \label{Def: m_i}
\theta_i = \tau^{m_{i^*}}(\gamma_{i^*}) \quad \text{ where }   m_i =
\max \{ k \ge 0 \ | \ \tau^{k}(\gamma_i) \in \Phi^+_n \}.
\end{equation}

For $\beta \in \Phi^+_n$ with $\tau(\beta) \in \Phi^+_n$, we set
$\tau \mathtt{M}(\beta)\seteq \mathtt{M}(\tau(\beta))$. In the
Auslander-Reiten quiver $\Gamma_Q$, this map $\tau$ is called {\it
the Auslander-Reiten translation}.

{\rm (i)} The dimension vector is an {\it additive function} on
$\Gamma_Q$ with respect to the map $\tau$; that is, for each
vertices $\mathtt{X} \in \Gamma_Q$ such that $\mathtt{X} =
\phi^{-1}(\beta,0)$ and $\tau(\beta) \in \Phi^+_n$,
\begin{align} \label{eq: additive function}
\Dim \ \mathtt{X} + \Dim \ \tau \mathtt{X} = \sum_{\mathtt{Z} \in
\mathtt{X}^{-}}\Dim \ \mathtt{Z}.
\end{align}
Here $\mathtt{X}^-$ is the set of vertices $\mathtt{Z}$ in
$\Gamma_Q$ such that there exists an arrow from $\mathtt{Z}$ to
$\mathtt{X}$.

{\rm (ii)} It is well-known that, for $\beta \in \Phi_n^+$,
\begin{align} \label{eq: negative image}
\tau(\beta) \in \Phi^-_n \quad \text{ if and only if } \quad  \beta
= \theta_i \quad \text{ for some } i \in I_0.
\end{align}

A subquiver $\Gamma'$ of $\Gamma_Q$ is said to be {\it sectional}
(\cite{Br}) if whenever $x \to y$ and $y \to z$ are arrows in
$\Gamma'$, we have $x \ne \tau(z)$.

The following description is one of the characterization of
$\Gamma_Q$ inside $\Z Q$:
\begin{align} \label{eq: known characterization}
 \phi^{-1}(\Phi^+_n \times \{ 0 \} ) = \{ (i,p) \in \Z Q \ | \ \xi_i-2m_i \le p \le \xi_i \}.
\end{align}

Recall that there is the {\it Nakayama permutation} $\nu$ on $\Z Q$
which is given by
$$ \nu(i,p) = (i^*,p+\mathtt{h}_n-2) $$
where $\mathtt{h}_n$ is the Coxeter number associated to $\Delta_n$.
Since, for all $i \in I_0$, the Nakayama permutation $\nu$ sends
vertices corresponding to $\mathtt{P}(i)$ to vertices corresponding
to $\mathtt{I}(i^*)$, we have
\begin{align} \label{eq: Nakayama equation}
\xi_{i^*}-2m_{i^*}= \xi_i - \mathtt{h}_n +2.
\end{align}

\subsection{Orders on $\Phi_n^+$.} \label{subsec: order}
 In this section, we recall various orders on $\Phi_n^+$.
\medskip

A {\it convex order} on $\Phi_n^+$ is a partial order $\preceq$
satisfying the following condition (\cite{BFZ96}):
$$\text{For all $\alpha,\beta$ and $\gamma=\alpha+\beta \in \Phi_n^+$, either $\alpha \prec \gamma \prec \beta$ or $\beta \prec \gamma   \prec \alpha$.}$$

Note that a {\it coarsest} convex partial order on $\Phi_n^+$
determines the {\it commutation class} of $w_0$; that is, the
equivalence class on reduced expressions of $w_0$ which is given by
$$s_{i_1}s_{i_2}\cdots s_{i_{\mathsf{N}}} \sim s_{j_1}s_{j_2}\cdots s_{j_{\mathsf{N}}} \qquad (\mathsf{N} \seteq |\Phi_n^+|)$$
if and only if $s_{j_1}s_{j_2}\cdots s_{j_{\mathsf{N}}}$ is obtained
from $s_{i_1}s_{i_2}\cdots s_{i_{\mathsf{N}}}$ by replacing
$s_{a}s_{b}$ by $s_{b}s_{a}$ for $a$ and $b$ not linked in
$\Delta_n$ (see \cite{B99}).

A Dynkin quiver $Q$ defines a convex partial order $\preceq_Q$ on
$\Phi_n^+$ in the following way \cite{R96}:
\begin{equation}\label{eq: paths Q}
\alpha \preceq_Q \beta \text{ if and only if } \text{there is a path
from $\beta$ to $\alpha$ in $\Gamma_Q$}.
\end{equation}
Thus, for a pair $(\alpha,\beta)$ with
$\gamma=\alpha+\beta\in\Phi_n^+$ and $\alpha \prec_Q \beta$,
\begin{equation}\label{eq: paths pair Q}
\text{ there exist paths from $\beta$ to $\gamma$ and from $\gamma$
to $\alpha$ in $\Gamma_Q$}.
\end{equation}

On the other hand, it is well-known that a reduced expression
$\redez=s_{i_1}s_{i_2} \cdots s_{i_N}$ of $w_0$ induces a {\it
convex total order} $\le_{\redez}$ on $\Phi^+_n$ by (\cite{Bour})
\begin{align} \label{eq: total}
\beta_z \seteq  s_{i_1}s_{i_2} \cdots s_{i_{z-1}}\alpha_{i_z} \text{
and $\beta_x <_{\redez} \beta_y$ if and only if $x < y$}.
\end{align}
Note that any convex total order is obtained in this way by a
reduced expression $\redez$ of $w_0$ (\cite{Papi94}).

The following theorem provides a way of obtaining all reduced
expressions of $w_0$ adapted to $Q$ and hence convex total orders
compatible with the convex partial order $\preceq_Q$:
\begin{align} \label{eq: compati}
\alpha \prec_Q \beta \text{ implies } \alpha <_{\widetilde{w}_0}
\beta \text{ for any } \widetilde{w}_0 \text{ adapted to } Q.
\end{align}

\begin{theorem} \cite[Theorem 2.17]{B99} \label{Thm: Berdard reading}
Let $Q$ be a quiver of finite type $A$, $D$ or $E$. Then any reduced
expression of $w_0$ adapted to $Q$ can be obtained in the following
way: We read $\Gamma_Q$ sequentially, in a manner compatible with
arrows; that is, if there exists an arrow $\beta \to \alpha$,
$\alpha$ appears before $\beta$. Replacing a vertex $\beta$ with $i$
for $\phi^{-1}(\beta)=(i,p)$, we have a sequence
$(i_1,i_2,\ldots,i_{\mathsf{N}})$ giving a reduced expression of
$w_0$ adapted to $Q$,
$$\redez = s_{i_1}s_{i_2}\cdots s_{i_{\mathsf{N}}}.$$
\end{theorem}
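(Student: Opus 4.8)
The plan is to establish, by induction on $\ell(w)$, the more flexible statement that for \emph{every} $w\in W_0$ the reduced expressions of $w$ adapted to $Q$ are exactly the words obtained by choosing a down-set $S$ of the poset $(\Phi_n^+,\preceq_Q)$ with $|S|=\ell(w)$, listing its elements in an order compatible with the arrows of $\Gamma_Q$, and replacing each root $\beta$ by the first coordinate of $\phi^{-1}(\beta,0)$; moreover the roots $\beta_z=s_{i_1}\cdots s_{i_{z-1}}\alpha_{i_z}$ attached to such a word run exactly through $S$. Theorem~\ref{Thm: Berdard reading} is then the case $w=w_0$, where the unique down-set of size $\mathsf{N}=|\Phi_n^+|$ is $\Phi_n^+$ itself. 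Two preliminaries set things up. First, by \eqref{eq: paths Q} a listing of a set of vertices of $\Gamma_Q$ is compatible with arrows exactly when it is a linear extension of $\preceq_Q$, so its first element is $\preceq_Q$-minimal. Second, the $\preceq_Q$-minimal positive roots are precisely the $\gamma_i=\alpha_i$ with $i$ a source of $Q$: by standard Auslander--Reiten theory these label the indecomposable injectives $\mathtt{I}(i)=\mathtt{S}(i)$, which are the only indecomposables admitting no irreducible morphism out of them, and by \eqref{eq:phi}(a) together with \eqref{eq: known characterization} they occupy the sinks $\phi^{-1}(\gamma_i,0)=(i,\xi_i)$ of $\Gamma_Q$ (no arrow of $\Z Q$ leaving $(i,\xi_i)$ stays inside $\Gamma_Q$, since every neighbour $j$ of the source $i$ satisfies $\xi_j=\xi_i-1$).

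The inductive step is reflection at a source. If $\widetilde{w}=s_{i_1}s_{i_2}\cdots s_{i_l}$ is a reduced expression of $w$ adapted to $Q$, then adaptedness forces $i_1$ to be a source of $Q$, so $\beta_1=\alpha_{i_1}=\gamma_{i_1}$ is $\preceq_Q$-minimal and $s_{i_2}\cdots s_{i_l}$ is a reduced expression of $s_{i_1}w$ of length $\ell(w)-1$ adapted to $s_{i_1}Q$, with attached roots $\beta'_z$ obeying $\beta_z=s_{i_1}\beta'_z$ for $z\ge 2$. The key input, which I will call the \emph{reflection lemma}, is that $\beta\mapsto s_{i_1}\beta$ induces an isomorphism from the full subquiver of $\Gamma_{s_{i_1}Q}$ on the vertex set $\Phi_n^+\setminus\{\alpha_{i_1}\}$ onto the full subquiver of $\Gamma_Q$ on the same set, compatibly with the assignment of the first coordinate of $\phi^{-1}(-,0)$ to each vertex. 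Granting this, the inductive hypothesis for $(s_{i_1}Q,s_{i_1}w)$ shows that $\{\beta_z:z\ge 2\}=s_{i_1}(\{\beta'_z\})$ is a down-set of $\preceq_Q$ inside $\Phi_n^+\setminus\{\alpha_{i_1}\}$ with an arrow-compatible, label-respecting listing, and prepending the $\preceq_Q$-minimal root $\beta_1=\alpha_{i_1}$ preserves all of this. Conversely, from a down-set $S$ with an arrow-compatible listing $\beta_1,\dots,\beta_l$ one reads off that $\beta_1$ is $\preceq_Q$-minimal, hence $\beta_1=\alpha_{i_1}$ for a source $i_1$ of $Q$; the reflection lemma carries $S\setminus\{\alpha_{i_1}\}$ to a down-set $s_{i_1}(S\setminus\{\alpha_{i_1}\})$ of $\preceq_{s_{i_1}Q}$ with an induced listing, which by induction yields a reduced expression $s_{i_2}\cdots s_{i_l}$ adapted to $s_{i_1}Q$ whose attached roots form that set; prepending $s_{i_1}$ stays reduced, since $\alpha_{i_1}$ is not among those roots (the set $s_{i_1}(S\setminus\{\alpha_{i_1}\})$ cannot contain $\alpha_{i_1}$ as $s_{i_1}\alpha_{i_1}=-\alpha_{i_1}\notin\Phi_n^+$), so $\widetilde{w}=s_{i_1}\cdots s_{i_l}$ is reduced, adapted to $Q$, and has attached root set exactly $S$. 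The base case $\ell(w)=0$ is vacuous.

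The main obstacle is the reflection lemma. One route is representation-theoretic: the Bernstein--Gelfand--Ponomarev reflection functor $\Sigma^{+}_{i_1}$ at the source $i_1$ restricts to an equivalence from the full subcategory of $\C Q$-modules with no summand isomorphic to $\mathtt{S}(i_1)$ onto the analogous subcategory over $\C(s_{i_1}Q)$, sends $\mathtt{M}(\beta)$ to $\mathtt{M}(s_{i_1}\beta)$ for $\beta\ne\alpha_{i_1}$ (compatibly with Theorem~\ref{Thm: Gabriel}), and commutes with Auslander--Reiten translations and irreducible morphisms, hence induces the required isomorphism of full subquivers; and \eqref{Def: m_i} places $\mathtt{S}(i_1)$ at the top of its column in $\Gamma_Q$ and at the bottom of its column in $\Gamma_{s_{i_1}Q}$, accounting for the single deleted vertex on each side. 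Alternatively the lemma can be checked combinatorially from \eqref{eq:phi}: the Coxeter element adapted to $s_{i_1}Q$ is the conjugate $s_{i_1}\tau s_{i_1}$, so the recursion \eqref{eq:phi} for $s_{i_1}Q$ is intertwined by $s_{i_1}$ with the one for $Q$ off the root $\alpha_{i_1}$, and one tracks the behaviour of the height function, the integers $m_j$, and the interval \eqref{eq: known characterization} under $Q\mapsto s_{i_1}Q$. Throughout I use without comment the standard facts that the roots attached to a reduced word form its inversion set and that prepending $s_i$ to a reduced word lengthens it precisely when $\alpha_i$ is not one of those roots.
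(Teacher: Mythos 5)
The paper does not prove this statement at all: it is quoted from B\'edard \cite[Theorem 2.17]{B99} and used as a black box, so there is no internal proof to compare against. Your argument is, in substance, the standard (and essentially B\'edard's) proof: induct on length, peel off the first letter $s_{i_1}$ at a source $i_1$ of $Q$, and relate $\Gamma_Q$ to $\Gamma_{s_{i_1}Q}$ by the BGP reflection at $i_1$. The combinatorial bookkeeping is sound: $\preceq_Q$-minimal roots are exactly the sinks of $\Gamma_Q$, which are the simple injectives $\mathtt{S}(i)=\mathtt{I}(i)$ for $i$ a source; since $\alpha_{i_1}$ is a sink of $\Gamma_Q$ and a source of $\Gamma_{s_{i_1}Q}$, no path between other vertices passes through it, so restricting the partial orders to $\Phi_n^+\setminus\{\alpha_{i_1}\}$ behaves as you use it to; and the inversion-set facts you invoke are standard. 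Two caveats. First, your induction hypothesis is misstated for general $w$: a down-set $S$ of size $\ell(w)$ produces a reduced adapted expression of the element whose inversion set is $S$, not of an arbitrary $w$ of that length; the correspondence $S=N(w)$ (which your ``moreover'' clause supplies) must be built into the statement. This is harmless for $w_0$, where $\Phi_n^+$ is the unique down-set of size $\mathsf{N}$. Second, essentially all of the real content is concentrated in your ``reflection lemma,'' which you assert with two sketched justifications rather than prove: that $\beta\mapsto s_{i_1}\beta$ identifies $\Gamma_{s_{i_1}Q}$ minus $\alpha_{i_1}$ with $\Gamma_Q$ minus $\alpha_{i_1}$ as quivers, preserving the first coordinate of $\phi^{-1}(-,0)$. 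This is a correct and standard fact (BGP reflection functors together with the description of how the slice of $\Z\Delta$ occupied by $\mathrm{mod}\,\C Q$ changes under $Q\mapsto s_{i_1}Q$), but as written it is cited rather than established, so your proof is a correct reduction of the theorem to that lemma rather than a complete self-contained argument. If you intend the proof to be complete, that lemma needs to be either proved or explicitly referenced to the literature (e.g.\ \cite[Chapter VII]{ASS} or \cite{B99} itself).
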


\begin{remark} \label{Rmk: total orders} In this remark, we suggest two canonical readings of $\Gamma_Q$ which follow the rule in Theorem \ref{Thm: Berdard reading}.
Thus we can obtain two convex total orders on $\Phi_n^+$ compatible
with $\preceq_Q$.
\begin{enumerate}
\item[({\rm A})]  We denote by $<^{L}_Q$ the convex total order induced from the following reading:
$$ \text{$(i,p)$ appears before $(i',p') \iff  \begin{cases} d(1,i)-p < d(1,i')-p' \quad\text{ or } \\ d(1,i)-p = d(1,i')-p'
\quad\text{ and }\quad i > i'. \end{cases}$}$$
\item[({\rm B})] We denote by $<^{U}_Q$ the convex total order induced from the following reading:
$$ \text{$(i,p)$ appears before $(i',p') \iff  \begin{cases} d(1,i)+p > d(1,i')+p' \quad\text{ or } \\ d(1,i)+p = d(1,i')+p'
\quad\text{ and }\quad i < i'.\end{cases}$}$$
\end{enumerate}
Here, $d(i,j)$ denotes the distance between $i$ and $j$ in
$\Delta_n$.
\end{remark}

\begin{definition} \label{Def: minimal pair}\cite[\S 2.1]{Mc12}.
For a total order $<$ on $\Phi^+_n$, a pair $(\alpha,\beta)$ with
$\alpha<\beta$ is called a {\it minimal pair of $\gamma \in
\Phi_n^+$ with respect to the total order $<$} if
\begin{itemize}
\item $\gamma=\alpha+\beta$,
\item there exist {\it no} pair $(\alpha',\beta')$ such that $\gamma=\alpha'+\beta' \text{ and }\alpha<\alpha'<\gamma<\beta'<\beta.$
\end{itemize}
\end{definition}

\subsection{Characterization of Auslander-Reiten quiver of type $A$} In this subsection, we fix the $\Delta_n$ as the Dynkin diagram of finite type $A$.
Citing the referee, the combinatorial properties in this subsection
is well-known to the experts. More precisely, using the {\it linear
quiver} $\overset{\gets}{Q}$ (see \eqref{def: linear quiver}), the
{\it reflection functor} on $D^b(\C Q)$-${\rm mod}$ and the tilting
theorem (\cite[Chapter VII]{ASS}), one can observe the descriptions
in this subsection. However, we have a difficulty for finding the
explicit statement of Theorem \ref{Thm: sectional} in standard
textbooks, we shall deal with and derive it from Lemma \ref{Lem: Key
Lem} and Remark \ref{rem: key} below.

Since $\Delta_n$ is the Dynkin diagram of finite type $A$, we have
the Coxeter number $\mathtt{h}_n=n+1$ and the involution induced by
$w_0 \in W_0$ is given by $i \mapsto i^*= n+1-i$. We say that a
vertex $i \in Q$ is a right (resp. left) intermediate if
$$ \xymatrix@R=3ex{ *{ \ }<3pt> \ar@{..}[r]  &*{\bullet}<3pt>
\ar@{<-}[r]_<{i-1}  &*{\bullet}<3pt> \ar@{<-}[r]_<{i}
&*{\bullet}<3pt> \ar@{..}[r]_<{i+1}  & *{ \ }<3pt> }\quad \text{
(resp. } \xymatrix@R=3ex{ *{ \ }<3pt> \ar@{..}[r]  &*{\bullet}<3pt>
\ar@{->}[r]_<{i-1}  &*{\bullet}<3pt> \ar@{->}[r]_<{i}
&*{\bullet}<3pt> \ar@{..}[r]_<{i+1}  & *{ \ }<3pt> } ) \quad
\text{in $Q$}.$$

Recall that, for every $1 \le a \le b \le n$, $\beta = \sum_{a \le k
\le b} \alpha_k$ is a positive root in $\Phi^+_n$ and every positive
root in $\Phi^+_n$ is of the form. Thus we sometimes identify $\beta
\in \Phi^+_n$ (and hence vertex in $\Gamma_Q$) with a segment
$[a,b]$. For $\beta=[a,b]$, we say $a$ the {\it first component} of
$\beta$ and $b$ the {\it second component} of $\beta$. If $\beta$ is
simple, we write $\beta$ as $[a]$.

\begin{example} \label{ex: example 1}
Consider the quiver $\xymatrix@R=3ex{ *{ \bullet }<3pt>
\ar@{->}[r]_<{1}  &*{\bullet}<3pt> \ar@{<-}[r]_<{2}
&*{\bullet}<3pt> \ar@{->}[r]_<{3} &*{\bullet}<3pt> \ar@{->}[r]_<{4}
& *{\bullet}<3pt> \ar@{-}[l]^<{\ \ 5} }$ of type $A_5$. We set
$\xi_1=0$. Then the $\Gamma_Q$ inside $\Z Q$ can be obtained by
using the Coxeter element $\tau$ \eqref{eq:phi} or the additive
property of the dimension vectors \eqref{eq: additive function} as
follows:
$$ \scalebox{0.9}{\xymatrix@R=0.5ex{
(i,p) & -6 & -5 & -4 & -3 & -2 & -1 & 0 \\
1&[5] \ar@{->}[dr] && [4]\ar@{->}[dr] && [2,3]\ar@{->}[dr] && [1] \\
2&& [4,5]\ar@{->}[dr]\ar@{->}[ur] && [2,4]\ar@{->}[dr]\ar@{->}[ur] && [1,3]\ar@{->}[dr]\ar@{->}[ur] \\
3&&& [2,5]\ar@{->}[dr]\ar@{->}[ur] && [1,4]\ar@{->}[dr]\ar@{->}[ur] && [3] \\
4&& [2] \ar@{->}[dr]\ar@{->}[ur], && [1,5]\ar@{->}[dr]\ar@{->}[ur] && [3,4]\ar@{->}[ur] \\
5&&& [1,2]\ar@{->}[ur] && [3,5]\ar@{->}[ur] }}
$$
\end{example}

\begin{definition} 
\begin{enumerate}
\item[({\rm a})] A path $\rho$ in $\Gamma_Q$ is {\it $S$-sectional} (resp. {\it $N$-sectional}) if the path is a concatenation of arrows whose forms are $(i,p) \to (i+1,p+1)$
(resp. $(i,p) \to (i-1,p+1)$).
\item[({\rm b})] A positive root $\beta \in \Phi^+_n$ is {\it contained in the path $\rho$} in $\Gamma_Q$ if
$\beta$ is an end or a start of some arrow in the path $\rho$.
\item[({\rm c})] An $S$-sectional (resp. $N$-sectional) path $\rho$ is {\it maximal} if there is no longer $S$-sectional (resp. $N$-sectional) path
containing all positive roots in $\rho$.
\item[({\rm d})] For $\beta \in \Phi^+_n$ with $\phi^{-1}(\beta,0) \hspace{-0.3ex} = \hspace{-0.3ex}(i,p)$, we denote by
$\phi^{-1}_1(\beta)\hspace{-0.3ex}=\hspace{-0.3ex}i$ and
$\phi^{-1}_2(\beta)\hspace{-0.3ex}=\hspace{-0.3ex}p$.
\end{enumerate}
\end{definition}

The main goal of this subsection is to provide an explicit
description of an AR-quiver of finite type $A$ by using the
following lemma and basic properties for an AR-quiver in \S
\ref{subsec: AR-quiver}.

\begin{lemma}\cite[Lemma 2.11]{B99},\cite[\S 3.2]{KKK13b}  \label{Lem: Key Lem} 
\begin{enumerate}
\item[({\rm a})]
For $k \in I_0$,
$$\phi^{-1}(\alpha_k,0) = \begin{cases} (k,\xi_k) & \text{ if $k$ is a source}, \\
(n+1-k, \xi_k-n+1)& \text{ if $k$ is a sink}, \\
(1, \xi_k-k+1) & \text{ if $k$ is a left intermediate, } \\
(n, \xi_k-n+k) & \text{ if $k$ is a right intermediate. }
\end{cases}$$
\item[({\rm b})] If $\beta \to \alpha$ is an arrow in $\Gamma_Q$ for $\alpha,\beta \in \Phi^+_n$, then $(\beta,\alpha)=1$.
\item[({\rm c})] We have
\begin{align} \label{eq: range}
(i,\xi_j-d(i,j)),\ (i,\xi_j-2m_j+d(i,j))\in\Gamma_Q \quad \text{for
any $i,j\in I_0$.}
\end{align}
\end{enumerate}
\end{lemma}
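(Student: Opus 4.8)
The plan is to prove the three statements by invoking the combinatorial description \eqref{eq:phi} of the bijection $\phi$ together with the basic structure of $\Z Q$ recalled in \S\ref{subsec: AR-quiver}. For part (a), I would argue by the four cases on the position of the vertex $k$ in $Q$. If $k$ is a source, then the injective envelope $\mathtt{I}(k)$ equals $\mathtt{S}(k)$, so $\gamma_k = \alpha_k$ (in the notation of \eqref{eq: dim P I}, $B(k)=\{k\}$), and $\phi(k,\xi_k)=(\gamma_k,0)=(\alpha_k,0)$ by \eqref{eq:phi}(a); hence $\phi^{-1}(\alpha_k,0)=(k,\xi_k)$. If $k$ is a sink, then $\mathtt{P}(k)=\mathtt{S}(k)$, so $\theta_k=\alpha_k$ and by the projective-cover description $\phi^{-1}(\theta_{k^*},0)$ is the vertex carrying $\mathtt{P}(k^*)$; combining $\alpha_k = \theta_k$ with \eqref{Def: m_i} and the Nakayama equation \eqref{eq: Nakayama equation} pins the horizontal coordinate to $\xi_k-n+1$ (using $\mathtt{h}_n=n+1$ and $k^*=n+1-k$). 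The two intermediate cases are hybrids: for a left intermediate $k$, all arrows at vertices $1,\dots,k-1$ point towards $k$ and those at $k+1,\dots,n$ point away, so one tracks $\alpha_k$ under the Coxeter element $\tau$ until it first becomes negative, or equivalently locates $\mathtt{S}(k)$ relative to the boundary vertex $1$ of $\Delta_n$; the distance from the boundary contributes the shift $-k+1$. The right intermediate case is symmetric, with the shift $-n+k$ measured from the vertex $n$.

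For part (b), I would use Lemma \ref{Lem: Key Lem}(b) as a direct consequence of Auslander-Reiten theory: an arrow $\beta\to\alpha$ in $\Gamma_Q$ corresponds to an irreducible morphism $\mathtt{M}(\beta)\to\mathtt{M}(\alpha)$, and for Dynkin quivers the existence of such a morphism forces the corresponding positive roots to satisfy $(\beta,\alpha)=1$; alternatively, and more self-containedly, the arrows of $\Z Q$ only connect vertices $(i,p)$ and $(j,p\pm 1)$ with $i,j$ adjacent in $\Delta_n$, and one checks that for two positive roots sitting at horizontally adjacent, vertically adjacent positions in $\Gamma_Q$ the pairing is $1$ directly from the segment description $[a,b]$ of roots in type $A$.

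For part (c), I would combine part (a) with the characterization \eqref{eq: known characterization}, which says $\phi^{-1}(\Phi_n^+\times\{0\})=\{(i,p)\in\Z Q\mid \xi_i-2m_i\le p\le \xi_i\}$, together with the observation that for any $j$ the vertex $(j,\xi_j)$ lies in $\Gamma_Q$ (it carries $\mathtt{I}(j)$) and $(j,\xi_j-2m_j)$ lies in $\Gamma_Q$ (it carries $\mathtt{P}(\,\cdot\,)$), and then slide horizontally: moving from column $j$ to column $i$ changes the available range of $p$ by at most $d(i,j)$ on each side because the boundary of $\Gamma_Q$ inside $\Z Q$ is ``sectional'' — its upper and lower rims are $N$- and $S$-sectional paths. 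Concretely, since $(j,\xi_j)$ and $(j,\xi_j-2m_j)$ are in $\Gamma_Q$ and the top (resp. bottom) boundary of $\Gamma_Q$ descends (resp. ascends) by one unit in $p$ for each step in $\Delta_n$, the points $(i,\xi_j-d(i,j))$ and $(i,\xi_j-2m_j+d(i,j))$ still satisfy $\xi_i-2m_i\le p\le\xi_i$ and hence lie in $\Gamma_Q$.

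The main obstacle I anticipate is part (a): keeping the bookkeeping of the height function $\xi$, the involution $i\mapsto i^*=n+1-i$, and the Coxeter-number shift $\mathtt{h}_n=n+1$ consistent across all four cases, especially in verifying that the intermediate-vertex formulas $(1,\xi_k-k+1)$ and $(n,\xi_k-n+k)$ really come out of iterating $\tau$ the correct number of times. The cleanest route is probably to deduce (a) directly from \cite[Lemma 2.11]{B99}, where the positions of simple roots in $\Gamma_Q$ are computed, and from \cite[\S 3.2]{KKK13b}, rather than re-deriving it; the remaining content of (b) and (c) is then a short argument from \eqref{eq: known characterization} and the adjacency structure of $\Z Q$.
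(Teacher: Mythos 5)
Your primary route is the paper's: parts (b) and (c) are imported verbatim from \cite[Lemma 2.11]{B99} and \cite[Lemma 3.2.2]{KKK13b}, and part (a) is settled case by case, the source and sink cases being immediate from the positions of $\mathtt{I}(k)$ and $\mathtt{P}(k^*)$ (exactly your $\gamma_k=\alpha_k$, $\theta_k=\alpha_k$ plus Nakayama argument), while the two intermediate cases are read off from the formula $\phi^{-1}(\alpha_k,0)=(i,\xi_k-d(i,k))$ of \cite[Lemma 3.2.3]{KKK13b} with the extremal vertex $i$ taken to be $1$ or $n$. So the ``cleanest route'' you recommend at the end is precisely what the paper does, and it is correct.

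Two cautions about the self-contained fallbacks you sketch, in case you intended them as replacements for the citations. For (b), the ``direct check from the segment description'' is circular: knowing which segments $[a,b]$ occupy adjacent positions of $\Gamma_Q$ is exactly the content of Theorem \ref{Thm: sectional}, and its proof (via Proposition \ref{Prop: same component}) invokes Lemma \ref{Lem: Key Lem}(b) to exclude the configurations with $(N_1,N_2)=0$; the non-circular justification is the AR-theoretic one, i.e.\ the cited result. For (c), the upper bound $\xi_j-d(i,j)\le\xi_i$ does follow from the $1$-Lipschitz property of $\xi$, but the lower bound $\xi_j-d(i,j)\ge\xi_i-2m_i$ does \emph{not} follow from the Lipschitz property of the lower rim alone: that only yields $\xi_i-2m_i\le\xi_j-2m_j+d(i,j)$, which suffices only when $m_j\ge d(i,j)$. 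One genuinely needs the Nakayama relation \eqref{eq: Nakayama equation}, which rewrites the lower rim as $\xi_i-2m_i=\xi_{i^*}-n+1$, combined with the type-$A$ inequality $d(i,j)+d(i^*,j)\le n-1$; the two bounds for the second point $(i,\xi_j-2m_j+d(i,j))$ are dual and need the same input. As stated, the ``slide horizontally'' step is a gap, which is presumably why the paper simply cites \cite{KKK13b} here.
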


\begin{proof}
The second and third assertions are identical with \cite[Lemma
2.11]{B99} and \cite[Lemma 3.2.2]{KKK13b}, respectively. For the
first assertion, we can use the
 \cite[Lemma 3.2.3]{KKK13b} in the following way: {\rm (i)} If $k$ is a left intermediate, then take the extremal vertex $i$ as $1$ in $I_0$. Then
the formula
\begin{equation} \label{eq: position simple}
\phi^{-1}(\alpha_k,0)=(i,\xi_k-d(i,k))
\end{equation}
in \cite[page 17]{KKK13b} tells that
$\phi^{-1}(\alpha_k,0)=(1,\xi_k-k+1)$. {\rm (ii)} If $k$ is a right
intermediate, then take the extremal vertex $i$ as $n$ in $I_0$.
Then \eqref{eq: position simple} tells that
$\phi^{-1}(\alpha_k,0)=(n, \xi_k-n+k)$. It is obvious for $k$ which
is a source or a sink.
\end{proof}

\begin{remark} \label{rem: key}
For a left (resp. right) intermediate $k$, we can take also the
extremal vertex $i$ as $n$ (resp. $1$) in $I_0$. Then we have
\begin{equation}\label{eq: position simple 2}
\phi^{-1}(\alpha_k,0)=(i^*,\xi_{k^*}-2m_{k^*}+d(i,k))
\end{equation}
by the formula \cite[page 18]{KKK13b}.
\end{remark}

\begin{proposition} \label{Prop: same component}
Let $\rho$ be an $N$-sectional $($resp. $S$-sectional$)$ path
containing simple root. Then every positive roots contained in
$\rho$ has the same first $($resp. second$)$ component.
\end{proposition}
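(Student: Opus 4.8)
The plan is to prove the statement for $N$-sectional paths; the $S$-sectional one then follows by the same argument with first and second components, sources and sinks, and $N$- and $S$-sectional paths interchanged — formally, by transporting everything through the diagram automorphism $i\mapsto i^*$ of $\Delta_n$, under which the segment $[a,b]$ becomes $[b^*,a^*]$. I would begin with two observations. First, an $N$-type arrow $(i,p)\to(i-1,p+1)$ leaves $i+p$ unchanged and an $S$-type arrow $(i,p)\to(i+1,p+1)$ leaves $i-p$ unchanged, so by \eqref{eq: known characterization} a maximal $N$-sectional path of $\Gamma_Q$ is exactly an antidiagonal $\rho_c=\{(i,p)\in\Gamma_Q\mid i+p=c\}$. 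Second, if $\beta\to\alpha$ is an arrow of $\Gamma_Q$ then $\beta\neq\alpha$ and $(\beta,\alpha)=1$ by Lemma~\ref{Lem: Key Lem}(b); a short computation with the scalar product on segments shows that two distinct positive roots $[a,b]$ and $[c,d]$ satisfy $([a,b],[c,d])=1$ if and only if exactly one of $a=c$, $b=d$ holds. Hence every arrow of $\Gamma_Q$ preserves precisely one of the two components of its endpoints, and the Proposition is the assertion that, along the antidiagonal through a simple root, the preserved component is always the first.

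For the base of an induction along this antidiagonal, fix $k$ and let $\rho$ be the maximal $N$-sectional path through $\alpha_k=[k,k]$. By Lemma~\ref{Lem: Key Lem}(a) (together with \eqref{eq: known characterization}, and \eqref{eq: Nakayama equation} when $k$ is a sink) the vertex $\phi^{-1}(\alpha_k,0)$ lies at an extremity of $\rho$, so $\alpha_k$ has a unique neighbour $\beta'$ on $\rho$, and I must show the arrow $\alpha_k$–$\beta'$ preserves the first component. I would carry out in full the case $k$ a source, which is the cleanest: here $\phi^{-1}(\alpha_k,0)=(k,\xi_k)$ is the top of its row, there is no $N$-type arrow leaving it, and $\beta'$ sits at $(k+1,\xi_k-1)=(k+1,\xi_{k+1})$; by \eqref{eq:phi}(a) this vertex carries $\gamma_{k+1}=\sum_{j\in B(k+1)}\alpha_j$, and since $B(k+1)$ is an interval of $I_0$ with minimum $k$ (the edge $\{k,k+1\}$ is oriented $k\to k+1$ because $k$ is a source, and no path reaches $k+1$ through $k-1$), the root $\gamma_{k+1}$ is a segment with first component $k$. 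For $k$ a sink, $\alpha_k=\mathtt S(k)=\mathtt P(k)$ sits at the bottom of row $n+1-k$ and has no predecessor in $\Gamma_Q$, and one argues dually on its successors; for $k$ a left or right intermediate I would locate $\beta'$ in row $1$ or row $n$ using Lemma~\ref{Lem: Key Lem}(a) and Remark~\ref{rem: key}. An alternative that handles all three non-source cases at once is to apply reflection functors at the vertices below (resp.\ above) $k$ to replace $Q$ by a quiver in which $k$ is a source; these functors move $\Gamma_Q$ inside $\Z Q$ in the standard controlled way and twist the root labels by a simple reflection, so the statement transports.

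Now the propagation. List the vertices of $\rho$ in order as $\alpha_k=\beta_1,\beta_2,\dots$, so consecutive ones are joined by $N$-type arrows, and assume inductively that $\beta_t=[k,d_t]$ has first component $k$. By the dichotomy, either $\beta_{t+1}=[k,d_{t+1}]$, in which case we are done, or $\beta_{t+1}=[a,d_t]$ with $a\neq k$. To rule out the second possibility I would invoke the additive function \eqref{eq: additive function} at $\beta_t$ when $\beta_t$ is non-projective (the case $\beta_t=\theta_i$, detected via \eqref{eq: negative image}, being split off and treated with the known location of the projective boundary of $\Gamma_Q$): the predecessor set of $\beta_t$ consists of $\beta_{t+1}$ together with at most one further vertex whose position is forced by \eqref{eq: known characterization}, so \eqref{eq: additive function} expresses $\Dim\tau\beta_t$ as an explicit $\Z$-linear combination of $\Dim\beta_t$, $\Dim\beta_{t+1}$ and possibly one more segment; since $\Dim\tau\beta_t$ must be an actual positive root of type $A$ — a segment, hence multiplicity free with connected support — the configuration $a\neq k$ leads to a contradiction. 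Induction along $\rho$ from the base step completes the proof.

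The step I expect to be the real obstacle is, first, the non-source base cases — pinning down the neighbour of $\alpha_k$ when $\alpha_k$ lies in row $1$ or row $n$ is noticeably more delicate than the source case — and, second, running the additivity-plus-positivity argument of the propagation uniformly across all boundary configurations (a vertex of $\rho$ with a single predecessor, a projective vertex where \eqref{eq: additive function} must be replaced by the radical relation for $\mathtt P(i)$, and $\rho$ meeting the first or last row). A cleaner but less self-contained route, hinted at in the opening of this subsection, is to check everything directly on the linear quiver $\overset{\gets}{Q}$, where $\Gamma_{\overset{\gets}{Q}}$ and all its sectional paths are completely explicit, and then transport the statement along the reflection functors on $D^b(\C Q)$-$\mathrm{mod}$.
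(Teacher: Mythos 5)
Your skeleton is the same as the paper's: anchor each maximal sectional path at a simple root using Lemma~\ref{Lem: Key Lem}(a), then propagate along the path using Lemma~\ref{Lem: Key Lem}(b) and the additive relation \eqref{eq: additive function}. Your ``dichotomy'' --- for distinct segments $([a,b],[c,d])=1$ exactly when precisely one of $a=c$, $b=d$ holds, so every arrow of $\Gamma_Q$ preserves exactly one of the two components --- is correct and is a clean repackaging of the computation the paper does case by case, and your base case for a source is right.

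The propagation step, however, has a genuine gap. Write $\beta_t=[k,d_t]$, let $\beta_{t+1}$ be its predecessor along the $N$-sectional path and $Z'$ the possible second predecessor (the $S$-type one). The relation \eqref{eq: additive function} is \emph{symmetric} in the two predecessors: it gives $\Dim\tau\beta_t=\Dim\beta_{t+1}+\Dim Z'-\Dim\beta_t$ and carries no information about which predecessor sits at $(i+1,p-1)$ and which at $(i-1,p-1)$. Concretely, if $\beta_{t+1}=[a,d_t]$ with $a\neq k$ (the branch you want to kill) and $Z'=[k,e]$ with $e\neq d_t$, then $\tau\beta_t=[a,d_t]+[k,e]-[k,d_t]=[a,e]$, a perfectly good multiplicity-free segment with $([a,e],[a,d_t])=([a,e],[k,e])=1$; every local constraint you invoke (positivity, connected support, inner product $1$ along arrows) is satisfied, so no contradiction follows. (In Example~\ref{ex: example 1}, the vertex $[1,4]$ has predecessors $[1,5]$ and $[2,4]$ and translate $[2,5]$; swapping which predecessor is declared ``$N$-type'' changes nothing in your data.) The contradiction must come from global information about where the path terminates, and this is exactly what the paper supplies: in each orientation case it runs the induction along \emph{two parallel} sectional paths simultaneously, one of which starts at a known injective or projective whose dimension vector is an explicit segment through $t$ (e.g.\ $\Dim\mathtt I(t+1)=[t,b]$), so that the difference $\beta=B_u-N_u$ of corresponding vertices is the \emph{known} segment $[t+1,b]$; then the condition $\beta+N_{u+1}\in\Phi_n^+$ combined with $(N_u,N_{u+1})=1$ pins $N_{u+1}$ down. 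You would need to import some such symmetry-breaking anchor (or actually carry out your reflection-functor fallback, which, like the non-source base cases, you only sketch) for the induction to close.
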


\begin{proof}
(1) Let $t$ be a source in $Q$; i.e.,
$\xymatrix@R=3ex{*{\bullet}<3pt> \ar@{<-}[r]_<{t-1}
&*{\bullet}<3pt> \ar@{->}[r]_<{t} &*{\bullet}<3pt> \ar@{-}[l]^<{\ \
t+1} }$. Since $t$ is a source,$$\text{ $\Dim \mathtt{I}(t-1)=
[a,t]$,  $\Dim \mathtt{I}(t+1)= [t,b]$ for some $a \le t-1 $ and $b
\ge t+1$.}$$ Note that $t-1$ or $t+1 \in I_0$. By Lemma \ref{Lem:
Key Lem} (c), the following subquiver is contained in $\Gamma_Q$:
$$\scalebox{0.7}{\xymatrix@R=1ex{
i=1&\beta=B_t\ar@{->}[dr] && N_{t-1}\ar@{.>}[dr] \\
&& B_{t-1}\ar@{.>}[dr]\ar@{->}[ur] && N_2 \ar@{->}[dr] \\
&&& B_2\ar@{->}[dr]\ar@{->}[ur] &&  N_1=\Dim \mathtt{I}(t-1)\ar@{->}[dr] \\
&&&& C_1=B_1\ar@{->}[ur]\ar@{->}[dr]  && [t] \\
&&& C_2\ar@{->}[ur]\ar@{->}[dr] && M_1=\Dim \mathtt{I}(t+1)\ar@{->}[ur] \\
&& C_{n-t} \ar@{.>}[ur]\ar@{->}[dr] && M_2\ar@{->}[ur] \\
i=n& \eta \ar@{->}[ur] && M_{n-t}\ar@{->}[ur] }}$$ By the additive
property of the dimension vectors \eqref{eq: additive function}, we
have
\begin{itemize}
\item $[t+1,b]=M_1-[t]=B_1-N_1=B_2-N_2=B_{t-1}-N_{t-1}=\beta$ and
\item $[a,t-1]=N_1-[t]=C_1-M_1=C_2-M_2=C_{n-t}-M_{n-t}=\eta$.
\end{itemize}
Since $\beta+N_2=B_2 \in \Phi^+_n$, $N_2$ is $[b+1,d]$ or $[a_2,t]$.
If $N_2=[b+1,d]$, then $(N_1,N_2)=0$, which yields a contradiction
to Lemma \ref{Lem: Key Lem} (b). Thus $N_2=[c,t]$. In this way, one
can show that $N_u$ for $1 \le u \le t-1$ is of the form $[a_u,t]$.
By the similar way, we can prove that $M_v$ for $1 \le v \le n-t$ is of the form $[t,b_v]$. \\
(2) Let $t$ be a sink in $Q$; i.e., $\xymatrix@R=3ex{*{\bullet}<3pt>
\ar@{->}[r]_<{t-1}  &*{\bullet}<3pt> \ar@{<-}[r]_<{t}
&*{\bullet}<3pt> \ar@{-}[l]^<{\ \ t+1} }$.  Since $t$ is a
sink,$$\text{ $\Dim \mathtt{P}(t-1)= [a,t]$,  $\Dim \mathtt{P}(t+1)=
[t,b]$ for some $a \le t-1 $ and $b \ge t+1$.}$$
By using a similar argument given in (1), one can prove our assertion. \\
(3) Let $t$ be a left intermediate in $Q$; i.e.,
$\xymatrix@R=3ex{*{\bullet}<3pt> \ar@{->}[r]_<{t-1}
&*{\bullet}<3pt> \ar@{->}[r]_<{t} &*{\bullet}<3pt> \ar@{-}[l]^<{\ \
t+1} }$. Then we have
$$\Dim \mathtt{I}(t-1)=[k,t-1], \ \Dim \mathtt{I}(t)=[k,t], \ \Dim \mathtt{P}(t^*)=[t,l] \text{ and } \Dim \mathtt{P}(t^*-1)=[t+1,l]$$
for some $k \le t-1$ and $t+1 \le l$. Moreover, Lemma \ref{Lem: Key
Lem} (a) tells that $\phi^{-1}(\alpha_t,0)=(1,\xi_t-t+1)$. By Lemma
\ref{Lem: Key Lem} (c), the following subquivers are contained in
$\Gamma_Q$:
$$
\scalebox{0.7}{\xymatrix@C=2ex@R=1ex{
[t]\ar@{->}[dr] && N_{1}\ar@{.>}[dr] & & i=1 \\
& M_{1}\ar@{.>}[dr]\ar@{->}[ur] && N_{t-1} \ar@{->}[dr] \\
&& M_{t-1}\ar@{->}[dr]\ar@{->}[ur] &&  N_{t}=\Dim \mathtt{I}(t-1) \\
&&& M_{t}=\Dim \mathtt{I}(t)\ar@{->}[ur]   \\
}}\quad \scalebox{0.7}{\xymatrix@C=2ex@R=1ex{
&&L_1\ar@{->}[dr]&&[t] \\
&L_{t^*-1}\ar@{.>}[ur]\ar@{->}[dr]&&J_1\ar@{->}[ur] \\
\Dim \mathtt{P}(t^*-1)=L_{t^*}\ar@{->}[ur]\ar@{->}[dr]&&J_{t^*-1}\ar@{.>}[ur] \\
& \Dim \mathtt{P}(t^*)=J_{t^*}\ar@{->}[ur] }}$$ Since $[t]+N_k=M_k
\in \Phi^+_n$, $N_k$ is $[a_k,t-1]$ or $[t+1,b_k]$. However
$(N_{t-1},N_{t})=1$ implies that $N_{t-1}=[a_{t-1},t-1]$ and hence
$M_{t-1}=[a_{t-1},t]$. Then we can show $M_{k}=[a_{k},t]$ for $1 \le
k \le t$ by using the additive property of the dimension vectors
\eqref{eq: additive function}.

By applying the same argument, one can prove $J_k=[t,b_k]$ for $1 \le k \le t^*$. \\
(4) Let $t$ be a right intermediate in $Q$; i.e.,
$\xymatrix@R=3ex{*{\bullet}<3pt> \ar@{<-}[r]_<{t-1}
&*{\bullet}<3pt> \ar@{<-}[r]_<{t} &*{\bullet}<3pt> \ar@{-}[l]^<{\ \
t+1} }$. Then we have
$$ \Dim \mathtt{I}(t)=[t,l], \ \Dim \mathtt{I}(t+1)=[t+1,l], \ \Dim \mathtt{P}(t^*)=[k,t] \text{ and } \Dim \mathtt{P}(t^*+1)=[k,t-1] $$
for some $k \le t-1$ and $t+1 \le l$. By using a similar argument
given in (3), one can prove our assertion.
\end{proof}

\begin{proposition} \label{Prop: maximal path}
Let $\rho$ be a maximal $N$-sectional path or a maximal
$S$-sectional path. Then it contains a simple root.
\end{proposition}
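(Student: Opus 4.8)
The plan is to count the maximal $N$-sectional paths (resp.\ $S$-sectional paths) of $\Gamma_Q$ and to match them bijectively with the simple roots. I would begin with two elementary observations inside the repetition quiver $\Z Q$: every vertex $(i,p)$ receives at most one $N$-sectional arrow, namely $(i+1,p-1)\to(i,p)$, and emits at most one, namely $(i,p)\to(i-1,p+1)$ (when the displayed index lies in $I_0$), so the maximal $N$-sectional paths of $\Gamma_Q$ partition its vertex set, and likewise for $S$-sectional paths; and along an $N$-sectional arrow the quantity $i+p$ is unchanged (along an $S$-sectional arrow, $p-i$ is unchanged), so each maximal $N$-sectional path lies inside a single line $L_c=\{(i,p)\in\Z Q\mid i+p=c\}$.

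The core step is to show that for each $c$ the intersection $\Gamma_Q\cap L_c$ consists of consecutive $i$-values, hence forms a single maximal $N$-sectional path, and that exactly $n$ of these intersections are nonempty. By \eqref{eq: known characterization} one has $(i,c-i)\in\Gamma_Q$ iff $B(i)\le c\le T(i)$, where $T(i)=i+\xi_i$ and $B(i)=i+\xi_i-2m_i$, and $B(i)\le T(i)$ because $m_i\ge 0$ by \eqref{Def: m_i}. Since $\xi$ is a height function on $Q$, consecutive values of $\xi$ differ by $\pm1$, whence $T(i+1)-T(i)\in\{0,2\}$; and rewriting $B$ by the Nakayama identity \eqref{eq: Nakayama equation} (with $\mathtt{h}_n=n+1$ and $i^*=n+1-i$) as $B(i)=i+\xi_{n+1-i}-n+1$ gives $B(i+1)-B(i)\in\{0,2\}$ as well. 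So $T$ and $B$ are nondecreasing, and $\{i\mid B(i)\le c\}$, $\{i\mid c\le T(i)\}$ are an initial and a final segment of $I_0$ whose intersection $\Gamma_Q\cap L_c$ is therefore an interval. The values of $c$ for which this is nonempty are exactly the integers in $[B(1),T(n)]$ having the same parity as $T(1)=1+\xi_1$: none is missed since $B(i+1)\le B(i)+2\le T(i)+2$ places every such $c$ in some $[B(i),T(i)]$. As $T(n)-B(1)=(n+\xi_n)-(1+\xi_n-n+1)=2n-2$, there are precisely $n$ such $c$; hence $\Gamma_Q$ has exactly $n$ maximal $N$-sectional paths, and symmetrically exactly $n$ maximal $S$-sectional paths.

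Finally I would apply Proposition \ref{Prop: same component}: were two simple roots to lie on one $N$-sectional path, that path would contain a simple root and so all of its vertices would share their first component, forcing the two simple roots to be equal; thus the $n$ simple roots $\alpha_1,\dots,\alpha_n$ occupy $n$ pairwise distinct maximal $N$-sectional paths, and since there are only $n$ such paths, every maximal $N$-sectional path contains one of them. The $S$-sectional case is verbatim the same with ``first component'' replaced by ``second component''. The one step needing genuine care is the interval/no-gap claim: one has to extract the monotonicity of both boundary profiles $T$ and $B$ from the height-function condition together with the Nakayama identity, and then verify that the intervals $[B(i),T(i)]$ overlap up to the parity-$2$ spacing; the partition property, the constancy of $i+p$, and the final counting comparison are then routine.
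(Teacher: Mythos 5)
Your argument is correct, but it is a genuinely different proof from the one in the paper. The paper argues by contradiction: assuming a maximal $N$-sectional path $\rho$ contains no simple root, it examines the two endpoints of $\rho$, uses the orientation of $Q$ at the corresponding indices together with Lemma \ref{Lem: Key Lem} (a), (c), Remark \ref{rem: key} and \eqref{eq: known characterization} to pin down where the endpoints must sit, and derives a contradiction through a case analysis (source/sink/intermediate, boundary index equal to $1$ or $n$ or not). You instead give a global counting argument: maximal $N$-sectional paths are exactly the nonempty diagonals $\Gamma_Q\cap L_c$, because the boundary profiles $T(i)=i+\xi_i$ and $B(i)=i+\xi_i-2m_i=i+\xi_{n+1-i}-n+1$ are both nondecreasing with steps in $\{0,2\}$ (height-function condition plus the Nakayama identity \eqref{eq: Nakayama equation}), so each diagonal meets $\Gamma_Q$ in an interval of consecutive $i$'s, the intervals $[B(i),T(i)]$ leave no parity-admissible gaps, and $T(n)-B(1)=2n-2$ gives exactly $n$ such diagonals; then Proposition \ref{Prop: same component} forces the $n$ simple roots onto $n$ distinct maximal paths, and the pigeonhole closes the argument. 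Your route avoids the endpoint case analysis entirely and yields as a by-product that there are exactly $n$ maximal $N$-sectional (and $S$-sectional) paths, each filling a whole diagonal --- which is essentially the counting content of Corollary \ref{Cor: sectional}; the paper's route is more local and stays closer to the representation-theoretic data (injectives, projectives, irreducible maps). Two small points worth making explicit in your write-up: the partition claim requires admitting single vertices as (trivial) maximal $N$-sectional paths, a convention the paper itself uses implicitly in Corollary \ref{Cor: sectional} (the length-$0$ path), and for such a trivial path the conclusion is that the lone vertex \emph{is} a simple root, which your bijective count does deliver.
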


\begin{proof}
By Proposition \ref{Prop: same component}, $\rho$ contains at most
one simple root. Assume that $\rho$ is a maximal $N$-sectional path
which does not contain a simple root, and we write $\rho= N_t \to
N_{t-1} \to \cdots \to N_1$.

Assume first that $\phi^{-1}_1(N_1) = a \neq 1$. Hence,
$N_1=\tau^k(\underline{\dim}\mathtt{I}(a))$, for some $k\ge 0$. If
the arrow between $a$ and $a-1$ in $\Delta_n$ was
$\xymatrix@R=3ex{*{\bullet}<3pt> \ar@{->}[r]_<{a-1}
&*{\bullet}<3pt> \ar@{-}[l]^<{ \ \ a} }$, the irreducible morphism
$\mathtt{I}(a)\to \mathtt{I}(a-1)$ would give the arrow
$(a,\xi_a)\to (a-1,\xi_a+1)$. Applying $\tau^k$, we would have a
contradiction to the maximality of $\rho$. Thus, $a$ is either right
intermediate or a source:
$$\text{$\xymatrix@R=3ex{*{\bullet}<3pt>
\ar@{<-}[r]_<{a-1}  &*{\bullet}<3pt> \ar@{<-}[r]_<{a}
&*{\bullet}<3pt> \ar@{-}[l]^<{\ \ a+1} }$ or
$\xymatrix@R=3ex{*{\bullet}<3pt> \ar@{<-}[r]_<{a-1}
&*{\bullet}<3pt> \ar@{->}[r]_<{a} &*{\bullet}<3pt> \ar@{-}[l]^<{\ \
a+1} }$ in $Q$.}$$

In both cases, $N_1$ is the vertex for an injective module:
otherwise, we have an arrow $N_1\to
\tau^{k-1}(\underline{\dim}\mathtt{I}(a-1))$ in $\Gamma_Q$ and it
contradicts the maximality of $\rho$ again. Thus we have
$\phi^{-1}(N_1,0)=(a,\xi_a)$. Hence Lemma \ref{Lem: Key Lem} (c)
tells that $\phi^{-1}(N_t,0)= (n,\xi_a-n+a)$. Thus Lemma 1.7(a)
implies that $N_t=[a]$ if $a$ is right intermediate and $N_1=[a]$ if
$a$ is a source, contradicting our assumption that $\rho$ does not
contain a simple root. Hence we have $\phi^{-1}_1(N_1)=1$.

If $\phi^{-1}_2(N_1)=\xi_1$ then $N_1=\Dim \mathtt{I}(1)$. Since $1$
is a source or sink, then $N_1=[1]$, or $N_{t=n}=[1]$ and
$\phi^{-1}(N_n)=(n,\xi_n-2m_n)$ contradicting our assumption on
$\rho$ again.

Now we assume that $\phi^{-1}_2(N_1) < \xi_1$. If $\phi^{-1}_1(N_t)
=b \neq n$, then
$$\text{$\xymatrix@R=3ex{*{\bullet}<3pt>
\ar@{->}[r]_<{b-1}  &*{\bullet}<3pt> \ar@{->}[r]_<{b}
&*{\bullet}<3pt> \ar@{-}[l]^<{\ \ b+1} }$ or
$\xymatrix@R=3ex{*{\bullet}<3pt> \ar@{->}[r]_<{b-1}
&*{\bullet}<3pt> \ar@{<-}[r]_<{b} &*{\bullet}<3pt> \ar@{-}[l]^<{\ \
b+1} }$ in $Q$,}$$ by the similar reason as above. In both cases,
$\phi^{-1}(N_t,0)=(b,\xi_b-2m_b)$. Hence Lemma \ref{Lem: Key Lem}
(c) tells that $\phi^{-1}_1(N_1)= (1,\xi_b-2m_b+(b-1))$. Thus Lemma
\ref{Lem: Key Lem} (a) and Remark \ref{rem: key} imply that
$N_1=[b^*]$ if $b$ is a left intermediate, and $N_t=[b^*]$ if $b$ is
a sink, respectively. Thus we have $t=n$ and $\phi^{-1}_1(N_n)=n$.
However, the below inequality
$$\phi^{-1}_2(N_n)=\phi^{-1}_2(N_1)-n+1 < \xi_n-2m_n=\xi_1-n+1$$
yields a contradiction to \eqref{eq: known characterization}.
Similarly, one can prove our assertion for a maximal $S$-sectional
path.
\end{proof}


\begin{theorem} \label{Thm: sectional}
Every positive root in an $N$-sectional path has the same first
component and every root in an $S$-sectional path has the same
second component.
\end{theorem}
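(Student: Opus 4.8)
The plan is to obtain the theorem as an immediate consequence of Proposition~\ref{Prop: same component} together with Proposition~\ref{Prop: maximal path}, with no further computation. First I would take an arbitrary $N$-sectional path $\rho$ in $\Gamma_Q$ and enlarge it to a maximal $N$-sectional path $\rho^{\max}$ that contains all the positive roots appearing in $\rho$; such a $\rho^{\max}$ exists because $\Gamma_Q$ is a finite quiver, so an $N$-sectional path can be prolonged only finitely many times, and the notion of maximality used here is exactly the one in the definition preceding Proposition~\ref{Prop: maximal path}, namely non-extendability while keeping all positive roots of the path.

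Next I would invoke Proposition~\ref{Prop: maximal path} to conclude that $\rho^{\max}$ contains a simple root. This puts us in the situation of Proposition~\ref{Prop: same component}, whose $N$-sectional half then guarantees that every positive root contained in $\rho^{\max}$ has the same first component. Since each positive root contained in $\rho$ is in particular contained in $\rho^{\max}$ (an end or a start of an arrow of $\rho$ is an end or a start of an arrow of $\rho^{\max}$), all positive roots in $\rho$ share the same first component. The argument for an $S$-sectional path $\rho$ is completely parallel: extend to a maximal $S$-sectional path, apply Proposition~\ref{Prop: maximal path} to locate a simple root inside it, and then apply the ``second component'' half of Proposition~\ref{Prop: same component}.

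As for the main obstacle: for this statement there is essentially none, since the substance has already been carried by Propositions~\ref{Prop: same component} and~\ref{Prop: maximal path}, which in turn rest on the explicit positions of simple roots in $\Gamma_Q$ (Lemma~\ref{Lem: Key Lem}(a) and Remark~\ref{rem: key}), the range statement \eqref{eq: range}, the additive property \eqref{eq: additive function}, and the characterization \eqref{eq: known characterization}. The only point meriting a line of care is the bookkeeping that extending $\rho$ to $\rho^{\max}$ is legitimate, i.e.\ that ``maximal'' is taken with respect to inclusion of the set of positive roots occurring in the path, which is precisely the hypothesis of Proposition~\ref{Prop: maximal path}; once this is noted, the proof is a two-line deduction.
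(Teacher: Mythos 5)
Your proposal matches the paper's own proof: both extend an arbitrary $N$-sectional (resp. $S$-sectional) path to a maximal one, use Proposition~\ref{Prop: maximal path} to find a simple root in it, and then apply Proposition~\ref{Prop: same component} to conclude. The argument is correct and takes essentially the same route.
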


\begin{proof}
By the previous proposition, every maximal $N$-sectional path (resp.
maximal $S$-sectional path) contains a simple root. Thus our
assertion follows from Proposition \ref{Prop: same component} and
the fact that every $N$-sectional path (resp. $S$-sectional path) is
contained in some maximal $N$-sectional path (resp. maximal
$S$-sectional path).
\end{proof}

Thus we say that an $N$-sectional path $\rho$ is the {\it maximal
$(N,i)$-sectional path} if all positive roots contained in $\rho$
have $i$ as a first component. Similarly, we can define a notion of
the {\it maximal $(S,i)$-sectional path}.

\begin{corollary} \label{Cor: sectional}
For $1 \le i \le n$, the Auslander-Reiten quiver $\Gamma_Q$ of
finite type $A$ contains a maximal $N$-sectional path of length
$n-i$ once and exactly once. At the same time, $\Gamma_Q$ contains a
maximal $S$-sectional path of length $i-1$ once and exactly once.
\end{corollary}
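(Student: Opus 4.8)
The plan is to count, for each fixed first component $i$, the maximal $(N,i)$-sectional path and determine its length, and dually for the second component. By Theorem \ref{Thm: sectional}, every positive root $[a,b]$ with fixed first component $a=i$ lies in a unique maximal $N$-sectional path, and there are exactly $n-i+1$ such roots, namely $[i,i],[i,i+1],\dots,[i,n]$. The key observation is that along an $N$-sectional arrow $(j,p)\to (j-1,p+1)$ the first component is preserved (Theorem \ref{Thm: sectional}) while the \emph{second} component must change; since consecutive roots in $\Gamma_Q$ differ by a simple root (this follows from the additive property \eqref{eq: additive function} together with Lemma \ref{Lem: Key Lem}(b), as in the proof of Proposition \ref{Prop: same component}), a single $N$-sectional step changes $[i,b]$ to $[i,b\pm1]$. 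Hence the roots along a maximal $(N,i)$-sectional path form a contiguous block inside $\{[i,i],\dots,[i,n]\}$.

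Next I would argue that this block is in fact \emph{all} of $\{[i,i],\dots,[i,n]\}$, i.e. the maximal $(N,i)$-sectional path contains every positive root with first component $i$. This is where the height-function bookkeeping of \S\ref{subsec: AR-quiver} enters: using \eqref{eq: known characterization} and Lemma \ref{Lem: Key Lem}(c), the vertices $(i,p)\in\Gamma_Q$ with a given first index $i$ in a single sectional direction occupy a full interval of $p$-values bounded by the injective/projective positions, and the additive recursion \eqref{eq: additive function} forces the second components to sweep through $i,i+1,\dots,n$ without repetition (repetition being excluded by Proposition \ref{Prop: same component}, which says each value occurs at most once on the path). Therefore the maximal $(N,i)$-sectional path has exactly $n-i+1$ vertices, hence length $n-i$, and by uniqueness (Theorem \ref{Thm: sectional}: a root lies in a \emph{unique} maximal $N$-sectional path) it occurs once and exactly once.

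The dual statement for $S$-sectional paths is entirely symmetric: fixing the second component $b=i$, the positive roots with that second component are $[1,i],[2,i],\dots,[i,i]$, totalling $i$ roots, all lying in a single maximal $(S,i)$-sectional path by Theorem \ref{Thm: sectional}, which therefore has $i$ vertices and length $i-1$, occurring once and exactly once. I would present both cases in one stroke, noting the symmetry $[a,b]\leftrightarrow$ swapping $N$- and $S$-sectional directions (which corresponds combinatorially to interchanging "first component" and "second component").

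The main obstacle is the middle step: proving that the contiguous block of second components realized along the maximal $(N,i)$-sectional path is the \emph{entire} range $\{i,\dots,n\}$ rather than a proper sub-interval. One could instead bypass this by a global counting argument: the total number of $N$-sectional arrows in $\Gamma_Q$ equals $|\Phi_n^+|-n=\binom{n}{2}$ (each non-simple root is the target of a unique $N$-sectional arrow — again by the additivity of dimension vectors and Lemma \ref{Lem: Key Lem}(b)), while the claimed lengths sum to $\sum_{i=1}^{n}(n-i)=\binom{n}{2}$; combined with the fact from Proposition \ref{Prop: maximal path} that there is at least one maximal $(N,i)$-sectional path for each $i$ (each contains a simple root $[i,i]=\alpha_i$, and distinct simple roots lie on distinct such paths since a maximal path contains at most one simple root), the counts match only if each length is exactly $n-i$ and each occurs exactly once. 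I expect the counting argument to be the cleaner route and would write the proof that way, invoking Propositions \ref{Prop: same component} and \ref{Prop: maximal path} for the existence and uniqueness of the simple root on each maximal sectional path.
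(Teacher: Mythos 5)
Your preferred route (the global arrow count) rests on a false premise: it is not true that each non-simple root is the target of an $N$-sectional arrow. In the paper's own Example \ref{ex: example 1}, the non-simple root $[1,2]$ sits at $(5,-4)$; its only incoming arrow is the $S$-sectional one from $[2]$ at $(4,-5)$, and it is the \emph{source} of the maximal $(N,1)$-sectional path $[1,2]\to[1,5]\to[1,4]\to[1,3]\to[1]$, whose final target is the \emph{simple} root $[1]$. So the proposed bijection between non-simple roots and targets of $N$-sectional arrows fails in both directions. What is true is that the number of $N$-sectional arrows equals $\binom{n+1}{2}$ minus the number of maximal $N$-sectional paths --- which is exactly the quantity in question, so the count cannot be bootstrapped the way you propose. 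Your first route correctly isolates the remaining obstacle (that the contiguous block of second components exhausts $\{i,\dots,n\}$), but the ``height-function bookkeeping'' offered for it is a gesture, not an argument.

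The gap closes with ingredients you already cite, assembled differently; this is the paper's (one-line) proof. Since each vertex has at most one incoming and one outgoing $N$-sectional arrow, every positive root lies on a \emph{unique} maximal $N$-sectional path. By Proposition \ref{Prop: maximal path} that path contains a simple root, and by Theorem \ref{Thm: sectional} all its roots share a first component $a$, so that simple root must be $[a,a]$. Hence any two maximal $N$-sectional paths whose roots have first component $i$ both pass through the vertex $[i,i]$ and therefore coincide: there is exactly one maximal $(N,i)$-sectional path. Every one of the $n-i+1$ roots $[i,i],\dots,[i,n]$ lies on some maximal $N$-sectional path, necessarily this one, so it has $n-i+1$ vertices and length $n-i$; the $S$-sectional statement is dual. (If you insist on counting, the correct version is: the maximal paths biject with simple roots, so there are $n$ of them and $\binom{n+1}{2}-n=\binom{n}{2}$ $N$-sectional arrows, and then $\sum_i\ell_i=\binom{n}{2}$ together with $\ell_i\le n-i$ forces equality --- but at that point the direct argument has already finished.)
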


\begin{proof}
For $i \in I$, there are $(n-i+1)$-many positive roots whose first
component is $i$, and $i$-many positive roots whose second component
is $i$. Thus our assertion follows from Theorem \ref{Thm:
sectional}.
\end{proof}

\begin{corollary} \label{Cor: one path}
For vertices $\beta$, $\beta' \in \Phi_n^+$ with $\beta \prec_Q
\beta'$ and $\beta+\beta' \in \Phi_n^+$,
\begin{equation} \label{eq: paths}
\text{there exists only one path from $\beta'$ to $\beta+\beta'$.
So is from $\beta+\beta'$ to $\beta$.}
\end{equation}
\end{corollary}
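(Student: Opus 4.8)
The plan is to reduce the claim to a uniqueness statement about paths lying on a single sectional line of $\Gamma_Q$, and then to close it by a coordinate count in the repetition quiver $\Z Q$. First I would set $\gamma\seteq\beta+\beta'$ and, using that every positive root of finite type $A$ is a segment, write $\gamma=[a,b]$ and $\{\beta,\beta'\}=\{[a,c],\,[c+1,b]\}$ for some $a\le c<b$. It then suffices to prove uniqueness of a path from $\beta'$ to $\gamma$, since the assertion for paths from $\gamma$ to $\beta$ is obtained in exactly the same way with $(\gamma,\beta)$ in the role of $(\beta',\gamma)$. Note that convexity of $\preceq_Q$ applied to $\beta\prec_Q\beta'$ gives $\gamma\prec_Q\beta'$, and that by \eqref{eq: paths pair Q} at least one path from $\beta'$ to $\gamma$ already exists in $\Gamma_Q$; so only uniqueness is at issue.

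Next I would record the structural fact that for each $i\in I_0$ there is a \emph{unique} maximal $N$-sectional path $\sigma_{N,i}$, with vertex set exactly the $n-i+1$ roots $[i,i],[i,i+1],\dots,[i,n]$, and likewise a unique maximal $S$-sectional path $\sigma_{S,i}$ with vertex set $[1,i],[2,i],\dots,[i,i]$. This follows from Theorem \ref{Thm: sectional} and Corollary \ref{Cor: sectional} by a short induction matching the $n$ maximal $N$-sectional paths --- one of each length $0,1,\dots,n-1$ --- against the $n$ families of positive roots with a fixed first component, which have sizes $1,2,\dots,n$. Since $\beta'$ shares a component with $\gamma=[a,b]$ (its first component is $a$ if $\beta'=[a,c]$, its second component is $b$ if $\beta'=[c+1,b]$), both $\beta'$ and $\gamma$ lie on a single such maximal path $\sigma$, namely $\sigma_{N,a}$ or $\sigma_{S,b}$, which is a linear chain of vertices. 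Because a path from $\beta'$ to $\gamma$ exists while $\gamma\prec_Q\beta'$, the vertex $\beta'$ precedes $\gamma$ along $\sigma$, so the sub-chain of $\sigma$ from $\beta'$ to $\gamma$ is one such path.

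For uniqueness I would use that every arrow of $\Gamma_Q\subseteq\Z Q$ raises the second coordinate by exactly one, so that any path from $\beta'$ to $\gamma$ has the same length $\ell$ --- the difference of the second coordinates of $\beta'$ and $\gamma$ --- as the sub-chain of $\sigma$. That sub-chain uses only arrows of a single type, $(i,p)\to(i-1,p+1)$ in the $N$-case and $(i,p)\to(i+1,p+1)$ in the $S$-case, and hence changes the first coordinate by $-\ell$, resp.\ $+\ell$. An arbitrary path from $\beta'$ to $\gamma$ has $\ell$ arrows and realizes the same net change in the first coordinate, which forces every one of its arrows to be of that same single type; thus it is $N$-sectional (resp.\ $S$-sectional), so by Theorem \ref{Thm: sectional} all of its vertices share the first component $a$ (resp.\ the second component $b$) and therefore lie on $\sigma$. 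Since a vertex of $\Z Q$ has at most one outgoing arrow of each of the two types, such a path must traverse $\sigma$ consecutively; having the same endpoints $\beta'$ and $\gamma$, it coincides with the sub-chain of $\sigma$ from $\beta'$ to $\gamma$. This gives the uniqueness of the path from $\beta'$ to $\gamma$, and the statement for paths from $\gamma$ to $\beta$ follows identically using the other common component of $\{\beta,\gamma\}$.

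The step I expect to be the genuine obstacle --- as opposed to the routine bookkeeping with the maximal sectional paths --- is showing that a path joining two vertices of a common sectional line cannot wander off it. The coordinate count in $\Z Q$ above is designed precisely for this: the length of the path is pinned down to the second-coordinate difference, so there are no ``spare'' arrows with which to deviate and then return, and once this is established the conclusion is forced by Theorem \ref{Thm: sectional} together with the uniqueness of the maximal sectional paths.
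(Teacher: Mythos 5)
Your proof is correct and follows the same route the paper intends with its one-line justification: existence comes from the convexity statement \eqref{eq: paths pair Q}, and uniqueness comes from Theorem \ref{Thm: sectional} together with Corollary \ref{Cor: sectional} (so that $\beta'$ and $\gamma$, sharing a component, lie on one maximal sectional chain) plus the $\Z Q$ coordinate count forcing any connecting path to be sectional. You have simply supplied the bookkeeping that the paper leaves implicit.
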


\begin{proof}
The proof comes from \eqref{eq: paths Q} and Theorem \ref{Thm:
sectional}.
\end{proof}

To get a $\Gamma_Q$ from $Q$, we have used $\tau(\beta)$
\eqref{eq:phi} or the additive property of the dimension vectors
\eqref{eq: additive function}. On the other hand, Lemma \ref{Lem:
Key Lem} and Theorem \ref{Thm: sectional} provide a combinatorial
way for computing the bijection $\phi^{-1}$ without using the
Coxeter element and the additive property of the dimension vectors.

\medskip

The following remark is well-known to experts but we record here for
later use.

\begin{remark}  \label{Alg: easy} 
({\rm a}) By {\rm Lemma \ref{Lem: Key Lem}} (a), we know the images of $\Pi_n$ in $\Gamma_Q$. \\
({\rm b}) For each $a \in I_0$, we draw a hook as follows:
\begin{itemize}
\item[({\rm i})] If $a$ is a source, $$\overbrace{N_n \to \cdots N_{a-1} \to N_a}^{\text{maximal $(N,a)$-sectional path}}=[a]
=\underbrace{S_{a} \gets S_{a-1} \gets \cdots \gets
S_1}_{\text{maximal $(S,a)$-sectional path}}.$$
\item[({\rm ii})] If $a$ is a sink,
$$\overbrace{N_n \gets \cdots N_{n-a} \gets N_{n-a+1}}^{\text{maximal $(N,a)$-sectional path}}=[a]
=\underbrace{S_{n-a+1} \to S_{n-a+2} \to \cdots \to
S_1}_{\text{maximal $(S,a)$-sectional path}}.$$
\item[({\rm iii})] If $\xymatrix@R=3ex{*{\bullet}<3pt>
\ar@{->}[r]_<{a-1}  &*{\bullet}<3pt> \ar@{->}[r]_<{a}
&*{\bullet}<3pt> \ar@{-}[l]^<{\ \ a+1} }$ in $Q$,
$$\overbrace{N_{n-a+1} \to \cdots N_{2} \to N_{1}}^{\text{maximal
$(N,a)$-sectional path}} =[a] = \underbrace{S_{1} \to S_{2} \to
\cdots \to S_a}_{\text{maximal $(S,a)$-sectional path}}.$$
\item[({\rm iv})] If $\xymatrix@R=3ex{*{\bullet}<3pt>
\ar@{<-}[r]_<{a-1}  &*{\bullet}<3pt> \ar@{<-}[r]_<{a}
&*{\bullet}<3pt> \ar@{-}[l]^<{\ \ a+1} }$ in $Q$, $$\overbrace{N_a
\gets \cdots N_{n-1} \gets N_{n}}^{\text{maximal $(N,a)$-sectional
path}} =[a] = \underbrace{S_{n} \gets S_{n-1} \gets \cdots \gets
S_{n+1-a}}_{\text{maximal $(S,a)$-sectional path}}.$$
\end{itemize}
Here $\phi^{-1}_1(N_k)=k$ and $\phi^{-1}_1(S_l)=l$. \\
({\rm c}) If $(i,p)$ in $\Gamma_Q$ is located at an intersection of
an $(N,a)$-sectional path and an $(S,b)$-sectional path,
$$ \phi^{-1}([a,b],0)=(i,p).$$
\end{remark}

Let $\kappa$ and $\sigma$ be subsets of $\Phi^{+}_n$ defined as
follows:
\begin{equation} \label{eq: kappa sigma}
\kappa \seteq \{  \beta \in \Phi^{+}_n \ | \ \phi^{-1}_1(\beta)=1
\}, \quad \sigma \seteq \{  \beta \in \Phi^{+} \ | \
\phi^{-1}_1(\beta)=n  \}.
\end{equation}
We enumerate the positive roots in
$\kappa=\{\kappa_1,\ldots,\kappa_r\}$ and
$\sigma=\{\sigma_1,\ldots,\sigma_s\}$ in the following way:
$$  \phi^{-1}_{2}(\kappa_{i+1})+2= \phi^{-1}_{2}(\kappa_{i}), \   \phi^{-1}_{2}(\sigma_{j+1})-2= \phi^{-1}_{2}(\sigma_{j}) \   \text{for } 1 \le i < r
\text{ and } 1 \le j <s.$$

\begin{corollary}  \label{Cor: first and last} 
\begin{enumerate}
\item[({\rm a})] If $\kappa_i=[a,b]$, then $\kappa_{i+1}=[b+1,c]$.
\item[({\rm b})] If $\sigma_j=[a,b]$, then $\sigma_{j+1}=[b+1,c]$.
\item[({\rm c})] $\sum_{i=1}^{r}\kappa_i=[1,n]=\sum_{j=1}^{s}\sigma_j$
\item[({\rm d})] If $\phi^{-1}_1([1,n])=k$, then $r=k=m_1$ , $s=n+1-k=m_n$ and hence $$\phi^{-1}([1,n])=(m_1,\xi_1-m_1+1)=(n+1-m_n,\xi_n-m_n+1).$$
\end{enumerate}
\end{corollary}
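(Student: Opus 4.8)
The plan is to deduce Corollary \ref{Cor: first and last} from the sectional-path structure established in Theorem \ref{Thm: sectional} together with the additivity of the dimension vector \eqref{eq: additive function} and the explicit positions of simple roots and of injective/projective modules recorded in \S\ref{subsec: AR-quiver} and Lemma \ref{Lem: Key Lem}.

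\medskip

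\noindent\textbf{Parts (a) and (b).} First I would observe that if $\kappa_i=[a,b]$ and $\kappa_{i+1}=[a',b']$, then by the enumeration convention $\phi^{-1}(\kappa_{i+1},0)=(1,p-2)$ where $\phi^{-1}(\kappa_i,0)=(1,p)$, so there is an arrow path of length two in $\Z Q$ from $\kappa_{i+1}$ to $\kappa_i$ passing through the vertex $(2,p-1)$; since both $\kappa_i$ and $\kappa_{i+1}$ lie in $\Gamma_Q$ and $\Gamma_Q$ is a full subquiver, that intermediate vertex, call it $\delta$, lies in $\Gamma_Q$ as well, and we have arrows $\kappa_{i+1}\to\delta\to\kappa_i$. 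These two arrows form an $S$-sectional step followed by an $N$-sectional step (or we use the additivity relation directly); applying \eqref{eq: additive function} at $\delta$ — noting $\tau\delta=\kappa_{i+1}$ or using \eqref{eq: negative image} to see $\delta\ne\theta_j$ because its first component is $2\ne 1$ after a short check, hence $\tau\delta\in\Phi^+_n$ — gives $\Dim\,\kappa_i+\Dim\,\kappa_{i+1}=\Dim\,\delta+(\text{other predecessors of }\delta)$. The cleaner route, which I would actually write, is: the arrow $\kappa_{i+1}\to\delta$ has, by Lemma \ref{Lem: Key Lem}(b), $(\kappa_{i+1},\delta)=1$, and $\delta$ lies on the maximal $(N,1)$-sectional path through $\kappa_i$ and $\kappa_{i+1}$? — no: $\delta$ has first component $2$. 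Instead I use that $\kappa_i$ and $\kappa_{i+1}$ are consecutive on the maximal $(S,1)$? Let me restart the mechanism: the key point is that $\kappa_i$ and $\kappa_{i+1}$ are joined by a mesh, and the mesh relation forces $\Dim\,\kappa_i+\Dim\,\kappa_{i+1}$ to be a sum of the dimension vectors of the vertices between them; combined with Theorem \ref{Thm: sectional} (the first component along an $N$-sectional path is constant $=1$), one shows the ``new'' summand is exactly $[b+1,c]$ for the appropriate $c$. I would spell out the two-step mesh argument carefully here, as this is the main obstacle: getting the combinatorics of which vertices appear in the mesh and confirming $\tau$ is defined there.

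\medskip

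\noindent\textbf{Part (c).} This follows by telescoping: $\kappa_1=[1,b_1]$ is $\Dim\,\mathtt{I}(1)$ (the unique vertex of $\Gamma_Q$ with first component $1$ and maximal second coordinate $\xi_1$, since the maximal $(N,1)$-sectional path through it ends at an injective vertex by the argument in Proposition \ref{Prop: maximal path}), hence its first component is $1$; and $\kappa_r$ is $\Dim\,\mathtt{P}(1^*)=\Dim\,\mathtt{P}(n)$, whose second component is $n$. By repeated application of part (a), the second component of $\kappa_{i+1}$ is one more than the ``length'' consumed, and the segments $[1,b_1],[b_1+1,b_2],\dots$ tile $[1,n]$, giving $\sum_i\kappa_i=[1,n]$; the same for $\sigma$ using part (b), with $\sigma_1=\Dim\,\mathtt{I}(n)$ having first component $n$ hmm — rather $\sigma_1=[1,?]$: I would check that $\sigma_1$ has first component $1$ by the same Proposition \ref{Prop: maximal path} reasoning applied to the maximal $(S,n)$-sectional? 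No — $\sigma$ consists of roots with first component $n$, so $\sigma_1=[n,b]$-type; the telescoping instead runs over second components. I will present the $\kappa$ telescoping in detail and remark that the $\sigma$ case is symmetric under the $S$/$N$ swap.

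\medskip

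\noindent\textbf{Part (d).} Here I would argue: $[1,n]$ is the unique positive root lying in both $\kappa$ and $\sigma$ (it is the only root with first component $1$ and, simultaneously, the only one summing to the highest root along both families), so writing $\phi^{-1}([1,n])=(k,p)$ we get that $[1,n]=\kappa_r$ forces $r=$ the number of steps from $\mathtt{I}(1)$ down to $[1,n]$ along the $\kappa$-chain $=m_1$ by \eqref{Def: m_i} (since $m_1=\max\{k\ge 0\mid \tau^k(\gamma_1)\in\Phi^+_n\}$ and $\gamma_1=\Dim\,\mathtt{I}(1)=\kappa_1$, so $\tau^{m_1}\kappa_1=\theta_{1^*}=\kappa_r$), and that $\phi^{-1}_1([1,n])=1+(r-1)=r$? — here I use that along the $S$-sectional path the first component increases by $1$ at each step, and $[1,n]$ sits at the end of the $\sigma$-chain too, giving $k$ directly. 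Then $r=k=m_1$ and, symmetrically, $s=n+1-k=m_n$. Finally the coordinate formula $\phi^{-1}([1,n])=(m_1,\xi_1-m_1+1)$ follows from $\phi^{-1}(\kappa_1,0)=(1,\xi_1)$ and the enumeration rule $\phi^{-1}_2(\kappa_{i+1})=\phi^{-1}_2(\kappa_i)-2$, so after $m_1-1$ steps the second coordinate is $\xi_1-2(m_1-1)$, while the first coordinate has climbed to $m_1$ along the $S$-path — and one reconciles $\xi_1-2(m_1-1)$ with $\xi_1-m_1+1$ using $d(1,m_1)=m_1-1$ and the relation between the two coordinates on a vertex of $\Z Q$. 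The analogous computation from the $\sigma$ side gives $(n+1-m_n,\xi_n-m_n+1)$, and equality of the two expressions is \eqref{eq: Nakayama equation}. The main obstacle throughout is bookkeeping the parity/coordinate relations on $\Z Q$ correctly; everything else is a direct consequence of Theorem \ref{Thm: sectional}.
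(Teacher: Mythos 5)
Your toolkit is the right one, and for (a)--(b) the mesh argument you eventually settle on is essentially what underlies the subquiver the paper draws from Remark \ref{Alg: easy}: at a row-$1$ vertex $(1,p)$ with $(1,p-2)\in\Gamma_Q$ the only predecessor is $(2,p-1)$, so \eqref{eq: additive function} gives $\kappa_i+\kappa_{i+1}=\delta$ with $\delta$ the root at $(2,p-1)$; the arrow $(2,p-1)\to(1,p)$ is $N$-sectional, so by Theorem \ref{Thm: sectional} $\delta$ shares the first component $a$ of $\kappa_i=[a,b]$, while $(1,p-2)\to(2,p-1)$ is $S$-sectional, so $\delta$ shares the second component of $\kappa_{i+1}$; hence $\delta=[a,c]$, $\kappa_{i+1}=[b+1,c]$. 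That is precisely the step you flag as ``the main obstacle'' and leave open, and your parenthetical ``the first component along an $N$-sectional path is constant $=1$'' exposes a recurring confusion: $\kappa$ (resp.\ $\sigma$) is the set of roots sitting in \emph{row} $1$ (resp.\ row $n$) of $\Gamma_Q$, not the set of roots with first component $1$ (resp.\ $n$). In Example \ref{ex: example 1} row $1$ is $\{[5],[4],[2,3],[1]\}$ and row $5$ is $\{[1,2],[3,5]\}$, so ``$\sigma$ consists of roots with first component $n$'' and the label $\kappa_r=\Dim\,\mathtt{P}(n)$ are wrong; what (c) actually needs is $\kappa_1=\gamma_1=[1,a]$ and $\kappa_r=\theta_n=[b,n]$ (the leftmost row-$1$ vertex is $\tau^{m_1}(\gamma_1)=\theta_{1^*}$), after which the telescoping is fine.

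The genuine gap is in (d). Your argument rests on ``$[1,n]=\kappa_r$'', which is false in general: in Example \ref{ex: example 1}, $\kappa_r=[5]$ while $[1,5]$ sits at $(4,-3)$, in neither $\kappa$ nor $\sigma$. Consequently the step ``$r=$ number of steps from $\mathtt{I}(1)$ to $[1,n]$ along the $\kappa$-chain'' and the attempted reconciliation of $\xi_1-2(m_1-1)$ with $\xi_1-m_1+1$ compare coordinates of two \emph{different} vertices (one in row $1$, one in row $k$) and cannot close. The correct count, which is what the paper's picture encodes, is: $[1,n]$ is the unique intersection of the maximal $(N,1)$-sectional path, whose row-$1$ endpoint is $\kappa_1$ at $(1,\xi_1)$, and the maximal $(S,n)$-sectional path, whose row-$1$ endpoint is $\kappa_r$ at $(1,\xi_1-2(r-1))$; writing $\phi^{-1}([1,n])=(k,p)$ and following these two paths to row $1$ gives $p=\xi_1-(k-1)$ and $p=\xi_1-2(r-1)+(k-1)$, whence $r=k$ and $p=\xi_1-k+1$, and symmetrically $s=n+1-k$ from the row-$n$ endpoints $\sigma_1=\theta_1$ and $\sigma_s=\gamma_n$. (Checking this against \eqref{Def: m_i} in Example \ref{ex: example 1} gives $r=k=4=m_1+1$ rather than $m_1$, so the normalization of $m_1$ in the statement merits a second look, but that is independent of the gap in your argument.)
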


\begin{proof}
By Remark \ref{Alg: easy}, we have a subquiver in $\Gamma_Q$ which
can be described as follows:
$$
\scalebox{0.7}{\xymatrix@C=3ex@R=1ex{
\kappa_k=[b,n]\ar@{->}[dr] && \kappa_{k-1}\ar@{.>}[dr] \ar@{}[rr]_{\cdots\cdots} && \kappa_2\ar@{->}[dr] && \kappa_1=[1,a] \\
& M^{(1)}_{k-1}\ar@{.>}[dr]\ar@{->}[ur]  && \cdots\ar@{.>}[ur]\ar@{.>}[dr] && M^{(1)}_1\ar@{->}[ur] \\
&& M^{(k-1)}_{2}\ar@{.>}[ur]\ar@{->}[dr]&& M^{(k-1)}_1 \ar@{.>}[ur]\\
i=k&&& [1,n]\ar@{->}[ur]\ar@{->}[dr]  \\
&& N^{(n-k+2)}_{1}\ar@{->}[ur]\ar@{.>}[dr]&& N^{(n-k+2)}_2 \ar@{.>}[dr]\\
& \sigma_1=[1,a']\ar@{.>}[ur] && \cdots\ar@{.>}[ur] && \sigma_{n+1-k}=[b',n] \\
}}
$$
Then our assertions easily follow from observations on the given
subquiver.
\end{proof}

\section{The generalized quantum affine Schur-Weyl duality} \label{sec: SW-duality}
In this section, we briefly recall the backgrounds and theories on
the generalized quantum affine Schur-Weyl duality developed in
\cite{KKK13a,KKK13b}. Thus, for precise statements and definitions
referred in this section, see
\cite{AK,HL11,KKK13a,KKK13b,Kas02,Oh14}.

\subsection{Quantum affine algebras and the category $\mathscr{C}_Q$.}
Let $I = \{0,1, \ldots, n \}$ be a set of indices. An {\it affine
Cartan datum} is a quadruple $(\cm,\wl,\Pi,\Pi^\vee)$ consisting of
\begin{enumerate}
\item[({\rm a})] a matrix $\cm$ of corank $1$, called the {\it affine Cartan matrix} satisfying
$$ ({\rm i}) \ a_{ii}=2 \ (i \in I), \quad ({\rm ii}) \ a_{ij} \in \Z_{\le 0}, \quad  ({\rm iii}) \ a_{ij}=0 \text{ if } a_{ji}=0$$
with $\mathsf{D}= {\rm diag}(\mathsf{d}_i \in \Z_{>0}  \mid i \in
I)$ making $\mathsf{D}\cm$ symmetric,
\item[({\rm b})] a free abelian group $\wl$ of rank $n+2$, the {\it weight lattice},
\item[({\rm c})] an linearly independent set $\Pi = \{ \alpha_i \mid i\in I \} \subset \wl$, the set of {\it simple roots},
\item[({\rm d})] an linearly independent set $\Pi^{\vee} = \{ h_i \mid i\in I\} \subset \wl^{\vee} := \Hom( \wl, \Z )$, the set of {\it simple coroots},
\end{enumerate}
which satisfy
\begin{itemize}
\item[(1)] $\langle h_i, \alpha_j \rangle  = a_{ij}$ for all $i,j\in I$,
\item[(2)] for each $i \in I$, there exists $\Lambda_i \in \wl$ such that $\langle h_i, \Lambda_j \rangle =\delta_{ij}$ for all $j \in I$.
\end{itemize}
 We call
$\rl = \sum_{i\in I}  \Z \alpha_i$ the {\it root lattice}, and
$\rl^+ = \sum_{i\in I} \Z_{\ge 0} \alpha_i$ the {\it positive cone}
of the root lattice. For $\beta=\sum_{i \in I} k_i \alpha_i \in
\rl^+$, the {\it height} of $\beta$ is defined by $|\beta|=\sum_{i
\in I} k_i$. We denote by $\delta=\sum_{i \in I}\mathsf{a}_i h_i$
the {\it null root} and by $c=\sum_{i \in I} \mathsf{c}_ih_i$ the
{\it center} (\cite[Chapter 4]{Kac}).

Set $\h = \Q \tens_\Z \wl^\vee$. Then there exists a symmetric
bilinear form $( \ , \ )$ on $\h^*$ satisfying
$$ \langle h_i,\lambda \rangle = \dfrac{  2(\alpha_i,\lambda)}{(\alpha_i,\alpha_i)} \quad \text{ for any } i \in I \text{ and } \lambda \in \h^*.$$
We normalize the bilinear form by $ \langle c,\lambda \rangle =
(\delta,\lambda)$ for any $\lambda \in \h^*.$

We choose $0 \in I$ as the leftmost vertices in the tables in
\cite[pages 54, 55]{Kac} except $A^{(2)}_{2n}$-case in which we take
the longest simple root as $\alpha_0$.

We denote $\g$ by the affine Kac-Moody algebra associated with the
Cartan datum $(\cm,\wl,\Pi,\Pi^\vee)$ and $\g_0$ by the subalgebra
of $\g$ generated by $e_i$, $f_i$ and $K_i$ for $i \in I_0$. Note
that $\g_0$ is a finite-dimensional simple Lie algebra.

Let $\gamma$ be the smallest positive integer such that
$$ \gamma(\alpha_i,\alpha_i)/2 \in \Z \quad \text{ for any } i \in I.$$
Note that $(\alpha_i,\alpha_i)/2$ takes the rational number
$1,2,1/2,1/3$ when $\g=G^{(1)}_2$.

For  an indeterminate $q$, $m,n \in \Z_{\ge 0}$ and $i\in I$, we
define $q_i = q^{(\alpha_i,\alpha_i)/2}$ and
\begin{equation*}
 \begin{aligned}
 \ &[n]_i =\frac{ q^n_{i} - q^{-n}_{i} }{ q_{i} - q^{-1}_{i} },
 \ &[n]_i! = \prod^{n}_{k=1} [k]_i ,
 \ &\left[\begin{matrix}m \\ n\\ \end{matrix} \right]_i=  \frac{ [m]_i! }{[m-n]_i! [n]_i! }.
 \end{aligned}
\end{equation*}

Let $\ko$ be an algebraic closure of $\C(q)$ in $\cup_{m
>0}\C((q^{1/m}))$.

\begin{definition} \label{Def: GKM}
The {\em quantum affine algebra} $U_q(\g)$ associated with
$(\cm,\wl,\Pi,\Pi^{\vee})$ is the associative $\ko$-algebra
generated by $e_i,f_i$ $(i \in I)$ and $q^{h}$ $(h \in
\gamma^{-1}\wl^{\vee})$ satisfying following relations:
\begin{enumerate}
  \item  $q^0=1, q^{h} q^{h'}=q^{h+h'} $ for $ h,h' \in \gamma^{-1}\wl^{\vee},$
  \item  $q^{h}e_i q^{-h}= q^{\langle h, \alpha_i \rangle} e_i,
          \ q^{h}f_i q^{-h} = q^{-\langle h, \alpha_i \rangle }f_i$ for $h \in \gamma^{-1}\wl^{\vee}, i \in I$,
  \item  $e_if_j - f_je_i =  \delta_{ij} \dfrac{K_i -K^{-1}_i}{q_i- q^{-1}_i }, \ \ \text{ where } K_i=q_i^{ h_i},$
  \item  $\displaystyle \sum^{1-a_{ij}}_{k=0}
  (-1)^ke^{(1-a_{ij}-k)}_i e_j e^{(k)}_i =  \sum^{1-a_{ij}}_{k=0} (-1)^k
  f^{(1-a_{ij}-k)}_i f_jf^{(k)}_i=0 \quad \text{ for }  i \ne j, $
\end{enumerate}
where $e_i^{(k)}=e_i^k/[k]_i!$ and $f_i^{(k)}=f_i^k/[k]_i!$.
\end{definition}
We denote by $U_q'(\g)$ the subalgebra of $U_q(\g)$ generated by
$e_i,f_i,K^{\pm 1}_i$ $(i \in I)$ and call it also the {\it quantum
affine algebra}.

We call $\wl_{\cl} \seteq \wl/\Z\delta$ the {\it classical weight
lattice}. For $\cl: \wl \to \wl_\cl$ the canonical projection, we
have
$$ \wl_\cl = \soplus_{i \in I} \Z \cl(\Lambda_i) \quad \text{and}
\quad \wl^\vee_\cl\seteq \Hom(\wl_\cl,\Z)= \{ h \in \wl^\vee \mid
\langle h,\delta \rangle =0 \} = \soplus_{i \in I} \Z h_i.$$ Set
$\Pi_\cl=\cl(\Pi)$ and $\Pi_\cl^\vee=\{ h_0\,\cdots,h_n\}$. Then we
can regard $U_q'(\g)$ as the quantum affine algebra associated with
the {\it classical affine Cartan datum}
$(\cm,\wl_\cl,\Pi_\cl,\Pi^\vee_\cl)$.

Set
$$\varpi_i={\rm gcd}(\mathsf{c}_0,\mathsf{c}_i)^{-1}(\mathsf{c}_0\Lambda_i-\mathsf{c}_i\Lambda_0) \in \wl \quad \text{ for } i \in I_0.$$ Then
$\{ \cl(\varpi_i) \mid i \in I_0\}$ forms a basis of $\wl_\cl^0
\seteq \{ \lambda \in \wl_\cl \mid \langle c , \lambda \rangle =0
\}$. The Weyl group $W_0$ of $\g_0$ acts on $\wl^0_\cl$ (\cite[\S
1.2]{AK}).

For $i \in I_0$, there exists a unique simple $U_q'(\g)$-module
$V(\va_i)$ (up to an isomorphism), called the {\it $i$th fundamental
module}, satisfying the certain properties (\cite[\S 1.3]{AK}).
Moreover, there exist the left dual $V(\varpi_i)^*$ and the right
dual ${}^*V(\varpi_i)$ of $V(\varpi_i)$ with the following
$U_q'(\g)$-homomorphisms
\begin{equation} \label{eq: dual}
 V(\varpi_i)^* \otimes V(\varpi_i)  \overset{{\rm tr}}{\longrightarrow} \ko \quad \text{ and } \quad
V(\varpi_i) \otimes {}^*V(\varpi_i)  \overset{{\rm
tr}}{\longrightarrow} \ko
\end{equation}
where
\begin{equation} \label{eq: p star}
V(\varpi_i)^*\seteq  V(\varpi_{i^*})_{(p^*)^{-1}}, \
{}^*V(\varpi_i)\seteq  V(\varpi_{i^*})_{p^*} \ \  \text{ and } \ \
p^* \seteq (-1)^{\langle \rho^\vee ,\delta
\rangle}q^{(\rho,\delta)}.
\end{equation}
Here $\rho$ is defined by  $\langle h_i,\rho \rangle=1$ and
$\rho^\vee$ is defined by $\langle \rho^\vee,\alpha_i  \rangle=1$
for all $i \in I$ (see \cite[\S 1.3]{AK}).

We say that a $U_q'(\g)$-module $M$ is {\it good} if it has a {\it
bar involution}, a crystal basis with {\it simple crystal graph},
and a {\it global basis} (see \cite{Kas02} for precise definitions).
For instance, every fundamental module is a good module. Note that
every good module is a simple $U_q'(\g)$-module.


\begin{definition} \cite{HL11} (see also \cite[\S 3.3]{KKK13b}) \label{def: Q module category}
Let $Q$ be a Dynkin quiver of finite type $A_n$ (resp. $D_n$) and
$U_q'(\g)$ be the quantum affine algebra of type $A^{(1)}_{n}$
(resp. $D^{(1)}_{n}$). For any positive root $\beta$ contained in
$\Phi_n^+$ associated to $\g_0$, we set the $U_q'(\g)$-module
$V_Q(\beta)$ defined as follows:
\begin{align} \label{V_Q(beta)}
V_Q(\beta) \seteq V(\va_i)_{(-q)^{p}} \quad \text{ where } \quad
\phi^{-1}(\beta,0)=(i,p).
\end{align}
We define the smallest abelian full subcategory
$\mathscr{C}^{(1)}_Q$ consisting of finite dimensional integrable
$U_q'(\g)$-modules such that
\begin{itemize}
\item[({\rm a})] it is stable by taking 
subquotient, tensor product and extension,
\item[({\rm b})] it contains $V_Q(\beta)$ for all $\beta \in \Phi_n^+$.
\end{itemize}
\end{definition}

For the rest of this subsection, we briefly recall the morphisms
between $U_q'(A^{(t)}_{n})$-modules $(t=1,2)$
$$ V(\varpi_i)_a \tens V(\varpi_j)_b \quad \text{and}  \quad V(\varpi_k)_c.$$
These kinds of morphisms are known as {\it Dorey's type morphisms}
and well-studied for the classical untwisted quantum affine algebras
of types $A^{(1)}_{n}$, $B^{(1)}_{n}$, $C^{(1)}_{n}$ and
$D^{(1)}_{n}$ in \cite{CP96}. Recently, the author investigated such
morphisms for the classical twisted quantum affine algebras of types
$A^{(2)}_{2n-1}$, $A^{(2)}_{2n}$ and $D^{(2)}_{n+1}$ in \cite{Oh14}.

\begin{theorem} \cite[Theorem 6.1]{CP96} \label{Thm: Dorey A_n(1)}
Let $U_q'(\g)$ be the affine algebra of type $A^{(1)}_{n}$. For $1
\le i,j,k \le n$ and $a,b,c \in \Z$,
$$ \Hom_{U_q'(\g)}\big( V(\varpi_i)_{(-q)^a} \tens V(\varpi_j)_{(-q)^b},V(\varpi_k)_{(-q)^c} \big) \neq 0 $$
if and only if one of the following holds:
\begin{itemize}
\item[{\rm(i)}] $i+j \le n$, $i+j=k$, $a-c=-j$ and $b-c=i$,
\item[{\rm(ii)}] $i+j>n$, $k=i+j-n-1$, $c-a=-n-1+j$ and $b-a=n+1-i$.
\end{itemize}
\end{theorem}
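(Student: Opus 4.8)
The plan is to work inside the $q$-wedge calculus for $U_q'(A_n^{(1)})$. Let $W=V(\varpi_1)$ be the $(n{+}1)$-dimensional vector representation with basis $\{v_1,\dots,v_{n+1}\}$ on which $e_i,f_i,K_i$ act by Jimbo's formulas, and write $W_a$ for its spectral shift. First I would recall, or reprove by the standard fusion argument, that for $1\le i\le n$ the fundamental module $V(\varpi_i)_a$ is realised as the image of the $R$-matrix $W_{aq^{i-1}}\otimes\cdots\otimes W_{aq^{-(i-1)}}\to W_{aq^{-(i-1)}}\otimes\cdots\otimes W_{aq^{i-1}}$, so that $V(\varpi_i)_a\cong\Lambda^i_q W$ with basis indexed by $i$-subsets of $\{1,\dots,n{+}1\}$, highest weight vector $v_1\wedge\cdots\wedge v_i$, and with the spectral datum recorded — after the $(-q)$-normalization — exactly by the exponent $p$ in $(-q)^p$ as in \eqref{V_Q(beta)}. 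Matching these conventions is the first bookkeeping step, and it also gives the identifications $\Lambda^{n+1}_q W\cong\ko$ (a spectral shift of the trivial module) and $\Lambda^j_q W\cong(\Lambda^{n+1-j}_q W)^*$ up to a shift, which will be needed in case~(ii).

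For sufficiency I would exhibit the two Dorey maps explicitly. In case~(i), with $i+j=k\le n$, take the $q$-antisymmetrized multiplication $\Lambda^i_q W\otimes\Lambda^j_q W\to\Lambda^{i+j}_q W$, $x\otimes y\mapsto x\wedge y$. In case~(ii), with $i+j>n$ and $k=i+j-n-1\ge 1$, use the top-form identification to rewrite $\Lambda^j_q W$ as a shift of $(\Lambda^{n+1-j}_q W)^*$ and then take the contraction $\Lambda^i_q W\otimes(\Lambda^{n+1-j}_q W)^*\to\Lambda^{\,i-(n+1-j)}_q W=\Lambda^k_q W$, which is legitimate since $i\ge n+1-j$. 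In both cases a direct computation with the coproduct and Jimbo's $R$-matrix shows these maps are $U_q(\g_0)$-equivariant automatically, and are $U_q'(\g)$-equivariant — i.e. also commute with $e_0$ and $f_0$ — \emph{precisely} when the three spectral parameters are locked together in a unique way; carrying out this computation and reading off that the locking is exactly $a-c=-j,\ b-c=i$ in case~(i) and $c-a=j-n-1,\ b-a=n+1-i$ in case~(ii) is the technical heart of the argument.

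For necessity I would argue in two stages. A weight constraint forces $\varpi_i+\varpi_j-\varpi_k$ into the root lattice of $\g_0$, which for type $A$ means $i+j\equiv k\pmod{n+1}$; together with $1\le k\le n$ this leaves only $k=i+j$ and $k=i+j-n-1$. To pin down the spectral parameters, a nonzero homomorphism $V(\varpi_i)_a\otimes V(\varpi_j)_b\to V(\varpi_k)_c$ exhibits $V(\varpi_k)_c$ as a quotient, hence as a composition factor, of $V(\varpi_i)_a\otimes V(\varpi_j)_b$; since fundamental modules are good and a tensor product of two goods is generically irreducible, such an occurrence forces the normalized ratio of $b$ and $a$ to be a root of the denominator $d_{i,j}(z)$ of $\Rnorm_{i,j}(z)$, and for $A_n^{(1)}$ these denominators (as recorded in \cite[Appendix A]{Oh14}) have exactly the roots producing the stated linear relations, while comparing highest $\ell$-weights fixes $c$. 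The step I expect to be the main obstacle is the $e_0,f_0$-equivariance check of the previous paragraph: one must verify by hand that ``multiply'' and ``contract'' commute with the affine generators, since it is there that the precise relations~(i) and~(ii) are \emph{produced} rather than merely checked.

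Finally, closer to the spirit of this paper, sufficiency can also be obtained combinatorially: given $(i,j,k)$ and parameters satisfying~(i) or~(ii), one shows there is a Dynkin quiver $Q$ of type $A_n$ and positive roots $\alpha\prec_Q\beta$ with $\gamma=\alpha+\beta$ such that $\phi^{-1}(\alpha,0),\phi^{-1}(\beta,0),\phi^{-1}(\gamma,0)$ realise the triple $(j,i,k)$ up to a common spectral shift — harmless, since (i)--(ii) constrain only differences — so that the surjection $V_Q(\beta)\otimes V_Q(\alpha)\twoheadrightarrow V_Q(\gamma)$ coming from the combinatorics of $\Gamma_Q$ supplies the nonzero Hom. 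Checking that every admissible $(i,j,k)$ does arise from some $Q$ is the one extra point, and it follows from Corollary~\ref{Cor: first and last} and the description of $\Gamma_Q$ obtained in \S\ref{sec: Combinatorics}.
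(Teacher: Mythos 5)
First, a point of comparison: the paper does not prove this statement at all --- it is quoted (after translation into the $(-q)^p$ normalization) from Chari--Pressley \cite[Theorem 6.1]{CP96} --- so there is no internal argument to measure yours against. Taken on its own terms, your \emph{sufficiency} argument is sound in outline and is essentially the standard one: realize $V(\varpi_i)_a$ by fusion inside tensor powers of the vector representation, obtain case (i) from the $q$-wedge multiplication, and obtain case (ii) by dualizing and tensoring with left/right duals. This is exactly the manipulation the paper itself performs in the twisted setting (compare the use of \eqref{eq: p ij} in the proof of Lemma \ref{Lem: height 2}, where an injection $V(\varpi_{k+1})\rightarrowtail V(\varpi_1)\otimes V(\varpi_k)$ is produced by dualizing and then converted into a surjection). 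Be warned that the ``bookkeeping'' you defer --- matching Chari--Pressley's spectral conventions to the $(-q)^p$ normalization of \eqref{V_Q(beta)} --- is genuinely delicate and is where sign and shift errors typically enter.

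The \emph{necessity} direction has a real gap. You argue that a nonzero hom forces $(-q)^{\pm(b-a)}$ to be a zero of $d_{i,j}(z)$ and then assert that these denominators ``have exactly the roots producing the stated linear relations.'' That is false: by \eqref{eq: dij A_n 1} the polynomial $d_{i,j}$ has $\min(i,j,n+1-i,n+1-j)$ zeros, and only the extremal ones can yield a fundamental quotient. For instance, for $n=3$ and $i=j=2$ one has $d_{2,2}(z)=(z-(-q)^2)(z-(-q)^4)$, yet neither case (i) ($i+j>n$) nor case (ii) ($k=0$) applies, so there is \emph{no} nonzero hom onto any fundamental module for any choice of parameters: at $b-a=4$ the simple head of $V(\varpi_2)\otimes V(\varpi_2)$ is the trivial module, and at $b-a=2$ it is a $21$-dimensional Kirillov--Reshetikhin-type module. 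Nor can ``comparing highest $\ell$-weights'' discard the bad roots for free: at a zero of the denominator the tensor product is not cyclic on the product of the $\ell$-highest weight vectors, so the head's highest $\ell$-weight is \emph{not} the product of the two Drinfeld polynomials, and identifying it requires knowing the composition series (via $q$-characters, a dimension count against the length-two decomposition, or iterated adjunctions $\Hom(V_i\otimes V_j,V_k)\cong\Hom(V_i,V_k\otimes V_j^{*})$ which generate further denominator constraints). None of this is in your write-up. Finally, your combinatorial fallback yields sufficiency only, only for triples realized in some $\Gamma_Q$, and inside this paper it would sit uncomfortably close to circularity, since Corollary \ref{Cor: Dorey on Gamma} cites the present theorem as one of its two justifications.
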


In this paper, we denote the $D^{(2)}_3$ in \cite{Kac} by
$A^{(2)}_3$ with the following enumeration on Dynkin diagram:
$$\xymatrix@R=3ex{*{\circ}<3pt>  \ar@{<=}[r]_<{0 \  \  } & *{\circ}<3pt> \ar@{=>}[r]_<{2 \  \ } \ar@{}[r]_>{\,\,\  1}  & *{\circ}<3pt>}$$

\begin{theorem} \cite[Theorem 3.5, Theorem 3.9]{Oh14} 
\begin{enumerate}
\item[({\rm a})] For $i+j=k \le n$, there exists a surjective $U_q'(A^{(2)}_{2n-1})$-module $($resp. $U_q'(A^{(2)}_{2n})$-module$)$ homomorphism
\begin{align} \label{eq: p ij}
  p_{i,j} \colon V(\varpi_i)_{(-q)^{-j}} \otimes V(\varpi_j)_{(-q)^{i}} \twoheadrightarrow V(\varpi_{k}).
\end{align}
\item[({\rm b})] There exists a surjective $U_q'(A^{(2)}_{2n})$-module homomorphism
\begin{align} \label{eq: p 1nn}
  p_{1,n} \colon V(\varpi_n)_{(-q)^{-1}} \otimes V(\varpi_1)_{(-q)^{n}}
  \twoheadrightarrow V(\varpi_{n}).
  \end{align}
\end{enumerate}
\end{theorem}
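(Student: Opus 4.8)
The plan is to derive these surjections from a direct computation of the denominators of the normalized $R$-matrices between the relevant fundamental modules, together with an explicit ``$q$-deformed wedge'' map that identifies the resulting quotient. For part (a), fix $(i,j)$ with $i+j=k\le n$. I would first realize the fundamental modules $V(\va_i)$, $V(\va_j)$, $V(\va_k)$ over $U_q'(A^{(2)}_{2n-1})$ (resp. $U_q'(A^{(2)}_{2n})$) from the Drinfeld realization and the vector representation, and compute the denominator $d_{V(\va_i),V(\va_j)}(z)$ of $\Rnorm_{i,j}(z)$ by following the action on dominant and extremal weight vectors, as in the derivation of the denominator tables. The expected output is that $d_{V(\va_i),V(\va_j)}(z)$ has a simple zero at the spectral parameter with $(-q)^{i}/(-q)^{-j}=(-q)^{i+j}$, which forces $V(\va_i)_{(-q)^{-j}}\otimes V(\va_j)_{(-q)^{i}}$ to be reducible, with simple head and simple socle by the standard theory of good modules.

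To pin down the head, note that the classical highest weight of $M\seteq V(\va_i)_{(-q)^{-j}}\otimes V(\va_j)_{(-q)^{i}}$ is $\cl(\va_i)+\cl(\va_j)$, and $\cl(\va_i)+\cl(\va_j)-\cl(\va_k)\in\rl^+$, consistent with $V(\va_k)$ occurring as a quotient. I would exhibit a nonzero $U_q(\g_0)$-linear map $M\to V(\va_k)$ deforming the classical antisymmetrization $\wedge^{\,i}\otimes\wedge^{\,j}\to\wedge^{\,i+j}$, and then check that at the specific spectral parameters $(-q)^{-j}$, $(-q)^{i}$ it is in fact $U_q'(\g)$-linear. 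The $U_q(\g_0)$-part of the equivariance is classical, so the only genuine verification is commutation with $e_0$ and $f_0$; the role of the exponents $-j$ and $i$ is precisely to make the spectral bookkeeping at the affine node close up. Surjectivity is then automatic, the image being a nonzero submodule of the simple module $V(\va_k)=V(\va_k)_{(-q)^0}$.

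For part (b) the same strategy applies, but the combinatorics is exceptional to $A^{(2)}_{2n}$: the ``resonance'' is $\va_n+\va_1$ against $\va_n$ rather than $\va_i+\va_j$ against $\va_{i+j}$, reflecting the special structure of $A^{(2)}_{2n}$ near its affine node. I would compute $d_{V(\va_n),V(\va_1)}(z)$ directly, locate its zero at $z=(-q)^{n+1}$ (so that $(-q)^{n}/(-q)^{-1}=(-q)^{n+1}$), and read off from the structure of the reducible tensor product $V(\va_n)_{(-q)^{-1}}\otimes V(\va_1)_{(-q)^{n}}$ that its unique quotient is $V(\va_n)$, using a $q$-character computation to see that the highest monomial of $V(\va_n)$ occurs and that no smaller fundamental module can intervene.

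The main obstacle is the second step in each part: promoting the classical antisymmetrization (resp., in part (b), the exceptional contraction) to an honest $U_q'(\g)$-homomorphism at the stated spectral parameters, i.e. showing that $V(\va_k)$ (resp. $V(\va_n)$) really appears as a \emph{quotient} of the reducible tensor product and not merely as a subquotient --- equivalently, the precise determination of the kernel and cokernel of $\Rnorm_{i,j}$ at the zero of its denominator. This is where the explicit module structure of the twisted fundamental modules, rather than formal properties, must be used, and the exceptional case (b) is the most delicate. An alternative would be to deduce (a) and (b) from the untwisted Dorey rule (Theorem \ref{Thm: Dorey A_n(1)}) via a folding/twisting functor from modules over $U_q'(A^{(1)}_{2n-1})$ (resp. $U_q'(A^{(1)}_{2n})$) together with a rescaling of spectral parameters; the matching of exponents is encouraging, but making such a functor monoidal in the $A^{(2)}_{2n}$ case is itself subtle, so I would keep the $R$-matrix computation as the primary route.
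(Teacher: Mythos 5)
This theorem is not proved in the paper at all: it is quoted verbatim from \cite[Theorems 3.5 and 3.9]{Oh14}, so there is no internal argument to compare against, and your proposal has to be judged as a proof of the cited result. Your reducibility step is sound: by \eqref{eq: dij A_n^2} the factor with $s=\min(i,j)$ gives a zero of $d_{i,j}(z)$ at $z=(-q)^{i+j}=(-q)^{i}/(-q)^{-j}$, and likewise $d_{n,1}(z)$ for $A^{(2)}_{2n}$ vanishes at $(-q)^{n+1}$, so the tensor products in question are reducible with simple head. But reducibility plus simplicity of the head does not identify the head as $V(\va_k)$, and the step you propose for that identification rests on a picture that is wrong for twisted types: for $A^{(2)}_{2n-1}$ and $A^{(2)}_{2n}$ the classical subalgebra $\g_0$ is of type $C_n$, and $V(\va_i)$ is \emph{not} irreducible over $U_q(\g_0)$ for $i\ge 2$ --- it decomposes as $\bigoplus_{s\ge 0}V_{\g_0}(\bar{\va}_{i-2s})$. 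There is therefore no single ``classical antisymmetrization $\wedge^{i}\otimes\wedge^{j}\to\wedge^{i+j}$'' to deform; a candidate intertwiner must be prescribed on all the lower constituents as well, and the $e_0,f_0$-equivariance check cannot be localized to one matrix coefficient. This is precisely the ``main obstacle'' you flag, and as written it is left open.

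The route actually used for these twisted Dorey morphisms is different and avoids the obstacle: one verifies the base case $V(\va_1)_{(-q)^{-1}}\otimes V(\va_1)_{(-q)}\twoheadrightarrow V(\va_2)$ by direct computation in the explicitly known vector representation, realizes each $V(\va_k)$ by fusion as the simple head of an ordered tensor product of spectrally shifted copies of $V(\va_1)$, and then produces the general $p_{i,j}$ --- and the exceptional $p_{1,n}$ of \eqref{eq: p 1nn}, which encodes the extra behaviour of the $A^{(2)}_{2n}$ end node --- by composing these surjections and passing to left and right duals as in \eqref{eq: dual}. Your fallback suggestion of transporting Theorem \ref{Thm: Dorey A_n(1)} through a folding correspondence runs into exactly the difficulty you yourself name: the correspondence of \cite{Her10} underlying \eqref{eq: mathsf V} is an isomorphism of Grothendieck rings, not a monoidal functor, so it cannot carry a surjection of tensor products across. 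In short, the proposal correctly locates the relevant zeros of the denominators but does not supply the construction of the surjection itself, which is the substance of the theorem.
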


\subsection{The denominators of normalized $R$-matrices.}
For good modules $M$ and $N$, we have an {\it intertwiner} between
$U_q'(\g)$-modules
\begin{equation} \label{Def: R(MN)} \Rnorm_{M,N}: M_\aff \otimes N_\aff \to \ko(z_M,z_N) \otimes_{\ko[z_M^{\pm 1},z_N^{\pm 1}]}
 N_\aff \otimes M_\aff \end{equation}
which satisfies
$$ \Rnorm_{M,N} \circ z_M = z_M \circ \Rnorm_{M,N}, \ \Rnorm_{M,N} \circ z_N = z_N \circ \Rnorm_{M,N} \
\text{ and } \ \Rnorm_{M,N}(\mathsf{v}_{M}\otimes
\mathsf{v}_{N})=\mathsf{v}_{N} \otimes \mathsf{v}_{M}.$$ Here
$M_\aff$ is the {\it affinization} of $M$ and $z_M$ is the
$U_q'(\g)$-automorphism of $M_\aff$ of weight $\delta$. We call
$\Rnorm_{M,N}$ the {\it normalized $R$-matrix}.

Note that $$\Rnorm_{M,N}(M_\aff \otimes N_\aff) \subset \ko(z_N/z_M)
\otimes_{\ko[(z_N/z_M)^{\pm 1}]}(N_\aff \otimes M_\aff) $$ and there
exists a unique monic polynomial $d_{M,N}(u) \in \ko[u]$ such that
\begin{equation}\label{Def: d(MN)}
d_{M,N}(z_N/z_M)\Rnorm_{M,N}(M_\aff \otimes N_\aff) \subset (N_\aff
\otimes M_\aff).
\end{equation}
We call $d_{M,N}(u)$ the {\it denominator} of $\Rnorm_{M,N}$.

\begin{theorem} \cite{AK,Kas02}  \label{Thm: basic properties} 
\begin{enumerate}
\item For good modules $M_1$ and $M_2$, the zeros of $d_{M_1,M_2}(z)$ belong to
$\C[[q^{1/m}]]\;q^{1/m}$ for some $m\in\Z_{>0}$.
\item $ V(\va_i)_{a_i} \otimes  V(\va_j)_{a_j}$ remains simple if and only if
$$ d_{i,j}(z)\seteq d_{V(\va_i),V(\va_j)}(z) $$ does not have
zeros at $z=a_i/a_j$ or $a_j/a_i$.
\item
Let $M$ be a finite dimensional simple integrable $U'_q(\g)$-module
$M$. Then, there exists a finite sequence $$\left( (i_1,a_1),\ldots,
(i_l,a_l)\right) \text{ in } (I_0\times \ko^\times)^l$$ such that
$d_{i_k,i_{k'}}(a_{k'}/a_k) \not=0$ for $1\le k<k'\le l$ and $M$ is
isomorphic to the head of
$\bigotimes_{i=1}^{l}V(\varpi_{i_k})_{a_k}$. Moreover, such a
sequence $\left((i_1,a_1),\ldots, (i_l,a_l)\right)$ is unique up to
permutation.
\end{enumerate}
\end{theorem}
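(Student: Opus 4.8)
The plan is to deduce all three statements from the structure theory of \emph{good} $U_q'(\g)$-modules together with the analytic properties of the normalized $R$-matrix $\Rnorm_{M,N}$ of \eqref{Def: R(MN)} and its denominator $d_{M,N}(u)$ of \eqref{Def: d(MN)}. Throughout I would use that a good module carries a global/crystal basis, is simple, and --- after a suitable choice of spectral parameters --- that a tensor product of good modules is generated by the tensor product of the highest weight vectors and, dually, has simple socle. For \emph{Part (1)}, I would compare $\Rnorm_{M,N}$ with the universal $R$-matrix $\Runiv_{M,N}$ on $M_\aff \otimes N_\aff$: the two differ by a scalar formal power series in $z = z_N/z_M$, and $d_{M,N}(z)$ records precisely the finitely many points where $\Rnorm_{M,N}$ fails to be regular. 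Reducing modulo $q$, the universal $R$-matrix becomes the combinatorial $R$-matrix on $B(M)\otimes B(N)$ and is in particular regular at $q=0$; estimating the $q$-valuation of the matrix coefficients of $\Rnorm_{M,N}(z)$ against a global basis then forces the zeros of $d_{M,N}(z)$ to have strictly positive $q$-valuation, i.e.\ to lie in $\C[[q^{1/m}]]\,q^{1/m}$ for a suitable $m\in\Z_{>0}$ (the ramification already present in $\ko$). This is the rationality-and-poles analysis of \cite{AK,Kas02}.

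For \emph{Part (2)}, suppose $d_{i,j}(z)$ vanishes neither at $z=a_i/a_j$ nor at $z=a_j/a_i$. Then both $\Rnorm_{V(\va_i)_{a_i},V(\va_j)_{a_j}}$ and $\Rnorm_{V(\va_j)_{a_j},V(\va_i)_{a_i}}$ specialize to honest $U_q'(\g)$-homomorphisms, and since their composite fixes the tensor product of highest weight vectors it is a nonzero scalar, so each is an isomorphism. As $V(\va_i)_{a_i}\otimes V(\va_j)_{a_j}$ is cyclic on its highest weight vector and cocyclic on the opposite extremal vector, this isomorphism identifies its head with its socle, hence the module is simple. Conversely, if $d_{i,j}$ vanishes at one of the two points, multiplying $\Rnorm$ by the corresponding vanishing factor produces a homomorphism whose image is a proper nonzero submodule, so the tensor product is reducible.

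For \emph{Part (3)}, existence comes from the fact that the highest weight of a finite dimensional simple $M$ is a product of fundamental weights with spectral parameters (equivalently, $M$ has Drinfeld polynomials), so $M$ is a quotient --- being simple, the head --- of some $\bigotimes_{k=1}^{l} V(\va_{i_k})_{a_k}$ with the $a_k$ arranged so that the tensor product is cyclic. To upgrade this to $d_{i_k,i_{k'}}(a_{k'}/a_k)\neq 0$ for all $k<k'$, I would fix a total order on the finite set of spectral parameters compatible with $q$-valuation and repeatedly interchange adjacent factors via the isomorphisms of Part (2); by Part (1) the only denominators that can vanish do so at parameters differing by a positive power of $q$, so a potentially obstructing pair can always be shuffled past the others. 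For uniqueness, the multiset $\{(i_k,a_k)\}$ is read off from $M$ itself --- it is the leading term of the $q$-character of $M$, equivalently the data of its Drinfeld polynomials --- so any two admissible sequences with the same head agree as multisets. The step I expect to be most delicate is exactly this reordering: one must arrange $d_{i_k,i_{k'}}(a_{k'}/a_k)\ne 0$ \emph{simultaneously} for all pairs and show the head really pins the sequence down up to permutation, both of which rest on the precise location of the denominator zeros from Part (1) together with the fact, used in Part (2), that a genuine pole of $\Rnorm_{M,N}(z)$ forces a proper submodule, so that no admissible ordering can conceal a tensor factor.
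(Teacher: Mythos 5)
This theorem is not proved in the paper at all: it is quoted verbatim from \cite{AK,Kas02} and used as a black box, so there is no internal argument to compare your proposal against. What you have written is a plausible reconstruction of the proofs in those references, and the overall architecture (valuation estimate on the denominator zeros; unitarity of the two normalized $R$-matrices plus cyclicity to get simplicity; Drinfeld-polynomial existence plus reordering for part (3)) is the right one. But as a proof it is an outline whose hardest steps are exactly the ones deferred. In part (1), the assertion that ``estimating the $q$-valuation of the matrix coefficients against a global basis forces the zeros to have positive valuation'' is the entire analytic content of the Akasaka--Kashiwara argument; nothing in your sketch explains why the comparison with the combinatorial $R$-matrix at $q=0$ bounds the valuation of the \emph{zeros} of $d_{M,N}$ rather than merely the valuation of the matrix entries. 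In part (2), the claim that $V(\va_i)_{a_i}\otimes V(\va_j)_{a_j}$ is cyclic on the tensor product of dominant extremal vectors (and cocyclic dually) is itself a nontrivial theorem requiring the global basis, not a formal consequence of goodness; without it the step ``isomorphism $\Rightarrow$ head $=$ socle $\Rightarrow$ simple'' does not close.

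In part (3) the reordering step needs to be pinned down: the correct statement is that one sorts the factors so that the $q$-adic valuations of $a_1,\dots,a_l$ are weakly decreasing, whence $a_{k'}/a_k$ has non-positive valuation for $k<k'$ and, by part (1), cannot be a zero of $d_{i_k,i_{k'}}$. Your phrase ``compatible with $q$-valuation'' gestures at this but leaves open whether ties and the simultaneity of the conditions are handled; in fact they are, precisely because the zeros have \emph{strictly} positive valuation, so no adjacent-transposition argument is needed at all --- a single global sort works. Finally, for uniqueness you should say why the head of such an ordered product determines the multiset $\{(i_k,a_k)\}$: the standard route is that the ordering condition forces the highest $\ell$-weight (equivalently the Drinfeld polynomials) of the head to be the product of those of the factors, and fundamental $\ell$-weights are ``prime'' in the monoid of Drinfeld polynomials. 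As written, ``the multiset is read off from the $q$-character'' assumes rather than proves that the surjection onto the head preserves this leading term.
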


Thus the information on zeros of denominators between fundamental
modules is crucial to investigate the finite dimensional integrable
$U_q'(\g)$-modules. The denominators between fundamental modules are
calculated in \cite{AK,DO94,KKK13b,Oh14} for all classical quantum
affine algebras.

\begin{theorem} \label{Thm; denominator} \cite{DO94,Oh14}
For $1\leq k,l \leq n$, we have
\begin{align}
& d_{k,l}(z)= \hspace{-3ex}  \displaystyle \prod_{s=1}^{ \min(k,l,n+1-k,n+1-l)} \hspace{-6ex} (z-(-q)^{2s+|k-l|}) \qquad\qquad\quad \ \ \  \text{ if $\g=A_n^{(1)}$}, \label{eq: dij A_n 1} \\
& d_{k,l}(z)= \hspace{-2ex} \displaystyle \prod_{s=1}^{\min(k,l)}
\hspace{-1ex} (z-(-q)^{2s+|k-l|})(z-(p^*)(-q)^{2s-k-l})  \text{ if
$\g=A_{2n-1}^{(2)}$ or $A_{2n}^{(2)}$}. \label{eq: dij A_n^2}
\end{align}
\end{theorem}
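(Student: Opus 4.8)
The formulas \eqref{eq: dij A_n 1} and \eqref{eq: dij A_n^2} are due to Date--Okado \cite{DO94} in the untwisted case and to the author \cite{Oh14} in the twisted case; here is the line of argument I would follow. The plan is to reduce the computation of every $d_{k,l}(z)$ to that of $d_{1,1}(z)$ by realizing each fundamental module $V(\varpi_k)$, after \cite{AK}, as the image of a composition of normalized $R$-matrices inside a tensor product of copies of the (twisted or untwisted) natural module $V(\varpi_1)$, and then propagating the pole data through this ``fusion'' construction. One first determines $d_{1,1}(z)$: for $A_n^{(1)}$ the module $V(\varpi_1)$ is the $(n+1)$-dimensional vector representation, $\Rnorm_{V(\varpi_1),V(\varpi_1)}(z)$ is the classical trigonometric $R$-matrix on it, and inspecting its scalar eigenvalues on the two summands of $V(\varpi_1)\otimes V(\varpi_1)$ gives $d_{1,1}(z)=z-(-q)^2$; for $A_{2n-1}^{(2)}$ and $A_{2n}^{(2)}$ one runs the same computation with the twisted natural module, where $V(\varpi_1)\otimes V(\varpi_1)$ acquires an additional constituent coming from the folded node, and it is exactly this constituent that introduces the parameter $p^*$ of \eqref{eq: p star} into $d_{1,1}(z)$.

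For general $k$ one uses that $V(\varpi_k)_{(-q)^c}$ occurs as the unique good submodule, equivalently as $\mathrm{Im}\,\Rnorm$, of
$$ V(\varpi_1)_{(-q)^{c-k+1}}\otimes V(\varpi_1)_{(-q)^{c-k+3}}\otimes\cdots\otimes V(\varpi_1)_{(-q)^{c+k-1}} $$
(suitably ordered, and with the evident analogue --- correct elementary modules and spectral shifts --- in the twisted cases). Since $\Rnorm$ between subquotients of tensor products is assembled from the $\Rnorm$'s of the tensor factors, it follows formally that $d_{k,l}(z)$ divides a product of the relevant shifts of $d_{1,1}$; bookkeeping which shifts $(-q)^{2s+|k-l|}$ (resp. $p^*(-q)^{2s-k-l}$) actually survive the overlaps, together with a degree and monicity count, pins down the ranges $1\le s\le\min(k,l,n+1-k,n+1-l)$ versus $1\le s\le\min(k,l)$ and yields the divisibility ``$d_{k,l}$ divides the asserted product''. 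The reverse divisibility --- that every asserted zero is genuinely a zero of $d_{k,l}$, i.e. that $V(\varpi_k)_{(-q)^a}\otimes V(\varpi_l)_{(-q)^b}$ is reducible at each claimed value $z=b/a$ --- is the step I expect to be the main obstacle. I would settle it either by exhibiting explicit nonzero $U_q'(\g)$-homomorphisms out of these tensor products into suitable Kirillov--Reshetikhin or fundamental modules, in the spirit of the Dorey-type maps recalled in Theorem~\ref{Thm: Dorey A_n(1)}, or, more robustly, by a $q$-character or dimension comparison of the head and the socle showing they cannot coincide there.

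One last bookkeeping point arises only in the twisted case: the factor involving $p^*$ must be tracked consistently through the fusion construction and through the crossing-symmetry relation between $d_{M,N}$ and the denominator attached to the dual modules, using $V(\varpi_i)^*=V(\varpi_{i^*})_{(p^*)^{-1}}$ and ${}^*V(\varpi_i)=V(\varpi_{i^*})_{p^*}$ from \eqref{eq: p star}; this is what produces the second factor $z-p^*(-q)^{2s-k-l}$ with the correct power of $p^*$, and it also explains why $A_{2n-1}^{(2)}$ and $A_{2n}^{(2)}$ give the same shape of answer, since the computation only involves the indices $I_0=\{1,\dots,n\}$. Combining the two divisibilities with the monicity of $d_{k,l}(z)$ then establishes \eqref{eq: dij A_n 1} and \eqref{eq: dij A_n^2}.
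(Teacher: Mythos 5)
The paper does not prove this theorem at all: it is imported verbatim from \cite{DO94} (untwisted case) and \cite{Oh14} (twisted cases), so there is no internal argument to compare yours against. Your outline is a fair reconstruction of how these formulas are actually obtained in those references --- start from the explicit $R$-matrix on the natural module to get $d_{1,1}$, realize $V(\varpi_k)$ by fusion inside $V(\varpi_1)^{\otimes k}$ with $q$-shifted spectral parameters, and use the duality data of \eqref{eq: p star} to track the $p^*$-factors in the twisted cases --- so the \emph{strategy} is the right one.

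As a proof, however, the proposal has two genuine gaps, both of which you partly acknowledge but neither of which is dispensable bookkeeping. First, the upper bound: the fusion construction gives that the poles of $\Rnorm_{V(\varpi_k),V(\varpi_l)}$ lie among those of the $kl$ shifted copies of $\Rnorm_{V(\varpi_1),V(\varpi_1)}$, so the naive divisor has $kl$ linear factors (indeed with repetitions), while the asserted answer has only $\min(k,l,n+1-k,n+1-l)$ of them. A ``degree and monicity count'' therefore cannot pin down the answer; identifying exactly which factors survive is the entire computational content of \cite{DO94,Oh14}, and in particular the truncation by $n+1-k$ and $n+1-l$ in \eqref{eq: dij A_n 1} is invisible from the fusion picture alone (it reflects the vanishing of the relevant matrix coefficients, not a generic overlap count). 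Second, the lower bound --- that each listed point is genuinely a zero --- is the step you defer, and it cannot be settled purely by the Dorey-type surjections of Theorem \ref{Thm: Dorey A_n(1)}: those certify reducibility only at the two extreme values $s=1$ and $s=\min(k,l)$ (via $V(\varpi_k)\otimes V(\varpi_l)\twoheadrightarrow V(\varpi_{k+l})$ and its dual version), whereas the intermediate zeros $2s+|k-l|$ for $1<s<\min(\cdots)$ require surjections onto non-fundamental simple modules (Kirillov--Reshetikhin-type heads) or an explicit evaluation of the normalized $R$-matrix on suitable vectors, which is what \cite{Oh14} carries out case by case. Until both of these are supplied, the argument establishes only that $d_{k,l}(z)$ divides a much larger product, not the stated equality.
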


\subsection{Quiver Hecke algebras and their finite dimensional modules.}
We now recall the definition of the quiver Hecke algebra. Let $I$ be
an index set and $\ko$ be a field. For any symmetrizable Cartan
datum $(\cm,\wl,\Pi,\Pi^{\vee})$, $\mathcal{Q}_{i,j}(u,v)\in
\ko[u,v]$ $(i,j\in I)$ denote polynomials satisfying the following
conditions:
\begin{align} \label{eq: condition A}
\mathcal{Q}_{i,j}(u,v) = \left\{
                 \begin{array}{ll}
                   \sum_{(\alpha_i,\alpha_i)p+(\alpha_j,\alpha_j)q =-2(\alpha_i,\alpha_j)} t_{i,j;p,q} u^pv^q & \hbox{if } i \ne j,\\
                   0 & \hbox{if } i=j,
                 \end{array}
               \right.
\end{align}
where $t_{i,j;p,q}=t_{j,i;q,p} \in \ko$ and $t_{i,j;-a_{ij},0} \ne
0$. Thus we have $\mathcal{Q}_{i,j}(u,v) = \mathcal{Q}_{j,i}(v,u)$.
Note that the symmetric group on $m$-letters, $\mathfrak{S}_m=
\langle \mathfrak{s}_1,\mathfrak{s}_2,\ldots,\mathfrak{s}_{m-1}
\rangle$, acts on $I^m$ by place permutations.

\begin{definition} \cite{KL09,KL11,R08} \
The {\it quiver Hecke algebra} $R(m)$ associated with polynomials
$(\mathcal{Q}_{i,j}(u,v))_{i,j\in I}$ is the $\Z$-graded
$\ko$-algebra defined by three sets of generators
$$\{e(\nu) \mid \nu = (\nu_1,\ldots, \nu_m) \in I^m\}, \;\{x_k \mid 1 \le k \le m\}, \;\{\tau_l \mid 1 \le l \le m-1\} $$
satisfying the following relations:
\begin{align*}
& e(\nu) e(\nu') = \delta_{\nu,\nu'} e(\nu),\ \sum_{\nu \in I^{m}}
e(\nu)=1,\
x_k e(\nu) =  e(\nu) x_k, \  x_k x_l = x_l x_k, \allowdisplaybreaks \\
& \tau_l e(\nu) = e(\mathfrak{s}_l(\nu)) \tau_l,\  \tau_k \tau_l =
\tau_l \tau_k \text{ if } |k - l| > 1,
\  \tau_k^2 e(\nu) = \mathcal{Q}_{\nu_k, \nu_{k+1}}(x_k, x_{k+1}) e(\nu), \allowdisplaybreaks  \\[5pt]
&  (\tau_k x_l - x_{\mathfrak{s}_k(l)} \tau_k ) e(\nu) = \left\{
                                                           \begin{array}{ll}
                                                             -  e(\nu) & \hbox{if } l=k \text{ and } \nu_k = \nu_{k+1}, \\
                                                               e(\nu) & \hbox{if } l = k+1 \text{ and } \nu_k = \nu_{k+1},  \\
                                                             0 & \hbox{otherwise,}
                                                           \end{array}
                                                         \right. \\[5pt]
&( \tau_{k+1} \tau_{k} \tau_{k+1}\hspace{-0.3ex} - \hspace{-0.3ex}
\tau_{k} \tau_{k+1} \tau_{k} ) e(\nu) \hspace{-0.5ex}
=\hspace{-0.5ex}\delta_{\nu_{k},\nu_{k+2}}\hspace{-1ex}\dfrac{\mathcal{Q}_{\nu_k,\nu_{k+1}}(x_k,x_{k+1})
\hspace{-0.3ex}-\hspace{-0.3ex}
\mathcal{Q}_{\nu_k,\nu_{k+1}}(x_{k+2},x_{k+1})}{x_{k}-x_{k+2}}
e(\nu).
\end{align*}
\end{definition}

By assigning $\Z$-grading on generator as below, $R(m)$ becomes
$\Z$-graded:
\begin{align*}
\deg(e(\nu))=0, \quad \deg(x_k e(\nu))=
(\alpha_{\nu_k},\alpha_{\nu_k}), \quad  \deg(\tau_l
e(\nu))=-(\alpha_{\nu_l},\alpha_{\nu_{l+1}}).
\end{align*}

We denote the direct sum of the Grothendieck groups of the
categories $\Rep(R(m))$ of finite dimensional graded $R(m)$-modules
by
$$ [\Rep(R)] = \bigoplus_{ m \in \Z_{\ge 0}} [\Rep(R(m))]. $$
Note that $[\Rep(R)]$ has a free $\A \seteq \Z[q,q^{-1}]$-module
structure induced from the $\Z$-grading on $R(m)$, i.e. $(qM)_k =
M_{k-1}$ for a graded module $M = \bigoplus_{k \in \Z} M_k $.

For $m,n \in \Z_{\ge 0}$, let
$$
R(m) \otimes R(n) \to R(m+n)
$$
be the $\ko$-algebra homomorphism given by $e(\mu) \otimes e(\nu)
\mapsto e(\mu * \nu) \ (\mu \in I^m, \nu \in I^n)$, $x_k\otimes 1
\mapsto x_k \ (1 \le k \le m)$, $1\otimes x_k \mapsto x_{m+k} \ (1
\le k \le n)$, $\tau_k\otimes 1 \mapsto \tau_k \ (1 \le k <n)$,
$1\otimes \tau_k \mapsto \tau_{m+k} \ (1 \le k <n)$, where $\mu *
\nu $ denotes the concatenation of $\mu $ and $\nu$.

For an $R(m)$-module $M$ and an $R(n)$-module $N$, the induced
$R(m+n)$-module
$$
M \conv N \seteq R(m+n) \otimes_{R(m) \otimes R(n)} (M \otimes N)
$$
is called the \emph{convolution product of $M$ and $N$ }. For $M \in
\Rep(m)$, $M_k \in \Rep(m_k)$ ($k =1,\ldots, n$) and $r \in
\Z_{>0}$, let
\begin{align*}
M^{\conv 0} = \ko, \qquad M^{\conv r} = \underbrace{M\conv \cdots
\conv M}_r, \qquad \dct{k=1}{n} M_k = M_1 \conv \cdots \conv M_n.
\end{align*}

For $\beta \in \rl^+$ with $|\beta|=m$, We define $I^\beta$ and
$e(\beta)$ as follows:
$$ I^\beta = \{ \nu = (\nu_1,\ldots,\nu_m) \in I^m \ | \ \alpha_{\nu_1}+ \cdots + \alpha_{\nu_m}=\beta \} \quad \text{ and } \quad
e(\beta) = \sum_{\nu \in I^\beta} e(\nu).$$ Then $e(\beta)$ becomes
a central idempotent. We define the subalgebra $R(\beta)$ of $R(m)$
by
$$R(\beta) \seteq R(m)e(\beta)$$ and call it the {\it quiver Hecke algebra
at $\beta$}.

\begin{theorem} \cite{KL09,KL11,R08} For any symmetrizable
Cartan datum $(\cm,\wl,\Pi,\Pi^{\vee})$, take
$(\mathcal{Q}_{i,j}(u,v))_{i,j \in I}$ satisfying \eqref{eq:
condition A}. Then there exists an $\A$-algebra isomorphism
$$ U_\A^-(\g)^\vee \simeq  [\Rep(R)]$$
where the multiplication of $[\Rep(R)]$ is given by the {\it
convolution product}.
\end{theorem}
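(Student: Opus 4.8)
The plan is to establish the equivalent dual statement that $K_0(R\text{-}\mathrm{proj})\seteq\bigoplus_{\beta\in\rl^+}K_0(R(\beta)\text{-}\mathrm{proj})$, the split Grothendieck group of finitely generated projective $\Z$-graded $R$-modules equipped with the convolution product, is isomorphic as an $\A$-algebra to the integral form $U_\A^-(\g)$ itself, and then to dualize. Since $R=\bigoplus_\beta R(\beta)$ and each $R(\beta)$ is module-finite over a central polynomial subalgebra (so that finitely generated $\Z$-graded $R(\beta)$-modules form a Krull--Schmidt category), both $K_0(R\text{-}\mathrm{proj})$ and $[\Rep(R)]$ are free $\A$-modules graded by $\rl^+$, and the Cartan pairing $\langle[P],[M]\rangle=\mathrm{gdim}\,\Hom_{R(\beta)}(P,M)$ is a perfect $\A$-bilinear pairing between $K_0(R(\beta)\text{-}\mathrm{proj})$ and $[\Rep(R(\beta))]$.

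First I would build an $\A$-algebra homomorphism $\Phi\colon U_\A^-(\g)\to K_0(R\text{-}\mathrm{proj})$. On a Chevalley generator the value is forced, $R(\alpha_i)\cong\ko[x_1]$ being a polynomial ring with unique indecomposable projective $\ko[x_1]$; put $\Phi(f_i)=[\ko[x_1]]$. More generally $R(n\alpha_i)$ is the nilHecke algebra, isomorphic to $\mathrm{Mat}_{n!}\bigl(\ko[x_1,\dots,x_n]^{\mathfrak S_n}\bigr)$, with a unique indecomposable projective $P(n\alpha_i)$ satisfying $[R(n\alpha_i)]=[n]_i!\,[P(n\alpha_i)]$; since $[\ko[x_1]]^{\conv n}=[n]_i!\,[P(n\alpha_i)]$, the divided-power relations hold automatically and $\Phi(f_i^{(n)})=[P(n\alpha_i)]$. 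The substantive point is verifying the quantum Serre relations $\sum_k(-1)^kf_i^{(1-a_{ij}-k)}f_jf_i^{(k)}=0$ in the image: this is a finite computation inside $R\bigl((1-a_{ij})\alpha_i+\alpha_j\bigr)$, carried out from the defining relations of $R$ --- the prescribed form of $\mathcal Q_{i,j}$ in \eqref{eq: condition A} is exactly what makes it work --- by realizing the relevant alternating sum of induction products as the Euler characteristic of an acyclic complex of projectives. This is the diagrammatic computation of Khovanov--Lauda and Rouquier and is where most of the work on the ``algebra'' side lives.

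Surjectivity of $\Phi$ is then easy: $R(\beta)=\bigoplus_{\nu\in I^\beta}R(\beta)e(\nu)$, every indecomposable projective is a graded summand of some $R(\beta)e(\nu)$, and for $\nu=(i_1,\dots,i_m)$ there is an identification $R(\beta)e(\nu)\cong\ko[x_1]\conv\cdots\conv\ko[x_1]$ of projective $R(\beta)$-modules, so $[R(\beta)e(\nu)]=\Phi(f_{i_1}\cdots f_{i_m})$ and the image of $\Phi$ contains an $\A$-spanning set of $K_0(R\text{-}\mathrm{proj})$. The main obstacle is injectivity. I would reduce it to the rank bound $\mathrm{rank}_\A K_0(R(\beta)\text{-}\mathrm{proj})\le\dim_{\Q(q)}U_q^-(\g)_{-\beta}$, which together with surjectivity and freeness of $U_\A^-(\g)$ forces $\Phi$ to be a degreewise isomorphism. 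For the bound one equips $K_0(R\text{-}\mathrm{proj})$ with a symmetric $\A$-bilinear form defined from graded dimensions of idempotent-truncations $e(\nu)R(\beta)e(\nu')$ (equivalently, from $\mathrm{Ext}$-Euler characteristics between projectives), shows that its Gram matrix in the monomial system $\{[R(\beta)e(\nu)]\}_{\nu\in I^\beta}$ agrees under $\Phi$ with that of Lusztig's nondegenerate form on $U_q^-(\g)$ in the monomial system $\{f_{i_1}\cdots f_{i_m}\}$, and invokes nondegeneracy of the latter --- equivalently the shuffle-algebra / quantum-Kostant-form description of $U_q^-(\g)$, which also pins down $\dim U_q^-(\g)_{-\beta}$. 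Nondegeneracy of the form on $K_0$ then caps its rank by that of the target. An alternative is to construct functorial operators $E_i,F_i$ on $\bigoplus_\beta R(\beta)\text{-}\mathrm{proj}$ realizing a $U_q^-(\g)$-module structure and appeal to nondegeneracy of this categorical action; either way some external input on the size of $U_q^-(\g)_{-\beta}$, such as a PBW or shuffle basis, is unavoidable, and that is the genuinely non-formal ingredient.

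Finally I would transfer the isomorphism $\Phi\colon U_\A^-(\g)\simeq K_0(R\text{-}\mathrm{proj})$ to the form asserted in the theorem. Using the Mackey-type filtration computing $\mathrm{Res}\circ\mathrm{Ind}$ on $R$-modules, one checks that $\Phi$ also intertwines the restriction coproduct on $K_0(R\text{-}\mathrm{proj})$ with Lusztig's coproduct on $U_\A^-(\g)$. The perfect Cartan pairing then identifies $[\Rep(R)]$ with the graded $\A$-dual $K_0(R\text{-}\mathrm{proj})^\vee\cong U_\A^-(\g)^\vee$, and by Frobenius reciprocity $(\mathrm{Ind}\dashv\mathrm{Res})$ the convolution product on $[\Rep(R)]$ is the transpose of the restriction coproduct on $K_0(R\text{-}\mathrm{proj})$, hence corresponds exactly to the multiplication on $U_\A^-(\g)^\vee$. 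This yields the $\A$-algebra isomorphism $U_\A^-(\g)^\vee\simeq[\Rep(R)]$.
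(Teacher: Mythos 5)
The paper offers no proof of this theorem --- it is quoted verbatim from \cite{KL09,KL11,R08} --- so the only meaningful comparison is with the canonical argument in those references, and your sketch reproduces it faithfully: the map $U_\A^-(\g)\to K_0(R\text{-}\mathrm{proj})$ on divided powers via the nilHecke algebra, the Serre relations via an explicit complex of projectives, surjectivity from the decomposition $R(\beta)=\bigoplus_\nu R(\beta)e(\nu)$, injectivity via comparison with Lusztig's form, and dualization through the Cartan pairing, the Mackey filtration, and the (co)induction adjunctions. One logical slip should be fixed: the inequality $\mathrm{rank}_\A K_0(R(\beta)\text{-}\mathrm{proj})\le\dim_{\Q(q)}U_q^-(\g)_{-\beta}$ is already an automatic consequence of surjectivity and cannot, combined with surjectivity, force injectivity; what the Gram-matrix computation and the nondegeneracy of Lusztig's form on $U_q^-(\g)_{-\beta}$ actually supply is the reverse bound $\mathrm{rank}_\A K_0(R(\beta)\text{-}\mathrm{proj})\ge\dim_{\Q(q)}U_q^-(\g)_{-\beta}$ (the Gram matrix of the spanning set $\{[R(\beta)e(\nu)]\}$ has full rank $\dim U_q^-(\g)_{-\beta}$), after which a surjection of free $\A$-modules of equal finite rank is an isomorphism. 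With that direction corrected, the outline is the standard and correct proof.
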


For a module $M$, We denote by ${\rm hd}(M)$ the {\it head} of $M$
and ${\rm soc}(M)$ the {\it socle} of $M$, respectively.

\begin{theorem} \cite[Theorem 4.7]{BKM12}\cite[Theorem 3.1]{Mc12} $($see also \cite{Kato12,KR09}$)$ \label{thm: BkMc}
For a finite simple Lie algebra $\g_0$, fix an equivalence class
$[\widetilde{w}_0]$ of reduced expression of $w_0$ and take any
reduced expression $\widetilde{w}_0$ in the class
$[\widetilde{w}_0]$. We fix a convex total order $\le_{\redez}$
induced from the reduced expression $\redez$ $($see \eqref{eq:
total}$)$.

Let $R$ be the quiver Hecke algebra corresponding to $\g_0$. For
each positive root $\beta \in \Phi^+_n$, there exists a simple
module $S_{\widetilde{w}_0}(\beta)$ such that
\begin{itemize}
\item[({\rm a})]  for all $m \in \Z_{\ge 0}$, $S_{\widetilde{w}_0}(\beta)^{\conv m}$ is simple,
\item[({\rm b})] for every $\underline{m}=(m_1,\cdots,m_{\mathsf{N}}) \in \Z_{\ge 0}^{\mathsf{N}}$, there exists a non-zero $R$-module homomorphism
\begin{align*}
&\rmat{\um} \colon \overset{\to}{S}_{\redez}(\um) \seteq
S_{\redez}(\beta_1)^{\conv m_1}\conv\cdots \conv
S_{\redez}(\beta_N)^{\conv m_N}\\
&\hs{20ex}{\Lto}\overset{\gets}{S}_{\redez}(\um) \seteq
S_{\redez}(\beta_N)^{\conv m_N}\conv\cdots \conv
S_{\redez}(\beta_1)^{\conv m_1}.
\end{align*}
and ${\rm Im}(\rmat{\um}) \simeq {\rm
hd}\left(\overset{\to}{S}_{\redez}(\um)\right) \simeq {\rm
soc}\left(\overset{\gets}{S}_{\redez}(\um)\right)$ is simple.
\item[({\rm c})] for every simple $R$-module $M$, there exists a unique sequence $\um \in \Z_{\ge 0}^{\mathsf{N}}$ such that
$M \simeq {\rm Im}(\rmat{\um}) \simeq {\rm hd}\big(
\overset{\to}{S}_{\redez}(\um) \big).$
\item[({\rm d})] for any minimal pair $(\beta_k,\beta_l)$ of $\beta_j=\beta_k+\beta_l$ with respect to the convex total $<_\redez$, there exists an exact sequence
$$ \qquad 0 \to S_{\redez}(\beta_j)
\to S_{\redez}(\beta_k) \conv S_{\redez}(\beta_l) \overset{{\mathbf
r}_{\um}}{\Lto} S_{\redez}(\beta_l) \conv S_{\redez}(\beta_k) \to
S_{\redez}(\beta_j) \to 0,$$ where $\um \in \Z_{\ge 0}^{\mathsf{N}}$
such that $m_k=m_l=1$ and $m_i=0$ for all $i \ne k,l$.
\end{itemize}
\end{theorem}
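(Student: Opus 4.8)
The final statement, Theorem~\ref{thm: BkMc}, is quoted from \cite{BKM12,Kato12,KR09,Mc12}; what follows is the line of argument those sources develop.

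\emph{Guiding principle and strategy.} Under the categorification $U_\A^-(\g_0)^\vee \simeq [\Rep(R)]$, one wants $[S_{\redez}(\beta)]$ ($\beta\in\Phi_n^+$) to be, up to a power of $q$, the dual PBW root vectors attached to the convex order $\le_{\redez}$, and $[\overset{\to}{S}_{\redez}(\um)]$ the dual PBW monomials; the Levendorskii--Soibelman (LS) straightening relations in the PBW basis then dictate the module-theoretic statements. So I would: (1) construct the \emph{cuspidal} modules $S_{\redez}(\beta)$ by induction on the height $|\beta|$; (2) show they are \emph{real}; (3) form the standard modules $\overset{\to}{S}_{\redez}(\um)$ and prove a unitriangularity property; and (4) read off (a)--(d).

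\emph{Construction.} For a simple root, take $S_{\redez}(\alpha_i)$ to be the unique one-dimensional simple $R(\alpha_i)$-module, which is cuspidal by inspection. For $|\beta|>1$, convexity of $\le_{\redez}$ guarantees a minimal pair $(\gamma,\delta)$ of $\beta$ with $\gamma<_{\redez}\beta<_{\redez}\delta$ and $|\gamma|,|\delta|<|\beta|$, so $S_{\redez}(\gamma),S_{\redez}(\delta)$ are available; put
$$ S_{\redez}(\beta) \seteq {\rm hd}\bigl( S_{\redez}(\gamma)\conv S_{\redez}(\delta)\bigr). $$
One must check that this convolution has simple head, that $S_{\redez}(\beta)$ is again cuspidal for $\le_{\redez}$, and that it is independent of the minimal pair. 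The first two come from a Mackey/shuffle analysis of $e(\nu)\bigl(S_{\redez}(\gamma)\conv S_{\redez}(\delta)\bigr)$ using cuspidality of the factors and the convexity inequalities; independence of the pair follows by linking any two minimal pairs through the LS relations, realized categorically by intertwiners between the relevant convolution products.

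\emph{Properties.} For (a), one first shows each $S_{\redez}(\beta)$ is real, i.e. $S_{\redez}(\beta)^{\conv 2}$ is simple --- a feature of finite type, visible from the fact that $[S_{\redez}(\beta)^{\conv m}]$ is, up to a power of $q$, the dual of a divided power of the root vector, hence a single dual-PBW basis element and so the class of a simple module; induction on $m$ then gives simplicity of every $S_{\redez}(\beta)^{\conv m}$. In (b), $\rmat{\um}$ is assembled by composing the R-matrices between the real simple modules $S_{\redez}(\beta_1),\ldots,S_{\redez}(\beta_{\mathsf{N}})$ so as to reverse the order of the factors; non-vanishing follows from reality. Simplicity of ${\rm Im}(\rmat{\um})$ rests on the key unitriangularity: for a suitable bi-lexicographic order on $\Z_{\ge 0}^{\mathsf{N}}$,
$$ [\overset{\to}{S}_{\redez}(\um)] = [{\rm hd}(\overset{\to}{S}_{\redez}(\um))] + \sum_{\um'\,\triangleright\,\um} c_{\um,\um'}\,[{\rm hd}(\overset{\to}{S}_{\redez}(\um'))], $$
which mirrors the LS straightening, together with the symmetric statement for $\overset{\gets}{S}_{\redez}(\um)$ for the opposite order; since ${\rm Im}(\rmat{\um})$ is simultaneously a quotient of $\overset{\to}{S}_{\redez}(\um)$ and a submodule of $\overset{\gets}{S}_{\redez}(\um)$, these expansions pin it down to the common leading term ${\rm hd}(\overset{\to}{S}_{\redez}(\um))$, which is simple. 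For (c), unitriangularity makes the ${\rm hd}(\overset{\to}{S}_{\redez}(\um))$ pairwise non-isomorphic, and for each $\lambda\in\rl^+$ their number equals the number of Kostant partitions of $\lambda$, which is the rank of $[\Rep(R(\lambda))]$, i.e. the number of simple $R(\lambda)$-modules; so they exhaust all simples and $\um$ is unique. For (d), if $(\beta_k,\beta_l)$ is a minimal pair of $\beta_j$, minimality forces the LS straightening to involve, besides the leading monomial, only the root vector $E(\beta_j)$, so $[S_{\redez}(\beta_k)\conv S_{\redez}(\beta_l)]$ has exactly the two composition factors ${\rm hd}(\overset{\to}{S}_{\redez}(\um))$ and $S_{\redez}(\beta_j)$; since (b) identifies ${\rm Im}(\rmat{\um})$ with the simple head and commutativity of $[\Rep(R)]$ gives $[S_{\redez}(\beta_k)\conv S_{\redez}(\beta_l)]=[S_{\redez}(\beta_l)\conv S_{\redez}(\beta_k)]$, both $\ker(\rmat{\um})$ and ${\rm coker}(\rmat{\um})$ have $S_{\redez}(\beta_j)$ as their only composition factor and are therefore isomorphic to it, yielding the four-term exact sequence.

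\emph{Main obstacle.} The crux is the pair of facts that everything else rests on: well-definedness of the cuspidal modules $S_{\redez}(\beta)$ (cuspidality and independence of the minimal pair) and the unitriangularity of the standard modules. Both require controlling convolution products of cuspidal modules via Mackey/shuffle computations and matching them with the LS relations --- the technical heart of \cite{Mc12} (with the geometric route of \cite{Kato12} as an alternative). Proving reality of all cuspidals in general, which must precede any free manipulation of R-matrices, is the other delicate point.
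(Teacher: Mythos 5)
The paper offers no proof of this statement: it is quoted verbatim from \cite{BKM12} and \cite{Mc12} (with \cite{Kato12,KR09} as alternatives), so there is no in-paper argument to compare against. Your sketch is an accurate account of the strategy of those cited sources --- inductive construction of cuspidal modules via minimal pairs, reality from the dual-PBW/dual-canonical basis identification in finite type, unitriangularity of standard modules for a bilexicographic order, the Kostant-partition counting argument, and the length-two analysis for minimal pairs --- so it matches the (cited) proof in approach.
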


\begin{remark}
\begin{itemize}
\item[{\rm (i)}] In this paper, we use the module of $S_{\widetilde{w}_0}(\beta)$ in \cite{KKK13b}. Thus the statement {\rm (c)} of
the above theorem is different from \cite[Theorem 2.9]{BKM12} even
though we use the same convention $\le_{\redez}$ for the convex
total order induced from $\widetilde{w}_0$ (see. \cite[Remark
4.3.3]{KKK13b}).
\item[{\rm (ii)}] For a minimal pair $(\beta_k,\beta_l)$ of $\beta_j$ with respect to $<_\redez$, the statement {\rm (d)} of
the above theorem tells that the modules $S_{\redez}(\beta_k) \conv
S_{\redez}(\beta_l)$ and $S_{\redez}(\beta_l) \conv
S_{\redez}(\beta_k)$ have composition length $2$.
\item[{\rm (iii)}] For any pair of reduced expressions $\redez$ and
$\widetilde{w_0}'$ of $w_0$ which are adapted to $Q$, we have
$$S_{\redex}(\beta) \simeq S_{\widetilde{w_0}'}(\beta) \quad \text{ for all } \beta \in \Phi^+_{n},$$
by \eqref{eq: compati} and Theorem \ref{thm: BkMc} ({\rm d}). Thus
we denote by $S_Q(\beta)$ the simple $R(\beta)$-module
$S_{\redez}(\beta)$ for any reduced expression $\redez$ adapted to
$Q$.
\end{itemize}
\end{remark}

%

\begin{definition} We say the quiver Hecke algebra $R(\beta)$ at $\beta$ is {\it symmetric} if $\mathcal{Q}_{i,j}(u,v)$ is a polynomial
in $u-v$ for all $i,j \in {\rm supp}(\beta)$. Here $${\rm
supp}(\beta)\seteq \{ j \in I \ | \ a_j \ne 0 \text{ where } \beta =
\sum_{j \in I} a_j \alpha_j \}.$$
\end{definition}

\subsection{The generalized quantum affine Schur-Weyl duality functors.}
Now we shortly review the generalized quantum affine Schur-Weyl
duality functors which were studied in \cite{KKK13a,KKK13b}.
\medskip

Let $\mathcal{S}$ be an index set and $\{ V_s \}_{s \in
\mathcal{S}}$ be a family of good $U'_q(\g)$-modules indexed by
$\mathcal{S}$.

Assume that we have a triple $(J,X,s)$ consisting of an index set
$J$ and two maps $X:J \to \ko^\times, \ s: J \to \mathcal{S}$. For a
given $(J,X,s)$ and $\{ V_s \}_{s \in \mathcal{S}}$, we define a
quiver $Q^J=(Q^J_0,Q^J_1)$ by observing the order of zeros of
denominator as follows:
\begin{enumerate}
\item $Q^J_0= J$.
\item For $i,j \in J$, we put $\mathtt{d}_{ij}$ many arrows from $i$ to $j$, where $\mathtt{d}_{ij}$ is the order of the zero of
$d_{V_{s(i)},V_{s(j)}}(z_2/z_1)$ at $X(j)/X(i)$.
\end{enumerate}
For a quiver $Q^J$, we can associate
\begin{itemize}
\item a symmetric Cartan matrix $\cm^J=(a^J_{ij})_{i,j \in J}$ by
\begin{align} \label{eq: sym Cartan mtx}
a^J_{ij} =  2 \quad \text{ if } i =j, \quad a^J_{ij} =
-\mathtt{d}_{ij}-\mathtt{d}_{ji} \quad \text{ if } i \ne j,
\end{align}
\item the set of polynomials $(\mathcal{Q}^J_{i,j}(u,v))_{i,j \in I}$
$$
\mathcal{Q}^J_{i,j}(u,v)
=(u-v)^{\mathtt{d}_{ij}}(v-u)^{\mathtt{d}_{ji}} \quad \text{ if } i
\ne j.
$$
\end{itemize}
We denote by $R_J$ the symmetric quiver Hecke algebra associated
with $(\mathcal{Q}^J_{i,j}(u,v))_{i,j \in I}$.

\begin{theorem} \cite{KKK13a} \label{thm:gQASW duality} There exists a functor
$$\F : \Rep(R_J) \rightarrow \CC_\g $$
where $\CC_\g$ denotes the category of finite-dimensional integrable
$U_q'(\g)$-modules. Moreover, the functor enjoys the following
properties:
\begin{enumerate}
\item[({\rm a})] $\F$ is a tensor functor; that is, there exist $U_q'(\g)$-module isomorphisms
$$\F(R_J(0)) \simeq \ko \quad \text{ and } \quad \F(M_1 \circ M_2) \simeq \F(M_1) \tens \F(M_2)$$
for any $M_1, M_2 \in \Rep(R_J)$.
\item[({\rm b})] If the underlying graph of $Q^J$ is a Dynkin diagram of finite type $A$, $D$ or $E$, then
the functor $\F$ is exact.
\end{enumerate}
\end{theorem}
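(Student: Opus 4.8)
The plan is to realize $\F$ as a bimodule functor and then to deduce both properties from the formalism of normalized $R$-matrices. First I would attach to the datum $(J,X,s)$ and the family $\{V_s\}_{s\in\mathcal S}$, for each $\beta\in\rl^+$ of the Cartan datum $\cm^J$ with $|\beta|=m$, the ``affine Schur-Weyl'' module $\widehat V^{\tens\beta}\seteq\bigoplus_{\nu\in J^\beta}(V_{s(\nu_1)})_{X(\nu_1),\aff}\tens\cdots\tens(V_{s(\nu_m)})_{X(\nu_m),\aff}$, carrying its commuting spectral automorphisms $z_1,\dots,z_m$ of weight $\delta$. The heart of the construction is a right $R_J(\beta)$-action on $\widehat V^{\tens\beta}$ in which $e(\nu)$ is the projector onto the $\nu$-summand, $x_k$ acts via the spectral shift coming from $z_k$, and $\tau_k$ acts as a renormalized $R$-matrix $R^{\mathrm{ren}}$ on the $k$-th and $(k{+}1)$-st tensor factors. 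The renormalization is dictated to us by the data: one scales $\Rnorm$ on those two factors by exactly the power of $(z_{k+1}-z_k)$ needed so that, in view of \eqref{Def: d(MN)} and the definition of $Q^J$ through the zero-orders $\mathtt{d}_{ij}$ of $d_{V_{s(i)},V_{s(j)}}$, the composite $R^{\mathrm{ren}}_{k,k+1}R^{\mathrm{ren}}_{k+1,k}$ acts on the pair of factors precisely as multiplication by $\mathcal Q^J_{\nu_k,\nu_{k+1}}(z_k,z_{k+1})$.

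Next I would verify the defining relations of $R_J(\beta)$ term by term. The $e(\nu)$- and $x$-relations are immediate; $\tau_ke(\nu)=e(\mathfrak{s}_k\nu)\tau_k$ and $\tau_k\tau_l=\tau_l\tau_k$ for $|k-l|>1$ hold because $R$-matrices exchange colors and act on disjoint factors; the mixed relation $(\tau_kx_l-x_{\mathfrak{s}_k(l)}\tau_k)e(\nu)$ produces the linear correction term exactly when $\nu_k=\nu_{k+1}$, as a consequence of $\Rnorm\circ z=z\circ\Rnorm$ together with the normalization $\Rnorm(\mathsf v\tens\mathsf v)=\mathsf v\tens\mathsf v$; the crucial quadratic relation $\tau_k^2e(\nu)=\mathcal Q^J_{\nu_k,\nu_{k+1}}(x_k,x_{k+1})e(\nu)$ is the identity singled out in the previous paragraph; and the braid-type relation follows from the Yang-Baxter equation for normalized $R$-matrices, the $\delta_{\nu_k,\nu_{k+2}}$-correction falling out of the same denominator bookkeeping. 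One then defines $\F(M)\seteq\widehat V^{\tens\beta}\tens_{R_J(\beta)}M$ for $M\in\Rep(R_J(\beta))$; since $x_k$ acts nilpotently on the finite-dimensional $M$, this is a finite-dimensional $U_q'(\g)$-module, with integrability inherited from the $V_s$. Property (a) is then formal: as a bimodule over $U_q'(\g)$ and $R_J(\beta)\tens R_J(\gamma)$, $\widehat V^{\tens(\beta+\gamma)}$ is the external tensor product $\widehat V^{\tens\beta}\tens\widehat V^{\tens\gamma}$ (the $R$-matrices crossing the middle are not needed for this restricted action), so using $M_1\conv M_2=R_J(\beta+\gamma)\tens_{R_J(\beta)\tens R_J(\gamma)}(M_1\tens M_2)$ and associativity of base change, $\F(M_1\conv M_2)\cong\F(M_1)\tens\F(M_2)$, while $\F(R_J(0))\simeq\ko$ is clear.

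The main obstacle is property (b), exactness when the underlying graph of $Q^J$ is of finite type $A$, $D$ or $E$. Since $\F=\widehat V^{\tens\beta}\tens_{R_J(\beta)}(-)$ is automatically right exact, it is enough to show that $\widehat V^{\tens\beta}$ is flat as a right $R_J(\beta)$-module, i.e.\ that $\mathrm{Tor}_1^{R_J(\beta)}(\widehat V^{\tens\beta},M)=0$ for every finite-dimensional $M$. The route I would take is to reduce, by dimension shifting along short exact sequences together with the fact that every object of $\Rep(R_J(\beta))$ has finite length, to the case of a simple module $S$, and then to exploit the special combinatorics of the ADE root system: building resolutions out of cuspidal (PBW) modules and tracking the $\Z$-grading, one uses that in finite type $A$, $D$ or $E$ the algebra $R_J(\beta)$ has only finitely many simple modules and a completely understood cuspidal representation theory, so that an Euler-characteristic count of graded dimensions forces the higher Tor to vanish. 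I expect this flatness step to be by far the most delicate point; everything preceding it is a careful but essentially mechanical verification on $R$-matrices.
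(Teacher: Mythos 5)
A preliminary but important caveat: the paper does not prove this theorem at all — it is imported verbatim from \cite{KKK13a}, so there is no in-paper argument to compare yours against. (The paper's only original contribution in this direction is Theorem \ref{thm: comb proof}, which re-derives combinatorially that the quiver $Q^J$ of Theorem \ref{thm:gQASW duality 2} is of type $A_n$, so that part (b) of the present theorem can be invoked; part (b) itself is taken as a black box.) Judged as a reconstruction of \cite{KKK13a}: your construction of the $(U_q'(\g),R_J(\beta))$-bimodule $\widehat V^{\otimes\beta}$ — completion at the points $X(\nu_k)$, $x_k$ acting through the shifted spectral parameter, $\tau_k$ through a renormalized $R$-matrix whose renormalization is dictated by the zero-orders $\mathtt d_{ij}$ of the denominators, relations checked via unitarity and Yang--Baxter — is a faithful outline of what is actually done there, and your derivation of (a) from the decomposition of the bimodule over the parabolic subalgebra together with associativity of base change is likewise the right argument.

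Part (b), however, contains a genuine gap. The reduction of exactness to $\mathrm{Tor}_1^{R_J(\beta)}(\widehat V^{\otimes\beta},-)=0$, and by d\'evissage to simple modules, is fine; but the mechanism you then invoke --- ``an Euler-characteristic count of graded dimensions forces the higher Tor to vanish'' --- is not an argument. A graded Euler characteristic only controls the alternating sum $\sum_i(-1)^i\dim_q\mathrm{Tor}_i$ and cannot separate $\mathrm{Tor}_1$ from $\mathrm{Tor}_2$ without some positivity or parity input you have not supplied. Nor can one close the loop by downward induction from finite global dimension: knowing $\mathrm{Tor}_{k+1}$ vanishes on all modules and choosing a surjection $P\twoheadrightarrow M$ from a $\mathrm{Tor}$-acyclic induced module only identifies $\mathrm{Tor}_k(M)$ with a kernel inside $\mathrm{Tor}_{k-1}$ of a syzygy, so one is pushed back to exactly the $\mathrm{Tor}_1$ statement being proved (a hereditary algebra with a non-flat module already shows that ``finite global dimension plus every module is a quotient of an acyclic one'' does not imply flatness). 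What is actually needed, and what \cite{KKK13a} supplies, is the homological structure theory of finite-type quiver Hecke algebras from \cite{Kato12,BKM12,Mc12}: simples are resolved by standard modules, i.e.\ convolution products of the cuspidal root modules, and $\mathrm{Tor}_{>0}$ of $\widehat V^{\otimes\beta}$ against modules induced from a parabolic is computed by Frobenius reciprocity from the flatness of the completed bimodule over the polynomial (parabolic) subalgebra; the ADE hypothesis on $Q^J$ enters precisely through this structure theory. Your proposal correctly flags this as the delicate step but does not bridge it, and the bridge you gesture at would not hold.
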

We call the functor $\F$ the {\it generalized quantum affine
Schur-Weyl duality functor}.

\begin{theorem} \cite{KKK13b} \label{thm:gQASW duality 2}
Let $\g$ be an affine Kac-Moody algebra of type $A^{(1)}_{n}$
$($resp. $D^{(1)}_{n})$ and $Q$ be a quiver whose underlying graph
is a Dynkin diagram is of finite type $A_n$ $($resp. $D_n)$. Set $$
J \seteq \left\{ (i,p) \in \Z Q \ | \ \phi(i,p) \in \Pi_n \times \{
0 \}  \right\},$$ where the map is given in \eqref{eq:phi}. We
define two maps $s: J \to \{ V(\varpi_i) \ | \ i \in I_0 \}$ and $X:
J \to  \ko^\times$ as
$$ s(i,p)=V(\varpi_i) \ \text{ and } \ X(i,p)= (-q)^p  \ \text{ for } (i,p) \in J.$$
\begin{itemize}
\item[({\rm a})] The underlying graph of the quiver $Q^J$ associated to $(J,X,s)$ is of finite type $A_{n}$ $($resp. $D_{n})$. Hence the functor
$$\F^{(1)}_Q: \Rep(R_J) \rightarrow \mathscr{C}^{(1)}_Q$$
in {\rm Theorem \ref{thm:gQASW duality}} is exact.
\item[({\rm b})] The functor $\F^{(1)}_Q$ sends simples to simples, bijectively. In particular, $\F^{(1)}_Q$ sends $S_Q(\beta)$ to $V_Q(\beta)$.
\end{itemize}
\end{theorem}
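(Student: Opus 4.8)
The plan is to establish part (a) first, deduce exactness from Theorem \ref{thm:gQASW duality}(b), and then handle part (b) by transporting the combinatorics on the two sides through the distinguished family $\{S_Q(\beta)\}_{\beta\in\Phi_n^+}$.

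For part (a), recall that $Q^J$ has vertex set $J$, and that the number of arrows from $(a,p_a)$ to $(b,p_b)$ is the order $\mathtt{d}_{ab}$ of the zero of $d_{V(\varpi_a),V(\varpi_b)}(z)=d_{a,b}(z)$ at $z=X(b,p_b)/X(a,p_a)=(-q)^{p_b-p_a}$, where $\phi^{-1}(\alpha_a,0)=(a,p_a)$. First I would write down all the positions $(a,p_a)$ explicitly using Lemma \ref{Lem: Key Lem}(a) together with Remark \ref{rem: key}, distinguishing whether $a$ is a source, a sink, a left intermediate or a right intermediate of $Q$; this determines every exponent $p_b-p_a$ in terms of the orientation of $Q$. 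Substituting into the denominator formula \eqref{eq: dij A_n 1} for $\g=A_n^{(1)}$ (and into the corresponding $D_n^{(1)}$ denominators for the type $D$ case), a short case analysis over the orientation types of $a$ and $b$ should show that $\mathtt{d}_{ab}+\mathtt{d}_{ba}=1$ exactly when $a$ and $b$ are adjacent in the Dynkin diagram of $\g_0$, after a suitable relabelling of $J$, and $=0$ otherwise; it should also show that the unique arrow has the orientation read off from the paths in $\Gamma_Q$. Hence the underlying graph of $Q^J$ is of type $A_n$ (resp. $D_n$), and exactness of $\F^{(1)}_Q$ is then immediate from Theorem \ref{thm:gQASW duality}(b).

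For part (b), once (a) identifies $R_J$ with the quiver Hecke algebra $R_{\g_0}$, I would first observe that the one-dimensional simple $R_J$-module attached to the vertex $(i,p_i)\in J$ — which is $S_Q(\alpha_i)$ in the convention fixed just before the theorem — is sent by $\F^{(1)}_Q$ to $V_{s(i,p_i)}$ spectrally shifted by $X(i,p_i)$, namely to $V(\varpi_i)_{(-q)^{p_i}}=V_Q(\alpha_i)$, directly from the construction of the functor. For a general $\beta\in\Phi_n^+$ I would fix a reduced expression $\redez$ of $w_0$ adapted to $Q$; by Theorem \ref{thm: BkMc} the module $S_Q(\beta)$ is the head of a specific convolution product of the $S_Q(\alpha_i)$'s and, more importantly, it is linked to the lower-height cuspidal modules by the exact sequences of Theorem \ref{thm: BkMc}(d). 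Applying the exact tensor functor $\F^{(1)}_Q$ and using the realization of $V_Q(\beta)$ coming from the construction of $\mathscr{C}^{(1)}_Q$ in \cite{HL11} as the head of the matching tensor product of fundamental modules, an induction on the height of $\beta$ should yield $\F^{(1)}_Q(S_Q(\beta))\cong V_Q(\beta)$. To upgrade this to the statement that $\F^{(1)}_Q$ sends simples to simples bijectively for all simple $R_J$-modules, I would use that $\F^{(1)}_Q$ induces a ring homomorphism of Grothendieck rings and that both $[\Rep(R_J)]$ and $[\mathscr{C}^{(1)}_Q]$ are identified, after $q\mapsto 1$, with $\C[N]$ so that the classes of simples form the dual-canonical/upper-global basis; since $\F^{(1)}_Q$ already matches these identifications on the generating family $\{S_Q(\beta)\}$, it must match the two bases on the nose, and hence carry simples to simples bijectively.

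The step I expect to be the main obstacle is precisely this last point in part (b): exactness and the tensor property alone do not force $\F^{(1)}_Q$ to preserve simplicity, so one genuinely has to bring in the comparison of the two $\C[N]$-categorifications (or, alternatively, the criterion of \cite{KKK13a} describing when $\F$ preserves the head of a convolution product in terms of the relevant $R$-matrices). The computation in part (a) is elementary but somewhat tedious, particularly for $D_n^{(1)}$, where the two spin nodes contribute extra factors to the denominators $d_{k,l}(z)$ that must be tracked carefully through the orientation case analysis.
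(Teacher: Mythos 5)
This theorem is imported from \cite{KKK13b}; the paper does not prove part (b) at all, and the only argument it supplies in-house is an alternative proof of part (a), given later as Theorem \ref{thm: comb proof}. Your route to (a) — locating the simple roots via Lemma \ref{Lem: Key Lem}(a) and Remark \ref{rem: key}, computing the exponents $p_b-p_a$ through an orientation case analysis, and feeding them into the denominator formula \eqref{eq: dij A_n 1} — is workable (it is essentially the computation carried out in Lemma \ref{Lem: height 2} for the twisted case) but is not what the paper does. The paper instead avoids the case analysis entirely: reducibility of $V_Q(\alpha_k)\otimes V_Q(\alpha_{k+1})$ comes for free from the Dorey-type surjection of Corollary \ref{Cor: Dorey on Gamma}, simplicity of $V_Q(\alpha_k)\otimes V_Q(\alpha_l)$ for $|k-l|>1$ comes from Proposition \ref{prop: far}, and then Theorem \ref{Thm: basic properties} together with the simplicity of the zeros of $d_{k,l}(z)$ pins down $\mathtt{d}_{kl}+\mathtt{d}_{lk}$. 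Your computation buys independence from the representation-theoretic preliminaries; the paper's buys brevity and is the point of Remark \ref{rem: alter}. Note also that the paper's alternative proof only covers type $A$, whereas the theorem (and your sketch) also claims type $D$.

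For part (b) you are reconstructing the argument of \cite{KKK13b}, and you correctly identify the genuine difficulty: exactness and the tensor property do not force preservation of simplicity, and one must compare the two categorifications of $\C[N]$ (simples $\leftrightarrow$ dual canonical basis on both sides, via \cite{HL11} on the quantum affine side and Varagnolo--Vasserot/Rouquier on the KLR side). Two points in your sketch deserve care. First, in the height induction for $\F^{(1)}_Q(S_Q(\beta))\cong V_Q(\beta)$ you need both that $\F^{(1)}_Q(S_Q(\beta))\neq 0$ and that the tensor product $V_Q(\alpha)\otimes V_Q(\beta')$ has simple head $V_Q(\gamma)$ with controlled length; you cannot invoke Corollary \ref{cor: length2} for the latter, since in this paper that corollary is \emph{deduced from} Theorem \ref{thm:gQASW duality 2}(b) — you must use \cite{CP96} or \cite{AK} directly to avoid circularity. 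Second, ``matching the two bases on the nose'' requires knowing that $[\F^{(1)}_Q(M)]$ is a nonnegative combination of simple classes equal to a single dual canonical basis element, from which simplicity of $\F^{(1)}_Q(M)$ follows; that step is exactly where the basis identification is used and is fine, but it should be stated rather than left implicit.
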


\begin{remark} \label{rem: alter}
In Theorem \ref{thm: comb proof}, we will provide an alternative
proof of ({\rm a}) in the above theorem by using {\it only} the
results in this paper. In \cite{KKK13b}, the authors of
\cite{KKK13b} used the result of \cite{HL11} to prove ({\rm a}) in
the above theorem.
\end{remark}

\section{Dorey's rules and minimal pair}
In this section, we prove that $\Gamma_Q$ encodes the information of
the Dorey's type morphisms for $U_q'(A^{(1)}_{n})$, and all pair
$(\alpha,\beta)$ of $\gamma \in \Phi^+_n$ are minimal with suitable
convex total orders, which is compatible with respect to the convex
partial order $\preceq_Q$.

\begin{definition} Let  $\beta$ be a vertex in $\Gamma_Q$. (Equivalently, $\beta \in \Phi^+_n$.)
\begin{enumerate}
\item[({\rm a})] The {\it upper ray of $\beta$} is the path consisting of one $S$-sectional path and one $N$-sectional path satisfying the following properties:
\begin{itemize}
\item  $\underbrace{S_1 \to \cdots S_{a-1} \to S_a}_{\text{$S$-sectional path}}=\beta
=\overbrace{N_{b} \to N_{b-1} \to  N_1}^{\text{$N$-sectional
path}}$,
\item there is no vertex $S_0 \in \Gamma_Q$ such that $S_0 \to S_1$ is an $S$-sectional path in $\Gamma_Q$,
\item there is no vertex $N_0 \in \Gamma_Q$ such that $N_1 \to N_0$ is an $N$-sectional path in $\Gamma_Q$.
\end{itemize}
\item[({\rm b})] The {\it lower ray of $\beta$} is the path consisting of one $S$-sectional path and one $N$-sectional path satisfying the following properties:
\begin{itemize}
\item  $\overbrace{N_1 \to \cdots N_{a-1} \to N_a}^{\text{$N$-sectional path}}=\beta
=\underbrace{S_{b} \to S_{b-1} \to  S_1}_{\text{$S$-sectional
path}}$,
\item there is no vertex $N_0 \in \Gamma_Q$ such that $N_0 \to N_1$ is an $N$-sectional path in $\Gamma_Q$,
\item there is no vertex $S_0 \in \Gamma_Q$ such that $S_1 \to S_0$ is an $S$-sectional path in $\Gamma_Q$.
\end{itemize}
\end{enumerate}
\end{definition}

In Example \ref{ex: example 1}, the upper ray and the lower ray of
$[1,4]$ can be described as follows:
$$ \scalebox{0.9}{\xymatrix@C=2ex@R=1ex{
 [4]\ar@{->}[dr] &&&& [1] \\
& [2,4]\ar@{->}[dr] && [1,3]\ar@{->}[ur] \\
&& [1,4]\ar@{->}[ur] \\
}} \quad \scalebox{0.9}{\xymatrix@C=2ex@R=1ex{
&&  [1,4]\ar@{->}[dr] \\
&  [1,5]\ar@{->}[ur] && [3,4] \\
 [1,2]\ar@{->}[ur]
}}
$$

\begin{theorem} \label{Thm: i_k+i_l=i_j}
For every pair $(\alpha,\beta)$ of $\alpha+\beta=\gamma\in
\Phi^+_n$, we write
$$\phi^{-1}_1(\alpha)=i_\alpha, \quad \phi^{-1}_1(\beta)=i_\beta \quad \text{ and } \quad \phi^{-1}_1(\gamma)=i_\gamma.$$
Then we have
$$ i_\alpha+i_\beta=i_\gamma  \quad \text{ or } \quad  (n+1-i_\alpha)+(n+1-i_\beta)=(n+1-i_\gamma).$$
\end{theorem}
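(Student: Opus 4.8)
The plan is to write each root as a segment, to express $\phi^{-1}_1$ of a root through two ``diagonal'' quantities that are constant along $N$- and $S$-sectional paths, and then to evaluate everything at simple roots via Lemma \ref{Lem: Key Lem}~(a).

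Since every root of type $A_n$ is a segment and the asserted conclusion is symmetric in $\alpha$ and $\beta$, I would first reduce to $\gamma=[a,d]$, $\alpha=[a,b]$, $\beta=[b+1,d]$ with $1\le a\le b\le d-1\le n-1$ (two disjoint intervals whose union is an interval must split it at one point). For a root $\delta=[x,y]$ set $A(\delta)=\phi^{-1}_1(\delta)+\phi^{-1}_2(\delta)$ and $B(\delta)=\phi^{-1}_1(\delta)-\phi^{-1}_2(\delta)$, so $\phi^{-1}_1(\delta)=\frac12\bigl(A(\delta)+B(\delta)\bigr)$. An $N$-sectional arrow $(i,p)\to(i-1,p+1)$ preserves $i+p$ and an $S$-sectional arrow $(i,p)\to(i+1,p+1)$ preserves $i-p$; combined with Theorem \ref{Thm: sectional}, Proposition \ref{Prop: maximal path} and Corollary \ref{Cor: sectional} — which together show that the $n-x+1$ roots with first component $x$ are exactly those lying on the unique maximal $N$-sectional path through $[x]$, and dually for second components and $S$-sectional paths — this shows $A(\delta)$ depends only on the first component of $\delta$ and $B(\delta)$ only on its second component. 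Writing $A_x=\phi^{-1}_1([x])+\phi^{-1}_2([x])$ and $B_y=\phi^{-1}_1([y])-\phi^{-1}_2([y])$ for these values, we get
\[
\phi^{-1}_1(\alpha)+\phi^{-1}_1(\beta)-\phi^{-1}_1(\gamma)=\frac12\bigl[(A_a+B_b)+(A_{b+1}+B_d)-(A_a+B_d)\bigr]=\frac12\bigl(B_b+A_{b+1}\bigr),
\]
so the theorem reduces to the local claim $B_b+A_{b+1}\in\{0,\,2(n+1)\}$ for every $1\le b\le n-1$.

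To prove that claim I would restate Lemma \ref{Lem: Key Lem}~(a) in consolidated form, using that ``$k$ is a source or a left intermediate'' is equivalent to ``the edge $\{k,k+1\}$ is oriented $k\to k+1$'', and ``$k$ is a source or a right intermediate'' to ``the edge $\{k-1,k\}$ is oriented $k\to k-1$''. Lemma \ref{Lem: Key Lem}~(a) then reads: $B_k=k-\xi_k$ or $2n-k-\xi_k$ according as $\{k,k+1\}$ is oriented $k\to k+1$ or $k+1\to k$; and $A_k=k+\xi_k$ or $2-k+\xi_k$ according as $\{k-1,k\}$ is oriented $k\to k-1$ or $k-1\to k$. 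Applying this with $k=b$ and $k=b+1$ to the single edge $\{b,b+1\}$, together with the height-function relation ($\xi_{b+1}=\xi_b-1$ if $b\to b+1$, and $\xi_{b+1}=\xi_b+1$ if $b+1\to b$), a one-line computation in each case gives $B_b+A_{b+1}=(b-\xi_b)+(2-b-1+\xi_b-1)=0$ if $b\to b+1$, and $B_b+A_{b+1}=(2n-b-\xi_b)+(b+1+\xi_b+1)=2n+2$ if $b+1\to b$. Hence $\phi^{-1}_1(\alpha)+\phi^{-1}_1(\beta)=\phi^{-1}_1(\gamma)$ when $\{b,b+1\}$ is oriented $b\to b+1$ in $Q$, and $\phi^{-1}_1(\alpha)+\phi^{-1}_1(\beta)=\phi^{-1}_1(\gamma)+(n+1)$ otherwise; the latter is exactly $(n+1-i_\alpha)+(n+1-i_\beta)=(n+1-i_\gamma)$, and note $n+1=\mathtt{h}_n$ is the Coxeter number.

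The only step needing real care is the reduction ``$A(\delta)$ depends only on the first component, $B(\delta)$ only on the second'': this rests on the fact that all roots sharing a first component lie on one connected $N$-sectional path, which follows from Theorem \ref{Thm: sectional} and Corollary \ref{Cor: sectional} via the bijection between maximal $N$-sectional paths and simple roots coming from Proposition \ref{Prop: maximal path}. Everything afterwards is the consolidation of Lemma \ref{Lem: Key Lem}~(a) and the two-case arithmetic above; as a bonus it pins down which of the two alternatives occurs — it is governed entirely by the orientation in $Q$ of the one edge $\{b,b+1\}$ joining the ``break points'' of the decomposition $\gamma=\alpha+\beta$.
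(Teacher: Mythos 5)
Your proof is correct, and it takes a genuinely different route from the one in the paper. The paper's argument is geometric: it invokes Corollary \ref{Cor: one path} to place the pair $(\alpha,\beta)$ on the upper or lower ray of $\gamma$, draws the relevant portion of $\Gamma_Q$, and then uses Corollary \ref{Cor: first and last}~(a) (the fact that consecutive roots $\kappa_u,\kappa_{u+1}$ in the top row are adjacent segments) to force the horizontal gap $\ell$ in that picture to equal $1$, from which $i_\alpha=i_\gamma-i_\beta$ falls out. You instead work entirely in the diagonal coordinates $A=\phi^{-1}_1+\phi^{-1}_2$ and $B=\phi^{-1}_1-\phi^{-1}_2$, observe via Theorem \ref{Thm: sectional}, Proposition \ref{Prop: maximal path} and Corollary \ref{Cor: sectional} that $A$ is a function of the first component and $B$ of the second (the counting in Corollary \ref{Cor: sectional} is exactly what guarantees that all roots with a given first component lie on a single connected maximal $N$-sectional path, so this step is sound), and then reduce the whole theorem to the identity $B_b+A_{b+1}\in\{0,2(n+1)\}$, verified by a two-case computation from Lemma \ref{Lem: Key Lem}~(a) and the height-function relation. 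I checked the case analysis: your consolidated restatement of Lemma \ref{Lem: Key Lem}~(a) in terms of the orientations of the edges $\{k-1,k\}$ and $\{k,k+1\}$ is right, and both arithmetic evaluations are correct. What your approach buys is twofold: it avoids the ray/picture bookkeeping and the appeal to Corollaries \ref{Cor: one path} and \ref{Cor: first and last} entirely, and it sharpens the statement by identifying exactly which alternative occurs — it is governed by the orientation of the single edge $\{b,b+1\}$ at the break point of the decomposition — which refines the observation recorded in Remark \ref{rem: same edge}. What the paper's approach buys is that the same picture is reused immediately afterwards (in Theorem \ref{thm: minimal} and Proposition \ref{prop: big mesh}), so the geometric setup is not wasted there.
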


\begin{proof}
We write $\gamma=[a,b]$. By Corollary \ref{Cor: one path}, the pair
$(\alpha,\beta)$ is contained in the upper ray of $[a,b]$ or the
lower ray of $[a,b]$. Assume that it is contained in the upper ray;
that is, there exists $a \le c \le b$ such that $\alpha=[a,c]$ and
$\beta=[c+1,b]$. Using \eqref{eq: known characterization}, Theorem
\ref{Thm: sectional} and Remark \ref{Alg: easy}, the situation in
$\Gamma_Q$  can be described as follows:
$$
\scalebox{0.9}{{\xy (-15,0)*{}="T0";(20,0)*{}="T1"; (30,0)*{}="T2";
(45,0)*{}="T3"; (50,0)*{}="T6";(40,0)*{}="T5";(30,0)*{}="T4";
(60,0)*{}="T7"; (0,-20)*{}="B1";(5,-25)*{}="B2"; (20,-40)*{}="B3";
(55,-5)*{}="R1";(50,-10)*{}="R2";(45,-15)*{}="R3"; "T4"; "B2"
**\dir{-}; "B3"; "B1" **\dir{-};
"B1"*{\bullet};"B2"*{\bullet};"B3"*{\bullet}; "B3"; "R1" **\dir{-};
"R2"; "R3"; **\crv{(47.5,-9.5)}?(.6)+(-1.5,0)*{\scriptstyle \ell};
"B2"; "B3"; **\crv{(12.5,-27)}?(.5)+(4.5,0)*{\scriptstyle i_z-i_y};
"R1"*{\bullet};"R2"*{\bullet};"R3"*{\bullet};
"R1"+(3,0)*{\scriptstyle N_1}; "R2"+(3,0)*{\scriptstyle
[a,c]};"R2"+(7,0); (60,-10) **\dir{.};(65,-10)*{\scriptstyle i_x};
"R3"+(3,0)*{\scriptstyle [a,v]}; "R3"+(7,0); (60,-15) **\dir{.};
(70,-15)*{\scriptstyle i_z-i_y+1}; "B1"+(-3,0)*{\scriptstyle S_1};
"B2"+(-6,0)*{\scriptstyle [c+1,b]}; "B2"+(-13,0); (-15,-25)
**\dir{.}; (-20,-25)*{\scriptstyle i_y}; "B3"+(0,-3)*{\scriptstyle
[a,b]}; "B3"+(7,0); (60,-40) **\dir{.};(65,-40)*{\scriptstyle i_z};
"T4"+(0,2)*{\scriptstyle \kappa_w}; "T5"+(0,2)*{\scriptstyle
\kappa_u}; "T4"*{\bullet}; "T5"*{\bullet}; "T5"; "R2"
**\dir{-};"T4"; "R3" **\dir{-};"T0"; "T7" **\dir{.};
\endxy}}
$$
By Theorem \ref{Thm: sectional} and Remark \ref{Alg: easy},
$\kappa_u=[e,c]$ ($e \le c$) and $\kappa_w=[c+1,f]$ ($c+1 \le f$).
 Then Corollary \ref{Cor: first and last} (a)
tells that $w=u+1$ and hence $\ell=1$. Thus we can conclude
$$i_\alpha=i_\gamma-i_\beta+1-\ell=i_\gamma-i_\beta$$ which implies
our first assertion. If a pair $(\alpha,\beta)$ of $\gamma \in
\Phi^+_n$ is contained in the lower ray of $\beta_z$, then one can
prove our second assertion by applying a similar argument.
\end{proof}

\begin{remark} \label{rem: same edge}
 Using the above theorem, we can observe the following: For pairs $(\alpha,\beta)$, $(\alpha',\beta')$ such that
$\alpha+\beta=\alpha'+\beta'=\gamma \in \Phi_n^+$ and they are
contained in the upper ray (resp. lower ray) of $\gamma$ together,
the number of edges between $\alpha$ and $\beta$ in the upper ray
(resp. lower ray) coincides with the one between $\alpha'$ and
$\beta'$.
\end{remark}

Now we show that every pair $(\alpha,\beta)$ of
$\alpha+\beta=\gamma\in\Phi^+_n$ is {\it indeed a minimal pair with
respect to a suitable convex total order} compatible with the convex
partial order $\preceq_Q$.

\begin{theorem} \label{thm: minimal}
For every pair  $(\alpha,\beta)$ of $\alpha+\beta=\gamma$, there is
a convex total order $<$ such that
\begin{itemize}
\item[({\rm a})] it is compatible with the convex partial order $\preceq_Q$,
\item[({\rm b})] $(\alpha,\beta)$ is a minimal pair of $\gamma$ with respect to the convex total order.
\end{itemize}
\end{theorem}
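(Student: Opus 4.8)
The plan is to produce the required order explicitly from the two canonical readings of $\Gamma_Q$ in Remark~\ref{Rmk: total orders}. First I would put the pair in normal form: since $\preceq_Q$ is convex, after interchanging $\alpha$ and $\beta$ if necessary we may assume $\alpha \prec_Q \gamma \prec_Q \beta$, so that any convex total order compatible with $\preceq_Q$ already satisfies $\alpha < \gamma < \beta$. Next, as in the proof of Theorem~\ref{Thm: i_k+i_l=i_j} (via Corollary~\ref{Cor: one path}), the decomposition $\{\alpha,\beta\}$ lies entirely in the upper ray of $\gamma$ or entirely in its lower ray; moreover, writing $\gamma=[a,b]$, in the upper-ray case the $\prec_Q\gamma$-member of the pair lies on the maximal $(N,a)$-sectional path (hence has first component $a$, so it is $\alpha$) while the $\succ_Q\gamma$-member lies on the maximal $(S,b)$-sectional path (hence has second component $b$, so $\beta=[c+1,b]$); in the lower-ray case the two sectional paths are interchanged, so $\beta$ has first component $a$ instead. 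I will carry out the upper-ray case with the order $<^{L}_Q$; the lower-ray case is verbatim with $<^{U}_Q$, first and second components swapped, and $N$- and $S$-sectional paths swapped.

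So assume $\{\alpha,\beta\}$ lies in the upper ray, $\alpha\prec_Q\gamma\prec_Q\beta$. Compatibility of $<^{L}_Q$ with $\preceq_Q$ is part of Remark~\ref{Rmk: total orders}, and $\alpha\prec_Q\beta$ forces $\alpha<^{L}_Q\beta$, so by Definition~\ref{Def: minimal pair} it only remains to rule out a decomposition $\gamma=\alpha'+\beta'$ with $\alpha<^{L}_Q\alpha'<^{L}_Q\gamma<^{L}_Q\beta'<^{L}_Q\beta$. The point is that $<^{L}_Q$ orders $\Phi^+_n$ primarily by the value of $d(1,i)-p$ at a vertex $(i,p)$, and this value is preserved by every $S$-sectional arrow; hence, by Theorem~\ref{Thm: sectional} and Corollary~\ref{Cor: sectional}, the positive roots with a fixed second component form a single contiguous block in $<^{L}_Q$, and inside a block $<^{L}_Q$ is just the linear order along the corresponding maximal $S$-sectional path (the secondary key being $i$). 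Since $\gamma$ and $\beta$ both have second component $b$, any $\beta'$ with $\gamma<^{L}_Q\beta'<^{L}_Q\beta$ has second component $b$ and lies strictly between $\gamma$ and $\beta$ on the maximal $(S,b)$-path; in particular $\beta'\succ_Q\gamma$, so by convexity of $\preceq_Q$ its partner satisfies $\alpha'\prec_Q\gamma$, and $\{\alpha',\beta'\}$ is again an upper-ray decomposition of $\gamma$.

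To finish I would invoke Remark~\ref{rem: same edge}: all upper-ray decompositions of $\gamma$ straddle $\gamma$ at one and the same edge-distance $d$ along the upper ray. Thus, as $\beta'$ is strictly closer to $\gamma$ than $\beta$ on the $(S,b)$-side, the partner $\alpha'$ is strictly farther from $\gamma$ than $\alpha$ on the $(N,a)$-side; translating positions into $\preceq_Q$ via \eqref{eq: paths Q} this says $\beta'\prec_Q\beta$ and $\alpha'\prec_Q\alpha$, so $\alpha'<^{L}_Q\alpha$, contradicting $\alpha<^{L}_Q\alpha'$. Hence no such $\{\alpha',\beta'\}$ exists, and $(\alpha,\beta)$ is a minimal pair for $<^{L}_Q$.

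The hard part is the combinatorial bookkeeping of the second step: pinning down exactly which decompositions of $\gamma$ lie in the upper ray versus the lower ray, and checking that $<^{L}_Q$ (respectively $<^{U}_Q$) is ``aligned'' with the ray that contains the given pair, so that an interleaving $\beta'$ is forced into the same second-component (respectively first-component) block as $\gamma$. Once that alignment is in place, minimality is a formal consequence of Remark~\ref{rem: same edge} and compatibility with $\preceq_Q$; the rest, including the lower-ray case, is routine symmetry.
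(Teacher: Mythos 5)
Your proof is correct and follows essentially the same route as the paper: reduce to the two canonical readings $<^U_Q$, $<^L_Q$ of Remark~\ref{Rmk: total orders}, split into the upper-ray and lower-ray cases, and use Remark~\ref{rem: same edge} to rule out a strictly interleaved decomposition. The only (harmless) difference is that you pair the upper ray with $<^L_Q$ and the lower ray with $<^U_Q$, whereas the paper uses the opposite pairing; both pairings work, since in either order the competing $\beta'$ is confined to the sectional block through $\gamma$ and the same-edge remark then forces $\alpha'$ strictly below $\alpha$.
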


\begin{proof}
As we observed, all pair $(\alpha,\beta)$ of $\gamma$ is located in
the upper ray of $\gamma$ or lower ray of $\gamma$. Assume that
$(\alpha,\beta)$ is in the upper ray. Take any pair
$(\alpha',\beta')$ of $\gamma$.

{\rm (i)} If $(\alpha',\beta')$ is also contained in the upper ray
of $\gamma$, the observation in Remark \ref{rem: same edge} tells
that we have one of the following two situations in $\Gamma_Q$:

$$
{\xy (0,0)*{}="CB"; (-10,10)*{}="LT";(10,10)*{}="RT"; "CB"; "LT"
**\dir{-}; "CB"; "RT" **\dir{-}; "CB"*{\bullet};
"CB"+(0,-2)*{\scriptstyle \gamma}; "CB"+(0,-2)*{\scriptstyle
\gamma}; "CB"+(5,5)*{\bullet}; "CB"+(-5,5)*{\bullet};
"CB"+(8,8)*{\bullet}; "CB"+(-8,8)*{\bullet};
"CB"+(10,8)*{\scriptstyle \alpha}; "CB"+(7,5)*{\scriptstyle
\alpha'}; "CB"+(-11,8)*{\scriptstyle \beta'};
"CB"+(-8,5)*{\scriptstyle \beta};
\endxy}
\qquad\qquad {\xy (0,0)*{}="CB"; (-10,10)*{}="LT";(10,10)*{}="RT";
"CB"; "LT" **\dir{-}; "CB"; "RT" **\dir{-}; "CB"*{\bullet};
"CB"+(0,-2)*{\scriptstyle \gamma}; "CB"+(5,5)*{\bullet};
"CB"+(-5,5)*{\bullet}; "CB"+(8,8)*{\bullet}; "CB"+(-8,8)*{\bullet};
"CB"+(10,8)*{\scriptstyle \alpha'}; "CB"+(7,5)*{\scriptstyle
\alpha}; "CB"+(-11,8)*{\scriptstyle \beta};
"CB"+(-8,5)*{\scriptstyle \beta'};
\endxy}
$$
Thus, with respect to the total convex order $<^U_Q$ in Remark
\ref{Rmk: total orders}, we have
$$  \alpha <^U_Q \alpha' <^U_Q \gamma <^U_Q \beta <^U_Q \beta' \text{ or }  \alpha' <^U_Q \alpha <^U_Q \gamma <^U_Q \beta' <^U_Q \beta.$$

{\rm (ii)} Assume that $(\alpha',\beta')$ is contained in the lower
ray of $\gamma$. Then the situation can be expressed as follows:
$$
{\xy (0,0)*{}="CB"; (-10,10)*{}="LT";(10,10)*{}="RT";
(-10,-10)*{}="BLT";(10,-10)*{}="BRT"; "CB"; "LT" **\dir{-}; "CB";
"RT" **\dir{-}; "CB"; "BLT" **\dir{-}; "CB"; "BRT" **\dir{-};
"CB"*{\bullet}; "CB"+(3,0)*{\scriptstyle \gamma};
"CB"+(5,5)*{\bullet}; "CB"+(-8,8)*{\bullet};
"CB"+(7,5)*{\scriptstyle \alpha}; "CB"+(-11,8)*{\scriptstyle \beta};
"CB"+(-8,-8)*{\bullet};
"CB"+(5,-5)*{\bullet};"CB"+(8,-5)*{\scriptstyle \alpha'};
"CB"+(-11,-8)*{\scriptstyle \beta'};
\endxy}
$$
Hence, with respect to the total convex order $<^U_Q$ in Remark
\ref{Rmk: total orders}, we have
$$  \alpha' <^U_Q \alpha <^U_Q \gamma <^U_Q \beta' <^U_Q \beta.$$
By {\rm (i)} and {\rm (ii)}, we can conclude that every pair
$(\alpha,\beta)$ in the upper ray becomes minimal with respect to
the total convex order $<^U_Q$. In the similar way, one can see that
every pair $(\alpha,\beta)$ in the lower ray becomes minimal with
respect to the total convex order $<^L_Q$.
\end{proof}

For the rest of this paper, we say that {\it a pair $(\alpha,\beta)$
is minimal} when $\alpha+\beta \in \Phi_n^+$ and the pair is a
minimal pair with respect to a suitable convex total order which is
induced by some reduced expression $\redez$ adapted to $Q$.

\begin{corollary} \label{Cor: Dorey on Gamma}
\begin{enumerate}
\item[({\rm a})] For every pair $(\alpha,\beta)$ of $\alpha+\beta=\gamma\in\Phi^+_n$, we have a surjection
\begin{align}\label{eq: surj R}
S_Q(\beta) \conv S_Q(\alpha) \twoheadrightarrow S_Q(\gamma) \quad
\text{ in } \Rep(R_{A_n}).
\end{align}
Hence we have a surjection
\begin{align}\label{eq: surj C}
V_Q(\beta) \otimes V_Q(\alpha) \twoheadrightarrow V_Q(\gamma) \quad
\text{ in } \mathscr{C}^{(1)}_Q.
\end{align}
\item[({\rm b})] For every positive root $\beta \in \Phi^+_n$, there exists a sequence $(i_1,i_2,\cdots,i_{|\beta|}) \in I_0^{|\beta|}$ such that
$$\stens_{t=1}^{|\beta|} V_Q(\alpha_{i_t})  \twoheadrightarrow V_Q(\beta).$$
\end{enumerate}
\end{corollary}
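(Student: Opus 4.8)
The plan for (a) is to read it off from Theorem~\ref{thm: minimal} and Theorem~\ref{thm: BkMc}(d). First I would fix the pair so that $\alpha \prec_Q \beta$ (the two roots are $\preceq_Q$-comparable since $\preceq_Q$ is convex and $\gamma=\alpha+\beta\in\Phi_n^+$), so $\alpha \prec_Q \gamma \prec_Q \beta$. By Theorem~\ref{thm: minimal} there is a convex total order $<$, compatible with $\preceq_Q$ (hence $\alpha<\beta$) and --- by the construction of $<^U_Q,<^L_Q$ in Remark~\ref{Rmk: total orders} together with Theorem~\ref{Thm: Berdard reading} --- induced by a reduced expression $\redez$ of $w_0$ adapted to $Q$, with respect to which $(\alpha,\beta)$ is a minimal pair of $\gamma$. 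Theorem~\ref{thm: BkMc}(d) then supplies the four-term exact sequence
$$ 0 \to S_{\redez}(\gamma) \to S_{\redez}(\alpha)\conv S_{\redez}(\beta) \overset{{\mathbf r}_{\um}}{\Lto} S_{\redez}(\beta)\conv S_{\redez}(\alpha) \to S_{\redez}(\gamma) \to 0, $$
whose last arrow is the desired surjection; since $\redez$ is adapted to $Q$ one has $S_{\redez}(\cdot)\simeq S_Q(\cdot)$ (as recorded just after Theorem~\ref{thm: BkMc}), which gives \eqref{eq: surj R}. For \eqref{eq: surj C} I would apply the functor $\F^{(1)}_Q$ of Theorem~\ref{thm:gQASW duality 2}: it is exact and a tensor functor by Theorem~\ref{thm:gQASW duality}(a), so it sends \eqref{eq: surj R} to a surjection $\F^{(1)}_Q(S_Q(\beta))\otimes \F^{(1)}_Q(S_Q(\alpha)) \twoheadrightarrow \F^{(1)}_Q(S_Q(\gamma))$, which by Theorem~\ref{thm:gQASW duality 2}(b) and the definition \eqref{V_Q(beta)} is exactly $V_Q(\beta)\otimes V_Q(\alpha)\twoheadrightarrow V_Q(\gamma)$.

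\medskip

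\textbf{Part (b).} Here I would induct on $|\beta|$, the case $|\beta|=1$ being trivial. For $\beta=[a,b]$ with $|\beta|\ge 2$, write $\beta=\alpha_a+[a+1,b]$ with both summands in $\Phi_n^+$; since $\preceq_Q$ is convex, $\alpha_a$ and $[a+1,b]$ are $\preceq_Q$-comparable, so part (a) gives a surjection onto $V_Q(\beta)$ from the tensor product of $V_Q(\alpha_a)$ and $V_Q([a+1,b])$ in whichever order $\preceq_Q$ dictates. Applying the induction hypothesis to $[a+1,b]$, tensoring the resulting surjection on the appropriate side by $V_Q(\alpha_a)$, and composing --- using that $X\otimes(-)$ and $(-)\otimes X$ preserve surjections of finite-dimensional $U_q'(\g)$-modules --- produces $\stens_{t=1}^{|\beta|} V_Q(\alpha_{i_t})\twoheadrightarrow V_Q(\beta)$ with $(i_1,\dots,i_{|\beta|})$ a permutation of $(a,a+1,\dots,b)$. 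Equivalently one can run the whole induction on the quiver Hecke side from \eqref{eq: surj R}, using exactness of $\conv$ in each variable (a consequence of $R(m+n)$ being free over $R(m)\otimes R(n)$), to obtain $\dct{t=1}{|\beta|} S_Q(\alpha_{i_t})\twoheadrightarrow S_Q(\beta)$, and apply $\F^{(1)}_Q$ once at the end.

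\medskip

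The statement is really a direct corollary of Theorems~\ref{thm: minimal}, \ref{thm: BkMc} and \ref{thm:gQASW duality 2}, so the only points needing genuine care --- and hence the nearest thing here to an obstacle --- are two pieces of bookkeeping. The first is that the total order delivered by Theorem~\ref{thm: minimal} is one of $<^U_Q,<^L_Q$, which are readings of $\Gamma_Q$ compatible with arrows and therefore adapted to $Q$, so that the module $S_{\redez}(\cdot)$ occurring in Theorem~\ref{thm: BkMc}(d) is genuinely $S_Q(\cdot)$, independently of the chosen adapted $\redez$. The second is the direction of the surjection in Theorem~\ref{thm: BkMc}(d): it is $S_{\redez}(\beta_l)\conv S_{\redez}(\beta_k)$, with $\beta_k<\beta_l$, that carries $S_{\redez}(\beta_j)$ as its \emph{head}, which forces the $\preceq_Q$-larger root to appear first in the convolution (and hence in the tensor product after applying $\F^{(1)}_Q$). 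Once these are pinned down there is nothing more to do.
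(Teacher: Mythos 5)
Your proposal is correct and follows essentially the same route as the paper: Theorem \ref{thm: minimal} to realize every pair as a minimal pair for a convex total order coming from a reduced expression adapted to $Q$, Theorem \ref{thm: BkMc}(d) for the four-term exact sequence yielding \eqref{eq: surj R}, and the exact tensor functor $\F^{(1)}_Q$ of Theorem \ref{thm:gQASW duality 2} to deduce \eqref{eq: surj C}, with (b) obtained by iterating (a). Your two bookkeeping remarks (adaptedness of $<^U_Q,<^L_Q$ so that $S_{\redez}=S_Q$, and the direction of the surjection in Theorem \ref{thm: BkMc}(d)) are exactly the points the paper relies on implicitly.
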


\begin{proof}
Note that for a minimal pair $(\alpha,\beta)$ of
$\alpha+\beta=\gamma$ with respect to some convex total order
$\le_{\redez}$, Theorem \ref{thm: BkMc} ({\rm d}) guarantees the
existence of \eqref{eq: surj R} in $\Rep(R_{A_n})$.

On the other hand, Theorem \ref{thm: minimal} tells that, for every
pair $(\alpha,\beta)$ of $\gamma$, there exists a reduced expression
$\redez'$ adapted to $Q$ such that the pair is minimal with respect
to the total order $\le_{\redez'}$. Thus our first assertion for
$\Rep(R_{A_n})$ follows.

Recall that the functor $\mathcal{F}^{(1)}_Q$ sends simples to
simples. Thus $\mathcal{F}^{(1)}_Q\big({\rm Im} (f) \big)$ is
non-zero for any non-zero homomorphism $f$ in $\Rep(R_{A_n})$; i.e.,
$\mathcal{F}^{(1)}_Q$ is faithful. Then our first assertion for
$\mathscr{C}^{(1)}_Q$ follows since $\mathcal{F}^{(1)}_Q$ is a
tensor functor sending $S_Q(\beta)$ to $V_Q(\beta)$. The existence
of \eqref{eq: surj C} is also guaranteed by Theorem \ref{Thm: Dorey
A_n(1)} and Theorem \ref{Thm: i_k+i_l=i_j}.

The second assertion follows from the first assertion.
\end{proof}

From Corollary \ref{Cor: Dorey on Gamma} (b), the condition (b) in
Definition \ref{def: Q module category} can be modified as follows:

\begin{itemize}
\item[({\rm b}$'$)] It contains $V_Q(\alpha_k)$ for all $\alpha_k \in \Pi_n$.
\end{itemize}

\begin{corollary}For $\phi^{-1}(\alpha,0)=(i,a),\phi^{-1}(\beta,0)=(j,b)$, assume that $(k,c) \in \Gamma_Q$ such that
the triple $\{ (i,a),(j,b),(k,c)\}$ satisfies the one of the
condition {\rm (i)} and {\rm (ii)} in Theorem \ref{Thm: Dorey
A_n(1)}. Then we have
$$\gamma\seteq \phi(k,c)=\alpha+\beta.$$
\end{corollary}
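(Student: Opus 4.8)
The statement is the converse of Corollary \ref{Cor: Dorey on Gamma}(a) read through Theorem \ref{Thm: Dorey A_n(1)}: those results show that an addition pair $(\alpha,\beta)$ of $\gamma=\alpha+\beta\in\Phi_n^+$ produces a triple $\{\phi^{-1}(\alpha,0),\phi^{-1}(\beta,0),\phi^{-1}(\gamma,0)\}$ of vertices of $\Gamma_Q$ satisfying one of (i), (ii), and this assignment is visibly injective; so the claim is exactly that it is onto. The plan is to establish this by locating the two ``factor'' vertices relative to the ``target'' $(k,c)$. Conditions (i) and (ii) are interchanged by $i\mapsto i^\ast=n+1-i$, which also swaps upper and lower rays, so (as in the proof of Theorem \ref{Thm: i_k+i_l=i_j}, where the lower ray is handled ``by a similar argument'') it suffices to treat the case in which the triple satisfies (i). Writing $\gamma'=\phi(k,c)=[x_0,y_0]$, condition (i) then reads $i+j=k\le n$, $a=c-j$, $b=c+i$, and in particular exactly one of the two factor vertices (the one we call $(i,a)$) has $p$-coordinate $<c$ and the other $(j,b)$ has $p$-coordinate $>c$.

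Thus $(i,a)=(k-j,c-j)$ is reached from $(k,c)$ by $j$ steps $(m,p)\mapsto(m-1,p-1)$ and $(j,b)=(k-i,c+i)$ by $i$ steps $(m,p)\mapsto(m-1,p+1)$. The first thing to check, using the staircase description \eqref{eq: known characterization} of $\Gamma_Q$ inside $\Z Q$ (together with \eqref{eq: range}), is that the intermediate vertices of these two broken lines also lie in $\Gamma_Q$; since in type $A$ consecutive indices are adjacent, the connecting arrows then lie in $\Gamma_Q$ as well. Consequently $(i,a)$ lies on the $S$-sectional arm and $(j,b)$ on the $N$-sectional arm of the \emph{upper ray} of $\gamma'$, at $\Z Q$-distances $j$ and $i$ from $\gamma'$. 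By Theorem \ref{Thm: sectional} the $S$-arm consists of roots with second component $y_0$ and the $N$-arm of roots with first component $x_0$, so $\phi(i,a)=[s,y_0]$ and $\phi(j,b)=[x_0,t]$ for some $x_0<s\le y_0$, $x_0\le t<y_0$; it remains to prove $s=t+1$, i.e. that $\{\phi(i,a),\phi(j,b)\}$ is one of the $y_0-x_0$ pairs $\{[x_0,m],[m+1,y_0]\}$ of $\gamma'$.

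For this I would run the analysis of Theorem \ref{Thm: i_k+i_l=i_j} in reverse. By Theorem \ref{Thm: i_k+i_l=i_j} and Corollary \ref{Cor: one path}, the pairs of $\gamma'$ contained in its upper ray are certain pairs $(W,V)$ with $W$ on the $N$-arm and $V$ on the $S$-arm, and for each of them the $\Z Q$-distances of $V$ and $W$ to $\gamma'$ sum to $k$ (one step along a sectional arm changes the $\Z Q$ first coordinate by exactly one, and the two first coordinates of an addition pair sum to $k$ by Theorem \ref{Thm: i_k+i_l=i_j}). Our $(i,a)$, $(j,b)$ satisfy exactly this relation $j+i=k$, so it suffices to see that they must coincide with the $V$, $W$ of a genuine pair. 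I would argue that every vertex $[m,y_0]$ with $m>x_0$ on the $S$-arm of the upper ray of $\gamma'$ is the $V$-part of the pair $\{[x_0,m-1],[m,y_0]\}$, whose $W$-part is then forced by Corollary \ref{Cor: one path} onto the $N$-arm of the same upper ray; the only vertices on the $S$-arm not of this form are the $[m,y_0]$ with $m<x_0$, and the $\Z Q$-range inequalities of \eqref{eq: known characterization}, controlled at the ends by Corollary \ref{Cor: first and last}, bound the arm lengths so that no such vertex sits at distance $j$ from $\gamma'$ when $i=k-j\ge1$. The case of a triple satisfying (ii) is handled identically with ``upper ray'' replaced by ``lower ray'' and the second alternative of Theorem \ref{Thm: i_k+i_l=i_j}.

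The verification that the broken lines stay in $\Gamma_Q$ and the bookkeeping with $\Z Q$-coordinates are routine. The main obstacle is the interleaving claim in the last step — that along the $S$-arm the first components of the segments, and along the $N$-arm the second components, mesh so that ``distance $j$ on one arm'' always pairs with ``distance $k-j$ on the other'' to give a genuine pair; equivalently, that the admissible range of $j$ (those $j$ for which both $(i,a)$ and $(j,b)$ lie in $\Gamma_Q$) coincides with the set of $j$'s realized by actual addition pairs. I expect to settle this by a counting argument comparing, for each $\gamma'$, the number of addition pairs in the upper ray with the number of admissible $j$'s; alternatively, a global count showing that the total number of triples of vertices of $\Gamma_Q$ satisfying (i) or (ii) equals $\binom{n+1}{3}$, the number of addition triples, would combine with the injectivity noted at the outset to finish the proof in one stroke.
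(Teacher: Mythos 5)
Your route is essentially the paper's: the published proof consists of two sentences observing that conditions (i)/(ii) place $\alpha$ and $\beta$ on the upper/lower ray of $\gamma$ and then invoking ``the argument in Theorem \ref{Thm: i_k+i_l=i_j}''; your write-up supplies the coordinate bookkeeping that the paper leaves implicit. The one step you flag as the ``main obstacle'' (showing $s=t+1$) does not require either of the counting arguments you sketch — it closes with tools you have already assembled. Apply Theorem \ref{Thm: i_k+i_l=i_j} to the \emph{genuine} pair $([x_0,t],[t+1,y_0])$ of $\gamma'=[x_0,y_0]$: since $[x_0,t]=\phi(j,b)$ sits on the $N$-arm of the upper ray, the pair lies in the upper ray together (as in the proof of Theorem \ref{Thm: i_k+i_l=i_j} and Remark \ref{rem: same edge}), so $[t+1,y_0]$ lies on the upper-ray $S$-arm with $\phi^{-1}_1([t+1,y_0])=k-j=i$. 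A sectional path changes the row index by exactly one at each arrow, hence meets each row of $\Z Q$ at most once; therefore $[t+1,y_0]$ and $\alpha=[s,y_0]$ occupy the same vertex $(i,a)$ of that arm, giving $s=t+1$ and $\alpha+\beta=\gamma'$. With that substitution your argument is complete and coincides with the paper's intended one.
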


\begin{proof}
By assumption, $\alpha,\beta$ are contained in the upper ray or
lower ray of $\gamma$, respectively. Applying the argument in
Theorem \ref{Thm: i_k+i_l=i_j}, one can prove the assertion.
\end{proof}

Recall Example \ref{ex: example 1}.
\begin{enumerate}
\item[({\rm i})] Using the reading in Remark \ref{Rmk: total orders} (A), the reduced expression $\redez$ of $w_0$ associated to the reading is
$$\redez=s_1s_3s_2s_1s_4s_3s_2s_1s_5s_4s_3s_2s_1s_5s_4$$ and the convex total order on $\Phi_5^+$ is given as follows:
\begin{align*}
&
[1]<_\redez[3]<_\redez[1,3]<_\redez[2,3]<_\redez[3,4]<_\redez[1,4]<_\redez[2,4]<_\redez[4]
\\ & \hspace{7ex}
<_\redez[3,5]<_\redez[1,5]<_\redez[2,5]<_\redez[4,5]<_\redez[5]<_\redez[1,2]<_\redez[2].
\end{align*}
\noindent With this convex total order, one can observe that every
pair in lower rays becomes minimal.
\item[({\rm ii})] Using the reading in Remark \ref{Rmk: total orders} (B), the reduced expression $\redez'$ of $w_0$ associated to the reading is
$$\redez'=s_3s_4s_5s_1s_2s_3s_4s_5s_1s_2s_3s_4s_1s_2s_1$$ and the convex total order on $\Phi_5^+$ is given as follows:
\begin{align*}
& [3]<_{\redez'}[3,4]<_{\redez'}[3,5]<_{\redez'}[1]<_{\redez'}[1,3]<_{\redez'}[1,4]<_{\redez'}[1,5]<_{\redez'}[1,2] \\
& \hspace{10ex}
<_{\redez'}[2,3]<_{\redez'}[2,4]<_{\redez'}[2,5]<_{\redez'}[2]<_{\redez'}[4]<_{\redez'}[4,5]<_{\redez'}[5].
\end{align*}
\noindent With this convex total order, one can observe that every
pair in upper rays becomes minimal.
\end{enumerate}

\begin{corollary} \label{cor: length2}
For every pair $(\alpha,\beta)$ of  $\alpha+\beta \in \Phi^+$,
$$V_Q(\beta)\otimes V_Q(\alpha)\text{ and } S_Q(\beta) \conv S_Q(\alpha) \text{ are of length $2$}.$$
\end{corollary}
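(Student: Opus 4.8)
The plan is to combine Theorem~\ref{thm: minimal} with Theorem~\ref{thm: BkMc}~({\rm d}) on the quiver Hecke algebra side, and then transport the result to $\mathscr{C}^{(1)}_Q$ via the functor $\mathcal{F}^{(1)}_Q$. First I would invoke Theorem~\ref{thm: minimal}: given a pair $(\alpha,\beta)$ with $\alpha+\beta=\gamma\in\Phi_n^+$, there exists a reduced expression $\redez$ of $w_0$ adapted to $Q$ such that $(\alpha,\beta)$ (after relabelling so that $\alpha<_{\redez}\beta$) is a minimal pair of $\gamma$ with respect to $\le_{\redez}$. Then Theorem~\ref{thm: BkMc}~({\rm d}) produces the exact sequence
\[
0 \to S_Q(\gamma) \to S_Q(\beta) \conv S_Q(\alpha) \to S_Q(\alpha) \conv S_Q(\beta) \to S_Q(\gamma) \to 0,
\]
using Remark's identification $S_{\redez}(\beta)\simeq S_Q(\beta)$ for $\redez$ adapted to $Q$. (Here I must be a little careful to match the ordering convention: Theorem~\ref{thm: BkMc}~({\rm d}) is stated with $\beta_k<_{\redez}\beta_l$ inside $S_{\redez}(\beta_k)\conv S_{\redez}(\beta_l)$; whichever of $\alpha,\beta$ is $\prec_Q$-smaller is the one on the right in our convention, as in \eqref{eq: surj R}, so the sequence above has the products written in the order $S_Q(\beta)\conv S_Q(\alpha)$.) An exact sequence of this shape, with the two outer terms equal to the \emph{simple} module $S_Q(\gamma)$ and the image of the middle map being simple and isomorphic to $S_Q(\gamma)$ (by Theorem~\ref{thm: BkMc}~({\rm b})), forces $S_Q(\beta)\conv S_Q(\alpha)$ to have composition factors exactly $S_Q(\gamma)$ (with multiplicity one, as head, from \eqref{eq: surj R}) and the kernel, which is again $S_Q(\gamma)$; hence composition length $2$. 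The same sequence read from the other side gives length $2$ for $S_Q(\alpha)\conv S_Q(\beta)$, and replacing $Q$ by a quiver realizing the other ray (or simply noting that both $(\alpha,\beta)$ and $(\beta,\alpha)$ occur) covers all orderings, so $S_Q(\beta)\conv S_Q(\alpha)$ has length $2$ as asserted.

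For the $U_q'(A_n^{(1)})$-module $V_Q(\beta)\otimes V_Q(\alpha)$, I would push the above through $\mathcal{F}^{(1)}_Q$. By Theorem~\ref{thm:gQASW duality 2}, $\mathcal{F}^{(1)}_Q$ is exact and sends $S_Q(\beta)$ to $V_Q(\beta)$, and by Theorem~\ref{thm:gQASW duality}~({\rm a}) it is a tensor functor, so $\mathcal{F}^{(1)}_Q(S_Q(\beta)\conv S_Q(\alpha))\simeq V_Q(\beta)\otimes V_Q(\alpha)$. Applying the exact functor $\mathcal{F}^{(1)}_Q$ to the four-term exact sequence above yields the exact sequence
\[
0 \to V_Q(\gamma) \to V_Q(\beta)\otimes V_Q(\alpha) \to V_Q(\alpha)\otimes V_Q(\beta) \to V_Q(\gamma) \to 0
\]
of $U_q'(\g)$-modules, with $V_Q(\gamma)$ simple since $\mathcal{F}^{(1)}_Q$ sends simples to simples bijectively. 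As before this forces $V_Q(\beta)\otimes V_Q(\alpha)$ to have composition length $2$.

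The main obstacle I anticipate is bookkeeping rather than conceptual: making sure the ordering conventions line up across three sources — the $\le_{\redez}$ convention of \eqref{eq: total}, the minimal-pair statement of Definition~\ref{Def: minimal pair}/Theorem~\ref{thm: minimal}, and the orientation of the exact sequence in Theorem~\ref{thm: BkMc}~({\rm d}) together with the convention $S_Q(\beta)\conv S_Q(\alpha)$ (with $\alpha\prec_Q\beta$) used in Corollary~\ref{Cor: Dorey on Gamma}. One also needs the remark that $S_{\redez}(\beta)$ is independent of which $\redez$ adapted to $Q$ is chosen, so that ``length $2$'' is a statement purely about $S_Q(\beta)\conv S_Q(\alpha)$ and not about a particular reduced word; this is exactly Remark~({\rm iii}) following Theorem~\ref{thm: BkMc}. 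Once these conventions are pinned down the proof is short, essentially ``exact sequence with equal simple outer terms $\Rightarrow$ length $2$,'' applied once on each side of $\mathcal{F}^{(1)}_Q$.
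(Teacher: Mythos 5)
Your proposal is correct and follows essentially the paper's own (one-line) proof: combine Theorem~\ref{thm: minimal} with Theorem~\ref{thm: BkMc}~({\rm d}) to obtain the four-term exact sequence on the quiver Hecke algebra side, and transport the conclusion through the exact tensor functor of Theorem~\ref{thm:gQASW duality 2}. One small inaccuracy worth fixing: the image of the middle map in that sequence is the simple head of $S_Q(\beta)\conv S_Q(\alpha)$ (equivalently the socle of the reversed product), which in general is \emph{not} isomorphic to $S_Q(\gamma)$ — the two composition factors are $S_Q(\gamma)$ and this other simple module — but this does not affect the length-$2$ conclusion, which only needs the kernel to be $S_Q(\gamma)$ and the image to be simple.
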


\begin{proof}
Since every pair $(\alpha,\beta)$ is minimal, our assertions follow
from Theorem \ref{thm: BkMc} (d) and Theorem \ref{thm:gQASW duality
2} (b).
\end{proof}

\begin{remark} \label{rmk: non-adapted}
For a reduced expression of the longest element $w_0$ of $A_5$
$$\widetilde{w}_0=s_1s_2s_3s_5s_4s_3s_1s_2s_3s_5s_4s_3s_1s_2s_3,$$
one can easily check that it is {\it not} adapted to any Dynkin
quiver $Q$ of type $A_5$. The convex total order induced by
$\widetilde{w}_0$ is given as follows:
\begin{align*}
& [1]<_\redez[1,2]<_\redez[1,3]<_\redez[5]<_\redez[1,5]<_\redez[4,5]<_\redez[2]<_\redez[2,5] \\
& \hspace{8ex}
<_\redez[2,3]<_\redez[1,4]<_\redez[2,4]<_\redez[4]<_\redez[3,5]<_\redez[3,4]<_\redez[3].
\end{align*}
Thus the pair $([1,2],[3,5])$ is not a minimal pair of $[1,5]$ with
respect to $<_\redez$, since we have
$$[1,2]<_\redez [1,3] <_\redez [1,5] <_\redez [4,5] <_\redez [3,5].$$

One can compute that
$$ S_{\widetilde{w}_0}([1,2])=\mathsf{L}(21), \ S_{\widetilde{w}_0}([1,3])=\mathsf{L}(321),
\ S_{\widetilde{w}_0}([4,5])=\mathsf{L}(45), \
S_{\widetilde{w}_0}([3,5])=\mathsf{L}(345)$$ where $\mathsf{L}(21)$,
$\mathsf{L}(321)$, $\mathsf{L}(45)$ and $\mathsf{L}(345)$ are
$1$-dimensional modules which are defined in the similar way of
\eqref{def: L(a,b)} below. Moreover, we have
\begin{itemize}
\item $\mathsf{L}(345) \conv \mathsf{L}(21)$ and
$\mathsf{L}(45) \conv \mathsf{L}(321)$ have composition length $2$,
\item ${\rm hd}\big( \mathsf{L}(345) \conv \mathsf{L}(21) \big)
\simeq {\rm soc}\big( \mathsf{L}(45) \conv \mathsf{L}(321) \big)$
are simple.
\end{itemize}
Hence we can conclude that $${\rm hd}\big( \mathsf{L}(345) \conv
\mathsf{L}(21) \big) \not\simeq {\rm hd}\big( \mathsf{L}(45) \conv
\mathsf{L}(321) \big) \simeq S_{\widetilde{w}_0}([1,5]),$$ by
\cite[Corollary 3.9]{KKKO14}.
\end{remark}

\section{Revisit of construction of exact sequences and simple modules.} \label{sec: revisit}
In this section, we revisit the theories on construction of exact
sequences and simple modules of $R_{A_n}$ developed in
\cite{BKOP,BKM12,HMM09,KKK13b,KKKO14,KP11,KR09,Mc12} by using the
explicit description of $\Gamma_Q$ in Section \ref{sec:
Combinatorics}.

\medskip

For a $\beta=[a,b] \in \Phi_n^+$, we define a $1$-dimensional simple
$R_{A_n}(\beta)$-module $L(a,b)=\ko u(a,b)$ as follows:
\begin{equation}
\begin{aligned} \label{def: L(a,b)}
& x_m u{(a,b)} =0 \ (1 \le m \le b-a+1) , \\ & \tau_k u{(a,b)} =0 \ (1 \le k < b-a+1), \\
& e(\nu) u{(a,b)}= \begin{cases}
u{(a,b)} & \text{if $\nu=(a,a+1, \ldots, b)$,} \\
0 & \text{otherwise.}
\end{cases}
\end{aligned}
\end{equation}

\begin{lemma} \label{lem: Q rev}
Fix a Dynkin quiver $\overset{\gets}{Q}$ of type $A_n$, called the
\emph{linear quiver}, as follows:
\begin{align} \label{def: linear quiver}
\xymatrix@R=3ex{ *{ \bullet }<3pt> \ar@{<-}[r]_<{1}
&*{\bullet}<3pt> \ar@{<-}[r]_<{2}  &*{ \bullet }<3pt>
\ar@{<.}[rr]_<{3} &&*{ \bullet }<3pt> \ar@{<-}[r]_<{n-1}  & *{
\bullet }<3pt> \ar@{-}[l]^<{\ \ n} }
\end{align}

For $\beta=(a,b) \in \Phi_n^+$, we have
$$ S_{\overset{\gets}{Q}}(\beta) \simeq L(a,b).$$
\end{lemma}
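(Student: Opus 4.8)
The plan is to make $\Gamma_{\overset{\gets}{Q}}$ completely explicit, read off from it a reduced expression adapted to $\overset{\gets}{Q}$ together with its convex order, and then identify the cuspidal modules $S_{\overset{\gets}{Q}}([a,b])$ with the one-dimensional modules $L(a,b)$ by an induction on $b-a$ built on the minimal-pair exact sequences. First I would fix the height function by $\xi_1=0$, so $\xi_i=i-1$. In $\overset{\gets}{Q}$ the vertex $1$ is a sink, $n$ is a source and each $2\le k\le n-1$ is a right intermediate, so Lemma~\ref{Lem: Key Lem}~(a) yields $\phi^{-1}(\alpha_k,0)=(n,\,2k-1-n)$ for all $k\in I_0$, i.e.\ every simple root lies on the bottom row. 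Running along the maximal $(N,a)$- and $(S,b)$-sectional paths through the simple roots (Theorem~\ref{Thm: sectional} and Remark~\ref{Alg: easy}) then gives the closed formula $\phi^{-1}([a,b],0)=(n+a-b,\ a+b-1-n)$ for all $1\le a\le b\le n$.

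For $[a,b]$ placed at $(i,p)=\phi^{-1}([a,b],0)$ one has $d(1,i)-p=2(n-b)$, depending only on $b$; hence the reading rule $<^L_Q$ of Remark~\ref{Rmk: total orders} produces the convex total order
$$[a,b]<^L_{\overset{\gets}{Q}}[a',b']\iff b>b'\ \text{ or }\ \big(b=b'\ \text{and}\ a>a'\big),$$
which is compatible with $\preceq_{\overset{\gets}{Q}}$, and the associated adapted reduced expression is $\redez=(s_ns_{n-1}\cdots s_1)(s_ns_{n-1}\cdots s_2)\cdots(s_ns_{n-1})(s_n)$; by the Remark following Theorem~\ref{thm: BkMc} this makes $S_{\overset{\gets}{Q}}(\beta)=S_{\redez}(\beta)$. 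Inspecting this order one sees at once that for every $a\le c<b$ the pair $\big([c+1,b],[a,c]\big)$ (ordered $[c+1,b]<_{\redez}[a,b]<_{\redez}[a,c]$) is a minimal pair of $[a,b]$; in particular $(\alpha_b,[a,b-1])$ is.

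Now I would induct on $b-a$. When $b=a$, $R(\alpha_a)$ is the polynomial ring in one variable, whose unique graded simple module is $L(a,a)$, so $S_{\overset{\gets}{Q}}(\alpha_a)=L(a,a)$. For $b>a$, Theorem~\ref{thm: BkMc}~(d) applied to the minimal pair $(\alpha_b,[a,b-1])$ gives
$$0\longrightarrow S_{\redez}([a,b])\longrightarrow S_{\redez}(\alpha_b)\conv S_{\redez}([a,b-1])\longrightarrow S_{\redez}([a,b-1])\conv S_{\redez}(\alpha_b)\longrightarrow S_{\redez}([a,b])\longrightarrow 0,$$
and by induction the two middle terms are $L(b,b)\conv L(a,b-1)$ and $L(a,b-1)\conv L(b,b)$, each of dimension $b-a+1$. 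It then suffices to produce a one-dimensional submodule of $L(b,b)\conv L(a,b-1)$ isomorphic to $L(a,b)$: from the canonical generator $v_0=1\otimes(u\otimes v)$, which sits on $e(b,a,a+1,\dots,b-1)$, the element $\tau_{b-a}\tau_{b-a-1}\cdots\tau_1\,v_0$ sits on $e(a,a+1,\dots,b)$, and a computation with the defining relations of $R_{A_n}$ for the orientation $\overset{\gets}{Q}$ shows it spans an $R([a,b])$-submodule annihilated by all $x_m$ and all $\tau_k$, i.e.\ a copy of $L(a,b)$. Since the middle module has length $2$ with simple socle $S_{\redez}([a,b])$ (Theorem~\ref{thm: BkMc}~(d)), this forces $S_{\overset{\gets}{Q}}([a,b])=S_{\redez}([a,b])\simeq L(a,b)$.

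I expect the main obstacle to be exactly this last verification: one must use the specific orientation of $\overset{\gets}{Q}$ — equivalently the chosen sign of $\mathcal{Q}_{i,i+1}(u,v)$ — to be certain that $L(a,b)$ occurs as a \emph{sub}module, not merely a quotient, of $L(b,b)\conv L(a,b-1)$, as the exact sequence requires; the computation itself is an elementary but careful manipulation of the $\tau$- and $x$-relations together with $\tau_k^2 e(\nu)=\mathcal{Q}_{\nu_k,\nu_{k+1}}(x_k,x_{k+1})e(\nu)$. An alternative that avoids the induction: since $L(a,b)$ is concentrated on $e(a,a+1,\dots,b)$, its restriction to any $R(\gamma)\otimes R(\delta)$ with $\gamma+\delta=[a,b]$ and $\gamma,\delta\neq 0$ is nonzero only for $\gamma=[a,c]$, $\delta=[c+1,b]$, where it equals $L(a,c)\boxtimes L(c+1,b)$ with $[a,c]>_{\redez}[a,b]>_{\redez}[c+1,b]$ by the order above; this is McNamara's characterization \cite{Mc12} of the cuspidal (root) module, whence $L(a,b)\simeq S_{\redez}([a,b])$ — here the only care needed is to match McNamara's normalization of the root module with the \cite{KKK13b} convention for $S_{\redez}$ adopted in this paper, a discrepancy that is invisible on one-dimensional modules.
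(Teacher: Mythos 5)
Your argument is correct in substance, but it runs along the socle side of the story, whereas the paper's proof runs along the head side and is considerably lighter. The paper induces on $|\beta|$ and invokes its Corollary \ref{Cor: Dorey on Gamma} (every pair is minimal for a suitable adapted order, hence $S_{\overset{\gets}{Q}}([a,c])\conv S_{\overset{\gets}{Q}}([c+1,b])\twoheadrightarrow S_{\overset{\gets}{Q}}([a,b])$, which by induction reads $L(a,c)\conv L(c+1,b)\twoheadrightarrow S_{\overset{\gets}{Q}}([a,b])$), combines this with the canonical surjection $L(a,c)\conv L(c+1,b)\twoheadrightarrow L(a,b)$ coming from Frobenius reciprocity, and concludes from uniqueness of the simple head that the two simple quotients coincide. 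No element-level computation and no explicit determination of $\Gamma_{\overset{\gets}{Q}}$ or of the adapted word is needed beyond Lemma \ref{Lem: Key Lem}~(a). Your route instead exhibits $L(a,b)$ inside the \emph{socle} of $L(b,b)\conv L(a,b-1)$ via the vector $\tau_{b-a}\cdots\tau_1\,v_0$; this does work (for $k<b-a$ the relevant idempotent is not a shuffle of $(b)$ with $(a,\dots,b-1)$, and $\tau_{b-a}^2$ produces $\mathcal{Q}_{b,b-1}(x_{b-a},x_{b-a+1})$, which kills any vector annihilated by all $x_m$ because $\mathcal{Q}_{i,j}$ has no constant term), but it is exactly the kind of hands-on verification the paper's use of the head avoids. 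Your explicit coordinates $\phi^{-1}([a,b],0)=(n+a-b,\,a+b-1-n)$, the order $<^L_{\overset{\gets}{Q}}$, the adapted word, and the minimality of $(\alpha_b,[a,b-1])$ are all correct.

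Two small points. First, your worry that the sign of $\mathcal{Q}_{i,i+1}$ might obstruct $L(a,b)$ being a submodule is unfounded: the computation above is sign-independent, and the orientation of $\overset{\gets}{Q}$ enters only through the convex order, i.e.\ in deciding that it is $L(b,b)\conv L(a,b-1)$ (and not the reverse product) whose socle is $S_{\redez}([a,b])$. Second, "length $2$ with simple socle" does not follow from Theorem \ref{thm: BkMc}~(d) alone, which only exhibits $S_{\redez}([a,b])$ as \emph{a} submodule; you need the module to be uniserial, which follows from indecomposability (its head is simple by Theorem \ref{thm: BkMc}~(b), while a direct sum $S_{\redez}([a,b])\oplus{\rm Im}(\rmat{})$ would have semisimple head of length $2$). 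Your alternative via McNamara's restriction characterization is also sound and is essentially the identification with the cuspidal modules of \cite{HMM09,KR09} that the paper records in the remark closing Section \ref{sec: revisit}.
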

\begin{proof}
By Lemma \ref{Lem: Key Lem} (a), we have
$$ \phi^{-1}(\alpha_k,0)=(n,\xi_n-2(n-k)) \quad \text{ for all } k \in I_0.$$
As is well-known, the shape of $\Gamma_{\overset{\gets}{Q}}$ is
given as follows:
$$ \scalebox{0.9}{\xymatrix@C=2ex@R=1ex{
1&&&&& [1,n]\ar@{->}[dr]\\
2&&&& [1,n-1]\ar@{.>}[dr]\ar@{->}[ur] && [2,n]\ar@{.>}[dr]\\
\vdots&&& \cdots\ar@{.>}[dr]\ar@{.>}[ur] && \cdots\ar@{.>}[dr]\ar@{.>}[ur] && \cdots\ar@{.>}[dr]  \\
n-1&& [1,2] \ar@{->}[dr]\ar@{.>}[ur], && \cdots\ar@{.>}[dr]\ar@{.>}[ur] && \cdots\ar@{.>}[ur]\ar@{.>}[dr] && [n-1,n]\ar@{->}[dr] \\
n&[1]\ar@{->}[ur]&& [2]\ar@{.>}[ur] && \cdots \ar@{.>}[ur] && [n-1]
\ar@{->}[ur] && [n] }}
$$

For a simple root $\beta$, it is trivial. By an induction hypothesis
on a height $|\beta|$ and Corollary \ref{Cor: Dorey on Gamma}, we
have $(c+1,b) < (a,c)$ and a surjection
$$ L(a,c) \conv L(c+1,b) \twoheadrightarrow S_{\overset{\gets}{Q}}(\beta) \quad \text{ for $\beta =(a,b) \in \Phi_n^+$ with $|\beta| \ge 2$}. $$
Since there is a canonical surjection
$$ L(a,c) \conv L(c+1,b) \twoheadrightarrow L(a,b),$$
and $L(a,c) \conv L(c+1,b)$ has a unique simple head, our assertion
follows.
\end{proof}

\begin{lemma} \cite{KP11} \label{Lem: irr cond}
For $[a_1,b_1]$, $[a_2,b_2] \in \Phi_n^+$ with $a_1=a_2$ or
$b_1=b_2$,
$$ \text{$L(a_1,b_1) \conv L(a_2,b_2) \simeq L(a_2,b_2) \conv L(a_1,b_1)$ is simple}.$$
\end{lemma}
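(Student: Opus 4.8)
The plan is to reduce to one explicit configuration and then prove simplicity by combining a shuffle--character argument with the $R$-matrix formalism for real simple modules over symmetric quiver Hecke algebras.

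By symmetry we may assume $a_1=a_2$: the case $b_1=b_2$ follows by applying the diagram automorphism $i\mapsto i^{*}=n+1-i$ of $A_n$ together with reversal of residue sequences, i.e. a functor $\sigma$ on $\Rep(R_{A_n})$ with $\sigma(L(a,b))\simeq L(b^{*},a^{*})$ and $\sigma(M\conv N)\simeq\sigma(N)\conv\sigma(M)$, which interchanges ``common first component'' with ``common second component''. So set $a_1=a_2=:a$ and, relabelling, assume $b_1\le b_2$; write $L_i:=L(a,b_i)$ and $m_i:=b_i-a+1$. By Lemma \ref{lem: Q rev}, $L_i\simeq S_{\overset{\gets}{Q}}([a,b_i])$ is a cuspidal module for the linear quiver $\overset{\gets}{Q}$ of \eqref{def: linear quiver}, and Theorem \ref{thm: BkMc}(a) (applied to a reduced expression adapted to $\overset{\gets}{Q}$) shows that each $L_i$ is a \emph{real} simple module, i.e. all of its convolution powers are simple. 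By the $R$-matrix machinery for real simple modules over symmetric quiver Hecke algebras (\cite{KKKO14}, cf. the use of \cite[Corollary 3.9]{KKKO14} in Remark \ref{rmk: non-adapted}), it follows that $L_1\conv L_2$ and $L_2\conv L_1$ have simple heads and simple socles, and that $L_1\conv L_2$ is simple if and only if $L_1\conv L_2\simeq L_2\conv L_1$. Thus everything reduces to this isomorphism.

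To prove $L_1\conv L_2\simeq L_2\conv L_1$, first observe from \eqref{def: L(a,b)} that the formal character of $L_i$ is the single residue word $(a,a+1,\dots,b_i)$, so the character of $L_1\conv L_2$ is the shuffle of $(a,\dots,b_1)$ with $(a,\dots,b_2)$; this is symmetric in the two factors, hence $\mathrm{ch}(L_1\conv L_2)=\mathrm{ch}(L_2\conv L_1)$. I would then show the $R$-matrix $r\colon L_1\conv L_2\to L_2\conv L_1$ is an isomorphism by computing the composite $r'\circ r$, with $r'$ the opposite $R$-matrix, and checking that it is a nonzero scalar. Using the explicit action in \eqref{def: L(a,b)} together with the defining relations of $R(m)$ --- in particular $\tau_k^{2}e(\nu)=\mathcal{Q}_{\nu_k,\nu_{k+1}}(x_k,x_{k+1})e(\nu)$, which for this paper's symmetric choice of $(\mathcal{Q}_{i,j})$ is $0$ for equal residues and $\pm(x_k-x_{k+1})e(\nu)$ for adjacent residues, and the fact that the $x_k$ act locally nilpotently --- this composite, evaluated on the cyclic generator $u(a,b_1)\otimes u(a,b_2)$, is an explicit product of linear forms $x_i-x_j$; and the hypothesis $a_1=a_2$ (so that the two segments are unlinked in Zelevinsky's sense, no spectral shift turning one into a proper extension of the other) is precisely what keeps every such factor from vanishing. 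Granting this, $L_1\conv L_2\simeq L_2\conv L_1$, so by the previous paragraph both products are simple and $L(a,b_1)\conv L(a,b_2)$ is the asserted simple module.

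The main obstacle is this last computation: one has to pin down exactly which product of factors $x_i-x_j$ represents $r'\circ r$ and verify its non-vanishing when $a_1=a_2$. There are two ways to sidestep it. One may simply cite \cite{KP11}, where this statement is the quiver Hecke incarnation of the classical fact that two segments with a common endpoint are unlinked and therefore have irreducible induced product. Alternatively, one may try to induct on $m_1+m_2$, using the canonical surjections $L(a,c)\conv L(c{+}1,b)\twoheadrightarrow L(a,b)$ together with the unique-simple-head property exploited in the proof of Lemma \ref{lem: Q rev}, so as to express the commutation of $L(a,b_1)$ with $L(a,b_2)$ in terms of commutations involving the shorter segments $L(a,b_1)$ and $L(b_1{+}1,b_2)$, in each of which the relevant $R$-matrix composite again only involves segments sharing an endpoint.
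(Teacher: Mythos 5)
The paper does not actually prove this lemma: it is quoted verbatim from \cite{KP11} and serves only to motivate Proposition \ref{prop: switching}, which subsumes it. The honest comparison is therefore with the paper's proof of that generalization, which runs through the quantum affine side: two roots sharing a first (resp.\ second) component lie on a common $N$-sectional (resp.\ $S$-sectional) path of $\Gamma_Q$, so by Remark \ref{Alg: easy} their AR-coordinates satisfy $|i-i'|=|p-p'|$; the denominator \eqref{eq: dij A_n 1} has all its zeros at exponents $\ge |i-i'|+2$, so $V_Q(\alpha)\otimes V_Q(\beta)$ is simple by Theorem \ref{Thm: basic properties}, and the exact functor of Theorem \ref{thm:gQASW duality 2} transports simplicity back to $S_Q(\alpha)\conv S_Q(\beta)$. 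Specializing to $Q=\overset{\gets}{Q}$ and invoking Lemma \ref{lem: Q rev} recovers exactly the statement of the present lemma, so that is the cleanest self-contained route inside this paper's framework.

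Your attempt stays entirely on the quiver Hecke side and has a genuine gap at the decisive step, which you acknowledge. The reductions are fine: the diagram-automorphism-plus-reversal functor handles $b_1=b_2$ given $a_1=a_2$; realness of $L(a,b_i)$ follows from Theorem \ref{thm: BkMc}(a) via Lemma \ref{lem: Q rev}; and \cite{KKKO14} does reduce simplicity of $L_1\conv L_2$ to the isomorphism $L_1\conv L_2\simeq L_2\conv L_1$, equivalently to $r'\circ r$ being a nonzero scalar. But that isomorphism \emph{is} the content of the lemma, and the equality of ungraded characters is no evidence for it: $\mathrm{ch}(M\conv N)=\mathrm{ch}(N\conv M)$ holds for arbitrary $M,N$, in particular for the linked configuration $a'<a\le b<b'$ of Proposition \ref{prop: big mesh}, where $L(a,b')\conv L(a',b)$ has length $2$. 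The asserted ``explicit product of linear forms $x_i-x_j$'' representing $r'\circ r$ is never exhibited, and identifying it and showing it is nonzero precisely when the two segments share an endpoint is the whole difficulty (it is delicate: on $L(a,b)$ all $x_k$ act by zero, so the computation lives in the induced module and involves the normalization of the intertwiners). As written, the argument is a plausible strategy whose core step is outsourced either to \cite{KP11} --- which is what the paper itself does --- or to an unexecuted induction; to close the gap without that computation, use the $\mathscr{C}^{(1)}_{\overset{\gets}{Q}}$ route above.
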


Now the following proposition can be regarded as a generalization of
the above lemma.

\begin{proposition} \label{prop: switching}
For any Dynkin quiver $Q$ of type $A_n$ and $\alpha=[a_1,b_1]$,
$\beta=[a_2,b_2] \in \Phi_n^+$ with $a_1=a_2$ or $b_1=b_2$,
$$ \text{ $S_Q(\alpha) \conv S_Q(\beta) \simeq S_Q(\beta) \conv S_Q(\alpha)$ is simple.} $$
\end{proposition}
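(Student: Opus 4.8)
The plan is to deduce the statement from the explicit $A_n^{(1)}$ denominators together with the properties of the functor $\mathcal{F}^{(1)}_Q$. Write $\phi^{-1}(\alpha,0)=(i_\alpha,p_\alpha)$ and $\phi^{-1}(\beta,0)=(i_\beta,p_\beta)$, so that $V_Q(\alpha)=V(\varpi_{i_\alpha})_{(-q)^{p_\alpha}}$ and $V_Q(\beta)=V(\varpi_{i_\beta})_{(-q)^{p_\beta}}$ by \eqref{V_Q(beta)}. The heart of the argument is a positional observation in $\Gamma_Q$. If $a_1=a_2$, then $\alpha$ lies on a maximal $N$-sectional path all of whose roots have first component $a_1$ by Theorem \ref{Thm: sectional}; by the uniqueness in Corollary \ref{Cor: sectional} this is the only maximal $N$-sectional path whose roots have first component $a_1$, so $\beta$, having the same first component, lies on the same path. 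Along an $N$-sectional path the quantity $i+p$ is constant, since every arrow has the form $(i,p)\to(i-1,p+1)$; hence $i_\alpha+p_\alpha=i_\beta+p_\beta$, that is, $|p_\alpha-p_\beta|=|i_\alpha-i_\beta|$. Symmetrically, if $b_1=b_2$ then $\alpha$ and $\beta$ lie on the unique maximal $S$-sectional path whose roots have second component $b_1$; along it $p-i$ is constant, so again $|p_\alpha-p_\beta|=|i_\alpha-i_\beta|$.

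Next I would feed this into the denominator formula \eqref{eq: dij A_n 1}: there $d_{i_\alpha,i_\beta}(z)=\prod_{s=1}^{M}\bigl(z-(-q)^{2s+|i_\alpha-i_\beta|}\bigr)$ with $M=\min(i_\alpha,i_\beta,n+1-i_\alpha,n+1-i_\beta)$, so every zero of $d_{i_\alpha,i_\beta}(z)$ is of the form $(-q)^{e}$ with $e\ge|i_\alpha-i_\beta|+2$. Since $q$ is transcendental over $\C$, $(-q)^{m}=(-q)^{m'}$ only when $m=m'$; and $|p_\alpha-p_\beta|=|i_\alpha-i_\beta|<e$ forces $e\ne p_\alpha-p_\beta$ and $e\ne p_\beta-p_\alpha$. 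Hence $d_{i_\alpha,i_\beta}(z)$ vanishes neither at $z=(-q)^{p_\alpha-p_\beta}$ nor at $z=(-q)^{p_\beta-p_\alpha}$, and Theorem \ref{Thm: basic properties}(2) gives that $V_Q(\alpha)\otimes V_Q(\beta)$ is simple. As a simple module it equals the image of the (regular, nonzero) specialization of the normalized $R$-matrix $V_Q(\alpha)\otimes V_Q(\beta)\to V_Q(\beta)\otimes V_Q(\alpha)$, a nonzero map between modules of equal dimension, hence an isomorphism; thus $V_Q(\alpha)\otimes V_Q(\beta)\simeq V_Q(\beta)\otimes V_Q(\alpha)$ is simple.

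Finally I would transport this along $\mathcal{F}^{(1)}_Q$. Since $\mathcal{F}^{(1)}_Q$ is a tensor functor with $\mathcal{F}^{(1)}_Q(S_Q(\gamma))=V_Q(\gamma)$, we get $\mathcal{F}^{(1)}_Q\bigl(S_Q(\alpha)\conv S_Q(\beta)\bigr)\simeq V_Q(\alpha)\otimes V_Q(\beta)$ and likewise for the opposite product. Because $\mathcal{F}^{(1)}_Q$ is exact and faithful it reflects simplicity (a nonzero proper submodule of $S_Q(\alpha)\conv S_Q(\beta)$ would be carried to a nonzero proper submodule of its image), so both $S_Q(\alpha)\conv S_Q(\beta)$ and $S_Q(\beta)\conv S_Q(\alpha)$ are simple; and since $\mathcal{F}^{(1)}_Q$ induces a bijection on isomorphism classes of simple modules, the isomorphism $V_Q(\alpha)\otimes V_Q(\beta)\simeq V_Q(\beta)\otimes V_Q(\alpha)$ lifts to $S_Q(\alpha)\conv S_Q(\beta)\simeq S_Q(\beta)\conv S_Q(\alpha)$. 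I do not expect a serious obstacle: the only genuinely combinatorial input is the identity $|p_\alpha-p_\beta|=|i_\alpha-i_\beta|$ of the first paragraph, and the one case worth noting separately is $\alpha=\beta$, where $i_\alpha=i_\beta$, $p_\alpha=p_\beta$ and the assertion is immediate from Theorem \ref{thm: BkMc}(a) (equivalently, from the same computation, since $d_{i,i}(z)$ does not vanish at $z=1$).
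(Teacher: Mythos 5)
Your proposal is correct and follows essentially the same route as the paper: the positional identity $|\phi^{-1}_2(\alpha)-\phi^{-1}_2(\beta)|=|\phi^{-1}_1(\alpha)-\phi^{-1}_1(\beta)|$ (which the paper extracts from Remark \ref{Alg: easy} and you derive directly from the sectional-path structure), then the denominator formula \eqref{eq: dij A_n 1} with Theorem \ref{Thm: basic properties} to get simplicity of $V_Q(\alpha)\otimes V_Q(\beta)$, then transport along $\mathcal{F}^{(1)}_Q$ via Theorem \ref{thm:gQASW duality 2}. You merely spell out the steps the paper leaves implicit.
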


\begin{proof}
By Remark \ref{Alg: easy}, we have
$$ |\phi^{-1}_2(\alpha) -\phi^{-1}_2(\beta)| = |\phi^{-1}_1(\alpha) -\phi^{-1}_1(\beta)|.$$
By \eqref{eq: dij A_n 1} and Theorem \ref{Thm: basic properties}, we
have
$$ \text{ $V_Q(\alpha) \otimes V_Q(\beta) \simeq V_Q(\beta) \otimes V_Q(\alpha)$ is simple.} $$
Thus our assertion follows from Theorem \ref{thm:gQASW duality 2}.
\end{proof}

\begin{corollary} \label{cor: switching}
For any Dynkin quiver $Q$ of type $A_n$ and  $\beta_k=[a_k,b_k] \in
\Phi_n^+$ $(k \in \Z_{>0})$, assume that $a_k = a_l$ $($resp.
$b_k=b_l)$ for all $ k \ne l $. Then we have
$$S_Q(\beta_1) \conv S_Q(\beta_2) \conv \cdots \conv S_Q(\beta_k) \text{ is simple.}$$
\end{corollary}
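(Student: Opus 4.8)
The plan is to obtain this as a commuting-family consequence of Proposition \ref{prop: switching}. Since all the $\beta_k$ share a first component $a$ (the case of a common second component being symmetric), Theorem \ref{Thm: sectional} places them on the maximal $(N,a)$-sectional path of $\Gamma_Q$; in particular every pair $\beta_k,\beta_l$ satisfies the hypothesis of Proposition \ref{prop: switching}, so $S_Q(\beta_k)\conv S_Q(\beta_l)\simeq S_Q(\beta_l)\conv S_Q(\beta_k)$ is simple, i.e.\ $S_Q(\beta_k)$ and $S_Q(\beta_l)$ commute. Each $S_Q(\beta_k)$ is moreover real by Theorem \ref{thm: BkMc}({\rm a}). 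Thus $\{S_Q(\beta_1),\ldots,S_Q(\beta_k)\}$ is a pairwise-commuting family of real simple $R_{A_n}$-modules, and the statement follows from the general fact that the convolution product of such a family is simple (the commuting-family property of real simple modules, \cite{KKKO14}). This realises the corollary as a common generalisation of Lemma \ref{Lem: irr cond} and of Proposition \ref{prop: switching}.

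If one wants an argument phrased only in terms of the material developed above, I would induct on $k$, with $k\le 2$ being Proposition \ref{prop: switching}. For the step, set $T\seteq S_Q(\beta_1)\conv\cdots\conv S_Q(\beta_{k-1})$, simple by induction; since each of $S_Q(\beta_1),\ldots,S_Q(\beta_{k-1})$ commutes with $S_Q(\beta_k)$ and commutation with a real simple module is inherited by the simple subquotients of convolution products, $T$ commutes with $S_Q(\beta_k)$, whence $T\conv S_Q(\beta_k)\simeq S_Q(\beta_k)\conv T$ is simple. (The reality of $T$ needed here is obtained by the same induction after rearranging $T^{\conv m}\simeq S_Q(\beta_1)^{\conv m}\conv\cdots\conv S_Q(\beta_{k-1})^{\conv m}$, which is legitimate because the $S_Q(\beta_i)$ pairwise commute.) A parallel route transports the problem along the faithful exact tensor functor $\F^{(1)}_Q$ of Theorem \ref{thm:gQASW duality 2} to the simplicity of $V_Q(\beta_1)\tens\cdots\tens V_Q(\beta_k)$: Remark \ref{Alg: easy} forces $|\phi^{-1}_2(\beta_k)-\phi^{-1}_2(\beta_l)|=|\phi^{-1}_1(\beta_k)-\phi^{-1}_1(\beta_l)|$, so by \eqref{eq: dij A_n 1} — whose zeros $(-q)^{2s+|i-j|}$, $s\ge1$, avoid $(-q)^{\pm|i-j|}$ — each pairwise tensor product is simple (Theorem \ref{Thm: basic properties}), and simplicity of the whole product follows from the compatibility of denominators, equivalently normalised $R$-matrices, with tensor products.

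The hard part will be precisely the passage from pairwise simplicity to simplicity of the entire product: in the $R$-matrix formulation one must check that $\Rnorm_{T,S_Q(\beta_k)}$, resp.\ $d_{V_Q(\beta_1),\,V_Q(\beta_2)\tens\cdots\tens V_Q(\beta_k)}(z)$, factors as the composite, resp.\ product, of the pairwise data — which needs care with the spectral-parameter shifts so that the relevant evaluation points coincide — while in the abstract formulation it is exactly the commuting-family result of \cite{KKKO14}. Every other ingredient (pairwise commutation via Proposition \ref{prop: switching}, reality via Theorem \ref{thm: BkMc}({\rm a}), and $\F^{(1)}_Q(S_Q(\beta))\simeq V_Q(\beta)$ via Theorem \ref{thm:gQASW duality 2}) is already in hand, so once that single step is granted the corollary is immediate.
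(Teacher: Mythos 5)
Your proposal is correct and follows the route the paper intends: the corollary is stated without proof precisely because it is the multi-factor extension of Proposition \ref{prop: switching}, obtained either from the pairwise denominator computation of Remark \ref{Alg: easy} together with the standard fact that a tensor product of fundamental modules with pairwise-nonvanishing denominators is simple, or equivalently from the commuting-family result for real simple modules in \cite{KKKO14}. You correctly isolate the only nontrivial input (the passage from pairwise simplicity to simplicity of the full product) and supply a valid reference for it, so nothing is missing.
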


\begin{proposition} \label{prop: redu 1}
Let $Q$ be any Dynkin quiver of finite type $A_n$.
\begin{enumerate}
\item[{\rm (a)}] For a source or a sink $i$ in $Q$, and $a \le i \le b$,
$$ S_Q([a,i]) \conv S_Q([i,b]) \simeq  S_Q([i,b])  \conv S_Q([a,i]) \text{ is simple.} $$
\item[{\rm (b)}] For a right intermediate or a left intermediate $i$ in $Q$, and $a < i < b$
$$ S_Q([a,i]) \conv S_Q([i,b]) \text{ and }  S_Q([i,b])  \conv S_Q([a,i]) \text{ are reducible.} $$
\end{enumerate}
\end{proposition}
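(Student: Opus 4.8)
The plan is to transport the statement to the category $\mathscr{C}^{(1)}_Q$ through the functor $\mathcal{F}^{(1)}_Q$ and then read it off the denominator formula \eqref{eq: dij A_n 1}. By Theorems \ref{thm:gQASW duality} and \ref{thm:gQASW duality 2}, $\mathcal{F}^{(1)}_Q$ is an exact tensor functor sending $S_Q(\beta)$ to $V_Q(\beta)$ and inducing a bijection on simple objects; being exact and sending simples to nonzero simples it is faithful (as noted in the proof of Corollary \ref{Cor: Dorey on Gamma}), so an object in its image is simple iff its image is simple, and it reflects isomorphisms between simples. Writing $\alpha=[a,i]$, $\beta=[i,b]$, $\phi^{-1}(\alpha,0)=(j_1,p_1)$ and $\phi^{-1}(\beta,0)=(j_2,p_2)$, it therefore suffices to prove: if $i$ is a source or a sink then $V_Q(\alpha)\otimes V_Q(\beta)$ is simple (whence, by symmetry of the criterion in Theorem \ref{Thm: basic properties}(2), so is $V_Q(\beta)\otimes V_Q(\alpha)$ and the two are isomorphic, giving $S_Q(\alpha)\conv S_Q(\beta)\simeq S_Q(\beta)\conv S_Q(\alpha)$ simple); and if $i$ is intermediate with $a<i<b$ then $V_Q(\alpha)\otimes V_Q(\beta)$ is reducible (equivalently, by the same symmetry, so is $V_Q(\beta)\otimes V_Q(\alpha)$). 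By Theorem \ref{Thm: basic properties}(2) and \eqref{eq: dij A_n 1}, reducibility holds precisely when
$$ |p_1-p_2| \;=\; |j_1-j_2| + 2s \qquad\text{for some }\ 1\le s\le \min(j_1,j_2,\,n+1-j_1,\,n+1-j_2).$$

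I would then locate $\alpha,\beta$ in $\Gamma_Q$. By Theorem \ref{Thm: sectional}, $\alpha=[a,i]$ lies on the maximal $S$-sectional path through $[i,i]$ and $\beta=[i,b]$ on the maximal $N$-sectional path through $[i,i]$. Set $(j_0,p_0):=\phi^{-1}([i,i],0)$, which is given explicitly by Lemma \ref{Lem: Key Lem}(a). Since $p-j$ is constant along an $S$-sectional path and $p+j$ is constant along an $N$-sectional path, $p_1=j_1+(p_0-j_0)$ and $p_2=(p_0+j_0)-j_2$, so $p_0$ cancels and
$$ p_1-p_2 \;=\; (j_1-j_0)+(j_2-j_0)$$
(in particular $|p_1-p_2|\equiv|j_1-j_2|\bmod 2$ automatically). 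Reading the hook description of Remark \ref{Alg: easy} case by case then yields the range of first coordinates spanned by each of these two paths together with the position of $[i,i]$ in them: for $i$ a source, $j_0=i$ with the $S$-path filling $\{1,\dots,i\}$ and the $N$-path $\{i,\dots,n\}$, so $j_1\le j_0\le j_2$; for $i$ a sink, $j_0=n+1-i$ with the $S$-path $\{n-i+1,\dots,n\}$ and the $N$-path $\{1,\dots,n-i+1\}$, so $j_2\le j_0\le j_1$; for $i$ a left intermediate, $j_0=1$ sits at the left end of both paths, so $j_1\le i$ and $j_2\le n-i+1$; for $i$ a right intermediate, $j_0=n$ sits at the right end of both, so $j_1\ge n-i+1$ and $j_2\ge i$. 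In the two intermediate cases the hypothesis $a<i<b$ gives $\alpha\ne[i,i]\ne\beta$, and since distinct vertices of a sectional path have distinct first coordinates this forces $j_1\ne j_0$ and $j_2\ne j_0$.

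Finally I would combine these. When $i$ is a source or a sink, $j_1-j_0$ and $j_2-j_0$ have opposite signs (more precisely $j_1\le j_0\le j_2$, resp.\ $j_2\le j_0\le j_1$), so by the triangle inequality $|p_1-p_2|=|(j_1-j_0)+(j_2-j_0)|\le|j_1-j_0|+|j_2-j_0|=|j_1-j_2|<|j_1-j_2|+2$; thus $(-q)^{\pm(p_1-p_2)}$ is not a zero of $d_{j_1,j_2}$, $V_Q(\alpha)\otimes V_Q(\beta)$ is simple, and (a) follows. When $i$ is intermediate with $a<i<b$, $j_1-j_0$ and $j_2-j_0$ are both nonzero of the same sign, so $|p_1-p_2|=|j_1-j_0|+|j_2-j_0|=|j_1-j_2|+2s$ with $s:=\min(|j_1-j_0|,|j_2-j_0|)\ge 1$; using $j_1\le i,\ j_2\le n-i+1$ in the left case and $j_1\ge n-i+1,\ j_2\ge i$ in the right case one checks $s\le\min(j_1,j_2,n+1-j_1,n+1-j_2)$, so $d_{j_1,j_2}$ does vanish at $(-q)^{p_1-p_2}$ or $(-q)^{p_2-p_1}$ and $V_Q(\alpha)\otimes V_Q(\beta)$ is reducible, giving (b). The only delicate point is the type-by-type bookkeeping in the middle step — reading off $j_0$ and the spanned coordinate ranges from Remark \ref{Alg: easy} and keeping the signs inside the absolute values straight — but it uses nothing beyond the combinatorics of $\Gamma_Q$ and the denominator formula already in hand.
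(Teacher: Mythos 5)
Your proposal is correct and follows essentially the same route as the paper: transport the statement to $\mathscr{C}^{(1)}_Q$ via $\mathcal{F}^{(1)}_Q$, locate $[a,i]$ and $[i,b]$ on the hook through $[i]$ using Lemma \ref{Lem: Key Lem}(a), Theorem \ref{Thm: sectional} and Remark \ref{Alg: easy}, and decide simplicity versus reducibility from the denominator formula \eqref{eq: dij A_n 1} via Theorem \ref{Thm: basic properties}(2). Your uniform bookkeeping through the invariants $p-j$ and $p+j$ (giving $p_1-p_2=(j_1-j_0)+(j_2-j_0)$ and reducing everything to a sign analysis at $j_0$) is just a cleaner packaging of the paper's case-by-case computation of $|k-l|$ versus $|p-q|$, and all the position and range claims check out against the structure of $\Gamma_Q$.
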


\begin{proof}
(a) Write $\phi^{-1}([i,b])=(k,p)$ and $\phi^{-1}([a,i])=(l,q)$. By
Remark \ref{Alg: easy} {\rm (b-i)} and {\rm (b-ii)}, we have
$$|k-l| \ge |p-q|.$$
Hence, by \eqref{eq: dij A_n 1}, $d_{k,l}(z)$ does not have a zero
at $(-q)^{|p-q|}$. Thus
$$ V_Q([a,i]) \otimes V_Q([i,b]) \simeq  V_Q([i,b])  \otimes V_Q([a,i])\text{ is simple} ,$$
which yields our first assertion.

\noindent (b) Assume that $i$ is a left intermediate. By Remark
\ref{Alg: easy}, all positive root of the form $[i,b]$ or $[a,i]$
are contained in the following hook in $\Gamma_Q$:
$$\overbrace{N_{n-i+1} \to \cdots N_{2} \to N_{1}}^{\text{maximal
$(N,i)$-sectional path}} =[i] = \underbrace{S_{1} \to S_{2} \to
\cdots \to S_i}_{\text{maximal $(S,i)$-sectional path}}.$$ Thus we
can assume that $[i,b]=N_{k}$ $(2 \le k \le n-i+1)$ and
$[a,i]=S_{l}$ $(2 \le l \le i)$. Then we have
$$\phi^{-1}_1(N_k)=k, \quad  \phi^{-1}_1(S_l)=l \quad \text{and} \quad \phi^{-1}_2(S_l)-\phi^{-1}_2(N_k)=k+l-2.$$
In this case, one can check that
\begin{itemize}
\item $\min \{ k,l,n+1-k,n+1-k \} = \min \{ k ,l \}$,
\item $|k-l|+2s=k+l-2$ when $1 \le s = \min \{k, l \} -1$.
\end{itemize}
 Thus $$ V_Q([a,i]) \otimes V_Q([i,b]) \text{ and }  V_Q([i,b])  \otimes V_Q([a,i])\text{ are reducible},$$
which yields our second assertion. The case for a right intermediate
$i$, we can prove by applying the same argument.
\end{proof}

\begin{corollary}
Let $Q$ be Dynkin quiver of finite type $A_n$ and $i$ be a source or
a sink in $Q$. For $\beta_k=[a_k,b_k] \in \Phi_n^+$ $(k \in
\Z_{>0})$, assume that $a_k = i$ or $b_k = i$ for all $k$. Then we
have
$$S_Q(\beta_1) \conv S_Q(\beta_2) \conv \cdots \conv S_Q(\beta_k) \text{ is simple.}$$
\end{corollary}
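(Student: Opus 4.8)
The plan is to deduce the statement from the pairwise commutativity of the modules $S_Q(\beta_k)$ together with the general fact that a convolution product of a family of real simple modules which pairwise commute is again simple. Recall that each $S_Q(\beta_k)$ is real simple, i.e. $S_Q(\beta_k)^{\conv m}$ is simple for every $m\ge 0$, by Theorem \ref{thm: BkMc} (a); this is the only input about the $S_Q(\beta_k)$ individually that is needed.

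The key step is to check that $S_Q(\beta_a)\conv S_Q(\beta_b)$ is simple for every pair $a\ne b$. Since each $\beta_k$ satisfies $a_k=i$ or $b_k=i$, I would split into cases. If $\beta_a$ and $\beta_b$ both have first component $i$, or both have second component $i$, then $S_Q(\beta_a)\conv S_Q(\beta_b)$ is simple by Proposition \ref{prop: switching}. In the remaining case we may assume $\beta_a=[a,i]$ with $a\le i$ and $\beta_b=[i,b]$ with $i\le b$, and here the hypothesis that $i$ is a \emph{source or a sink} of $Q$ lets us apply Proposition \ref{prop: redu 1} (a), which gives that $S_Q([a,i])\conv S_Q([i,b])$ is simple (in either order). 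The degenerate values $\beta_k=\alpha_i=[i,i]$ and repetitions $\beta_a=\beta_b$ are absorbed into these cases. Thus $\{S_Q(\beta_k)\}$ is a commuting family of real simple modules.

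It then remains to invoke the principle that a commuting family of real simple modules over a symmetric quiver Hecke algebra has simple convolution product (this is the same mechanism already used to pass from Proposition \ref{prop: switching} to Corollary \ref{cor: switching}, and in general follows from \cite{KKKO14}); applied to $\{S_Q(\beta_k)\}$ it yields that $S_Q(\beta_1)\conv\cdots\conv S_Q(\beta_k)$ is simple. Alternatively, one can transport the whole computation to $\mathscr{C}^{(1)}_Q$ along the exact tensor functor $\mathcal{F}^{(1)}_Q$ of Theorem \ref{thm:gQASW duality 2}, where pairwise simplicity of the $V_Q(\beta_k)$ is read off directly from \eqref{eq: dij A_n 1} and Theorem \ref{Thm: basic properties} (2) (again, the mixed pair being exactly Proposition \ref{prop: redu 1} (a)), and $\mathcal{F}^{(1)}_Q$ reflects simplicity since it is exact and sends simples to simples bijectively. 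The only delicate point is the mixed pair $([a,i],[i,b])$: this is precisely where $i$ being a source or a sink, rather than an intermediate vertex, is essential — by Proposition \ref{prop: redu 1} (b) the corresponding convolution product is reducible for an intermediate $i$ — whereas combining the pairwise-commuting factors into a single simple module is routine.
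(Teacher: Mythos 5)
Your argument is correct and is exactly the (omitted) argument the paper intends: the corollary is stated immediately after Proposition \ref{prop: redu 1} and mirrors the passage from Proposition \ref{prop: switching} to Corollary \ref{cor: switching}, namely pairwise simplicity of the products (same-side pairs by Proposition \ref{prop: switching}, mixed pairs $[a,i]$, $[i,b]$ by Proposition \ref{prop: redu 1}(a), which is where the source/sink hypothesis enters) combined with the fact that a pairwise-commuting family of real simple modules has simple convolution product, or equivalently the corresponding statement for tensor products of fundamental modules transported back along $\mathcal{F}^{(1)}_Q$. No gaps.
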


\begin{lemma} \cite{KP11} \label{Lem: far}
For $[a_1,b_1]$, $[a_2,b_2] \in \Phi_n^+$ with $b_1 < a_2-1$,
$$ \text{$L(a_1,b_1) \conv L(a_2,b_2) \simeq L(a_2,b_2) \conv L(a_1,b_1)$ is simple}.$$
\end{lemma}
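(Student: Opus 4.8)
The plan is to reduce, exactly as in Proposition~\ref{prop: switching} and Proposition~\ref{prop: redu 1}, to a statement about the linear quiver $\overset{\gets}{Q}$ and then to a denominator computation. By Lemma~\ref{lem: Q rev}, $S_{\overset{\gets}{Q}}([a_1,b_1])\simeq L(a_1,b_1)$ and $S_{\overset{\gets}{Q}}([a_2,b_2])\simeq L(a_2,b_2)$. The functor $\mathcal{F}^{(1)}_{\overset{\gets}{Q}}$ of Theorem~\ref{thm:gQASW duality 2} is exact, faithful (see the proof of Corollary~\ref{Cor: Dorey on Gamma}), a tensor functor, and a bijection on isomorphism classes of simple modules; hence $L(a_1,b_1)\conv L(a_2,b_2)$ is simple if and only if the $U_q'(A^{(1)}_n)$-module $V_{\overset{\gets}{Q}}([a_1,b_1])\otimes V_{\overset{\gets}{Q}}([a_2,b_2])$ is simple, in which case the two tensor orders are isomorphic (the $R$-matrix being then an isomorphism), and pulling this back through $\mathcal{F}^{(1)}_{\overset{\gets}{Q}}$ gives $L(a_1,b_1)\conv L(a_2,b_2)\simeq L(a_2,b_2)\conv L(a_1,b_1)$. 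So it suffices to prove simplicity of the tensor product.

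Next I would record the positions in $\Gamma_{\overset{\gets}{Q}}$. By Lemma~\ref{Lem: Key Lem}(a) every simple root lies on level $n$ in $\Gamma_{\overset{\gets}{Q}}$, and combining this with Theorem~\ref{Thm: sectional} and Remark~\ref{Alg: easy} — equivalently, reading off the picture of $\Gamma_{\overset{\gets}{Q}}$ drawn in the proof of Lemma~\ref{lem: Q rev} — one obtains, with $\xi_n=0$,
$$\phi^{-1}([a,b],0)=\bigl(\,n-(b-a),\ a+b-2n\,\bigr),\qquad\text{so}\qquad V_{\overset{\gets}{Q}}([a,b])=V(\varpi_{n-(b-a)})_{(-q)^{a+b-2n}}.$$
Write $\ell_i=n-(b_i-a_i)$ and $p_i=a_i+b_i-2n$; then $|\ell_1-\ell_2|=\bigl|(b_1-a_1)-(b_2-a_2)\bigr|$ and, since $a_1\le b_1<a_2-1<a_2\le b_2$, we have $p_2-p_1=(a_2+b_2)-(a_1+b_1)>0$.

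Finally, by Theorem~\ref{Thm: basic properties}(2) and the formula \eqref{eq: dij A_n 1} for $d_{\ell_1,\ell_2}$, together with the fact that $(-q)^{m}=(-q)^{m'}$ forces $m=m'$, the module $V_{\overset{\gets}{Q}}([a_1,b_1])\otimes V_{\overset{\gets}{Q}}([a_2,b_2])$ is simple precisely when $(p_2-p_1)-|\ell_1-\ell_2|>2m$, where $m=\min(\ell_1,\ell_2,n+1-\ell_1,n+1-\ell_2)$. A short computation shows $(p_2-p_1)-|\ell_1-\ell_2|$ equals $2(a_2-a_1)$ if $b_2-a_2\ge b_1-a_1$ and $2(b_2-b_1)$ otherwise; and the hypothesis $b_1<a_2-1$ forces $(b_1-a_1)+(b_2-a_2)\le n-3$ (as $b_1-a_1\le b_1-1$ and $b_2-a_2\le n-b_1-2$), which pins $m$ down to $(b_1-a_1)+1$ in the first case and $(b_2-a_2)+1$ in the second. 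In both cases the required inequality then reduces to exactly $a_2\ge b_1+2$, i.e.\ the hypothesis. I expect this last step — extracting the correct value of $m$ from the bound on $(b_1-a_1)+(b_2-a_2)$ and disposing of the two cases — to be the only genuine work; everything before it is formal, and one could alternatively invoke the known denominator criterion directly. Transporting the resulting simplicity (and commutativity) back through $\mathcal{F}^{(1)}_{\overset{\gets}{Q}}$ completes the proof.
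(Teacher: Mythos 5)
Your proof is correct, but it is worth noting that the paper does not actually prove this lemma: it is quoted from \cite{KP11}, where the argument is a direct one on the quiver Hecke algebra side (the condition $b_1<a_2-1$ means the supports of the two segments are non-adjacent in $\Delta_n$, so the relevant idempotents interact trivially). What you have written is instead essentially the paper's own proof of the \emph{generalization}, Proposition \ref{prop: far}, specialized to the linear quiver $\overset{\gets}{Q}$: identify $L(a_i,b_i)$ with $S_{\overset{\gets}{Q}}([a_i,b_i])$ via Lemma \ref{lem: Q rev}, transport through the exact, faithful tensor functor $\mathcal{F}^{(1)}_{\overset{\gets}{Q}}$, and check that the denominator \eqref{eq: dij A_n 1} has no zero at the relevant spectral parameter. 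Your explicit coordinates $\phi^{-1}([a,b],0)=(n-(b-a),a+b-2n)$ are right, the parity/positivity observations reducing the non-vanishing condition to $(p_2-p_1)-|\ell_1-\ell_2|>2m$ are sound, and the identification $m=\min(b_1-a_1+1,\,b_2-a_2+1)$ via the bound $(b_1-a_1)+(b_2-a_2)\le n-3$ is correct and is indeed the only non-formal step; the final inequality collapses to $a_2\ge b_1+2$ in both cases as you claim. There is no circularity, since Lemma \ref{lem: Q rev} and Theorem \ref{thm:gQASW duality 2} do not rely on this lemma. The trade-off is that your route invokes the full Schur--Weyl machinery (including exactness of the functor, whose original proof in \cite{KKK13b} rests on \cite{HL11}) to prove a statement that has an elementary self-contained proof; on the other hand it fits the methodology of Section \ref{sec: revisit} and generalizes immediately to arbitrary $Q$, which is exactly what Proposition \ref{prop: far} does.
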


Now the following proposition can be regarded as a generalization of
the above lemma.

\begin{proposition} \label{prop: far}
For any Dynkin quiver $Q$ of finite type $A_n$ and $[a,b], [a',b']
\in \Phi_n^+$,  assume that $ a < b < a'-1 < b'$. Then we have
$$ S_Q([a,b]) \conv S_Q([a',b']) \simeq  S_Q([a',b'])  \conv S_Q([a,b]) \text{ is simple.} $$
\end{proposition}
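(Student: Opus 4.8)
The plan is to follow the template of Proposition~\ref{prop: switching} and Proposition~\ref{prop: redu 1}: push the statement through the duality functor $\mathcal{F}^{(1)}_Q$ and reduce it to a question about zeros of a denominator. Write $\phi^{-1}([a,b],0)=(k,p)$ and $\phi^{-1}([a',b'],0)=(l,q)$, so that $V_Q([a,b])=V(\va_k)_{(-q)^p}$ and $V_Q([a',b'])=V(\va_l)_{(-q)^q}$. Since $\mathcal{F}^{(1)}_Q$ is a faithful exact tensor functor carrying $S_Q(\beta)$ to $V_Q(\beta)$ and simples to simples (Theorem~\ref{thm:gQASW duality 2}), it is enough to show that $V_Q([a,b])\otimes V_Q([a',b'])$ is simple; once it is, it is isomorphic to $V_Q([a',b'])\otimes V_Q([a,b])$ by the standard property of normalized $R$-matrices, and pulling this isomorphism back through $\mathcal{F}^{(1)}_Q$ gives the asserted commutativity of the convolution products. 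By Theorem~\ref{Thm: basic properties}~(2) and the denominator formula~\eqref{eq: dij A_n 1}, simplicity of $V(\va_k)_{(-q)^p}\otimes V(\va_l)_{(-q)^q}$ is equivalent to
$$ |p-q|\ \ne\ |k-l|+2s \qquad \bigl(1\le s\le M\seteq\min(k,l,\,n+1-k,\,n+1-l)\bigr). $$

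To prove this I would reduce it to a combinatorial inequality about the positions of $[a,b]$ and $[a',b']$ in $\Gamma_Q$. Since $\xi$ is a height function on the path $\Delta_n$ and $p-\xi_k,\,q-\xi_l\in 2\Z$, we get $p-q\equiv\xi_k-\xi_l\equiv d(k,l)=|k-l|\pmod 2$, so $|p-q|-|k-l|$ is even; hence it suffices to establish
$$ |p-q|\le|k-l| \qquad\text{or}\qquad |p-q|>|k-l|+2M . $$
The tool is Remark~\ref{Alg: easy}: the vertex $[a,b]$ is the unique intersection of the maximal $(N,a)$-sectional path with the maximal $(S,b)$-sectional path, and likewise for $[a',b']$; these sectional paths have known lengths (Corollary~\ref{Cor: sectional}) and their extremal vertices are the simple roots $[a],[b]$ (resp.\ $[a'],[b']$), whose positions are given by Lemma~\ref{Lem: Key Lem}~(a). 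The hypothesis $b<a'-1$ says that the supports $\{a,\dots,b\}$ and $\{a',\dots,b'\}$ are disjoint and separated by the vertex $b+1$, and the point is that this separation forces $(k,p)$ and $(l,q)$ apart: traversing the relevant $(S,\cdot)$-sectional paths from their simple-root ends and recording how the level moves against the height (by $\pm1$ at each arrow, with the sign dictated by the local orientation of $Q$) pins down $(k,p)$ and $(l,q)$ up to the finitely many cases according to whether each of $a,b,a',b'$ is a source, a sink, or a left/right intermediate in $Q$, and in each case the displayed inequality can be checked directly; the cases where one of the two roots lies at level $1$ or level $n$ (isolated using Corollary~\ref{Cor: first and last}) are precisely the ones in which the second alternative $|p-q|>|k-l|+2M$ is needed.

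The main obstacle is exactly this case analysis. In Proposition~\ref{prop: switching} and Proposition~\ref{prop: redu 1} the two roots $\alpha,\beta$ lie on a common sectional path, so $|p-q|$ and $|k-l|$ are directly comparable; here they sit on disjoint parts of $\Gamma_Q$, and the ``wrap-around'' of sectional paths---the jump of the varying component at the vertices of $Q$ where the orientation reverses---makes the position of $[a,b]$ a genuinely $Q$-dependent quantity, so the bookkeeping is unavoidable. A more module-theoretic route sidesteps it: induct on $(b-a)+(b'-a')$, write $[a,b]$ as the sum $\beta_1+\beta_2$ of the pair on, say, its upper ray so that $S_Q(\beta_1)\conv S_Q(\beta_2)\twoheadrightarrow S_Q([a,b])$ by Corollary~\ref{Cor: Dorey on Gamma}~(a), observe that $\beta_1$ and $\beta_2$ each still satisfy the gap hypothesis against $[a',b']$ so that $S_Q(\beta_i)\conv S_Q([a',b'])$ is simple by the inductive hypothesis, and conclude using standard commutation results for convolution products from \cite{KKKO14}; I would present the position computation as the proof and record this as a remark.
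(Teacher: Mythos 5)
Your reduction is the same as the paper's: via Theorem \ref{thm:gQASW duality 2} and Theorem \ref{Thm: basic properties} everything comes down to showing that $(-q)^{\pm(p-q)}$ is not a zero of the denominator \eqref{eq: dij A_n 1}, and your parity remark together with the dichotomy ``$|p-q|\le|k-l|$ or $|p-q|>|k-l|+2M$'' is exactly what is true. The gap is that the proposal stops there: the assertion that the inequality ``can be checked directly'' in each of the finitely many cases determined by the local orientations of $Q$ at $a,b,a',b'$ is precisely the content of the proposition, and you never exhibit the mechanism through which the hypothesis $b<a'-1$ enters. The paper supplies that mechanism by bringing in the two auxiliary segments $[a,b']$ (a positive root) and $[a',b]$ (not a root): the maximal $(N,a)$- and $(S,b')$-sectional paths meet at $\phi^{-1}([a,b'],0)=(k,r)$, while the $(N,a')$- and $(S,b)$-paths do not meet inside $\Gamma_Q$, which leaves exactly four configurations. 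In two of them the rays escape $\Gamma_Q$ and one reads off $|p-q|<|i-j|$ directly; in the other two, where $[a,b]$ and $[a',b']$ both sit on the upper (resp.\ lower) ray of $[a,b']$, one gets $|p-q|=|2k-i-j|$, so the only candidate zero is at $s=k-\max\{i,j\}$ (resp.\ $\min\{i,j\}-k$), and the inequality $k>i+j$ --- which is where $b<a'-1$ actually bites, through the two distinct pair decompositions $[a,b']=[a,b]+[b+1,b']=[a,a'-1]+[a',b']$ and Theorem \ref{Thm: i_k+i_l=i_j} --- pushes this $s$ outside the admissible range $1\le s\le\min\{i,j,n+1-i,n+1-j\}$. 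Note also that the configurations requiring the second alternative are not ``one root at level $1$ or $n$'' as you suggest, but exactly those where the two sectional paths intersect inside $\Gamma_Q$; an orientation-by-orientation computation of $(k,p)$ and $(l,q)$ from Remark \ref{Alg: easy} would in principle succeed, but it is not a proof until it is carried out.

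Your fallback induction (splitting $[a,b]$ along a ray via Corollary \ref{Cor: Dorey on Gamma} and invoking commutation results of \cite{KKKO14}) is a plausible alternative route, but its base case --- two simple roots $[a]$, $[a']$ with $a<a'-1$ --- already requires the positional estimate you are trying to avoid, and the inductive step needs the precise statements from \cite{KKKO14} that simplicity of $M\conv N_i$ for $i=1,2$ propagates to simple quotients of $N_1\conv N_2$; these would have to be cited explicitly rather than gestured at.
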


\begin{proof}
Note that $[a,b'] \in \Phi_n^+$ and there is no $[a',b] \not \in
\Phi_n^+$. Thus
\begin{itemize}
\item $[a,b']$ is located at the intersection of the maximal $(N,a)$-sectional path and the maximal $(S,b')$-sectional path,
\item there is no intersection between the maximal $(S,a)$-sectional path and the maximal $(N,b')$-sectional path.
\end{itemize}
Then the situations can be described as follows:
$${\xy (-35,0)*++{\Lray}; (-5,0)*++{\Rray}; (25,0)*++{\Uray}; (55,0)*++{\Dray}; \endxy}$$

Write $\phi^{-1}([a,b])=(i,p)$, $\phi^{-1}([a',b'])=(j,q)$ and
$\phi^{-1}([a,b'])=(k,r)$. For the first and second cases, one can
easily notice that $|i-j|>|p-q|$. Thus the assertion follows from
\eqref{eq: dij A_n 1}.

Assume the third case. Then it is enough to show that, for any $1
\le s \le \min \{ i,j,n+1-i,n+1-j \}$,
\begin{equation} \label{eq: zero cond}
|i-j|+2s \ne 2k-i-j.
\end{equation}
Set $v=\max \{ i, j \}$ and $w =  \min \{ i,j,n+1-i,n+1-j \}$. Then
we have
$$|i-j|+2s = 2k-i-j \quad \text{ if } s=k-v.$$
However, the fact $k>i+j$ implies $k-v> w$. Thus the assertion
holds. For the fourth case, one can prove by applying the similar
argument of the third case.
\end{proof}

Now, we have an alternative proof of Theorem \ref{thm:gQASW duality
2} (a) as we emphasized in Remark \ref{rem: alter}:

\begin{theorem} \label{thm: comb proof}
With the same choice of $(J,X,s)$ in {\rm Theorem \ref{thm:gQASW
duality 2}}, the underlying graph of $Q^J$ is of finite type $A_n$.
\end{theorem}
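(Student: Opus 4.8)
The plan is to compute, for every ordered pair of vertices of $Q^J$, the number $\mathtt{d}$ of arrows between them directly from the denominator formula \eqref{eq: dij A_n 1}, and to match the outcome against the explicit positions of the simple roots recorded in Lemma \ref{Lem: Key Lem}(a). Write $(i_k,p_k)\seteq\phi^{-1}(\alpha_k,0)$, so that $J=\{(i_k,p_k)\mid 1\le k\le n\}$ is the vertex set of $Q^J$, with $s(i_k,p_k)=V(\varpi_{i_k})$ and $X(i_k,p_k)=(-q)^{p_k}$. Since $-q$ is not a root of unity and the factors $z-(-q)^{2s+|i_k-i_l|}$, $1\le s\le\mu_{kl}\seteq\min(i_k,i_l,n+1-i_k,n+1-i_l)$, in \eqref{eq: dij A_n 1} are pairwise distinct, every zero of $d_{i_k,i_l}(z)$ is simple and lies at a positive power of $-q$. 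Hence each $\mathtt{d}$ is $0$ or $1$, at most one of the two arrow-multiplicities between two vertices can be nonzero (so $Q^J$ is simply laced), and $(i_k,p_k)$ is joined to $(i_l,p_l)$ in $Q^J$ exactly when $|p_k-p_l|=2s+|i_k-i_l|$ for some $1\le s\le\mu_{kl}$. Using $p_k\equiv\xi_{i_k}\equiv\xi_1+i_k-1\pmod 2$ one sees $|p_k-p_l|\equiv|i_k-i_l|\pmod 2$, and since $2\mu_{kl}+|i_k-i_l|=\min(i_k+i_l,\,2(n+1)-i_k-i_l)$, this reduces to the single clean criterion
\begin{equation*}
(i_k,p_k)\text{ and }(i_l,p_l)\text{ are joined in }Q^J \iff |i_k-i_l|+2\ \le\ |p_k-p_l|\ \le\ \min\bigl(i_k+i_l,\,2(n+1)-i_k-i_l\bigr).
\end{equation*}
It then remains to prove that this holds if and only if $|k-l|=1$; this identifies the underlying graph of $Q^J$ with the path $\alpha_1-\alpha_2-\cdots-\alpha_n$, i.e.\ the Dynkin diagram of type $A_n$.

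For $|k-l|=1$ the pair $(\alpha_k,\alpha_{k+1})$ is a pair of $[k,k+1]\in\Phi^+_n$, so by Corollary \ref{cor: length2} the module $V_Q(\alpha_k)\otimes V_Q(\alpha_{k+1})$ has composition length $2$, in particular it is not simple; by Theorem \ref{Thm: basic properties}(2) the denominator $d_{i_k,i_{k+1}}(z)$ therefore has a zero at $(-q)^{p_k-p_{k+1}}$ or at $(-q)^{p_{k+1}-p_k}$, which by the criterion says that $(i_k,p_k)$ and $(i_{k+1},p_{k+1})$ are joined. (One may equally well invoke the Dorey surjection of Corollary \ref{Cor: Dorey on Gamma}(a), or read off the exact exponent from Theorem \ref{Thm: Dorey A_n(1)}.)

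For $|k-l|\ge 2$ one checks, using $i_k\in\{k,\,n+1-k,\,1,\,n\}$ and the formulas of Lemma \ref{Lem: Key Lem}(a) together with a height function $\xi$, that the criterion fails. The routine cases are those in which at least one of $k,l$ is a source or a sink with index strictly between $1$ and $n$: if both $k,l$ are sources (or both sinks) then $|i_k-i_l|=|k-l|$ while $|p_k-p_l|=|\xi_k-\xi_l|\le|k-l|$ falls below the window; in the mixed cases the orientations of the edges of $Q$ incident to $k$ and to $l$ are forced enough to pin $\xi_k-\xi_l$ into a range making $|p_k-p_l|$ exceed the upper bound $\min(i_k+i_l,2(n+1)-i_k-i_l)$ (the left-intermediate/right-intermediate combination cannot occur for adjacent indices and is likewise excluded by this bound). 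The genuinely delicate cases are those with $i_k=i_l\in\{1,n\}$, where $\mu_{kl}=1$ and the window collapses to $|p_k-p_l|=2$: by \eqref{eq: kappa sigma} the vertices of $\Gamma_Q$ with first coordinate $1$ (resp.\ $n$) are exactly the set $\kappa$ (resp.\ $\sigma$) of Corollary \ref{Cor: first and last}, their second coordinates decrease by $2$ along the enumeration $\kappa_1,\kappa_2,\dots$, and Corollary \ref{Cor: first and last}(a) ($\kappa_i=[a,b]\Rightarrow\kappa_{i+1}=[b+1,c]$) forces two members of $\kappa$ consecutive in this enumeration and both simple to be of the form $[b],[b+1]$; so among such vertices an arrow occurs only between $\alpha_m$ and $\alpha_{m+1}$, and symmetrically for $\sigma$. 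Combining all cases finishes the proof. The heart of the argument — and the only place I expect real work rather than a uniform estimate — is organising this last step, in particular keeping careful track of the distinction between the index $k$ of a simple root and the first coordinate $i_k$ of its position in $\Gamma_Q$, and of the sign of $\xi_k-\xi_l$ dictated by the local orientations; the $\kappa/\sigma$ cases, where the window is too tight for the crude bounds, are exactly where the combinatorics of Section \ref{sec: Combinatorics} (especially Corollary \ref{Cor: first and last}) is indispensable.
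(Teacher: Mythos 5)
Your overall strategy is sound and your reduction of adjacency in $Q^J$ to the window $|i_k-i_l|+2\le |p_k-p_l|\le\min(i_k+i_l,\,2(n+1)-i_k-i_l)$ is correct, as is the observation that all zeros of \eqref{eq: dij A_n 1} are simple, so $Q^J$ is automatically simply laced. For the adjacent case $|k-l|=1$ you and the paper do the same thing: produce a surjection $V_Q(\alpha_k)\otimes V_Q(\alpha_{k+1})\twoheadrightarrow V_Q(\alpha_k+\alpha_{k+1})$ (Corollary \ref{Cor: Dorey on Gamma}) to force reducibility and hence a zero of the denominator. Where you diverge is the non-adjacent case: the paper disposes of $|k-l|\ge 2$ in one line by citing Proposition \ref{prop: far} with $[a,b]=[k,k]$, $[a',b']=[l,l]$ (that proposition was set up precisely so that $b<a'-1$ forces simplicity of the tensor product), whereas you redo this from scratch by a case analysis on whether $k$ and $l$ are sources, sinks, or intermediates, using Lemma \ref{Lem: Key Lem}(a) and the height function. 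Your route is completable --- I checked several of your cases and the conclusion holds in each --- but as written it is only a sketch, and one assertion in it is inaccurate: in the mixed cases $|p_k-p_l|$ does not always exceed the upper bound of the window (e.g.\ for $k$ a left intermediate and $l>k+1$ a source one finds $|p_k-p_l|\le l-3$, strictly below the required value $l+1$), so if you carry this out you must track on which side the window fails case by case. Citing Proposition \ref{prop: far} buys you all of this at once. One further caveat: since Remark \ref{rem: alter} presents this theorem as an \emph{alternative} proof of the exactness of $\mathcal{F}^{(1)}_Q$, you should not justify reducibility of $V_Q(\alpha_k)\otimes V_Q(\alpha_{k+1})$ via Corollary \ref{cor: length2}, whose proof passes through Theorem \ref{thm:gQASW duality 2}(b) and hence through the exactness you are trying to reprove; use instead the functor-free justification you mention parenthetically, namely Theorem \ref{Thm: Dorey A_n(1)} combined with Theorem \ref{Thm: i_k+i_l=i_j}.
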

\begin{proof}
By Corollary \ref{Cor: Dorey on Gamma}, there exists a surjective
morphism
$$ V_Q(\alpha_{k}) \otimes V_Q(\alpha_{k+1}) \twoheadrightarrow V_Q(\alpha_{k}+\alpha_{k+1}) \ \text{ or } \
V_Q(\alpha_{k+1}) \otimes V_Q(\alpha_{k}) \twoheadrightarrow
V_Q(\alpha_{k}+\alpha_{k+1}) $$ and hence $ V_Q(\alpha_{k}) \otimes
V_Q(\alpha_{k+1})$ is reducible.

On the other hand, Proposition \ref{prop: far} tells that
$$V_Q(\alpha_{k}) \otimes V_Q(\alpha_{l}) \simeq V_Q(\alpha_{l}) \otimes V_Q(\alpha_{k}) \text{ is simple for $|k-l|>1$.}$$
By Theorem \ref{Thm: basic properties} and the fact that
$d_{k,l}(z)$ in \eqref{eq: dij A_n 1} has only zeros of multiplicity
$1$, our assertion follows.
\end{proof}

\begin{definition}
A simple $R(\beta_k)$-module $M$ is called {\it real} if $M \conv M$
is again simple.
\end{definition}

For example, every $S_Q(\beta)$ is real by Theorem \ref{thm: BkMc}
{\rm (a)}. Now, we recall one of the main results in \cite{KKKO14}:

\begin{theorem} \cite{KKKO14} $($see also \cite{KKKO14b}$)$ \label{Thm: length 2 KKKO}
Let $M_k$ be a simple $R(\beta_k)$-module $(k=1,2)$ and assume one
of $M_1$ and $M_2$ is real, and
$\Rnorm_{\mathcal{F}^{(1)}_Q(M_1),\mathcal{F}^{(1)}_Q(M_2)}(z)$ has
a simple pole at $z=1$. Then $M_1 \conv  M_2$ has composition length
$2$. In particular, there exists an exact sequence
$$    0 \to N  \to M_1 \conv  M_2 \overset{ \rmat{M_1,M_2}}{\longrightarrow} M_2 \conv  M_1 \to N  \to 0, $$
where $\rmat{M_1,M_2}$ is non-zero,
 ${\rm Im}(\rmat{M_1,M_2})$ is the unique simple socle of $M_2 \conv  M_1$ and
$N$ is the unique simple head of $M_2 \conv  M_1$ $($up to
isomorphisms$)$.
\end{theorem}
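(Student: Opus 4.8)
The plan is to transport the whole statement through the exact tensor functor $\mathcal{F}^{(1)}_Q$ and then run the R-matrix machinery on the quiver Hecke side. Since $\mathcal{F}^{(1)}_Q$ sends simples to simples, is faithful, and is a tensor functor (Theorem \ref{thm:gQASW duality 2}), the affine normalized R-matrix $\Rnorm_{\mathcal{F}^{(1)}_Q(M_1),\mathcal{F}^{(1)}_Q(M_2)}(z)$ is realized as the image under $\mathcal{F}^{(1)}_Q$ of a renormalized R-matrix $\rmat{M_1,M_2}\colon M_1\conv M_2 \to M_2\conv M_1$ between the quiver Hecke modules. First I would construct $\rmat{M_1,M_2}$ intrinsically: affinize $M_1$ to $M_{1,z}$, apply the universal intertwiner $\Runiv$ to obtain a map $M_{1,z}\conv M_2 \to M_2\conv M_{1,z}$ with coefficients in $\ko((z))$, clear the largest common power of $z$, and specialize at $z=0$. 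The order of the pole of the affine R-matrix at $z=1$ then records the homogeneous degree $\Lambda(M_1,M_2)$ of $\rmat{M_1,M_2}$, so that the hypothesis ``simple pole at $z=1$'' translates into $\mathfrak{d}(M_1,M_2)=1$, where $\mathfrak{d}(M_1,M_2)\seteq \tfrac12\bigl(\Lambda(M_1,M_2)+\Lambda(M_2,M_1)\bigr)$. This first step is purely formal, using the faithfulness and exactness recorded in Theorem \ref{thm:gQASW duality 2} together with the denominator facts of Theorem \ref{Thm: basic properties}.

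Next I would invoke the simplicity of heads and socles, which is the genuinely hard input (from \cite{KKKO14}) and which requires that one of $M_1,M_2$ be real. Granting it, $\rmat{M_1,M_2}$ is nonzero, both ${\rm hd}(M_1\conv M_2)$ and ${\rm soc}(M_1\conv M_2)$ are simple, and one has the identifications ${\rm Im}(\rmat{M_1,M_2})\simeq {\rm hd}(M_1\conv M_2)\simeq {\rm soc}(M_2\conv M_1)$ and, symmetrically, ${\rm Im}(\rmat{M_2,M_1})\simeq {\rm hd}(M_2\conv M_1)\simeq {\rm soc}(M_1\conv M_2)$. Writing $N\seteq {\rm Ker}(\rmat{M_1,M_2})$ I obtain a short exact sequence $0\to N\to M_1\conv M_2\to {\rm Im}(\rmat{M_1,M_2})\to 0$ with simple quotient, so it remains only to show that the submodule $N$ is itself simple.

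To pin down the length I would compare $\rmat{M_1,M_2}$ with $\rmat{M_2,M_1}$. The simple-pole hypothesis is exactly the unitarity-type relation making the composite $\rmat{M_2,M_1}\circ\rmat{M_1,M_2}$ vanish, since this composite is, up to a nonzero scalar, multiplication by the R-matrix scalar whose vanishing order at $z=1$ equals $\mathfrak{d}(M_1,M_2)=1$. Hence ${\rm Im}(\rmat{M_2,M_1})\subseteq {\rm Ker}(\rmat{M_1,M_2})=N$, and ${\rm Im}(\rmat{M_2,M_1})\simeq {\rm hd}(M_2\conv M_1)$ is simple and coincides with ${\rm soc}(M_1\conv M_2)\subseteq N$. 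Using that the Grothendieck classes satisfy $[M_1\conv M_2]=[M_2\conv M_1]$ together with the two short exact sequences above, a composition-factor count forces $N\simeq {\rm Im}(\rmat{M_2,M_1})={\rm hd}(M_2\conv M_1)$ to be simple. This gives composition length $2$ for $M_1\conv M_2$, and reading the same data off yields the asserted exact sequence
$$ 0 \to N \to M_1 \conv M_2 \overset{\rmat{M_1,M_2}}{\Lto} M_2\conv M_1 \to N \to 0, $$
with $\rmat{M_1,M_2}$ nonzero, ${\rm Im}(\rmat{M_1,M_2})$ the simple socle of $M_2\conv M_1$, and $N\simeq {\rm hd}(M_2\conv M_1)$ its simple head.

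The main obstacle is the simplicity of the head and socle under the reality hypothesis, together with the precise identification ${\rm Im}(\rmat{M_1,M_2})={\rm hd}(M_1\conv M_2)={\rm soc}(M_2\conv M_1)$; this is the deep content established in \cite{KKKO14} and rests on a delicate analysis of the endomorphisms of the affinized convolution products, where reality is used to guarantee that the relevant endomorphism algebra is local. Once this is available, the translation ``simple pole $\Longrightarrow$ $\mathfrak{d}=1$ $\Longrightarrow$ length $2$'' is the comparatively routine bookkeeping step carried out above, relying only on the formal properties of $\mathcal{F}^{(1)}_Q$ and the vanishing of the R-matrix composite.
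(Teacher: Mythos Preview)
The paper does not prove this theorem: it is quoted verbatim from \cite{KKKO14} (with a pointer to \cite{KKKO14b}) and is used as a black box in the proof of Proposition~\ref{prop: big mesh}. So there is no ``paper's own proof'' to compare your attempt against.

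That said, your sketch is a reasonable outline of the argument actually carried out in \cite{KKKO14,KKKO14b}. A couple of points deserve care. First, the identification of the pole order of $\Rnorm_{\mathcal{F}^{(1)}_Q(M_1),\mathcal{F}^{(1)}_Q(M_2)}$ at $z=1$ with the invariant $\mathfrak{d}(M_1,M_2)$ on the quiver Hecke side is not merely formal: it is the compatibility result proved in \cite{KKKO14b} and relies on the specific construction of the functor via the $(R_J,U_q'(\g))$-bimodule, not just on exactness and faithfulness. Second, your ``composition-factor count'' step is correct but should be stated cleanly: from $\rmat{M_2,M_1}\circ\rmat{M_1,M_2}=0$ one gets ${\rm Im}(\rmat{M_2,M_1})\subseteq {\rm Ker}(\rmat{M_1,M_2})$, and combining the two short exact sequences with $[M_1\conv M_2]=[M_2\conv M_1]$ in the Grothendieck group forces equality ${\rm Im}(\rmat{M_2,M_1})={\rm Ker}(\rmat{M_1,M_2})$, hence $N$ is simple. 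With those two clarifications, your argument matches the one in the cited references.
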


Note that the statements in the rest of this section also hold when
we replace the module $S_Q(\beta)$ (resp. $M \in \Rep(R_{A_n})$) in
the statements with $V_Q(\beta)$ (resp. $M \in
\mathscr{C}^{(1)}_Q$), by applying the exact functor
$\mathcal{F}_Q^{(1)}$.

\begin{proposition}\label{prop: big mesh} Let $Q$ be any Dynkin quiver of finite type $A_n$.
Assume we have the following rectangle subquiver in $\Gamma_Q
\colon$
$$\scalebox{0.9}{{\xy (0,10)*++{\rectangle}; \endxy}}$$
for $a\ne a'$ and $b \ne b'$. Then there exists a short exact
sequence
\begin{equation}\label{eq: rec quiver}
\begin{aligned}
   & 0 \to S_Q([a,b]) \conv S_Q([a',b'])  \to  S_Q([a,b']) \conv S_Q([a',b]) \\
   &  \hspace{10ex}\overset{\rmat{}}{\longrightarrow}
   S_Q([a',b]) \conv S_Q([a,b']) \to S_Q([a,b]) \conv S_Q([a',b'])  \to 0,
\end{aligned}
\end{equation}
such that $\rmat{}:=\rmat{S_Q([a,b']),S_Q([a',b])}$ and
\begin{itemize}
\item $S_Q([a,b]) \conv S_Q([a',b']) \simeq S_Q([a',b']) \conv S_Q([a,b])$ is simple,
\item $S_Q([a',b]) \conv S_Q([a,b'])$ has composition length $2$ with the unique simple socle ${\rm Im}(\rmat{})$ and
the simple head $S_Q([a',b']) \conv S_Q([a,b])$ $($up to
isomorphisms$)$.
\end{itemize}
\end{proposition}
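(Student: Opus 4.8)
The plan is to obtain the whole statement from Theorem~\ref{Thm: length 2 KKKO}, applied to the modules $M_1:=S_Q([a,b'])$ and $M_2:=S_Q([a',b])$, both of which are real by Theorem~\ref{thm: BkMc}(a). First I would record the positions of the four corners. Writing $\phi^{-1}([a,b])=(i_1,p_1)$, $\phi^{-1}([a',b'])=(i_3,p_3)$, $\phi^{-1}([a,b'])=(i_2,p_2)$, $\phi^{-1}([a',b])=(i_4,p_4)$, I would use the elementary fact that $i+p$ is constant along every $N$-sectional path and $i-p$ along every $S$-sectional path (immediate from the shapes $(i,p)\to(i-1,p+1)$ and $(i,p)\to(i+1,p+1)$ of the arrows), together with Theorem~\ref{Thm: sectional} and Remark~\ref{Alg: easy}. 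Hence the four vertices are the corners of an axis-parallel rectangle in the coordinates $(i+p,\ i-p)$, whose $N$-sides (length $\ell_N\ge1$) join $[a,b]$ to $[a,b']$ and $[a',b]$ to $[a',b']$, and whose $S$-sides (length $\ell_S\ge1$) join $[a,b]$ to $[a',b]$ and $[a,b']$ to $[a',b']$. Computing the two diagonals from the step-vectors $(-1,1)$ and $(1,1)$, the picture forces $[a,b],[a',b']$ to be the ends of the steep diagonal and $[a,b'],[a',b]$ the ends of the shallow one:
$$|i_1-i_3|=\ell_N+\ell_S>|\ell_N-\ell_S|=|p_1-p_3|,\qquad |p_2-p_4|=\ell_N+\ell_S>|\ell_N-\ell_S|=|i_2-i_4|,$$
and since the rectangle lies inside $\Gamma_Q$, the description \eqref{eq: known characterization} yields $1\le\min(\ell_N,\ell_S)\le\min(i_2,i_4,n+1-i_2,n+1-i_4)$. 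The identity $[a,b]+[a',b']=[a,b']+[a',b]$ in the root lattice is clear.

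Next I would show $S_Q([a,b])\conv S_Q([a',b'])$ is simple; being real and simple it then commutes, $S_Q([a,b])\conv S_Q([a',b'])\simeq S_Q([a',b'])\conv S_Q([a,b])$. Since $\mathcal{F}^{(1)}_Q$ is exact, faithful, sends simples to simples (Theorem~\ref{thm:gQASW duality 2}) and is a tensor functor, it reflects simplicity, so it suffices to see that $V_Q([a,b])\otimes V_Q([a',b'])$ is simple; by Theorem~\ref{Thm: basic properties}(2) this is the statement that $d_{i_1,i_3}(z)$ does not vanish at $z=(-q)^{\pm(p_1-p_3)}$. But by \eqref{eq: dij A_n 1} every zero of $d_{i_1,i_3}$ has the form $(-q)^{2s+|i_1-i_3|}$ with $s\ge1$, hence has exponent of absolute value $>|i_1-i_3|>|p_1-p_3|$ by the steep-diagonal estimate, so no such zero occurs.

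Then I would apply Theorem~\ref{Thm: length 2 KKKO} to $M_1,M_2$. Its hypothesis is that $\Rnorm_{\mathcal{F}^{(1)}_Q(M_1),\,\mathcal{F}^{(1)}_Q(M_2)}(z)=\Rnorm_{V_Q([a,b']),\,V_Q([a',b])}(z)$ has a simple pole at $z=1$, which by the normalization of the $R$-matrix means exactly that $d_{i_2,i_4}(z)$ has a simple zero at the ratio $(-q)^{p_4-p_2}$ of spectral parameters. By the shallow-diagonal computation $|p_2-p_4|=|i_2-i_4|+2\min(\ell_N,\ell_S)$, so this ratio equals $(-q)^{\pm(2s_0+|i_2-i_4|)}$ with $s_0=\min(\ell_N,\ell_S)$; by \eqref{eq: dij A_n 1} together with $1\le s_0\le\min(i_2,i_4,n+1-i_2,n+1-i_4)$ this is a zero of $d_{i_2,i_4}$, and it is simple because the linear factors $z-(-q)^{2s+|i_2-i_4|}$ in \eqref{eq: dij A_n 1} are pairwise distinct. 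Theorem~\ref{Thm: length 2 KKKO} then produces the four-term exact sequence \eqref{eq: rec quiver}, the commutativity and simplicity of $S_Q([a,b])\conv S_Q([a',b'])$, the fact that $S_Q([a',b])\conv S_Q([a,b'])$ has composition length $2$, that $\mathrm{Im}(\rmat{})$ is its unique simple socle, and that its head is the simple module $N$ furnished by the theorem.

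What remains — and this is the step I expect to be the real obstacle — is to identify $N$ with $S_Q([a,b])\conv S_Q([a',b'])$ (equivalently $S_Q([a',b'])\conv S_Q([a,b])$). Both $S_Q([a,b'])\conv S_Q([a',b])$ and $S_Q([a,b])\conv S_Q([a',b'])$ have weight $[a,b]+[a',b']$, and since the latter is simple it is enough to show it is a composition factor of the former; by the exact sequence and the exactness of $\mathcal{F}^{(1)}_Q$ this reduces to showing that the socle of the length-two module $V_Q([a,b'])\otimes V_Q([a',b])$ is $V_Q([a,b])\otimes V_Q([a',b'])$. This last assertion is a $T$-system–type identity among the fundamental $U_q'(A_n^{(1)})$-modules attached to the corners of the rectangle, and I would prove it by an explicit analysis of this product — which, being of length two, is pinned down by its single extra constituent — either through its $q$-character or by producing directly a nonzero $U_q'(A_n^{(1)})$-homomorphism $V_Q([a,b'])\otimes V_Q([a',b])\to V_Q([a',b'])\otimes V_Q([a,b])$ built from normalized $R$-matrices and the Dorey-type morphisms of Theorem~\ref{Thm: Dorey A_n(1)} and Corollary~\ref{Cor: Dorey on Gamma}. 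Granting $N\simeq S_Q([a,b])\conv S_Q([a',b'])$, the two bulleted assertions of the proposition are exactly the output of Theorem~\ref{Thm: length 2 KKKO} for $M_1=S_Q([a,b'])$, $M_2=S_Q([a',b])$, together with the commutativity already established.
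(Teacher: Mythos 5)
Your overall strategy matches the paper's: establish the rectangle geometry, deduce simplicity of $S_Q([a,b])\conv S_Q([a',b'])$ from the denominator formula \eqref{eq: dij A_n 1} via the exact faithful tensor functor $\mathcal{F}^{(1)}_Q$, verify that the normalized $R$-matrix between the two ``shallow diagonal'' modules has a (simple) pole at $z=1$, and invoke Theorem \ref{Thm: length 2 KKKO}. Up to that point your argument is sound and essentially identical to the paper's (the paper's pole computation is the same arithmetic: $t=k-v$ with $k\le i+j-1$ forcing $t$ into the admissible range).

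The genuine gap is exactly where you flag it: identifying the simple head $N$ of $S_Q([a',b])\conv S_Q([a,b'])$ with $S_Q([a,b])\conv S_Q([a',b'])$. You only sketch two possible strategies ($q$-character analysis, or building a morphism from $R$-matrices and Dorey maps) without carrying either out, and the reduction you offer --- that both modules have the same weight and the length-two module is ``pinned down by its single extra constituent'' --- does not close the argument: a priori the head of $S_Q([a',b])\conv S_Q([a,b'])$ could be some other simple module of weight $[a,b]+[a',b']$, and nothing you have written rules this out. The paper fills this gap by explicitly constructing a nonzero homomorphism $\psi\colon S_Q([a',b])\conv S_Q([a,b'])\to S_Q([a',b'])\conv S_Q([a,b])$ entirely inside $\Rep(R_{A_n})$, splitting into four cases according to the relative order of $a,a',b,b'$. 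For instance, when $a<a'\le b'<b$ one uses the minimal-pair injection $S_Q([a',b])\hookrightarrow S_Q([a',b'])\conv S_Q([b'+1,b])$ (the socle half of Theorem \ref{thm: BkMc}(d)), convolves with $S_Q([a,b'])$, applies the Dorey surjection $S_Q([b'+1,b])\conv S_Q([a,b'])\twoheadrightarrow S_Q([a,b])$ from Corollary \ref{Cor: Dorey on Gamma}, and concludes that the composite is nonzero by \cite[Corollary 3.11]{KKKO14}; the cases $a<a'\le b<b'$ etc.\ additionally use the commutation result of Proposition \ref{prop: switching}. Since the target is simple, nonvanishing gives surjectivity and hence the identification of the head. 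Without this construction (or a completed $q$-character computation in its place), your proof of the proposition is incomplete.
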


\begin{proof}
Write $\phi^{-1}([a',b])=(j,q)$, $\phi^{-1}([a,b'])=(i,p)$,
$\phi^{-1}([a,b])=(k,r)$ and $\phi^{-1}([a',b'])=(l,s)$. In the
rectangle subquiver, we can read that $$\text{$|k-l|>|r-s|$ and
hence $S_Q([a,b]) \conv S_Q([a',b']) \simeq S_Q([a',b']) \conv
S_Q([a,b])$ is simple.}$$

Now we claim that $\Rnorm_{V_Q([a',b]),V_Q([a,b'])}(z)$ has a pole
at $z=1$; that is, there exists  $1 \le t \le \min \{
i,j,n+1-i,n+1-j \}$ satisfying
\begin{equation} \label{eq: red condition}
|i-j|+2t = 2k-i-j.
\end{equation}

Set $v=\max \{ i, j \}$ and $u =  \min \{ i,j \}$. Then we have $w=
\min \{ u,n+1-v \}$ and
$$|i-j|+2t = 2k-i-j \quad \text{ if } t=k-v$$
Since $k \le i+j-1$, $k-v < w$ always. Thus our claim holds.

By Theorem \ref{Thm: length 2 KKKO}, it is enough to show that there
exists a non-zero $R$-homomorphism
$$\psi: S_Q([a',b]) \conv S_Q([a,b']) \twoheadrightarrow S_Q([a,b]) \conv S_Q([a',b']) \simeq S_Q([a',b']) \conv S_Q([a,b]).$$
Note that the following four cases can happen:
$$ a< a' \le  b'<b , \quad a'< a \le b <b', \quad  a< a' \le b< b' \quad \text{or} \quad  a'< a \le b'<b.$$

Let us consider the first case. From the rectangle quiver, we can
read that $[a',b]$ is located at the right part of the upper ray of
$[a',b]$ and hence $[b'+1,b]$ is located at the left part of the
upper ray. Thus we have an injective $R$-homomorphism
\begin{equation} \label{eq: 6 trem ses 1 step 1}
S_Q([a',b]) \hookrightarrow S_Q([a',b'])\otimes S_Q([b'+1,b]).
\end{equation}
By taking $ - \conv S_Q([a,b'])$ to \eqref{eq: 6 trem ses 1 step 1},
we have a composition
\begin{align*}
  & S_Q([a',b]) \conv S_Q([a,b'])  \longrightarrow S_Q([a',b']) \conv S_Q([b'+1,b])\conv S_Q([a,b']) \\
  & \hspace{16ex} \longrightarrow S_Q([a',b']) \conv S_Q([a,b]),
\end{align*}
since $([b'+1,b],[a,b'])$ is the pair contained in the upper ray of
$[a,b]$. By \cite[Corollary 3.11]{KKKO14}, the composition is
non-zero. From the fact that $S_Q([a',b']) \conv S_Q([a,b])$ is
simple, the composition is surjective and hence $S_Q([a',b']) \conv
S_Q([a,b])$ is a head of $S_Q([a',b]) \conv S_Q([a,b'])$. By
applying the similar argument, one can see that there exists such a
surjective homomorphism for the second case.

Now, we assume the third case; that is, $a< a' \le  b< b'$. Since
$([a,b],[b+1,b'])$ is the pair contained in the lower ray of
$[a,b']$, we have an injective $R$-homomorphism
\begin{equation} \label{eq: 6 trem ses 2 step 1}
S_Q([a,b']) \hookrightarrow S_Q([a,b])\conv S_Q([b+1,b']).
\end{equation}
By taking $S_Q([a',b])\conv -$ to \eqref{eq: 6 trem ses 2 step 1},
we have
\begin{align*}
& S_Q([a',b]) \conv S_Q([a,b'])  \longrightarrow S_Q([a',b]) \conv S_Q([a,b])\conv S_Q([b+1,b'])  \\
& \hspace{7ex}  \simeq   S_Q([a,b]) \conv S_Q([a',b])\conv
S_Q([b+1,b']) \longrightarrow S_Q([a,b]) \conv S_Q([a',b']),
\end{align*}
by Proposition \ref{prop: switching}. In the similar way, one can do
for the fourth case, also.
\end{proof}

\begin{remark}
When $Q$ is equal to $\overset{\gets}{Q}$, we always have the
following inequality in \eqref{eq: rec quiver}:
$$a' <a \le b <b'.$$
Thus the above proposition can be understood as generalizations of
the following statements \cite[Proposition 4.2.3]{KKK13a}:
\begin{enumerate}
\item[{\rm (a)}] For $a' \le a \le b \le b'$, $L(a,b) \conv L(a',b') \simeq L(a',b') \conv L(a,b)$ is simple.
\item[{\rm (b)}] For $a' <a \le b <b'$, there exists an exact sequence
\begin{equation*}
\begin{aligned}
& 0 \to L(a,b) \conv L(a',b')  \to L(a,b') \conv L(a',b) \\
&  \hspace{15ex}  \overset{\rmat{}}{\longrightarrow}  L(a',b) \conv
L(a,b') \to L(a,b) \conv L(a',b')  \to 0,
\end{aligned}
\end{equation*}
where the image of $\rmat{}$ coincides with the head of $L(a,b')
\conv L(a',b)$ and the socle of $L(a',b) \conv L(a,b')$.
\end{enumerate}
\end{remark}

Theorem \ref{Thm: sectional} tells that all positive roots
$\beta_k$'s in a maximal $S$-sectional (resp. $N$-sectional) path
$\rho$ share second (resp. first) component and have the same value
\begin{align*}
& \phi^{-1}_1(\beta_k)-\phi^{-1}_2(\beta_k)=\phi^{-1}_1(\beta_{k'})-\phi^{-1}_2(\beta_{k'}) \\
\text{ (resp. }
&\phi^{-1}_1(\beta_k)+\phi^{-1}_2(\beta_k)=\phi^{-1}_1(\beta_{k'})+\phi^{-1}_2(\beta_{k'})
\text{)}.
\end{align*}

For each  maximal $S$-sectional (resp. $N$-sectional) path $\rho$,
we assign a value
$$\chi_S(\rho)=\phi^{-1}_1(\beta)-\phi^{-1}_2(\beta) \text{ (resp. } \chi_N(\rho)=\phi^{-1}_1(\beta)+\phi^{-1}_2(\beta) \text{)}$$
where $\beta$ is any positive root in $\rho$.

Note that $\chi_S(\rho) \ne \chi_S(\rho')$ (resp. $\chi_N(\rho) \ne
\chi_N(\rho')$) for any distinct pair of maximal $S$-sectional
(resp. $N$-sectional) paths $(\rho,\rho')$.

We set $\rho_1$ the maximal $S$-sectional path such that
$\chi_S(\rho_1)$ is maximum and $\mathbf{i}_1$ the second component
of $\rho_1$. In this way, we define $\rho_k$ the maximal
$S$-sectional path such that $\chi_S(\rho_k)$ is $k$th maximum and
$\mathbf{i}_k$ the second component of $\rho_k$ for $1 \le k \le n$.
Similarly, we set $\rho'_k$ the maximal $N$-sectional path such that
$\chi_N(\rho'_k)$ is $k$th maximum and $\mathbf{j}_k$ the first
component of $\rho'_k$ for $1 \le k \le n$.

In Example \ref{ex: example 1},
$$(\mathbf{i}_1,\mathbf{i}_2,\mathbf{i}_3,\mathbf{i}_4,\mathbf{i}_5)=(1,3,4,5,2) \quad \text{and} \quad
(\mathbf{j}_1,\mathbf{j}_2,\mathbf{j}_3,\mathbf{j}_4,\mathbf{j}_5)=(3,1,2,4,5).$$

\begin{theorem} \label{thm: re-indexing} Let $Q$ be any Dynkin quiver of finite type $A_n$.
Assume that we have a simple $R_{A_n}$-module $M$.
\begin{enumerate}
\item[({\rm a})] There exists a sequence
$$(a_{1;\mathbf{i}_k},a_{2;\mathbf{i}_k},\ldots,a_{\mathbf{i}_k-1;\mathbf{i}_k},a_{\mathbf{i}_k;\mathbf{i}_k})$$
for $1 \le s \le \mathbf{i}_k$, $1 \le k \le n$ and
$a_{s,\mathbf{i}_k} \in \Z_{\ge 0}$ such that
$$ \dct{k=1}{n} \left(\dct{s=1}{\mathbf{i}_k} S_Q\big([s,\mathbf{i}_k]\big)^{\conv a_{s;\mathbf{i}_k}}\right) \twoheadrightarrow M,$$
where $\dct{s=1}{\mathbf{i}_k} S_Q\big([s,\mathbf{i}_k]\big)^{\conv
a_{s;\mathbf{i}_k}}$ is a simple module for every $1\le k \le n.$
\item[({\rm b})] There exists a sequence
$$(a_{\mathbf{j}_k;\mathbf{j}_k},a_{\mathbf{j}_k;\mathbf{j}_k+1},\ldots,a_{\mathbf{j}_k;n-1},a_{\mathbf{j}_k;n})$$
for $\mathbf{j}_k \le s \le n$, $1 \le k \le n$ and
$a_{s,\mathbf{i}_k} \in \Z_{\ge 0}$ such that
$$ \dct{k=1}{n} \left(\dct{s=\mathbf{j}_k}{n} S_Q\big([\mathbf{j}_k,s]\big)^{\conv a_{\mathbf{j}_k;s}}\right) \twoheadrightarrow M,$$
where $\dct{s=\mathbf{j}_k}{n} S_Q\big([\mathbf{j}_k,s]\big)^{\conv
a_{\mathbf{j}_k;s}}$ is a simple module for every $1\le k \le n.$
\end{enumerate}
\end{theorem}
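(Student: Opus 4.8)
The plan is to deduce both parts from the classification of simple $R_{A_n}$-modules in Theorem \ref{thm: BkMc}(c), after fixing the right reduced expression adapted to $Q$. For part (a), I would take the reduced expression $\redez$ of $w_0$ adapted to $Q$ that is associated with the reading $<^{L}_Q$ of Remark \ref{Rmk: total orders}(A) (this reading follows the rule of Theorem \ref{Thm: Berdard reading}, so such a $\redez$ exists), noting that $S_{\redez}(\beta)=S_Q(\beta)$ for all $\beta\in\Phi^+_n$ because $\redez$ is adapted to $Q$. By Theorem \ref{thm: BkMc}(c) there is a unique $\um=(m_1,\dots,m_{\mathsf N})\in\Z^{\mathsf N}_{\ge 0}$ with $M\simeq{\rm hd}\big(\overset{\to}{S}_{\redez}(\um)\big)$, where $\overset{\to}{S}_{\redez}(\um)$ is the convolution of the $S_{\redez}(\beta)^{\conv m_\beta}$ listed in increasing $<^{L}_Q$-order. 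Hence it is enough to reorganize $\overset{\to}{S}_{\redez}(\um)$ into the shape $\dct{k=1}{n}\big(\dct{s=1}{\mathbf i_k}S_Q([s,\mathbf i_k])^{\conv a_{s;\mathbf i_k}}\big)$ with each inner factor simple, and then pass to heads.

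The key observation is that $<^{L}_Q$ traverses the maximal $S$-sectional paths one after another, in the order $\rho_1,\dots,\rho_n$ fixed before the theorem. Indeed, along an $S$-sectional arrow $(i,p)\to(i+1,p+1)$ the quantity $\chi_S(\beta)=\phi^{-1}_1(\beta)-\phi^{-1}_2(\beta)$ is unchanged, while along an $N$-sectional arrow $(i,p)\to(i-1,p+1)$ it drops by $2$; thus $\chi_S$ is constant on each maximal $S$-sectional path and takes pairwise distinct values on distinct ones. Since in type $A$ one has $d(1,i)=i-1$, the ordering $<^{L}_Q$ is governed in the first instance by the value of $\chi_S$, so the roots lying on a fixed maximal $S$-sectional path form a contiguous stretch of $<^{L}_Q$ and these stretches are arranged according to $\chi_S$, i.e. as $\rho_1,\dots,\rho_n$. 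Therefore $\overset{\to}{S}_{\redez}(\um)$ splits as a convolution of $n$ consecutive blocks, the $k$-th built from the positive roots on $\rho_k$. By Corollary \ref{Cor: sectional} and Theorem \ref{Thm: sectional}, the roots on $\rho_k$ are exactly the $\mathbf i_k$ roots $[1,\mathbf i_k],[2,\mathbf i_k],\dots,[\mathbf i_k,\mathbf i_k]$, so the $k$-th block is a convolution product of the modules $S_Q([s,\mathbf i_k])^{\conv a_{s;\mathbf i_k}}$ $(1\le s\le\mathbf i_k)$ taken in the internal order of $\rho_k$, where $a_{s;\mathbf i_k}\in\Z_{\ge0}$ is the entry of $\um$ indexed by $[s,\mathbf i_k]$.

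It remains to put each block into the prescribed order. The roots $[s,\mathbf i_k]$, $1\le s\le\mathbf i_k$, all share the second component $\mathbf i_k$, so by Proposition \ref{prop: switching} the modules $S_Q([s,\mathbf i_k])$ pairwise commute under convolution, hence so do their convolution powers; and by Corollary \ref{cor: switching} any convolution product of these powers is simple. Consequently the $k$-th block is isomorphic to $\dct{s=1}{\mathbf i_k}S_Q([s,\mathbf i_k])^{\conv a_{s;\mathbf i_k}}$, which is simple, and assembling the $n$ blocks gives $\overset{\to}{S}_{\redez}(\um)\simeq\dct{k=1}{n}\big(\dct{s=1}{\mathbf i_k}S_Q([s,\mathbf i_k])^{\conv a_{s;\mathbf i_k}}\big)$; taking heads proves (a). Part (b) is obtained by the mirror argument: use the reading $<^{U}_Q$ of Remark \ref{Rmk: total orders}(B), observe that $\chi_N(\beta)=\phi^{-1}_1(\beta)+\phi^{-1}_2(\beta)$ is constant along $N$-sectional arrows and increases along $S$-sectional ones, so that $<^{U}_Q$ sweeps out the maximal $N$-sectional paths $\rho'_1,\dots,\rho'_n$ in blocks; the roots on $\rho'_k$ are $[\mathbf j_k,\mathbf j_k],\dots,[\mathbf j_k,n]$, which share the first component $\mathbf j_k$, so Proposition \ref{prop: switching} and Corollary \ref{cor: switching} again make the reordered blocks simple and yield the surjection onto $M$.

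I expect the genuine work to lie in the second paragraph: one has to verify carefully that under the Bernard-type readings $<^{L}_Q$ (resp. $<^{U}_Q$) the roots of distinct maximal $S$-sectional (resp. $N$-sectional) paths are never interleaved, and that the order in which the blocks appear is precisely the labelling $\rho_1,\dots,\rho_n$ (resp. $\rho'_1,\dots,\rho'_n$) chosen just before the theorem. Everything after the block decomposition of $\overset{\to}{S}_{\redez}(\um)$ is a routine combination of Theorem \ref{thm: BkMc}, Proposition \ref{prop: switching} and Corollary \ref{cor: switching}.
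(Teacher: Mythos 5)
Your proposal is correct and is essentially an expanded version of the paper's own (one-line) proof, which likewise combines Theorem \ref{thm: BkMc}, the readings $<^{L}_Q$ and $<^{U}_Q$, Theorem \ref{Thm: sectional}, and Proposition \ref{prop: switching}; the block decomposition via the invariants $\chi_S$, $\chi_N$ is exactly the intended mechanism. The only caveat is a labelling convention: as the paper's own Example \ref{ex: example 1} shows, under $<^{L}_Q$ the $S$-sectional blocks appear in \emph{increasing} order of $\chi_S$, so your identification of the block order with $\rho_1,\dots,\rho_n$ matches the example rather than the literal "maximum" wording in the paper's definition of $\rho_1$ -- an inconsistency inherited from the paper, not a gap in your argument.
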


\begin{proof}
With Theorem \ref{Thm: sectional} and Theorem \ref{thm: BkMc}, our
assertions come from the convex total orders $<^U_Q$, $<^L_Q$ and
Proposition \ref{prop: switching}.
\end{proof}

\begin{remark} Fix the Dynkin quiver as $\overset{\gets}{Q}$.
Then we have
$$(\mathbf{j}_1,\mathbf{j}_2,\ldots,\mathbf{j}_{n-1},\mathbf{j}_n)=(n,n-1,\ldots,2,1).$$

\begin{enumerate}
\item[({\rm a})] For a Kashiwara-Nakashima tableau $T$ of shape $\lambda$ of finite type $A_n$ (\cite{KN94}), we can associate $\ell(\lambda)$-tuple
of partitions
$(\mu^{(1)},\ldots,\mu^{(\ell(\lambda)-1)},\mu^{(\ell(\lambda))})$
as in \cite[Section 1]{KP11}. Here $\ell(\eta)$ denotes the length
of a partition $\eta$. Set $L(a,b)=\ko$ for $a>b$. Then
\cite[Theorem 4.8]{KP11} tells that the simple head $\triangle(T)$
of

$ \nabla(T) \seteq \dct{k=1}{\ell(\lambda)} \left(
\dct{s=1}{\ell(\mu^{(\ell(\lambda)+1-k)})} \hspace{-3.5ex}
S_{\overset{\gets}{Q}}\big( [\ell(\lambda)+1-k,\ell(\lambda)+1-k+\mu^{(\ell(\lambda)+1-k)}_s \hspace{-1ex} -1] \big)\right)$ 

\noindent is indeed a simple module over $R^\lambda_{A_n}$, the
cyclotomic quotient of $R_{A_n}$ with the dominant integral weight
$\lambda$. Moreover, the crystal structure on $\{ \triangle(T) \}$
(\cite{LV09}) is isomorphic to the crystal structure of $B(\lambda)$
of $V(\lambda)$ over $U_q(A_n)$ (\cite{Kash91,Kash93a}).
\item[({\rm b})] If we give a total order on $I_0$ as $\mathbf{j}_k > \mathbf{j}_{k+1}$ for $1 \le k \le n-1$, we have
\begin{align}\label{eq: natural order}1<2< \cdots <n-1<n.\end{align}
Then Lemma \ref{lem: Q rev} tells that
$S_{\overset{\gets}{Q}}(\beta)$ coincides with the cuspidal
representation $L_\beta$ in \cite{HMM09,KR09} with respect to the
total order \eqref{eq: natural order} on $I_0$.
\end{enumerate}
\end{remark}

\section{The categories $\mathscr{C}^{(2)}_Q$ over $U_q'(A^{(2)}_{2n-1})$ and $U_q'(A^{(2)}_{2n})$.}
In this subsection, we study the category $\mathscr{C}^{(2)}_Q$ over
$U_q'(A^{(2)}_{2n-1})$ and $U_q'(A^{(2)}_{2n})$ by using the
combinatorial descriptions in Section \ref{sec: Combinatorics} and
the Dorey's type morphisms for $U_q'(A^{(2)}_{2n-1})$ and
$U_q'(A^{(2)}_{2n})$ studied in \cite{Oh14}.

\medskip

Now, we fix $m \in \Z_{\ge 2}$ as $2n$ or $2n-1$, and a Dynkin
quiver $Q$ of finite type $A_m$. For $1 \le i \le m$ and $(-q)^p \in
\ko^\times$, we define
\begin{equation} \label{eq: star}
\begin{aligned}
& \{1,\ldots,n \} \ni i^\star \seteq \begin{cases} i \\
m+1-i  \end{cases} \text{ and } \\
& ((-q)^p)^\star \seteq  \begin{cases} (-q)^p & \text{ if } 1 \le i \le n, \\
(-1)^m(-q)^p& \text{ if } n  < i \le m. \end{cases}
\end{aligned}
\end{equation}

Using \eqref{eq: star},  we can obtain an injective map $^\star$,
from the set $\big\{ V(\va_i)_{(-q)^p} \mid  1 \le i \le m, \ p \in
\Z \big\}$ consisting of fundamental $U'_q(A^{(1)}_{m})$-modules to
the set $\big\{ V(\va_i)_{\pm(-q)^p} \mid  1 \le i \le n, \ p \in \Z
\big\}$ consisting of fundamental $U'_q(A^{(2)}_{m})$-modules, which
is given by (see \cite{Her10})
$$(V(\va_i)_{(-q)^{p}})^\star \seteq V(\va_{i^\star})_{((-q)^{p})^\star}.$$

For each $(i,p)\in \Gamma_Q$ with $\phi(i,p)=(\beta,0) \in \Phi^+_m
\times \{ 0 \}$, we set
\begin{equation} \label{eq: mathsf V}
 \mathsf{V}_Q(\beta) \seteq (V_Q(\beta))^\star = \begin{cases} V(\varpi_i)_{(-q)^p} & \text{ if } 1 \le i \le n, \\
                                          V(\varpi_{m+1-i})_{(-1)^m(-q)^p} & \text{ if } n  < i \le m, \end{cases}
\end{equation}
which is a fundamental module over $U'_q(A^{(2)}_{m})$.

By mimicking Definition \ref{def: Q module category} and using
\eqref{eq: mathsf V}, we can define the category
$\mathscr{C}^{(2)}_Q$ as follows (see also \cite{Her10}):

\begin{definition} \label{def: Q module category 2}
Let $Q$ be a Dynkin quiver of finite type $A_m$ and $U_q'(\g)$ be
the quantum affine algebra of type $A^{(2)}_{m}$ for $m =2n$ or
$2n-1 \in \Z_{\ge 2}$. We define the smallest abelian full
subcategory $\mathscr{C}^{(2)}_Q$ consisting of finite dimensional
integrable $U_q'(A^{(2)}_{m})$-modules such that
\begin{itemize}
\item[({\rm a})] it is stable by taking subquotient, tensor product and extension,
\item[({\rm b})] it contains $\mathsf{V}_Q(\beta)$ for all $\beta \in \Phi_m^+$.
\end{itemize}
Here, $\Phi_m^+$ is the set of positive root of $A_{m}$.
\end{definition}

The goal of this subsection is to show the following theorem:

\begin{theorem} \label{Thm: m 2}
For every positive root $\gamma \in \Phi^+_{m}$ with $|\gamma| \ge
2$, there exists a minimal pair $(\alpha,\beta)$ such that  there
exists a surjective $U'_q(A_{m}^{(2)})$-module homomorphism
$$\mathsf{V}_Q(\beta) \otimes \mathsf{V}_Q(\alpha) \twoheadrightarrow \mathsf{V}_Q(\gamma).$$
\end{theorem}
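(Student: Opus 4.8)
The plan is to reduce the statement to the Dorey-type surjections for $U_q'(A^{(2)}_m)$ recorded in the excerpt, using the combinatorial structure of $\Gamma_Q$ together with the $\star$-map. First I would take $\gamma \in \Phi_m^+$ with $|\gamma| \ge 2$ and, exactly as in the proof of Theorem \ref{Thm: i_k+i_l=i_j}, pick the pair $(\alpha,\beta)$ of $\gamma$ sitting on one of the two ``innermost'' arrows of the upper ray (or lower ray) of $\gamma$; concretely, writing $\gamma=[a,b]$, one may choose the pair coming from an arrow $\kappa_u \to \cdots$, $\kappa_{u+1} \to \cdots$ of the $\kappa$-chain, which by Corollary \ref{Cor: first and last}(a) forces $\ell=1$ and hence $i_\alpha + i_\beta = i_\gamma$ (or the dual identity $(n+1-i_\alpha)+(n+1-i_\beta)=(n+1-i_\gamma)$ in the lower-ray case), where $i_{(-)}=\phi^{-1}_1(-)$. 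By Theorem \ref{thm: minimal} this chosen pair is minimal with respect to a suitable convex total order adapted to $Q$, so the ``minimal pair'' clause of the theorem is automatic once the pair is fixed.

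Next I would translate the positions into $R$-matrix data. Write $\phi^{-1}(\alpha,0)=(i_\alpha,p_\alpha)$, $\phi^{-1}(\beta,0)=(i_\beta,p_\beta)$, $\phi^{-1}(\gamma,0)=(i_\gamma,p_\gamma)$; Remark \ref{Alg: easy} and Theorem \ref{Thm: sectional} pin down the differences $p_\alpha - p_\gamma$ and $p_\beta - p_\gamma$ precisely (they are $\pm$ the first-component differences, with signs governed by whether we are on an $N$- or $S$-sectional leg), just as in the proof of Corollary \ref{Cor: Dorey on Gamma}: in $\mathscr{C}^{(1)}_Q$ we already know $V_Q(\beta)\otimes V_Q(\alpha)\twoheadrightarrow V_Q(\gamma)$ via Theorem \ref{Thm: Dorey A_n(1)}, which tells us exactly which of cases (i)/(ii) of that theorem the triple realizes, and hence gives the exponent relations $a - c = -j$, $b-c = i$ (or the $i+j>m$ variant) for the $A^{(1)}_m$-fundamental modules $V(\varpi_i)_{(-q)^a}$ etc. Then I apply the $\star$-map \eqref{eq: star}--\eqref{eq: mathsf V}: $\mathsf{V}_Q(\gamma) = (V_Q(\gamma))^\star$ by definition, and I must check that the exponent/spectral-parameter relations satisfied by $(V_Q(\alpha),V_Q(\beta),V_Q(\gamma))$ are transported by $\star$ into precisely the hypotheses of the Dorey-type surjections \eqref{eq: p ij} and \eqref{eq: p 1nn} of \cite{Oh14} for $U_q'(A^{(2)}_{2n-1})$ and $U_q'(A^{(2)}_{2n})$.

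I expect the main obstacle to be this last verification — a careful bookkeeping of indices and twists under $\star$. The map $i \mapsto i^\star$ folds $\{1,\dots,m\}$ onto $\{1,\dots,n\}$, and the spectral parameter picks up a factor $(-1)^m$ exactly when $i>n$; so the ``long-range'' Dorey relations of type (i) ($i+j=k\le n$) and type (ii) ($i+j>m$) in Theorem \ref{Thm: Dorey A_n(1)} must be shown to land on the two cases \eqref{eq: p ij} and \eqref{eq: p 1nn} of the twisted morphisms, and one needs to confirm the twist factors are absorbed consistently (using that $((-q)^p)^\star$ is defined so that the composite $\star$-morphism $\{V(\varpi_i)_{(-q)^p}\}\to\{V(\varpi_{i^\star})_{((-q)^p)^\star}\}$ respects the denominator formulas \eqref{eq: dij A_n 1} vs.\ \eqref{eq: dij A_n^2}). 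A clean way to organize this is: fix $\gamma=[a,b]$, split into the cases ``$b\le n$'', ``$a>n$'', and ``$a\le n<b$'', and in each case exhibit the explicit pair $(\alpha,\beta)$ (chosen on the appropriate ray so that the pair is minimal and $\ell=1$), then invoke the corresponding one of \eqref{eq: p ij}, \eqref{eq: p 1nn}, or the untwisted-looking sub-case of \eqref{eq: p ij} with $k\le n$, after applying $\star$. The remaining cases follow by the ``dual'' argument along the lower ray, exactly as in Theorem \ref{Thm: i_k+i_l=i_j}. Once the index arithmetic is matched up, the surjectivity of $\mathsf{V}_Q(\beta)\otimes\mathsf{V}_Q(\alpha)\twoheadrightarrow\mathsf{V}_Q(\gamma)$ is immediate from the cited theorems of \cite{Oh14}.
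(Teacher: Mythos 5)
Your overall strategy (use the $\Gamma_Q$ combinatorics, note that minimality is automatic by Theorem \ref{thm: minimal}, fold through $\star$, and land on the twisted Dorey morphisms of \cite{Oh14}) is the right frame, and it matches the paper's starting point. But the proposal treats the decisive step as routine bookkeeping that will succeed for ``the innermost pair on the upper (or lower) ray,'' and that is where it breaks down. First, your case division by the segment endpoints ($b\le n$, $a>n$, $a\le n<b$) is aimed at the wrong invariant: the folding $i\mapsto i^\star$ acts on the AR-quiver row $\phi^{-1}_1(\gamma)$, which is \emph{not} determined by $[a,b]$ alone but depends on the orientation of $Q$; the paper's cases are governed by $i_\gamma=\phi^{-1}_1(\gamma)$ relative to $1$, $n$ and $2n-1$. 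Second, for a general pair the identity $i_\alpha+i_\beta=i_\gamma$ (or its dual) does \emph{not} fold into the hypothesis $i^\star+j^\star=k^\star\le n$ of \eqref{eq: p ij} — e.g.\ if $i_\alpha,i_\beta\le n$ but $i_\gamma>n$ the folded indices simply fail to add up — so ``invoking \eqref{eq: p ij} after applying $\star$'' is not available for an arbitrarily chosen pair, and indeed when $i_\gamma=1$ there is no pair on the upper ray at all.

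What is actually needed, and what your proposal is missing, is (i) an existence statement for a \emph{usable} pair: when no pair lies on one ray, there is a pair on the other ray whose first coordinates differ from $i_\gamma$ by exactly $1$ (this is the content of Lemmas \ref{Lem: minimal pair 1} and \ref{Lem: no pair}, which require genuine combinatorial work with Corollaries \ref{Cor: sectional} and \ref{Cor: first and last}); and (ii) the observation that the resulting target surjections, such as \eqref{eq: goal 1}--\eqref{eq: goal 3}, are \emph{not} instances of \eqref{eq: p ij} but must be manufactured from it by passing to left/right duals using the $A^{(2)}_m$ duality datum $p^*=(-1)^m(-q)^{m+1}$ from \eqref{eq: p star}, together with the isomorphism \eqref{eq: + iso -} at the fold point $i^\star=n$ (replaced by \eqref{eq: p 1nn} when $m=2n$). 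These dualization steps are precisely where the sign $(-1)^m$ in the definition of $((-q)^p)^\star$ gets absorbed; without them the ``twist factors are absorbed consistently'' claim is an assertion, not an argument. So the proposal as written has a genuine gap in both the selection of the pair and the production of the twisted surjection.
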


\medskip

It is known that (\cite[(1.7)]{AK}), for all $p \in \Z$,
\begin{equation} \label{eq: + iso -}
V(\varpi_n)_{(-q)^p} \simeq V(\varpi_n)_{-(-q)^p}
\end{equation}
where $V(\varpi_n)$ is a $U'_q(A_{2n-1}^{(2)})$-module.
\medskip

{\it In this subsection, we prove {\rm Theorem \ref{Thm: m 2}} only
for $m=2n-1$. In the way of proving {\rm Theorem \ref{Thm: m 2}} for
$m=2n-1$, we use \eqref{eq: + iso -} several times. To prove the
case when $m=2n$, one has to use the same arguments, the same pairs
of positive roots in this subsection, and use the morphism
\eqref{eq: p 1nn} instead of \eqref{eq: + iso -}.}
\medskip

Note that the $p^*$ in \eqref{eq: p star} is $(-1)^m(-q)^{m+1}$ for
$\g=A^{(2)}_{m}$ (\cite[Appendix A]{Oh14}).

\begin{lemma} \label{Lem: height 2}
For $\alpha_k+\alpha_{k+1}$ with $1 \le k < 2n-1$, we have a
surjective $U'_q(A_{2n-1}^{(2)})$-module homomorphism
$$ \mathsf{V}_Q(\alpha_k) \otimes \mathsf{V}_Q(\alpha_{k+1}) \twoheadrightarrow \mathsf{V}_Q(\alpha_k \hspace{-0.3ex}+\hspace{-0.3ex}\alpha_{k+1}) \text{ or }
\mathsf{V}_Q(\alpha_{k+1}) \otimes \mathsf{V}_Q(\alpha_{k})
\twoheadrightarrow
\mathsf{V}_Q(\alpha_k\hspace{-0.3ex}+\hspace{-0.3ex}\alpha_{k+1}).$$
\end{lemma}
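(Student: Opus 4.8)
The plan is to transport the already-available simply-laced Dorey morphism along the folding map $^\star$ of \eqref{eq: mathsf V} and then recognize its image as one of the twisted Dorey morphisms of \cite{Oh14}. Write $\gamma=[k,k+1]=\alpha_k+\alpha_{k+1}$ and let $\{\alpha,\beta\}=\{\alpha_k,\alpha_{k+1}\}$ with $\alpha\prec_Q\beta$. Applying Corollary \ref{Cor: Dorey on Gamma}(a) to the category $\mathscr{C}^{(1)}_Q$ over $U_q'(A^{(1)}_{2n-1})$ produces a surjection $V_Q(\beta)\otimes V_Q(\alpha)\twoheadrightarrow V_Q(\gamma)$. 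Putting $\phi^{-1}(\alpha,0)=(I_\alpha,P_\alpha)$, $\phi^{-1}(\beta,0)=(I_\beta,P_\beta)$, $\phi^{-1}(\gamma,0)=(I_\gamma,P_\gamma)$, Theorem \ref{Thm: Dorey A_n(1)} pins down the spectral data: we are in case (i) (so $I_\alpha+I_\beta=I_\gamma\le 2n-1$ and $P_\beta-P_\gamma=-I_\alpha$, $P_\alpha-P_\gamma=I_\beta$) or in case (ii) (so $I_\alpha+I_\beta=I_\gamma+2n$, with the analogous relations among the $P$'s); by Theorem \ref{Thm: i_k+i_l=i_j} these are exactly the two combinatorial alternatives for the first components.

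Next I would apply $^\star$: by \eqref{eq: mathsf V} each first component $I$ becomes $I^\star=\min(I,2n-I)$, and the spectral parameter acquires the factor $(-1)^{2n-1}=-1$ precisely when $I>n$. It then remains to verify, through a finite case check organized by the positions of $I_\alpha$, $I_\beta$, $I_\gamma$ relative to $n$ --- equivalently by the orientation of the edge $\{k,k+1\}$ in $Q$ and the vertex-types of $k$ and $k+1$, which via Lemma \ref{Lem: Key Lem}(a) and the $N$- and $S$-sectional structure of $\Gamma_Q$ determine $(I_\alpha,P_\alpha),(I_\beta,P_\beta),(I_\gamma,P_\gamma)$ explicitly --- that the triple $(\mathsf{V}_Q(\beta),\mathsf{V}_Q(\alpha),\mathsf{V}_Q(\gamma))$, up to the evident simultaneous spectral shift $V(\varpi_i)_a\mapsto V(\varpi_i)_{ax}$ and possibly with the two factors interchanged (which the statement permits), matches a twisted Dorey morphism from \cite{Oh14}: the ``sum'' morphism \eqref{eq: p ij} when $I^\star_\gamma=I^\star_\alpha+I^\star_\beta$, and otherwise a morphism with target index $|I^\star_\alpha-I^\star_\beta|$, which one extracts from \eqref{eq: p ij} by means of the duality \eqref{eq: dual}--\eqref{eq: p star} (with $p^*=-(-q)^{2n}$ here). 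This ``difference'' case really does occur: for $Q$ of type $A_5$ with $\xi_1=0$ and orientation $1\to2\gets3\to4\to5$ one finds $\mathsf{V}_Q([1,2])=V(\varpi_1)_{-(-q)^{-4}}$, a fundamental module of index $1$. (Reducibility of the pertinent tensor product at the required spectral ratio can also be checked directly from the denominator \eqref{eq: dij A_n^2} via Theorem \ref{Thm: basic properties}.)

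The delicate point, and the main obstacle, is the sign bookkeeping. The three applications of $^\star$ contribute up to three factors $-1$, and a shifted (and, if needed, dualized) twisted Dorey morphism can absorb them only when they all coincide with the single sign carried by the shift parameter. I would show that whenever they fail to agree, the module responsible for the discrepancy is forced to be the middle fundamental module $V(\varpi_n)$, so that the isomorphism \eqref{eq: + iso -}, $V(\varpi_n)_{(-q)^s}\simeq V(\varpi_n)_{-(-q)^s}$, removes it; the remaining configurations, where a mismatch would otherwise survive, are excluded by the height-$2$ hypothesis, which bounds the pairwise differences of $I_\alpha$, $I_\beta$, $I_\gamma$. (For the companion statement $m=2n$ in Theorem \ref{Thm: m 2} the same argument applies with \eqref{eq: + iso -} replaced by \eqref{eq: p 1nn}, as noted in the text.)
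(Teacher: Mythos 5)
Your plan coincides with the paper's own proof: both reduce to a finite case analysis over the orientation of the edge $\{k,k+1\}$ and the vertex types of $k$ and $k+1$, read off $\phi^{-1}(\alpha_k,0)$, $\phi^{-1}(\alpha_{k+1},0)$ and $\phi^{-1}(\alpha_k+\alpha_{k+1},0)$ from Lemma \ref{Lem: Key Lem}(a), identify the resulting twisted fundamental modules under $^\star$, and produce the surjection by dualizing the twisted Dorey morphism \eqref{eq: p ij} and tensoring with left/right duals (your ``difference'' case, e.g.\ $V(\varpi_{k+1})\otimes V(\varpi_k)_{-(-q)^{2n-1}}\twoheadrightarrow V(\varpi_1)_{(-q)^k}$, is exactly the paper's Case 1). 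The only differences are presentational --- you first invoke the untwisted Dorey data before applying $^\star$, while the paper computes the twisted modules directly --- and your sketch defers the sign verification that the paper carries out case by case in its tables.
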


\begin{proof}
\textbf{(Case 1: \xymatrix@R=3ex{ *{ \bullet }<3pt> \ar@{<-}[r]
&*{\bullet}<3pt> \ar@{->}[r]_<{k}  &*{\bullet}<3pt>
\ar@{<-}[r]_<{k+1} & *{\bullet}<3pt> } ) } In this case, Lemma
\ref{Lem: Key Lem} (a) tells that
$$ \phi^{-1}(\alpha_k)=(k,\xi_k) \quad \text{ and } \quad \phi^{-1}(\alpha_{k+1})=(2n-1-k,\xi_k-(2n-1)).$$
Thus, by Remark \ref{Alg: easy}, we have $\alpha_{k} \prec_Q
\alpha_{k+1}$,
$$ \phi^{-1}(\alpha_k+\alpha_{k+1})= (2n-1,\xi_k-(2n-1-k))$$
and hence
$\mathsf{V}_Q(\alpha_{k}+\alpha_{k+1})=V(\varpi_1)_{-(-q)^{\xi_k-(2n-1-k)}}$.
For each $k$, $ \mathsf{V}_Q(\alpha_k)$ and
$\mathsf{V}_Q(\alpha_{k+1})$ are summarized by the following table:
\begin{center}
\begin{tabular}{ | c | c | c  | } \hline
$k$ & $\mathsf{V}_Q(\alpha_{k+1})$ &  $ \mathsf{V}_Q(\alpha_k)$ \\
\hline $k \le n-1$ & $V(\varpi_{k+1})_{-(-q)^{\xi_k-(2n-1)}}$ &
$V(\varpi_{k})_{(-q)^{\xi_k}}$ \\ \hline $k\ge n$ &
$V(\varpi_{2n-1-k})_{(-q)^{\xi_k-(2n-1)}}$ &
$V(\varpi_{2n-k})_{-(-q)^{\xi_k}}$ \\ \hline
\end{tabular}
\end{center}
For $k \le n-1$, we have an injection $V(\varpi_{k+1})
\rightarrowtail V(\varpi_{1})_{(-q)^{k}} \otimes
V(\varpi_{k})_{(-q)^{-1}}$ by taking dual to \eqref{eq: p ij}.
Tensoring the right dual of $V(\varpi_{k})_{(-q)^{-1}}$, we have
$$ V(\varpi_{k+1}) \otimes V(\varpi_{k})_{-(-q)^{2n-1}} \twoheadrightarrow V(\varpi_{1})_{(-q)^{k}},$$
which yields our assertion. Similarly, we can prove for $k \ge n$.

\textbf{(Case 2: \xymatrix@R=3ex{ *{ \bullet }<3pt> \ar@{<-}[r]
&*{\bullet}<3pt> \ar@{->}[r]_<{k}  &*{\bullet}<3pt>
\ar@{->}[r]_<{k+1} & *{ \bullet }<3pt> } ) } In this case, we have
$$ \phi^{-1}(\alpha_k)=(k,\xi_k), \ \phi^{-1}(\alpha_{k+1})=(1,\xi_k-1-k),\
\phi^{-1}(\alpha_k+\alpha_{k+1})= (k+1,\xi_k-1)$$ and hence
$\mathsf{V}_Q(\alpha_{k+1})=V(\varpi_1)_{(-q)^{\xi_k-1-k}}$. For
each $k$, $ \mathsf{V}_Q(\alpha_k)$ and $\mathsf{V}_Q(\alpha_{k+1})$
are summarized by the following table:
\begin{center}
\begin{tabular}{ | c | c | c  | } \hline
$k$ & $\mathsf{V}_Q(\alpha_{k})$ &  $
\mathsf{V}_Q(\alpha_k+\alpha_{k+1})$ \\ \hline $k \le n-1$ &
$V(\varpi_{k})_{(-q)^{\xi_k}}$ & $V(\varpi_{k+1})_{(-q)^{\xi_k-1}}$
\\ \hline $k \ge n$ & $V(\varpi_{2n-k})_{-(-q)^{\xi_k}}$ &
$V(\varpi_{2n-k-1})_{-(-q)^{\xi_k-1}}$ \\ \hline
\end{tabular}
\end{center}
Using the same technique in \textbf{(Case 1)}, we have
$\mathsf{V}_Q(\alpha_{k+1}) \otimes \mathsf{V}_Q(\alpha_{k}) \twoheadrightarrow \mathsf{V}_Q(\alpha_k+\alpha_{k+1})$.\\
\textbf{(Case 3: \xymatrix@R=3ex{ *{\bullet}<3pt> \ar@{->}[r]
&*{\bullet}<3pt> \ar@{<-}[r]_<{k}  &*{\bullet}<3pt>
\ar@{->}[r]_<{k+1} & *{\bullet}<3pt> } ) } We have
\begin{align*}
& \phi^{-1}(\alpha_k)=(2n-k,\xi_k-2n+2), \ \phi^{-1}(\alpha_{k+1})=(k+1,\xi_k+1), \\
& \phi^{-1}(\alpha_k+\alpha_{k+1})= (1,\xi_k+1-k)
\end{align*}
and hence
$\mathsf{V}_Q(\alpha_k+\alpha_{k+1})=V(\varpi_1)_{(-q)^{\xi_k+1-k}}$.
 For each $k$, $ \mathsf{V}_Q(\alpha_k)$ and $\mathsf{V}_Q(\alpha_{k+1})$ are summarized by the following table:
\begin{center}
\begin{tabular}{ | c | c | c  | } \hline
$k$ & $\mathsf{V}_Q(\alpha_{k})$ &  $ \mathsf{V}_Q(\alpha_{k+1})$ \\
\hline $k \le n-1$ & $V(\varpi_{k})_{-(-q)^{\xi_k-2n+2}}$ &
$V(\varpi_{k+1})_{(-q)^{\xi_k+1}}$ \\ \hline $k \ge n$ &
$V(\varpi_{2n-k})_{(-q)^{\xi_k-2n+2}}$ &
$V(\varpi_{2n-k-1})_{-(-q)^{\xi_k+1}}$ \\ \hline
\end{tabular}
\end{center}
By the same way of the \textbf{(Case 1,2)}, one can prove that
$\mathsf{V}_Q(\alpha_{k}) \otimes \mathsf{V}_Q(\alpha_{k+1})
\twoheadrightarrow \mathsf{V}_Q(\alpha_k+\alpha_{k+1}).$ For the
remaining cases, we have
\begin{center}
\scalebox{0.73}{\begin{tabular}{ | c | c | c | c  | } \hline $Q$ &
$\phi^{-1}(\alpha_{k},0)$ &  $\phi^{-1}(\alpha_{k+1},0)$ &  $
\phi^{-1}(\alpha_{k}+\alpha_{k+1},0)$ \\ \hline $\xymatrix@R=3ex{
*{\bullet}<3pt> \ar@{->}[r]  &*{\bullet}<3pt> \ar@{<-}[r]_<{k}
&*{\bullet}<3pt> \ar@{<-}[r]_<{k+1} & *{ \bullet }<3pt> }$ &
$(2n-k,\xi_k-2n+2)$ & $(2n-1,\xi_{k}-2n+k+3)$ &
$(2n-k+1,\xi_k-2n+3)$
\\  \hline
$\xymatrix@R=3ex{ *{\bullet}<3pt> \ar@{->}[r]  &*{\bullet}<3pt>
\ar@{->}[r]_<{k}  &*{\bullet}<3pt> \ar@{<-}[r]_<{k+1} & *{ \bullet
}<3pt> }$ & $(1,\xi_k-k+1)$ & $(2n-k-1,\xi_{k}-2n+1)$ &
$(2n-k,\xi_{k}-2n+2)$
\\  \hline
$\xymatrix@R=3ex{ *{\bullet}<3pt> \ar@{->}[r]  &*{\bullet}<3pt>
\ar@{->}[r]_<{k}  &*{\bullet}<3pt> \ar@{->}[r]_<{k+1} & *{ \bullet
}<3pt> }$ & $(1,\xi_k-k+1)$ & $(1,\xi_{k}-k-1)$ & $(2,\xi_k-k)$
\\  \hline
$\xymatrix@R=3ex{ *{\bullet}<3pt> \ar@{<-}[r]  &*{\bullet}<3pt>
\ar@{<-}[r]_<{k}  &*{\bullet}<3pt> \ar@{->}[r]_<{k+1} & *{ \bullet
}<3pt> }$ & $(2n-1,\xi_k-2n+1+k)$& $(k+1,\xi_{k}+1)$ & $(k,\xi_{k})$
\\  \hline
$\xymatrix@R=3ex{ *{\bullet}<3pt> \ar@{<-}[r]  &*{\bullet}<3pt>
\ar@{<-}[r]_<{k}  &*{\bullet}<3pt> \ar@{<-}[r]_<{k+1} & *{ \bullet
}<3pt> }$ & $(2n-1,\xi_k-2n+1+k)$ & $(2n-1,\xi_{k}-2n+3+k)$ &
$(2n-2,\xi_{k}-2n+2+k)$
\\  \hline
\end{tabular}}
\end{center}
By using the same technique, one can prove our assertion for the
remaining cases.
\end{proof}

\begin{lemma} \label{Lem: minimal pair 1}
Let $\gamma=[k,\ell] \in \Phi^+_n$ with $|\gamma| \ge 3$. We write
the upper ray and the lower ray of $[k,\ell]$ as follows:
\begin{align*}
 \text{Upper ray}: & \underbrace{S^u_1 \to \cdots S^u_{a_1-1} \to S^u_{a_1}}_{\text{$S$-sectional path}}=[k,\ell]
=\overbrace{N^u_{b_1} \to N^u_{b_1-1} \to  N^u_1}^{\text{$N$-sectional path}} \\
 \text{Lower ray}: &\overbrace{N^l_1 \to \cdots N^l_{b_2-1} \to N^l_{b_2}}^{\text{$N$-sectional path}}=[k,\ell]
=\underbrace{S^l_{a_2} \to S^l_{a_2-1} \to
S^l_1}_{\text{$S$-sectional path}}.
\end{align*}
Then the followings hold:
\begin{enumerate}
\item[({\rm a})] If $a_1=1$ and $b_1 >1$, then all $N^l_j$ $(1 \le j < b_2 )$ are of the form $[k,t]$ with $t < \ell$ and
all $N^u_s$ $(1 \le s < b_1 )$ are of the form $[k,v]$ with $v > l$.
Moreover, $N^l_{b_2-1}=[k,\ell-1]$ and the pair
$([\ell],[k,\ell-1])$ is in the lower ray of $\gamma$.
\item[({\rm b})] If $b_1=1$ and $a_1 >1$, then all $S^l_j$ $(1 \le j < a_2 )$ are of the form $[b,\ell]$ with $k < b$
and all $S^u_s$ $(1 \le s < a_1 )$ are of the form $[a,\ell]$ with
$a < k$. Moreover, $S^l_{b_2-1}=[k+1,\ell]$ and the pair
$([k+1,\ell],[k])$ is  in the lower ray of $\gamma$.
\item[({\rm c})]If $a_2=1$ and $b_2 >1$, then all $N^u_j$ $(1 \le j < b_1 )$ are of the form $[k,t]$ with $t < \ell$ and
all $N^l_s$ $(1 \le s < b_2 )$ are of the form $[k,v]$ with $v > l$.
Moreover, $N^u_{b_1-1} = [k,\ell-1]$ and $([\ell],[k,\ell-1])$ is in
the upper ray of $\gamma$.
\item[({\rm d})] If $b_2=1$ and $a_2 >1$, then all $S^u_j$ $(1 \le j < a_1 )$ are of the form $[b,\ell]$ with $k < b$
and all $S^l_s$ $(1 \le s < a_2 )$ are of the form $[a,\ell]$ with
$a < k$. Moreover, $S^u_{a_1-1}=[k+1,\ell]$ and the pair
$([k+1,\ell],[k])$ is in the upper ray of $\gamma$.
\end{enumerate}
\end{lemma}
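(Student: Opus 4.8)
The plan is to prove statement (a) in full; (b), (c), (d) then follow because the four statements are permuted by the obvious symmetries of the situation — the Dynkin involution $i\mapsto i^*=n+1-i$, which induces the bijection $[a,b]\mapsto[n+1-b,n+1-a]$ of $\Phi^+_n$ interchanging the two components and interchanging $N$- with $S$-sectional paths, and the standard duality on finite dimensional $\C Q$-modules, which reverses all arrows of $\Gamma_Q$ and likewise interchanges $N$- with $S$-sectional paths. Hence it suffices to treat (a); equivalently, each of (b), (c), (d) can be proved word for word as (a) after swapping the roles of the first/second component, resp. of the incoming/outgoing arrows at $\gamma$.

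For (a), assume $a_1=1$ and $b_1>1$, so the upper ray of $\gamma=[k,\ell]$ is the pure $N$-sectional path $\gamma=N^u_{b_1}\to\cdots\to N^u_1$ and no $S$-sectional arrow of $\Gamma_Q$ ends at $\gamma$. By Theorem~\ref{Thm: sectional} every $N^u_s$ and $N^l_j$ has first component $k$ and every vertex on the $S$-sectional part of the lower ray has second component $\ell$. The first point is that \emph{every} pair of $\gamma$ lies in the lower ray: by Corollary~\ref{Cor: one path} such a pair lies in the upper or the lower ray, and one in the upper ray would — the $S$-part of the upper ray being empty — have both members with first component $k$, forcing the sum to have coefficient $2$ at $\alpha_k$, impossible for a root of type $A$. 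Next, for $k\le t\le\ell-1$ the pair $([t+1,\ell],[k,t])$ has sum $[k,\ell]=\gamma$, hence lies in the lower ray, with $[k,t]$ on the $N$-sectional part and $[t+1,\ell]$ on the $S$-sectional part; this places the $\ell-k$ roots $[k,k],\dots,[k,\ell-1]$ among the $N^l_j$ with $j<b_2$, all of the asserted form. Since $a_1=1$, the $S$-sectional part of the lower ray is the maximal $(S,\ell)$-sectional path with $\gamma$ deleted, of length $\ell-1$ (Corollary~\ref{Cor: sectional}); together with the lengths of the maximal $(S,\ell)$- and $(N,k)$-sectional paths this forces $b_2-1=\ell-k$ and $b_1-1=n-\ell$, whence $\{N^l_j:j<b_2\}=\{[k,t]:k\le t\le\ell-1\}$ and $\{N^u_s:s<b_1\}=\{[k,v]:\ell+1\le v\le n\}$, and the pair $([\ell],[k,\ell-1])$ (the case $t=\ell-1$) lies in the lower ray, as claimed.

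It remains to show $N^l_{b_2-1}=[k,\ell-1]$, i.e.\ that $[k,\ell-1]$ is the \emph{immediate} predecessor of $\gamma$, not merely a vertex of the $N$-sectional part. Here I would show $\gamma$ is projective: $\gamma$ has at least one incoming arrow (it is not simple, $|\gamma|\ge3$), and by $a_1=1$ every incoming arrow is $N$-sectional, so there is exactly one, from some $[k,t^*]$; if $\gamma$ were not projective, then by~\eqref{eq: negative image} $\tau(\gamma)\in\Phi^+_n$ and~\eqref{eq: additive function} gives $\gamma+\tau(\gamma)=[k,t^*]$, so $\tau(\gamma)=[k,t^*]-[k,\ell]$, a positive root only if $t^*>\ell$ — and excluding $t^*>\ell$ (using $a_1=1$ and~\eqref{eq: known characterization}) makes $\gamma$ projective. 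Then $[k,t^*]$ is the dimension vector of $\mathrm{rad}\,\mathtt{M}(\gamma)$, which is indecomposable (one outgoing arrow from the corresponding vertex of $Q$) and equals $\gamma-\alpha_j$ for that vertex $j$; since $\gamma-\alpha_j$ is an interval with first component $k$ only for $j=\ell$, we get $t^*=\ell-1$, finishing (a).

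I expect the one genuine obstacle to be the parenthetical exclusion of $t^*>\ell$ — equivalently, showing that along the maximal $(N,k)$-sectional path the second components increase monotonically on the stretch immediately preceding $\gamma$. The set-theoretic description of the $N$-sectional part of the lower ray is obtained cleanly above, but controlling this order really uses the hypothesis $a_1=1$ and will need a short case analysis on the type of the vertex $\ell$ of $Q$ (source/sink/left or right intermediate), via Lemma~\ref{Lem: Key Lem}(a), Remark~\ref{rem: key}, and the explicit computation of $\phi^{-1}$ in Remark~\ref{Alg: easy}, in the spirit of the proofs of Propositions~\ref{Prop: same component} and~\ref{Prop: maximal path}. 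Cases (b), (c), (d) then follow by the symmetries noted at the outset.
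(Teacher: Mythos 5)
Your reductions (all pairs of $\gamma$ lie in the lower ray when $a_1=1$; each $[k,t]$ with $k\le t\le \ell-1$ occurs among the $N^l_j$ with $j<b_2$; the $N^u_s$ are $[k,v]$ with $v>\ell$ because the maximal $(N,k)$-sectional path visits distinct vertices) are correct and match the spirit of the paper. But there is a genuine gap at the step ``this forces $b_2-1=\ell-k$.'' The counting only yields $b_2-1\ge \ell-k$ and $b_1-1\le n-\ell$, and since $(\ell-k)+(n-\ell)=n-k=(b_2-1)+(b_1-1)$, these inequalities are consistent with $b_2-1>\ell-k$, i.e.\ with some $N^l_j$ of the form $[k,v]$, $v>\ell$, sitting \emph{before} $\gamma$ on the maximal $(N,k)$-sectional path. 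Convexity does not exclude this: from $\gamma\prec_Q[k,v]$ and $[k,v]=\gamma+[\ell+1,v]$ one only gets $\gamma\prec_Q[k,v]\prec_Q[\ell+1,v]$, which is no contradiction. (That this danger is real is visible in Example \ref{ex: example 1}, where the maximal $(N,1)$-sectional path reads $[1,2]\to[1,5]\to[1,4]\to[1,3]\to[1]$: second components along such a path need not be monotone in general, so the hypothesis $a_1=1$ must be used in an essential way to order them.) Consequently the first assertion of (a) --- that \emph{all} $N^l_j$ with $j<b_2$ have second component $<\ell$ --- is not established, and the same missing fact is exactly the ``parenthetical exclusion of $t^*>\ell$'' you flag at the end; you have correctly located the crux but underestimated its scope and left it unproved in both places.

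The paper closes this gap by a mechanism absent from your proposal. First, Proposition \ref{Prop: maximal path} applied to the maximal $(N,k)$-sectional path forces $N^l_1=[k]$ (the simple root must sit at the end \emph{not} belonging to the upper ray, since $N^u_1\ne[k]$), which by Remark \ref{Alg: easy} pins down that $k$ is a sink or right intermediate and, using $a_1=1$ and $b_1>1$, that $\ell$ is a right intermediate with $S^l_1=[\ell]$ lying on row $n$. Then Corollary \ref{Cor: first and last} is the decisive input: the row-$n$ roots $\sigma_1,\sigma_2,\dots$ form consecutive intervals with strictly increasing second components, and since consecutive vertices $N^l_j\to N^l_{j+1}$ lie on maximal $S$-sectional paths whose invariants $\chi_S$ differ by exactly $2$, the $N^l_j$ are matched bijectively with the consecutive $\sigma_u,\sigma_{u+1},\dots,\sigma_w$ between the one of second component $k$ and $\sigma_w=[\ell]$. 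This shows at once that the second components of $N^l_1,\dots,N^l_{b_2}$ increase strictly from $k$ to $\ell$, hence are all $<\ell$ for $j<b_2$, and that $N^l_{b_2-1}$ has second component equal to that of $\sigma_{w-1}$, which is $\ell-1$ because $\sigma_w=[\ell,\ell]$. Your alternative route via projectivity of $\gamma$ and $\mathrm{rad}\,\mathtt{M}(\gamma)$ would work once $t^*>\ell$ is excluded, but that exclusion is the whole difficulty; to repair the proof you should import the $\sigma$-row argument (or an equivalent monotonicity statement) rather than the length count.
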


\begin{proof} We only give a proof of (a). The remaining cases can be proved in the similar way.
The second assertion of (a) follows from Theorem \ref{Thm:
sectional} and \eqref{eq: paths}. In particular, $N^u_1 \ne [k]$.
Note that $$ \text{$N^l_1 \to \cdots N^l_{b_2-1} \to N^l_{b_2} =
N^u_{b_1} \to N^u_{b_1-1} \to  N^u_1$ is a maximal $N$-sectional
path.}$$ Thus by Proposition \ref{Prop: maximal path}, $N^l_1=[k]$
and hence $k$ is a sink or a right intermediate. On the other hand,
$b_1>1$ implies that $\xymatrix@R=3ex{*{\bullet}<3pt>
\ar@{<-}[r]_<{\ell-1}  &*{\bullet}<3pt> \ar@{<-}[r]_<{\ell}
&*{\bullet}<3pt> \ar@{-}[l]^<{\ \ \ell+1} }$ in $Q$. Hence
$S^l_1=[\ell]$. Thus, by Remark \ref{Alg: easy}, the situation in
$\Gamma_Q$ can be drawn as follows:
$$
\scalebox{0.9}{{\xy (20,20)*{}="T0"; (40,0)*{}="B3";(34,0)*{}="B23";
(5,5)*{}="L3";(10,0)*{}="B1";(20,0)*{}="B2";(25,25)*{}="TT0";
(17,17)*{}="L1"; (10,10)*{}="L2";(0,0)*{}="B0"; "T0"; "B3"
**\dir{-};"T0"; "L3" **\dir{-};"T0"; "TT0" **\dir{-}; "L3"; "B1"
**\dir{-};"L2"; "B2" **\dir{-};"L1"; "B23" **\dir{-}; "B3";
"B3"+(5,5) **\dir{-}; "B0"; "B3"+(10,0) **\dir{.};
"B3"+(15,0)*{\scriptstyle n};
"B1"*{\bullet};"B2"*{\bullet};"B3"*{\bullet};"B23"*{\bullet};
"T0"*{\bullet};"L1"*{\bullet};"L3"*{\bullet};"L2"*{\bullet};
"L1"+(-5,0)*{\scriptstyle N^l_{b_2-1}}; "T0"+(5,0)*{\scriptstyle
[k,\ell]}; "L2"+(-3,0)*{\scriptstyle N^l_j};
"L3"+(-7,0)*{\scriptstyle N^l_1=[k]}; "B3"+(3,-2.5)*{\scriptstyle
[\ell]=\sigma_w}; "B1"+(0,-2.5)*{\scriptstyle \sigma_u};
"B2"+(0,-2.5)*{\scriptstyle \sigma_v};
"B23"+(-1.5,-2.5)*{\scriptstyle \sigma_{w-1}};
\endxy}}
$$
Theorem \ref{Cor: sectional} and Corollary \ref{Cor: first and last}
tell that for any $u< v <w$,
$$ \sigma_u=[k,c], \quad \sigma_u=[e,f] \quad \text{and} \quad k \le c < e \le f < \ell.$$
By applying Theorem \ref{Cor: sectional} once again, we can conclude
that $N^l_j=[k,f]$ for $f<\ell$. Furthermore, we can observe that
$$\text{$\sigma_{w-1}=[g,\ell-1] \ $ for some $ \ g \le \ell-1$ and
hence $ \ N^l_{b_2-1}=[k,\ell-1]$.}$$
\end{proof}

\begin{lemma} \label{Lem: no pair }
Let $\gamma=[k,\ell] \in \Phi^+_n$ with $|\gamma| \ge 3$.
\begin{enumerate}
\item[({\rm a})] If there is no pair of $\gamma$ in the upper ray, there exists a pair $(\alpha,\beta)$ in
the lower ray such that
$$ |\phi^{-1}_1(\alpha)-\phi^{-1}_1(\gamma)| =1 \quad \text{ or } \quad  |\phi^{-1}_1(\beta)-\phi^{-1}_1(\gamma)| =1.$$
\item[({\rm b})] If there is no pair of $\gamma$ in the lower ray, there exists a pair $(\alpha,\beta)$ in
the upper ray such that
$$ |\phi^{-1}_1(\alpha)-\phi^{-1}_1(\gamma)| =1 \quad \text{ or } \quad  |\phi^{-1}_1(\beta)-\phi^{-1}_1(\gamma)| =1.$$
\end{enumerate}
\end{lemma}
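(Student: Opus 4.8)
The plan is to deduce both statements from Lemma~\ref{Lem: minimal pair 1}, which already isolates the relevant degenerate configurations, so that the only real work is a reduction step plus reading off coordinates. Write $\gamma=[k,\ell]$; since $|\gamma|\ge 3$ we have $\ell\ge k+2$, so $[k]$, $[k+1,\ell]$, $[k,\ell-1]$, $[\ell]$ all lie in $\Phi_n^+$ and $([\ell],[k,\ell-1])$, $([k],[k+1,\ell])$ are pairs of $\gamma$; by the discussion around Theorem~\ref{Thm: i_k+i_l=i_j} each of them is contained in the upper ray or in the lower ray of $\gamma$. Using the notation $a_1,b_1$ of Lemma~\ref{Lem: minimal pair 1} (the $S$-arm and the $N$-arm of the upper ray of $\gamma$ have $a_1$ and $b_1$ vertices, respectively), I would first prove the reduction: \emph{there is no pair of $\gamma$ in the upper ray if and only if exactly one of $a_1,b_1$ equals $1$}, i.e. we are in case (a) or case (b) of Lemma~\ref{Lem: minimal pair 1}.

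The ``if'' direction is immediate: if $a_1=1$ (resp. $b_1=1$) the $S$-arm (resp. $N$-arm) of the upper ray is just $\{\gamma\}$, and by the proof of Theorem~\ref{Thm: i_k+i_l=i_j} any pair in the upper ray has exactly one summand on each arm, so there is no room for a summand distinct from $\gamma$. For the ``only if'' direction I would argue by contraposition: if $a_1>1$ and $b_1>1$, the vertex $N^u_{b_1-1}$ of the $N$-arm adjacent to $\gamma$ shares the first component $k$ with $\gamma$ by Theorem~\ref{Thm: sectional}, and since the maximal $(N,k)$-sectional path consists exactly of the $n-k+1$ roots $[k,\cdot]$ (Corollary~\ref{Cor: sectional}, Theorem~\ref{Thm: sectional}), Theorem~\ref{Thm: sectional} together with the additivity of the dimension vectors \eqref{eq: additive function} forces $N^u_{b_1-1}=[k,\ell-1]$, and likewise $S^u_{a_1-1}=[k+1,\ell]$; then, exactly as in the proof of Theorem~\ref{Thm: i_k+i_l=i_j}, the pair $([\ell],[k,\ell-1])$ (equivalently $([k],[k+1,\ell])$) sits inside the upper ray, a contradiction. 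Finally $a_1=b_1=1$ cannot occur here: it would make $\gamma$ simultaneously the source end of the maximal $(S,\ell)$-sectional path and the sink end of the maximal $(N,k)$-sectional path, which by Lemma~\ref{Lem: Key Lem}(a), \eqref{eq: known characterization} and Proposition~\ref{Prop: maximal path} forces $\gamma$ to be a simple root, contradicting $|\gamma|\ge 3$. This reduction is the main obstacle; everything afterward is bookkeeping.

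Granting the reduction, the hypothesis of (a) puts us in case (a) of Lemma~\ref{Lem: minimal pair 1} (when $a_1=1$, $b_1>1$) or in case (b) (when $b_1=1$, $a_1>1$). In the first case Lemma~\ref{Lem: minimal pair 1}(a) gives $N^l_{b_2-1}=[k,\ell-1]$ with $([\ell],[k,\ell-1])$ a pair in the lower ray; since $N^l_{b_2-1}\to N^l_{b_2}=\gamma$ is an $N$-sectional arrow, $\phi^{-1}_1([k,\ell-1])=\phi^{-1}_1(\gamma)+1$. In the second case Lemma~\ref{Lem: minimal pair 1}(b) gives $S^l_{a_2-1}=[k+1,\ell]$ with $([k+1,\ell],[k])$ a pair in the lower ray, and since $\gamma=S^l_{a_2}\to S^l_{a_2-1}$ is an $S$-sectional arrow, $\phi^{-1}_1([k+1,\ell])=\phi^{-1}_1(\gamma)+1$. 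Either way the required pair in the lower ray is produced, proving (a). For (b) I would run the same argument with the roles of the upper and lower rays exchanged, invoking cases (c) and (d) of Lemma~\ref{Lem: minimal pair 1} in place of (a) and (b); there the adjacent summand ($[k,\ell-1]=N^u_{b_1-1}$ or $[k+1,\ell]=S^u_{a_1-1}$) is joined to $\gamma$ by a sectional arrow lying inside the upper ray, so the first component differs from $\phi^{-1}_1(\gamma)$ by $-1$, again giving a pair with the stated property.
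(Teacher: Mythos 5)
Your proof hinges entirely on the claimed dichotomy ``there is no pair of $\gamma$ in the upper ray if and only if exactly one of $a_1,b_1$ equals $1$,'' and that dichotomy is false. Take the linear quiver $\overset{\gets}{Q}$ of \eqref{def: linear quiver} with $n=5$ and $\gamma=[2,4]$. There the maximal $(N,2)$-sectional path is $[2]\to[2,3]\to[2,4]\to[2,5]$ and the maximal $(S,4)$-sectional path is $[1,4]\to[2,4]\to[3,4]\to[4]$, so the upper ray of $\gamma$ is $[1,4]\to[2,4]\to[2,5]$ with $a_1=b_1=2$; its only vertices besides $\gamma$ are $[1,4]$ and $[2,5]$, so it contains no pair of $\gamma$ even though neither arm is trivial. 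Hence the hypothesis of (a) does \emph{not} force you into case (a) or (b) of Lemma~\ref{Lem: minimal pair 1}, and your argument produces nothing for such $\gamma$ (the conclusion still holds there, via $([2,3],[4])$ in the lower ray, but your proof never reaches it). The root of the problem is the contrapositive step: Theorem~\ref{Thm: sectional} only tells you that the neighbour $N^u_{b_1-1}$ of $\gamma$ on the $N$-arm is some $[k,y]$; neither it nor the additivity \eqref{eq: additive function} pins $y$ down to $\ell-1$. In the example above $N^u_{b_1-1}=[2,5]$ and $S^u_{a_1-1}=[1,4]$, not $[k,\ell-1]$ and $[k+1,\ell]$; also $([\ell],[k,\ell-1])$ and $([k],[k+1,\ell])$ are genuinely different pairs and are not ``equivalently'' located in $\Gamma_Q$. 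Finally, the case $a_1=b_1=1$ that you exclude does occur with $|\gamma|\ge 3$: for $Q\colon 1\to 2\gets 3\gets 4\to 5$ one computes $\phi^{-1}([2,4],0)=(1,-2)$ with no $S$-sectional arrow into it and no $N$-sectional arrow out of it, so the upper ray of $[2,4]$ is the single vertex $\gamma$ itself.

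Because the reduction fails, the subsequent bookkeeping (which is fine as far as it goes: a single sectional arrow between $\gamma$ and a neighbour does give row-difference $1$) does not cover the hypothesis of the lemma. The paper argues differently: from the assumption it first deduces that the lower ray must begin at $[k]$ and end at $[\ell]$, hence constrains the local orientation of $Q$ at $k$ and $\ell$ (sink/source/intermediate), and then in each case uses Corollary~\ref{Cor: first and last} on the extremal rows to identify $N^l_{b_2-1}=[k,\ell-1]$ or $S^l_{a_2-1}=[k+1,\ell]$ \emph{inside the lower ray}, which are adjacent to $\gamma$ by construction. If you want to keep your outline, you must replace the false dichotomy by an honest characterization of when the upper ray contains no pair; as the linear quiver shows, this is not a statement about the lengths $a_1,b_1$ alone.
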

\begin{proof} (a) Using the notations in Lemma \ref{Lem: minimal pair 1}, Corollary \ref{Cor: sectional} implies that
$$N_1^l=[k] \quad \text{ and } \quad S_1^l=[\ell] \quad \text{ under the assumption of (a)}.$$
Thus $k$ can not be a source and a right intermediate in $Q$, and
$\ell$ can not be a sink and a right intermediate in $Q$.

\textbf{(Case 1: $k$ is a sink and $\ell$ is a source)} If $k=1$ or
$\ell=n$, then the situation in $\Gamma_Q$ can be drawn as follows:
$$
\scalebox{0.9}{{\xy (20,20)*{}="T0"; (35,5)*{}="R2";(0,0)*{}="B1";
(23,17)*{}="R1"; (0,0)*{}="B0";(6,0)*{}="B1"; (30,0)*{}="B2"; "T0";
"R2" **\dir{-};"T0"; "B0" **\dir{-};"R2"; "B2" **\dir{-};"R1"; "B1"
**\dir{-}; "B0"; "B2"+(10,0) **\dir{.};
"B1"*{\bullet};"R2"*{\bullet};"B2"*{\bullet};"B0"*{\bullet};
"T0"*{\bullet};"R1"*{\bullet}; "R1"+(5,0)*{\scriptstyle
S^l_{a_2-1}}; "T0"+(-5,0)*{\scriptstyle [1,\ell]};
"R2"+(2,0)*{\scriptstyle [\ell]}; "B0"+(0,-2.5)*{\scriptstyle [1]};
"B1"+(0,-2.5)*{\scriptstyle \sigma_2}; "B2"+(0,-2.5)*{\scriptstyle
\sigma_w};
\endxy}} \quad
\text{ or } \scalebox{0.9}{{\xy (20,20)*{}="T0";
(40,0)*{}="B3";(34,0)*{}="B23"; (5,5)*{}="L3";(10,0)*{}="B1";
(17,17)*{}="L1"; (0,0)*{}="B0"; "T0"; "B3" **\dir{-};"T0"; "L3"
**\dir{-}; "L3"; "B1" **\dir{-};"L1"; "B23" **\dir{-}; "B0";
"B3"+(10,0) **\dir{.}; "B3"+(15,0)*{\scriptstyle n};
"B1"*{\bullet};"B3"*{\bullet};"B23"*{\bullet};
"T0"*{\bullet};"L1"*{\bullet};"L3"*{\bullet};
"L1"+(-5,0)*{\scriptstyle N^l_{b_2-1}}; "T0"+(5,0)*{\scriptstyle
[k,n]}; "L3"+(-2,0)*{\scriptstyle [k]}; "B3"+(3,-2.5)*{\scriptstyle
[n]=\sigma_s}; "B1"+(0,-2.5)*{\scriptstyle \sigma_u};
"B23"+(-1.5,-2.5)*{\scriptstyle \sigma_{s-1}};
\endxy}}
$$

For each case, $\sigma_2=[2,a]$ and $\sigma_{s-1}=[b,n-1]$,
respectively. Thus $(S^l_{a_2-1}=[2,\ell],[1])$ and
$([n],N^l_{b_2-1}=[k,n-1])$ are desired ones, respectively.

If $k \ne 1$ and $\ell \ne n$, then Remark \ref{Alg: easy} and
Corollary \ref{Cor: first and last} imply that the situation in
$\Gamma_Q$ can be depicted as follows:
$$\scalebox{0.9}{{\xy
(0,40)*{}="T0";(34,40)*{}="T11";(40,40)*{}="T1"; (20,20)*{}="M0";
(35,5)*{}="MR1";(8,8)*{}="ML1"; (30,0)*{}="B1";(16,0)*{}="B0";
"M0"+(-3,3); "T11" **\dir{-}; "M0"; "MR1" **\dir{-};"M0"; "T1"
**\dir{-}; "T0"+(-5,0); "T1"+(5,0) **\dir{.}; (-5,0); (45,0)
**\dir{.}; "M0"; "ML1" **\dir{-};"M0"; "T0" **\dir{-}; "ML1"; "B0"
**\dir{-};"MR1"; "B1" **\dir{-};
"T0"*{\bullet};"T1"*{\bullet};"M0"*{\bullet};"T11"*{\bullet};
"MR1"*{\bullet};"ML1"*{\bullet}; "B0"*{\bullet};"B1"*{\bullet};
"M0"+(-3,3)*{\bullet}; "M0"+(5,0)*{\scriptstyle [k,\ell]};
"MR1"+(3,0)*{\scriptstyle [\ell]};"ML1"+(-3,0)*{\scriptstyle [k]};
"T0"+(0,2)*{\scriptstyle [a,\ell]};"T1"+(0,2)*{\scriptstyle [k,b]};
"T11"+(-3,2)*{\scriptstyle [b+1,c]}; "M0"+(-9,3)*{\scriptstyle
[b+1,\ell]};
\endxy}}
$$
for some $k < b < c < a < \ell$. Thus $([k,b],[b+1,\ell])$ is a pair in the upper ray of $\gamma$ which yields a contradiction to the assumption. \\
\textbf{(Case 2: $k$ is a not sink or $\ell$ is not a source)}
Equivalently,
$$\xymatrix@R=3ex{*{\bullet}<3pt>
\ar@{<-}[r]_<{k-1}  &*{\bullet}<3pt> \ar@{<-}[r]_<{k}
&*{\bullet}<3pt> \ar@{-}[l]^<{\ \ k+1} } \text{ or }
\xymatrix@R=3ex{*{\bullet}<3pt> \ar@{<-}[r]_<{\ell-1}
&*{\bullet}<3pt> \ar@{<-}[r]_<{\ell} &*{\bullet}<3pt> \ar@{-}[l]^<{\
\ \ell+1} } \text{ in $Q$}.$$ Thus, using the notations in Lemma
\ref{Lem: minimal pair 1}, the situation in $\Gamma_Q$ can be drawn
as follows:
$$
\scalebox{0.9}{{\xy (20,20)*{}="T0"; (35,5)*{}="R2";(0,0)*{}="B1";
(23,17)*{}="R1"; (0,0)*{}="B0";(6,0)*{}="B1"; (30,0)*{}="B2"; "T0";
"R2" **\dir{-};"T0"; "B0" **\dir{-};"R2"; "B2" **\dir{-};"R1"; "B1"
**\dir{-}; "B0"; "B2"+(10,0) **\dir{.};
"B1"*{\bullet};"R2"*{\bullet};"B2"*{\bullet};"B0"*{\bullet};
"T0"*{\bullet};"R1"*{\bullet}; "R1"+(5,0)*{\scriptstyle
S^l_{a_2-1}}; "T0"+(-5,0)*{\scriptstyle [k,\ell]};
"R2"+(2,0)*{\scriptstyle [\ell]}; "B0"+(-4,-2.5)*{\scriptstyle
\sigma_a=[k]}; "B1"+(0,-2.5)*{\scriptstyle \sigma_{a+1}};
"B2"+(0,-2.5)*{\scriptstyle \sigma_w};
\endxy}} \quad
\text{ or } \quad \scalebox{0.9}{{\xy (20,20)*{}="T0";
(40,0)*{}="B3";(34,0)*{}="B23"; (5,5)*{}="L3";(10,0)*{}="B1";
(17,17)*{}="L1"; (0,0)*{}="B0"; "T0"; "B3" **\dir{-};"T0"; "L3"
**\dir{-}; "L3"; "B1" **\dir{-};"L1"; "B23" **\dir{-}; "B0";
"B3"+(10,0) **\dir{.}; "B3"+(15,0)*{\scriptstyle n};
"B1"*{\bullet};"B3"*{\bullet};"B23"*{\bullet};
"T0"*{\bullet};"L1"*{\bullet};"L3"*{\bullet};
"L1"+(-5,0)*{\scriptstyle N^l_{b_2-1}}; "T0"+(5,0)*{\scriptstyle
[k,\ell]}; "L3"+(-2,0)*{\scriptstyle [k]};
"B3"+(2,-2.5)*{\scriptstyle [\ell]=\sigma_{b}};
"B1"+(0,-2.5)*{\scriptstyle \sigma_{u}};
"B23"+(-1.5,-2.5)*{\scriptstyle \sigma_{b-1}};
\endxy}}
$$
Then $(S^l_{a_2-1},\sigma_{a+1})=([k+1,l],[k+1,c])$ and
$(N^l_{b_2-1},\sigma_{b-1})=([k,\ell-1],[d,\ell-1])$, respectively.
Hence $([k+1,\ell],[k])$ and $([\ell],[k,\ell-1])$ are desired one,
respectively. The proof for $(b)$ can be obtained by applying the
similar argument.
\end{proof}

\bigskip
\noindent \textbf{\emph{Proof of Theorem \ref{Thm: m 2}.}} For
$\gamma \in \Phi^+_{2n-1}$ with $|\gamma|=2$, it is already proved
in Lemma \ref{Lem: height 2}. Thus it suffices to consider when
$|\gamma|=|[k,\ell]| \ge 3$. To prove this, we need to observe the
situation in $\Gamma_Q$ for each case.

\textbf{(Case 1: $1 <\phi^{-1}_1(\gamma)=i_\gamma\le n$ )} (a) If
there is a pair $(\alpha,\beta)$ in the upper ray, then Theorem
\ref{Thm: i_k+i_l=i_j} tells that
$$ (\phi^{-1}_1(\alpha))^\star=\phi^{-1}_1(\alpha), \quad (\phi^{-1}_1(\beta))^\star = \phi^{-1}_1(\beta)
\quad \text{and} \quad
\phi^{-1}_1(\alpha)+\phi^{-1}_1(\beta)=\phi^{-1}_1(\gamma).$$ Then
we have a surjective homomorphism
$$ \mathsf{V}_Q(\beta) \otimes  \mathsf{V}_Q(\alpha) \twoheadrightarrow \mathsf{V}_Q(\gamma) $$
by \eqref{eq: p ij}.

(b) Now we deal with the case when there is no pair in the upper
ray. By Lemma \ref{Lem: minimal pair 1} and Lemma \ref{Lem: no pair
}, there exists a minimal pair $(\alpha,\beta)$ such that
$$ (\phi^{-1}_1(\alpha),\phi^{-1}_1(\beta)) =(2n-1,i_\gamma+1) \text{ or } (i_\gamma+1,2n-1).$$
For the case when $(\phi^{-1}_1(\alpha),\phi^{-1}_1(\beta))
=(2n-1,i_\gamma+1) $, it suffices to show that there exists a
surjective homomorphism
\begin{equation} \label{eq: goal 1}
V(\varpi_{(i_\gamma+1)^\star})_{((-q)^{p_\gamma-1})^\star} \otimes
V(\varpi_{1})_{-(-q)^{p_\gamma+2n-1-i_\gamma}} \twoheadrightarrow
V(\varpi_{i_\gamma})_{(-q)^{p_\gamma}},
\end{equation}
where $p_\gamma=\phi^{-1}_2(\gamma)$. \\
\noindent (b-i) If $i_\gamma<n$, then
$(i_\gamma+1)^\star=i_\gamma+1$ and
$((-q)^{p_\gamma-1})^\star=(-q)^{p_\gamma-1}$. By taking dual to
\eqref{eq: p ij}, we have
\begin{equation} \label{eq: goal 1-1}
 V(\varpi_{i_\gamma+1}) \rightarrowtail  V(\varpi_{i_\gamma})_{(-q)^1} \otimes V(\varpi_{1})_{(-q)^{-i_\gamma}}.
\end{equation}
By taking the left dual of $V(\varpi_{1})_{(-q)^{-i_\gamma}}$ to
\eqref{eq: goal 1-1}, we have
$$ V(\varpi_{i_\gamma+1}) \otimes V(\varpi_{1})_{-(-q)^{2n-i_\gamma}} \twoheadrightarrow  V(\varpi_{i_\gamma})_{(-q)^1}.$$
Thus we have \eqref{eq: goal 1} for $i_\gamma<n$.

\noindent (b-ii) If $i_\gamma=n$, then $(i_\gamma+1)^\star=n-1$ and
$((-q)^{p_\gamma-1})^\star=-(-q)^{p_\gamma-1}$. Then \eqref{eq: goal
1} becomes
$$V(\varpi_{n-1})_{-(-q)^{p_\gamma-1}} \otimes V(\varpi_{1})_{-(-q)^{p_\gamma+n-1}}
\twoheadrightarrow V(\varpi_{n})_{(-q)^{p_\gamma}} \simeq
V(\varpi_{n})_{-(-q)^{p_\gamma}},$$ by \eqref{eq: + iso -}. The
above equation follows from \eqref{eq: p ij} directly.

The case when $(\phi^{-1}_1(\alpha),\phi^{-1}_1(\beta))
=(i_\gamma+1,2n-1) $ can be proved in the same way.

\textbf{(Case 2: $\phi^{-1}_1(\beta_\gamma)=1$ )} Note that there is
no pair in the upper ray. Thus using the notations in Lemma
\ref{Lem: minimal pair 1}, we have
\begin{itemize}
\item $N^l_1=[k]$ and $S^l_1=[\ell]$,
\item $[k]=N^l_1 \to \cdots \to N^l_{b_2-1} \to N^l_{b_2}$ is a maximal $N$-sectional path,
\item $S^l_{a_2} \to S^l_{a_2-1} \to \cdots \to  S^l_1=[\ell]$ is a maximal $S$-sectional path.
\end{itemize}
Thus the situation in $\Gamma_Q$ can be drawn as follows:
$$\scalebox{0.9}{{\xy
(20,40)*{}="T0"; (0,20)*{}="L2";(7,27)*{}="L1";(10,30)*{}="L01";
(45,15)*{}="R1"; (15,5)*{}="B1"; (35,5)*{}="B2";(29,5)*{}="B21";
"T0"; "L2" **\dir{-};"T0"; "R1" **\dir{-}; "L2"; "B1"
**\dir{-};"R1"; "B2" **\dir{-}; "L01"; "B2" **\dir{-}; (-20,40);
(65,40) **\dir{.}; (-20,20); "L2"+(-5,0) **\dir{.}; "R1"+(5,0);
"R1"+(20,0) **\dir{.}; "R1"+(23,0)*{\scriptstyle i_\alpha}; (-20,5);
(65,5) **\dir{.}; (-20,30); "L01"+(-7,0) **\dir{.};
(-27,30)*{\scriptstyle 2n-i_\alpha}; (-23,20)*{\scriptstyle
i_\beta}; (-23,40)*{\scriptstyle 1}; (-25,5)*{\scriptstyle 2n-1};
"T0"*{\bullet};"L2"*{\bullet};"L1"*{\bullet};"R1"*{\bullet};"B21"*{\bullet};"B2"*{\bullet};"B1"*{\bullet};"L01"*{\bullet};
"L1"; "B21" **\dir{-}; "L2"+(-2.5,0)*{\scriptstyle [k]};
"L1"+(-4,0)*{\scriptstyle N^l_{s-1}}; "L01"+(-3,0)*{\scriptstyle
N^l_s}; "R1"+(2.5,0)*{\scriptstyle [\ell]};
"T0"+(0,2.5)*{\scriptstyle [k,\ell]}; "B21"+(0,-2.5)*{\scriptstyle
\sigma_{t-1}}; "B2"+(4,-2)*{\scriptstyle \sigma_t=[\ell,b]};
"B1"+(-4,-2)*{\scriptstyle \sigma_u=[a,k]};
\endxy}}
$$
for some $a < k$ and $\ell<b$. By Corollary \ref{Cor: first and
last}, $\sigma_{t-1}=[c,\ell-1]$ $(c \le \ell-1)$ and hence
$N^l_{s-1}=[k,\ell-1]$. We fix a pair $(\alpha,\beta)$ as
$([\ell],[k,\ell-1])$. Then we can see that
$$\phi^{-1}_1(\beta)=2n+1-i_\alpha \text{ and } ( 2n-\phi^{-1}_1(\beta) )+( 2n-\phi^{-1}_1(\alpha) )=( 2n-\phi^{-1}_1(\gamma) )=2n-1.$$
(a) Assume that $i_\alpha=\phi^{-1}_1(\alpha) \ge n+1$, then
$\phi^{-1}_1(\beta) \le n$. Thus it suffices to show that there
exists a surjective homomorphism
\begin{equation} \label{eq: goal 2}
V(\varpi_{2n-i_\alpha+1})_{(-q)^{p_\gamma-2n+i_\alpha}} \otimes
V(\varpi_{2n-i_\alpha})_{-(-q)^{p_\gamma+i_\alpha-1}}
\twoheadrightarrow V(\varpi_{1})_{(-q)^{p_\gamma}}.
\end{equation}
By taking dual to \eqref{eq: p ij}, we have
\begin{equation} \label{eq: goal 2-1}
V(\varpi_{2n-i_\alpha+1}) \rightarrowtail
V(\varpi_{1})_{(-q)^{2n-i_\alpha}} \otimes
V(\varpi_{2n-i_\alpha})_{(-q)^{-1}}.
\end{equation}
By taking left dual of $V(\varpi_{2n-i_\alpha})_{(-q)^{-1}}$ to
\eqref{eq: goal 2-1}, we have
$$ V(\varpi_{2n-i_\alpha+1}) \otimes V(\varpi_{2n-i_\alpha})_{-(-q)^{2n-1}} \twoheadrightarrow V(\varpi_{1})_{(-q)^{2n-i_\alpha}}.$$
Thus we have \eqref{eq: goal 2} for $i_\alpha \ge n+1$.

\noindent (b) Assume that $i_\alpha \le n$, then $\phi^{-1}_1(\beta)
\ge n+1$. Thus it suffices to show that there exists a surjective
homomorphism
\begin{equation} \label{eq: goal 3}
V(\varpi_{i_\alpha-1})_{-(-q)^{p_\gamma-2n+i_\alpha}} \otimes
V(\varpi_{i_\alpha})_{(-q)^{p_\gamma+i_\alpha-1}} \twoheadrightarrow
V(\varpi_{1})_{(-q)^{p_\gamma}}.
\end{equation}
By taking dual to \eqref{eq: p ij}, we have
$$
V(\varpi_{i_\alpha}) \rightarrowtail
V(\varpi_{i_\alpha-1})_{(-q)^{1}} \otimes
V(\varpi_{1})_{(-q)^{-i_\alpha+1}}.$$ By taking right dual of
$V(\varpi_{i_\alpha-1})_{(-q)^{1}} $, we have
$$ V(\varpi_{i_\alpha-1})_{-(-q)^{-2n+1}} \otimes V(\varpi_{i_\alpha}) \twoheadrightarrow V(\varpi_{1})_{(-q)^{-i_\alpha+1}}.$$
Thus we have \eqref{eq: goal 3} for $i_\alpha \le n$.

The remaining case \textbf{(Case 3: $n
<\phi^{-1}_1(\beta_\gamma)=i_\gamma\le 2n-1$)} can be proved by
applying the similar arguments in \textbf{(Case 1)} and
\textbf{(Case 2)}.

\begin{corollary}\label{Cor: Dorey on Gamma 2}
 For every positive root $\beta \in \Phi^+_{m}$, there exists a sequence $(i_1,i_2,\cdots,i_{|\beta|}) \in I_0^{|\beta|}$ such that
$$\stens_{t=1}^{|\beta|} \mathsf{V}_Q(\alpha_{i_t})  \twoheadrightarrow \mathsf{V}_Q(\beta).$$
\end{corollary}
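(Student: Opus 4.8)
The plan is to argue by induction on the height $|\beta|$ of the positive root $\beta \in \Phi^+_{m}$, in exact analogy with the proof of Corollary \ref{Cor: Dorey on Gamma} (b). When $|\beta|=1$ the root is simple, say $\beta=\alpha_i$ for some $i \in I_0$, and the one-term sequence $(i)$ works trivially since $\mathsf{V}_Q(\alpha_i)\twoheadrightarrow\mathsf{V}_Q(\alpha_i)$ is the identity. So the base case is immediate.

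For the inductive step, suppose $|\beta|=|\gamma|\ge 2$ and that the assertion holds for all positive roots of strictly smaller height. By Theorem \ref{Thm: m 2} there exists a (minimal) pair $(\alpha,\beta')$ with $\gamma=\alpha+\beta'$ together with a surjective $U'_q(A_m^{(2)})$-module homomorphism $\mathsf{V}_Q(\beta') \otimes \mathsf{V}_Q(\alpha) \twoheadrightarrow \mathsf{V}_Q(\gamma)$. Since $|\alpha|,|\beta'| < |\gamma|$, the induction hypothesis yields sequences $(i_1,\dots,i_{|\beta'|})$ and $(j_1,\dots,j_{|\alpha|})$ in $I_0$ together with surjections $\stens_{t=1}^{|\beta'|}\mathsf{V}_Q(\alpha_{i_t})\twoheadrightarrow\mathsf{V}_Q(\beta')$ and $\stens_{t=1}^{|\alpha|}\mathsf{V}_Q(\alpha_{j_t})\twoheadrightarrow\mathsf{V}_Q(\alpha)$. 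Tensoring these two surjections — a tensor product of surjective homomorphisms of $U'_q(A_m^{(2)})$-modules is again surjective, as over the field $\ko$ the tensor product is right exact and the maps are morphisms for the Hopf algebra structure — and composing with the surjection of Theorem \ref{Thm: m 2}, one gets a surjective homomorphism
$$\Big(\stens_{t=1}^{|\beta'|}\mathsf{V}_Q(\alpha_{i_t})\Big)\otimes\Big(\stens_{t=1}^{|\alpha|}\mathsf{V}_Q(\alpha_{j_t})\Big)\twoheadrightarrow \mathsf{V}_Q(\beta')\otimes\mathsf{V}_Q(\alpha)\twoheadrightarrow\mathsf{V}_Q(\gamma).$$
Concatenating the two index sequences produces a sequence $(i_1,\dots,i_{|\beta'|},j_1,\dots,j_{|\alpha|})$ in $I_0$ of length $|\beta'|+|\alpha|=|\gamma|=|\beta|$, which is exactly the required datum.

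I expect essentially no genuine obstacle here beyond the invocation of Theorem \ref{Thm: m 2}, which carries all the weight: the existence of a surjection from a tensor product of two smaller fundamental-type modules $\mathsf{V}_Q$ onto $\mathsf{V}_Q(\gamma)$ is precisely what unlocks the induction. The only subtle point to keep track of is bookkeeping on lengths — the concatenated sequence must have length exactly $|\beta|$ — and this is automatic because heights add under the decomposition $\gamma=\alpha+\beta'$, so no extra care is needed. (If one prefers, the same statement for $m=2n$ follows by the identical argument, using Theorem \ref{Thm: m 2} for $m=2n$ in place of the $m=2n-1$ version.)
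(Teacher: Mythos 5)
Your proof is correct and follows exactly the route the paper intends: the corollary is stated without proof immediately after Theorem \ref{Thm: m 2}, and the evident argument is the induction on height you give, mirroring how Corollary \ref{Cor: Dorey on Gamma} (b) is deduced from part (a) there. The inductive step via tensoring surjections and invoking Theorem \ref{Thm: m 2}, with additivity of heights handling the length bookkeeping, is sound.
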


From Corollary \ref{Cor: Dorey on Gamma 2}, the condition (b) in
Definition \ref{def: Q module category 2} can be also restated as
follows:
\begin{itemize}
\item[({\rm b}$'$)] It contains $\mathsf{V}_Q(\alpha_k)$ for all $\alpha_k \in \Pi_m$.
\end{itemize}

\bibliographystyle{amsplain}


\end{document}